\documentclass[10pt,a4paper]{amsart}
\usepackage{verbatim}
\usepackage{tikz}
\usetikzlibrary{patterns}
\usepackage[toc]{appendix}
\usepackage[T1]{fontenc}

\usepackage{graphicx}
\usepackage{enumerate}
\usepackage{amsmath,amsfonts,amssymb}
\usepackage{color}
\def\loc{\operatorname{loc}}
\usepackage{cite}
\usepackage{ latexsym }
\definecolor{citation}{rgb}{0.11,0.67,0.84}
\definecolor{formula}{rgb}{0.1,0.2,0.6}
\definecolor{url}{rgb}{0.11,0.67,0.84}
\usepackage{pgf,tikz}
\usepackage{mathrsfs}
\usepackage{fancyhdr}
\usepackage{dutchcal}

\newcommand{\medint}{-\kern -,375cm\int}

\newcommand{\medintinrigo}{-\kern -,315cm\int}
\makeatletter
\newcommand{\linethrough}{\mathpalette\@thickbar}
\newcommand{\@thickbar}[2]{{#1\mkern0mu\vbox{
    \sbox\z@{$#1#2\mkern-0.5mu$}%
    \dimen@=\dimexpr\ht\tw@-\ht\z@+2\p@\relax 
    \hrule\@height0.5\p@ 
    \vskip\dimen@
    \box\z@}}
}
\makeatother

\newcommand{\mathstrike}[1]{\ensuremath{\linethrough{#1}}}

\newcommand{\nra}[1]{\mathstrike{\lVert} #1 \rVert}
\newcommand{\snra}[1]{\mathstrike{[} #1 ]}

\newtheorem{theorem}{Theorem}[section]
\newtheorem{lemma}[theorem]{Lemma}
\newtheorem{proposition}[theorem]{Proposition}
\newtheorem{corollary}[theorem]{Corollary}
\newtheorem{definition}[theorem]{Definition}
\theoremstyle{definition}
\newtheorem{remark}[theorem]{Remark}

\numberwithin{equation}{section}

\usepackage{dsfont}

\newcommand{\reqnomode}{\tagsleft@false}

\usepackage{hyperref}

\newcommand\ccc{\textnormal{c}}

\def\dx{\,{\rm d}x}

\def\ds{\,{\rm d}s}
\def\dt{\,{\rm d}t}
\def\dy{\,{\rm d}y}

\def \d{\,{\rm d}}
\def \diver{\,{\rm div}}
\def\dist{\,{\rm dist}}

\def\supp{\,{\rm supp}}
\def\diam{\,{\rm diam}}

\def\kk{\,\kappa}

\allowdisplaybreaks
\makeatletter
\DeclareRobustCommand*{\bfseries}{%
  \not@math@alphabet\bfseries\mathbf
  \fontseries\bfdefault\selectfont
  \boldmath
}

\DeclareMathOperator*{\osc}{osc}

\makeatother

\newlength{\defbaselineskip}
\newcommand{\mint}{\mathop{\int\hskip -1,05em -\, \!\!\!}\nolimits}

\def \diver{\,{\rm div}}

\newcommand{\ti}[1]{\tilde{#1}}

\newcommand{\rrr}{\textnormal{\texttt{r}}}

\newcommand{\rr}{\varrho}
\newcommand{\snr}[1]{\lvert #1\rvert}
\newcommand{\nr}[1]{\lVert #1 \rVert}

\newcommand{\tx}[1]{\textnormal{\texttt{#1}}}

\def\loc{\operatorname{loc}}

\def\eqn#1$$#2$${\begin{equation}\label#1#2\end{equation}}

\newcommand{\data}{\textnormal{\texttt{data}}}

\def\supp{\,{\rm supp }}

\def\XXint#1#2#3{{\setbox0=\hbox{$#1{#2#3}{\int}$}
     \vcenter{\hbox{$#2#3$}}\kern-.5\wd0}}

\title{Intrinsic Schauder estimates at nearly linear growth}

\author[De Filippis]{Cristiana De Filippis}  \address{Cristiana De Filippis\\Dipartimento SMFI, Universit\`a di Parma, Viale delle Scienze 53/a, Campus, 43124 Parma, Italy} \email{\url{cristiana.defilippis@unipr.it}}
\author[De Filippis]{Filomena De Filippis}  \address{Filomena De Filippis\\Fachbereich Mathematik, Universität Salzburg, Hellbrunner Str. 34, 5020 Salzburg, Austria} \email{\url{filomena.defilippis@plus.ac.at}}
\author[H\"ast\"o]{Peter H\"ast\"o}  \address{Peter H\"ast\"o\\Department of Mathematics and Statistics, University of Helsinki, FI-00014, Helsinki, Finland} \email{\url{peter.hasto@helsinki.fi}}
\begin{document}

\subjclass[2020]{49N60, 35J60 \vspace{1mm}} 

\keywords{Regularity, $(p,q)$-growth, Nonuniform ellipticity, $\mu$-ellipticity\vspace{1mm}}

\thanks{{\it Acknowledgements.} C. De Filippis is supported by the European Research Council, through the ERC StG project NEW, nr.~101220121, and by the University of Parma through the action "Bando di Ateneo 2024 per la ricerca". This research was funded in whole or in part by the Austrian Science Fund (FWF) [10.55776/PAT1850524]. For open access purposes, the author has applied a CC BY public copyright license to any author accepted manuscript version arising from this submission.
\vspace{1mm}}

\maketitle

\begin{abstract}
We develop a nonlinear potential theoretic framework for Schauder estimates for vector-valued solutions of a broad class of nonautonomous variational problems at nearly linear growth. Our approach naturally embraces the variable exponent as well as the Double and Multi phase setting, yielding new regularity results in basic models and recovering optimal regularity recently established in specific cases.
 \end{abstract}
\setcounter{tocdepth}{1}
{\small \tableofcontents}
\section{Introduction}\label{intro}
\noindent Schauder estimates are fundamental tools in the Analysis of PDEs and in the Calculus of Variations, with applications to free boundary problems, evolutionary PDEs, and global regularity. The smoothing effect that (autonomous) elliptic operators have on solutions is the cornerstone of regularity theory. However, plugging in ingredients like forcing or transport terms, or space-depending coefficients, might inhibit the regularizing process. Think e.g.\ of the Laplace operator. By Weil's lemma, harmonic maps\footnote{Solutions to $\Delta u=0$.} are smooth --- in particular, the regularity of solutions self-improves. The presence of external ingredients is a clear obstruction to this mechanism, so it is natural to wonder how much of such regularity survives when coefficients are plugged in. Schauder theory answers this basic question. Specifically, given a bounded, elliptic matrix $\tx{A}\colon \Omega\to \mathbb{R}^{n\times n}$, $\tx{A}\approx \mathds{I}$, the natural guess for weak energy solutions to $\diver(\tx{A}(x)Du)=0$ is
\eqn{i.0}
$$
\tx{A}\in C^{0,\alpha}_{\loc}(\Omega,\mathbb{R}^{n\times n}) \ \Longrightarrow \ u\in C^{1,\alpha}_{\loc}(\Omega),\qquad \mbox{with} \ \ \alpha\in (0,1).
$$
Linear Schauder theory dates back to the end of the '20's of the previous century, with the classical results of Hopf, Caccioppoli, and Schauder, heavily relying on potential theory --- techniques later on streamlined by Campanato (via suitable function spaces), L. Simon (by means of blow up methods), and Trudinger (using convolution arguments). The key aspect emerging from these works is the perturbative nature of Schauder estimates for linear, uniformly elliptic equations. Analogous phenomena hold for nonautonomous, quasilinear elliptic PDEs\footnote{Weak solutions to quasilinear elliptic PDEs with H\"older continuous coefficients have H\"older continuous gradients, but implication \eqref{i.0} may fail for degenerate operators, see \cite{ura68}.} of the type $\diver(\tx{c}(x)\snr{Du}^{p-2}Du)=0$, with $1\lesssim \tx{c}(\cdot)\in C^{0,\alpha}_{\loc}(\Omega)$ and $1<p<\infty$. This is due to Manfredi \cite{man88}, after DiBenedetto \cite{dib83}, and Giaquinta \& Giusti \cite{gg82}, based on the fundamental contributions of Ural'tseva \cite{ura68} and Uhlenbeck \cite{uhl77}. The unifying feature of the equations and functionals treated in the aforementioned works lays in the uniform boundedness of the related ellipticity ratio. Specifically, for nonlinear elliptic PDEs of the type\footnote{Assume that field $\tx{B}$ is smooth and $z\mapsto \partial \tx{B}(\cdot,z)$ is symmetric for simplicity.} $\diver(\tx{B}(x,Du))=0$, the ellipticity ratio is defined as 
\eqn{er}
$$
\mathcal{R}(x,z):=\frac{\mbox{highest eigenvalue of }\partial \tx{B}(x,z)}{\mbox{lowest eigenvalue of }\partial \tx{B}(x,z)},
$$
and measures how the gradient variable affects the growth/ellipticity properties of field $\tx{B}$. The extension of \eqref{er} to functionals as in \eqref{fun} goes through the Euler-Lagrange equation, solved by minima. In fact, replacing $\tx{B}$ with the derivative in the gradient variable of a (suitably regular, strictly convex) integrand $\tx{f}$ in \eqref{er}, we are led to consider the pointwise ratio between the highest and the lowest eigenvalue of the Hessian matrix $\partial^{2}\tx{f}$. Equations or functionals characterized by a uniformly bounded ellipticity ratio are classified as uniformly elliptic, and Schauder theory is well-established, with three key aspects worth highlighting.
\begin{itemize}
    \item Schauder theory for uniformly elliptic problems always holds, in the sense that to H\"older continuous coefficients always correspond solutions with H\"older continuous gradient.
    \item The smoother the better: in nondegenerate, uniformly elliptic problems, smooth ingredients grant smooth solutions.
    \item Uniformly elliptic Schauder estimates are achieved via perturbation arguments.
\end{itemize}
The above paradigm dramatically fails in the nonuniformly elliptic setting. Nonuniformly elliptic equations or functionals feature unbounded ellipticity ratios, which blow up, in several significant cases, as a positive power of the gradient variable. This class of nonuniformly elliptic PDEs is extremely rich. The most celebrated model is the nonparametric area integral
$$
w\mapsto \int_{\Omega}\sqrt{1+\snr{Dw}^{2}}\dx,
$$
featuring quadratic ellipticity ratio, see Bombieri, De Giorgi \& Miranda \cite{bdm69}, Ladyzhenskaya \& Ural'tseva \cite{lu70}, Trudinger \cite{tru72}, and L. Simon \cite{sim76}, for relevant regularity theory. Anisotropic energies \cite{mar89,mar91,bb20},
$$
w\mapsto \int_{\Omega}\snr{Dw}^{p}+\sum_{i=1}^{n}\snr{\partial_{i}w}^{q_{i}}\dx,\qquad\quad  1<p\le q_{1}\le \cdots\le q_{n}<\infty,
$$
and slow-growing integrals \cite{fm00,bf01},
$$
w\mapsto \int_{\Omega}\snr{Dw}\log(1+\snr{Dw})\dx,
$$
feature power-type nonuniformity as well. The key idea in this setting is to link the growth/ellipticity features of the nonlinear vector field $\tx{B}$ to the rate of blow-up of the ellipticity ratio \eqref{er}. Indeed, by imposing that the lowest and the highest eigenvalues of $\partial \tx{B}$ behave as two different powers of the gradient variable:
\eqn{pq}
$$
\begin{array}{c}
\displaystyle
\snr{z}^{p-2}\mathds{I}\lesssim \partial\tx{B}(x,z)\lesssim \snr{z}^{q-2}\mathds{I} \ \ \mbox{in the sense of forms,}\\[8pt]\displaystyle
\mbox{for some} \ \ 1<p<q<\infty \ \ \mbox{and all} \ \ \snr{z}\ge 1,\ x\in \Omega,
\end{array}
$$
the behavior at infinity of the related ellipticity ratio can be controlled in terms of the difference $q-p$,
\eqn{rpq}
$$
\mathcal{R}(x,z)\approx_{\snr{z}\ge 1}\snr{z}^{q-p}.
$$
The growth rate of $\mathcal{R}$ can then be reduced by choosing $q-p$ sufficiently close to zero, so the asymptotic in \eqref{rpq} suggests that a moderate blow-up rate of $\mathcal{R}$ grants hope for regular solutions. Marcellini, Giaquinta and Hong proved that a restriction of the type $q-p\lesssim 1/n$ is both necessary \cite{gia87,mar91,hon92} and sufficient \cite{mar89,mar91} condition for regularity. Since these breakthroughs, autonomous nonuniformly elliptic theory flourished, see \cite{bm20,bs20,bb20,hs21,bs24,sch24} in the scalar setting, \cite{sch09,sch21,bdms22,def22,dkk24,gk24} in the vectorial one for an (incomplete) list of advances, and \cite{mar21,min24,dm25b} for general overviews. Most notably, the construction in \cite{hon92} provides a strongly convex, nonuniformly elliptic, functional with too large nonuniformity rate and unbounded scalar minima. This novel fact breaks the uniformly elliptic orthodoxy, as it points out that regularity should not always be expected for nonuniformly elliptic problems already in the autonomous, strongly convex setting, and suggests that Schauder theory might fail already in the most simple situations. In this respect, the first seminal results date back to the end of the '60's, due to Trudinger \cite{tru67}, Ladyzhenskaya \& Ural'tseva \cite{lu70}, L. Simon \cite{sim76}, take also \cite{iva84} as a general reference. In these foundational works, the use of strong solutions, the total differentiability of the equation, and the smoothness of ingredients was unavoidable, thus escaping the classical Schauder framework, which prescribes minimal regularity (H\"older continuity) of coefficients,\footnote{As several nonuniformly elliptic models feature $p$-Laplacian type degeneracy in zero, see e.g.~\cite{sch14,bs15}, the expected maximal regularity is gradient H\"older continuity of solutions \cite{ura68}, therefore it is useless to impose more regularity than H\"older continuity on coefficients.} and the validity of nonuniformly elliptic Schauder theory remained open. New impulses eventually came from Zhikov \cite{zhi86,jko94,zhi95}, who introduced novel, elementary models like the $p(x)$-Laplacian
\eqn{px.2}
$$
w\mapsto \int_{\Omega}\snr{Dw}^{p(x)}\dx,\qquad\quad  1<p(\cdot)\in L^{\infty}(\Omega),
$$
and the Double Phase energy
\eqn{dp.2}
$$
\begin{cases}
    \displaystyle
    \ w\mapsto \int_{\Omega}\snr{Dw}^{p}+a(x)\snr{Dw}^{q}\dx\vspace{1.5mm}\\
    \displaystyle
    \ 0\le a(\cdot)\in C^{0,\alpha}(\Omega), \ \ \alpha\in (0,1]\vspace{1.5mm}\\
    \displaystyle
    \ 1<p\le q<\infty,
\end{cases}
$$
in the setting of homogenization and to study the possible occurrence of Lavrentiev phenomenon. Functionals \eqref{px.2}--\eqref{dp.2} are uniformly elliptic as their ellipticity ratio \eqref{er} stays uniformly bounded. However, a very mild amount of nonuniformity emerges due to the (mis)behavior of coefficients, see \cite[Section 4.6]{dm21}, which disclose a new, sharp phenomenology for Schauder theory to hold. In fact, building on early 2-d examples of Zhikov \cite{zhi86}, Fonseca, Mal\'y \& Mingione \cite{fmm04} proved that as soon as 
\eqn{pqn}
$$
1<p<n<n+\alpha<q<\infty \ \Longrightarrow \ \frac{q}{p}>1+\frac{\alpha}{n},
$$
the functional \eqref{dp.2} admits minima whose set of essential discontinuity points has almost maximal Hausdorff dimension, while, subject again to \eqref{pqn}, Esposito, Leonetti \& Mingione \cite{elm04} exihibited a minimizer of \eqref{dp.2} with a one point singularity preventing its membership of $W^{1,q}_{\loc}$, thus implying the failure of Schauder theory. The examples in \cite{zhi86,fmm04,elm04} were eventually perfected by Balci, Diening \& Surnachev \cite{bds20,bds25}, who constructed minima of \eqref{dp.2} that cannot be better than $W^{1,p}$-regular, and the Schauder paradigm outlined below \eqref{er} starts crumbling. In fact, a closer inspection of \cite{zhi86,elm04,fmm04,bds20,bds25} reveals that the same construction works if in \eqref{dp.2} the coefficient $a$ is $C^{\alpha}$-regular with any $\alpha>0$, and the integrand is nondegenerate. This means that, despite the H\"older continuity of the ingredients, Schauder theory may not always hold now. In particular, even for nondegenerate integrands with smooth coefficients, minima need not be smooth, therefore two out of three distinctive features of classical Schauder theory dramatically fail, already in presence of very weak nonuniformity types as those in \eqref{px.2}--\eqref{dp.2} (keep in mind that both models are pointwise uniformly elliptic). In the case of \eqref{px.2}--\eqref{dp.2}, only the perturbative approach to gradient regularity survives, thanks to the pointiwise uniform ellipticity of the governing integrands, cf. \cite{am01,cm15}. Further extensions appear in \cite{bo20,rt20,bs21,buli,bos22,bar23,dl24,fsv24,bar25,adp25,bb90}, and a unifying (perturbative) approach to the maximal regularity of a general class of models sharing analogous weak nonuniformity as in \eqref{px.2}--\eqref{dp.2} can be found in \cite{ho22a,ho22b,ho23}, by the third author and Ok. However, owing to the lack of homogeneous reference estimates, perturbations are no longer feasible in the genuinely nonuniformly elliptic setting, i.e., when the ellipticity ratio \eqref{er} blows up. The longstanding\footnote{See the comments in \cite[page 1111 (footnote)]{dm23a}.} problem of establishing optimal Schauder theory in the nonuniformly elliptic setting was settled by the first author and Mingione \cite{dm23a,dm25,dm25b}, by designing novel nonlinear potential theoretic techniques that allow bypassing the structural obstructions generated by polynomial nonuniformity and yield maximal regularity results within the sharp nonuniformity range
\eqn{qpqp}
$$
\frac{q}{p}<1+\frac{\alpha}{n},
$$
cf. \eqref{pqn}. The approach of \cite{dm23a,dm25} can be further expanded to push Schauder theory as close as possible to linear growth. The regularity of minima of nonautonomous variational integrals at linear growth is notoriously very delicate. In fact, Giaquinta, Modica \& Souček \cite{gms79} constructed (remarkably, one-dimensional) examples showing that nonautonomous, area-type functionals with almost $C^{2}$-regular coefficients might admit minimizers with jump discontinuities, which therefore belong to $BV\setminus W^{1,1}$. On the other hand, $C^{2}$-regular coefficients guarantee (at least) $C^{1}$-regular minima, cf. \cite{lu70}. Area-type integrals are specific instances of $\mu$-elliptic problems, a degenerate form of nonuniform ellipticity, typical of functionals at linear or nearly linear growth. Being the limiting configurations between linear and power growth, the latter class of models is rather common in materials science: the theories of Prandtl-Eyring fluids and of plastic materials with logarithmic hardening are prominent instances of applications cf. Frehse \& Seregin \cite{fs99}. We further refer to Fuchs \& Mingione \cite{fm00}, Bildhauer \& Fuchs \cite{bf01}, Schmidt \cite{sch14}, Beck \& Schmidt \cite{bs13,bs15}, Beck \& Gmeineder \& Sch\"affner \cite{bgs25}, Gmeineder \cite{gme20,gme21}, and Gmeineder \& Kristensen \cite{gk19a,gk19b,gk24} on deep regularity results for minima of general $\mu$-elliptic functionals, and \cite{def25} for an overview. Schauder-type results can be achieved for problems at nearly linear growth. In fact, the first author and Mingione \cite{dm23b} provided the first set of Schauder estimates for functionals at nearly linear growth as
$$
w\mapsto \int_{\Omega}\tx{c}(x)\snr{Dw}\log(1+\snr{Dw})\dx,\qquad \quad 1\lesssim \tx{c}(\cdot)\in C^{0,\beta}(\Omega),\ \ \beta\in (0,1],
$$
and Log-Double Phase models
\eqn{dp}
$$
\begin{cases}
    \displaystyle
    \ w\mapsto\int_{\Omega}\snr{Dw}\log(1+\snr{Dw})+a(x)\snr{Dw}^{q}\dx\vspace{1.5mm}\\ 
    \displaystyle
    \ 0\le a(\cdot)\in C^{0,\alpha}(\Omega),\ \ \alpha\in (0,1],\qquad \quad 1<q<\infty,
\end{cases}
$$
subject to
\eqn{aq1}
$$
q<1+\frac{\alpha}{n},
$$
coherently with \eqref{qpqp},\footnote{Formally, take $p=1$ there.} see also \cite{dp24} for the Log-Multi Phase case, \eqref{mp} below. Later on, the first two authors and Piccinini \cite{ddp24} obtained sharp regularity for bounded minima of \eqref{dp} and highlighted via fractal counterexamples that nearly linear growing functionals may be the limiting configurations in which $\mu$-ellipticity, convex anisotropies and H\"older continuous coefficients can coexist, see \cite[Theorem 4 and Section 1.1]{ddp24} and \cite[Section 5.2]{def25}. This motivates our analysis. In fact, aim of this paper is to provide a comprehensive Schauder theory for vector-valued solutions to a large class of nonautonomous variational problems at nearly linear growth of the type
\eqn{fun}
$$
W^{1,1}_{\loc}(\Omega,\mathbb{R}^{N})\ni w\mapsto \mathcal{F}(w;\Omega):=\int_{\Omega}\tx{f}(x,Dw)\dx.
$$
Here, $\Omega\subset \mathbb{R}^{n}$ is an open subset, with $n\ge 2$, $N\ge 1$. We assume the radial (Uhlenbeck \cite{uhl77}) structure condition $\tx{f}(x,z):=A(x,\snr{z})$, cf. Section \ref{sa}, which is fundamental to achieve full regularity in the multidimensional setting, \cite{sy02}. The notion of (local) minimizer we shall use is the standard one.
\begin{definition}
 A function $u\in W^{1,1}_{\loc}(\Omega,\mathbb{R}^{N})$ is a (local) minimizer of functional $\mathcal{F}$ in \eqref{fun} if for every ball $B\Subset \Omega$, $\tx{f}(\cdot,Du)\in L^{1}(B)$ and $\mathcal{F}(u;B)\le \mathcal{F}(w;B)$ for all $w\in u+W^{1,1}_{0}(B,\mathbb{R}^{N})$.  
\end{definition}
\noindent We indeed develop a nonlinear potential theoretic framework to attack the regularity of a general class of anisotropic variational integrals at nearly linear growth in the vectorial setting. Specifically, being tailored on the Double Phase structure of \eqref{dp}, the methodology in \cite{dm23b,dp24} fails to cover very natural models, for which we deliver Schauder theory. For instance, we are able to treat variable exponent functionals like
\eqn{px}
$$
w\mapsto \int_{\Omega}\left(\snr{Dw}\log(1+\snr{Dw})\right)^{p(x)}\dx,
$$
where the (H\"older continuous) exponent $p$ is now allowed to attain one, a scenario previously forbidden in the literature, cf. \cite{dhhr11}. Our result in this respect reads as follows.
\begin{theorem}\label{mt.1}
 Let $u\in W^{1,1}_{\loc}(\Omega,\mathbb{R}^{N})$ be a local minimizer of \eqref{px}, with exponent 
 \eqn{px.11}
 $$
 1\le p(\cdot)\in C^{0,\alpha}(\Omega), \qquad \alpha\in (0,1].
 $$
 Then $Du$ is locally H\"older continuous.
\end{theorem}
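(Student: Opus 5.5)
Theorem \ref{mt.1} should follow from the general Schauder result for radial integrands $\tx{f}(x,z)=A(x,\snr{z})$ at nearly linear growth developed later in the paper; the task is then to show that $A(x,t):=(t\log(1+t))^{p(x)}$ meets its structural hypotheses. These are: nearly linear growth and $\mu$-ellipticity in the gradient variable, and a H\"older-type dependence on $x$ that is compatible with the mild nonuniformity produced by the coefficients. I would proceed in three steps.

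\emph{Step 1: $t$-structure.} For fixed $x$, set $g(t):=t\log(1+t)$ and $\phi_x(t):=g(t)^{p(x)}$. I would compute $\phi_x'$ and $\phi_x''$ and prove the two-sided bounds
$$
\phi_x''(t)\approx \frac{\phi_x'(t)}{t}\cdot\frac{1}{\log(1+t)}\quad \text{when } p(x)\approx 1
$$
and $\phi_x''(t)\approx \phi_x'(t)/t$ when $p(x)$ is bounded away from $1$. These should hold uniformly in $x$, with constants depending only on $\sup p$. This is the $\mu$-ellipticity of log type, with exponent $\mu=1$, and it is uniform because $g$ itself is of type $L\log L$ with $g''(t)\approx 1/(1+t)$ for large $t$. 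The natural lower bound is $\phi_x(t)\gtrsim t\log(1+t)$ and the upper bound is $\phi_x(t)\lesssim t^{p(x)}\log^{p(x)}(1+t)$.

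\emph{Step 2: $x$-structure.} This is the heart of the matter. On a ball $B_r$, let $p_m:=\inf_{B_r}p$. Then $\snr{p(x)-p_m}\le [p]_{0,\alpha}r^{\alpha}$, so
$$
\phi_x(t)\le \phi_{x_m}(t)\, g(t)^{[p]r^\alpha}.
$$
The key observation is that $g(t)^{cr^\alpha}\lesssim 1$ as long as $g(t)\lesssim r^{-n}$ or so. Energy bounds, via higher integrability of type $L\log L$ and Sobolev embedding, should give $g(\snr{Du})\in L^{1+\delta}$, which controls the range where $t\gg r^{-n}$. Concretely, I would verify the paper's condition, presumably of the form
$$
\snr{\partial_t A(x,t)-\partial_t A(y,t)}\lesssim \snr{x-y}^{\alpha}\,\partial_tA(x,t)\,\log(e+t)^{\gamma}\,(\text{or }t^{\sigma}),
$$
with an arbitrarily small power $\sigma$. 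This small power is admissible because the condition \eqref{aq1}-type restriction $q<1+\alpha/n$ degenerates, in the variable exponent case, into the requirement that the local gap $r^\alpha$ be smaller than $\alpha/n$, which holds for small $r$. Differentiating $g^{p(x)}$ in $x$ yields the factor $\snr{p(x)-p(y)}\log g(t)$, so the H\"older modulus costs only a logarithm.

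\emph{Step 3: conclusion.} After these verifications, I would invoke the main theorem. Internally, that theorem would proceed by approximating with nondegenerate, regularized, frozen problems; deriving Caccioppoli and fractional/nonlinear potential estimates for $\snr{Du}$; obtaining $L^\infty$ bounds via a potential with the $\mu=1$ logarithmic loss absorbed; and then using a comparison with the frozen autonomous radial problem (Uhlenbeck structure) to obtain gradient H\"older continuity. The main obstacle is Step 2 near points where $p=1$: there the frozen integrand is merely $L\log L$, and the extra factor $\log(1+t)^{p-1}$ together with the unbounded $\log g(t)$ from the $x$-derivative must be absorbed by the logarithmic $\mu$-ellipticity. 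I would first try to absorb it by the a priori $L^\infty$ gradient bound on small balls, choosing the radius so that $r^\alpha\log\nr{Du}_{L^\infty}\lesssim 1$.
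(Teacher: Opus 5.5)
Your route is the paper's: Theorem~\ref{mt.1} is obtained as a corollary of Theorem~\ref{mt} by checking its structural hypotheses for $A(x,t)=(t\log(1+t))^{p(x)}$ with $\tx{g}(t)=\log(1+t)$ and $\mu=1$ (from $\Lambda/\lambda\lesssim 1+\log(1+t)$), and by absorbing the factor $\snr{p(x_1)-p(x_2)}\log(t\log(1+t))$ from the $x$-oscillation into a small power of $t$ on small balls. To make this precise, the paper fixes $\vartheta_*\in(0,\alpha/n)$ and works on balls with $\min p=1$ and radius $r\le r_*$, where $[p]_{0,\alpha}r_*^{\alpha}\le\vartheta_*/4$. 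This gives $\nr{p}_{L^{\infty}(B)}\le 1+\vartheta_*/4$ and hence $\vartheta=2(\nr{p}_{L^\infty}-1)<1$ in \eqref{a.5}; the case $\inf p>1$ is left to the Acerbi--Mingione theory. The oscillation hypothesis \eqref{a.3} that must then be checked is the additive bound $c\snr{B}^{\alpha/n}(1+(\inf_B A)^{\vartheta_*})$, not the multiplicative form you guessed, so neither higher integrability nor a radius tied to $\nr{Du}_{L^\infty}$ is needed.
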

\noindent We furthermore offer the vectorial parallel of the results in \cite{dm23b}, which naturally extends to functionals with multiple phases \cite{dp24}.
\begin{theorem}\label{mt.1.5}
Let $u\in W^{1,1}_{\loc}(\Omega,\mathbb{R}^{N})$ be a local minimizer of \eqref{dp}, subject to \eqref{aq1}. Then $Du$ is locally H\"older continuous.
\end{theorem}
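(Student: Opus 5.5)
The plan is to obtain Theorem \ref{mt.1.5} as a special case of the general Schauder result that the paper will develop for radial integrands $\tx{f}(x,z)=A(x,\snr{z})$, in the same way as Theorem \ref{mt.1}. The first step is to write \eqref{dp} in Uhlenbeck form with
$$
A(x,t):=t\log(1+t)+a(x)t^{q},
$$
and to verify the structural hypotheses on $A$ required in Section \ref{sa}. These are of three kinds. First, nearly linear growth and $\mu$-ellipticity from below: the Hessian of $t\log(1+t)$ behaves like $1/t$ at infinity, so the lower eigenvalue of $\partial^2\tx{f}$ decays like $\snr{z}^{-1}$, and this is exactly the $\mu=1$ logarithmic case. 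Second, an upper bound of polynomial type $q$ on the highest eigenvalue. Third, a H\"older-type control of the $x$-dependence. For the last one we use $a\in C^{0,\alpha}$ to get
$$
\snr{\partial_z\tx{f}(x,z)-\partial_z\tx{f}(y,z)}\lesssim \snr{x-y}^{\alpha}\snr{z}^{q-1}.
$$

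The second step is to check the quantitative nonuniformity restriction of the abstract theorem. In the nonlinear potential approach of \cite{dm23a,dm23b} the key condition compares the blow-up rate of the ellipticity ratio $\mathcal{R}\approx \snr{z}^{q-1}\log$ with the H\"older exponent of the coefficients, in the form $q-1<\alpha/n$. This is precisely \eqref{aq1}. If the general theorem is phrased through an abstract gap condition (e.g. a bound like $A(x,t)\lesssim A(y,t)+\text{const}$ when $\snr{x-y}^{\alpha}t^{q-1}\lesssim 1$ on small balls, in the spirit of the Hästö--Ok (A1)-type conditions), I will verify it by splitting into the cases $a(x)$ large and $a(x)$ small relative to $r^{\alpha}$ on a ball of radius $r$. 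In the degenerate case we have $a(x)t^{q}\le (a(y)+r^\alpha)t^q$, and the extra term $r^\alpha t^q$ is absorbed into $t\log(1+t)$ thanks to the preliminary bound $t\lesssim r^{-n}$, which comes from $L^1$ integrability of the gradient. This absorption is where $q<1+\alpha/n$ enters: $r^{\alpha}t^{q-1}\lesssim r^{\alpha-n(q-1)}\le 1$.

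The third step is to invoke the main theorem for such $A$. That theorem yields first local Lipschitz bounds, via the potential-theoretic estimate on $\snr{Du}$ obtained through approximation with regularized problems, De Giorgi-type iteration on level sets of $\snr{Du}$, and the fractional Sobolev/Riesz potential control. Gradient H\"older continuity then follows by freezing the coefficients, where the problem becomes uniformly elliptic on the bounded range of the gradient. The vectorial setting is handled by the radial structure.

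I expect the main obstacle to be the verification of the abstract compatibility condition uniformly down to linear growth. Concretely, the logarithmic lower term gives only $W^{1,1}$-type a priori integrability, so absorbing $r^{\alpha}t^{q}$ requires the sharp inequality $n(q-1)<\alpha$ with no room to spare. If the direct absorption fails, I would first try an $L\log L$ higher integrability of $Du$ to improve the preliminary bound on $t$ by a logarithmic factor.
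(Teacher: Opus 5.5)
Your overall route is the paper's: Theorem \ref{mt.1.5} follows by checking that $\tx{H}(x,t)=t\log(1+t)+a(x)t^{q}$ fulfills the assumptions of Section \ref{sa} and then invoking Theorem \ref{mt}. The verification you sketch, however, breaks down at the decisive point.

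You split the ellipticity information into a lower bound $\lambda\gtrsim \snr{z}^{-1}$ and a separate upper bound of polynomial order $q$. You then assert that the ellipticity ratio is $\mathcal{R}\approx \snr{z}^{q-1}\log(1+\snr{z})$, and treat \eqref{aq1} as a compatibility condition between this ratio and $\alpha$. In Theorem \ref{mt} the pointwise ratio is controlled by \eqref{a.5.x}: $\Lambda/\lambda\le \tx{c}+\tx{c}\,\tx{g}(t)\ell_{1}(t)^{\mu-1}$, with $\tx{g}$ subpolynomial by $\eqref{a.4}_{3}$ and $1\le \mu<\mu_{\textnormal{max}}$. Here $\mu_{\textnormal{max}}-1$ is only a tiny fraction of $\alpha/n-\vartheta_{*}$, cf. \eqref{mu*}. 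A ratio of size $\snr{z}^{q-1}$ would force $\mu\ge q$, so you would only cover $q<\mu_{\textnormal{max}}$, not the full range \eqref{aq1}.

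The observation you are missing is that the phase $a(x)t^{q-2}$ enters both eigenvalues at the same point. Take $\lambda(x,t)=(1+t)^{-1}+a(x)t^{q-2}$ and $\Lambda(x,t)=(1+\log(1+t))(1+t)^{-1}+a(x)t^{q-2}$. Then $\Lambda/\lambda\le 1+\log(1+t)$, so \eqref{a.5.x} holds with $\mu=1$ and $\tx{g}(t)=\log(1+t)$. Also \eqref{a.5} holds because $q<2$, again by \eqref{aq1}. The degeneracy of $a(\cdot)$ is invisible to the pointwise ratio; it is encoded in the oscillation hypothesis \eqref{a.3}, and that is where \eqref{aq1} is really used. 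Indeed, $\snr{\lambda(x_{1},t)-\lambda(x_{2},t)}t+\snr{\mathcal{a}(x_{1},t)-\mathcal{a}(x_{2},t)}t\lesssim [a]_{0,\alpha}\snr{B}^{\alpha/n}t^{q-1}\lesssim \snr{B}^{\alpha/n}\bigl(1+(\inf_{B}\tx{H}(\cdot,t))^{q-1}\bigr)$. This gives $\vartheta_{*}:=q-1$, which is admissible in \eqref{atat} precisely because $q<1+\alpha/n$.

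You should discard your fallback absorption argument. $L^{1}$-integrability of $Du$ gives no pointwise bound $\snr{Du}\lesssim r^{-n}$. In any case no such step is needed: the Lavrentiev-type inequality \eqref{a.3.1} is a consequence of \eqref{a.3} (Remark \ref{rmp}). So the ``main obstacle'' you anticipate, and the $L\log L$ remedy, never arise.

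Conversely, you skip two hypotheses that Theorem \ref{mt} genuinely uses. The first is the H\"older condition \eqref{a.5.2} on $A''$. The radial structure alone does not give vectorial $C^{1,\beta}$; \eqref{a.5.2} is what allows \cite{dsv09} to be applied to the frozen problems in Section \ref{vs}. The second, for $n=2$ and $\alpha\ge 2/3$, is the reinforcement \eqref{a.5.2s}. Both are immediate here. For \eqref{a.5.2}, treat $t\log(1+t)$ and $a(x)t^{q}$ separately; for both, $\snr{A'''(x,s)}\lesssim A''(x,s)/s$, and $A''$ is comparable on $[t/2,3t/2]$. For \eqref{a.5.2s}, take $\gamma\in[q,1+\alpha/n)$ in $\eqref{a.2.x}_{2}$, so that $\vartheta_{*}=q-1\le\gamma-1$.

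With these corrections the proof reduces, as in the paper, to a few lines of explicit verification.
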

\noindent Another application of our results covers integral
\eqn{dppx}
$$
\begin{cases}
\displaystyle
\  w\mapsto \int_{\Omega}\left(\snr{Dw}\log(1+\snr{Dw})\right)^{p(x)}+a(x)\snr{Dw}^{q(x)}\dx\vspace{1.5mm}\\
 \displaystyle
\ 0\le a(\cdot)\in C^{0,\alpha}, \ \ \alpha\in (0,1]\vspace{1.5mm}\\
\displaystyle
\ 1\le p(\cdot)\le q(\cdot)\in L^{\infty}(\Omega),\ \ p,q\in C^{0,\sigma}(\Omega),\ \ \sigma\in (0,1].
\end{cases}
$$
We indeed have the following theorem.
\begin{theorem}\label{mt.2}
In \eqref{dppx}, assume that 
\eqn{pxqx}
$$
1<\inf_{x\in \Omega}q(x)\qquad \mbox{and}\qquad \nr{q}_{L^{\infty}(\Omega)}<1+\frac{\min\{\alpha,\sigma\}}{n},
$$
and let $u\in W^{1,1}_{\loc}(\Omega,\mathbb{R}^{N})$ be a local minimizer of \eqref{dppx}. Then $Du$ is locally H\"older continuous.
\end{theorem}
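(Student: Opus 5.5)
The plan is to deduce Theorem \ref{mt.2} from the general Schauder result the paper develops later for radial integrands $\tx{f}(x,z)=A(x,\snr{z})$. That result should assume four things: nearly linear growth from below, a polynomial upper bound with exponent below $1+\alpha/n$, $\mu$-ellipticity, and a H\"older-type control of $x\mapsto A(x,t)$ whose constants are allowed to blow up like a power of $t$. So the work is to verify these structural assumptions for
\[
A(x,t):=\left(t\log(1+t)\right)^{p(x)}+a(x)t^{q(x)} .
\]
We then fix the exponents so that \eqref{pxqx} matches the threshold condition of the abstract theorem.

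First, the growth and ellipticity conditions, checked pointwise in $x$. Set $q_-:=\inf_\Omega q>1$ and $\tilde q:=\nr{q}_{L^\infty}$. For $t\ge 1$ we have
\[
t\log(1+t)\lesssim A(x,t)\lesssim t^{\tilde q},
\]
using $p\le q$ and that $(t\log(1+t))^{p(x)}\lesssim t^{p(x)+\varepsilon}$ is absorbed by $t^{\tilde q}$ after shrinking $\varepsilon$. The second $t$-derivative splits into two parts:
\begin{itemize}
\item the log-part, whose convexity degenerates at most like $\log$-type $\mu$-ellipticity with exponent $1$ (when $p(x)=1$ this is exactly $1/(t(1+t))$);
\item the $a$-part, which is uniformly $q_->1$ convex, i.e.\ $A_{tt}\gtrsim a\,t^{q-2}$.
\end{itemize}
So the ellipticity ratio is bounded by $t^{\tilde q-1}$ up to logarithms. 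Monotonicity $A_{t}/t$ nondecreasing, needed for the Uhlenbeck structure, follows since each summand is a composition of convex increasing functions. A mild regularization near $t=0$ (replacing $t$ by $\sqrt{\mu^2+t^2}$ and letting $\mu\to0$ by the approximation scheme of the general theorem) handles the degeneracy at the origin.

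Second, the $x$-dependence. For $\snr{x-y}\le r\le 1$ and $t\ge1$, the mean value theorem in the exponent gives
\[
\snr{(t\log(1+t))^{p(x)}-(t\log(1+t))^{p(y)}}\lesssim r^\sigma\log(e+t)\,(t\log(1+t))^{\max\{p(x),p(y)\}}
\]
and similarly for $t^{q(x)}$. The coefficient term contributes $\snr{a(x)-a(y)}t^{q}\lesssim r^\alpha t^{\tilde q}$. The logarithmic loss is absorbed into $t^{\varepsilon}$. Comparing with the lower bound $t\log(1+t)$ yields the relative oscillation estimate
\[
\snr{A(x,t)-A(y,t)}\lesssim r^{\min\{\alpha,\sigma\}}\,t^{\tilde q-1+\varepsilon}\,A(x,t),
\]
with the analogous bound for $A_t$. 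This is precisely the form required, with H\"older exponent $\min\{\alpha,\sigma\}$ and gap $\tilde q-1+\varepsilon$. Since \eqref{pxqx} is strict, we may fix $\varepsilon>0$ with $\tilde q+\varepsilon<1+\min\{\alpha,\sigma\}/n$.

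The main obstacle is that $p(x)$ may touch $1$ while jumping relative to $p(y)$. The multiplicative factor $(t\log(1+t))^{p(y)-p(x)}$ is then not bounded but grows like $t^{r^\sigma}$. The standard remedy is a log-H\"older-type observation: on balls of radius $r$ one only needs $t\lesssim r^{-n}$, by the a priori $L^\infty$ gradient bound of the general theory, or by the energy bound in the intrinsic scale. With that restriction, $t^{cr^\sigma}\le e^{c r^\sigma n\log(1/r)}\lesssim1$. I expect checking that this restricted-range version fits the hypotheses of the abstract theorem to be the delicate step. With all assumptions verified, the general theorem gives local H\"older continuity of $Du$.
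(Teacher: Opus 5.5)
Your strategy matches the paper's: Theorem \ref{mt.2} is obtained by verifying the hypotheses of Theorem \ref{mt} for $\tx{J}(x,t)=(t\log(1+t))^{p(x)}+a(x)t^{q(x)}$. However, you misplace where the gap $\nr{q}_{L^{\infty}}-1$ enters, and as written two hypotheses are not met.

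The first is the pointwise ellipticity ratio. Theorem \ref{mt} requires $\Lambda/\lambda\le \tx{c}+\tx{c}\,\tx{g}(t)\ell_{1}(t)^{\mu-1}$, see \eqref{a.5.x}, with $\mu<\mu_{\textnormal{max}}(n,\vartheta_{*},\alpha,\gamma)$. That threshold is very small and unrelated to \eqref{pxqx}. Your bound ``$t^{\tilde q-1}$ up to logarithms'' would force $\mu-1\ge \tilde q-1$, which \eqref{pxqx} does not allow. The bound is also simply lossy. The phase $a(x)t^{q(x)}$ contributes $q\,a(x)t^{q(x)-2}$ to $A'/t$ and $q(q-1)\,a(x)t^{q(x)-2}$ to $A''$. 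Since $q(q-1)\ge q_{-}(q_{-}-1)>0$ (this is where $\inf q>1$ is used), you can take $\lambda=(t\log(1+t))^{p(x)-1}/(1+t)+a(x)t^{q(x)-2}$, and $\Lambda$ with the same $a$-term. This gives $\Lambda/\lambda\lesssim 1+\log(1+t)$, so $\mu=1$ is admissible. Note also that the smallest eigenvalue of $t\log(1+t)$ is $\approx 1/t$, not $1/(t(1+t))$; the latter is area-type degeneracy, $\mu=2$, which is excluded. In this framework the integrand is pointwise nearly uniformly elliptic, and the $p$--$q$ gap is encoded solely in $\vartheta_{*}$ in \eqref{a.3}.

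The second is the oscillation hypothesis. Condition \eqref{a.3} is an additive bound that must hold for every $t>0$:
$\snr{\lambda(x_{1},t)-\lambda(x_{2},t)}t+\snr{A'(x_{1},t)-A'(x_{2},t)}\lesssim \snr{B}^{\alpha/n}(1+(\inf_{B}A(\cdot,t))^{\vartheta_{*}})$ with $\vartheta_{*}<\alpha/n$, here with $\min\{\alpha,\sigma\}$ in place of $\alpha$. Your relative estimate for $A_t$ does not give this. At points where $a>0$ and $q(x)=\tilde q$ you have $A_t\gtrsim t^{\tilde q-1}$, so your bound only yields $r^{\min\{\alpha,\sigma\}}t^{2(\tilde q-1)+\varepsilon}$. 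Comparing with $\tx{J}^{\vartheta_{*}}\approx t^{\tilde q\vartheta_{*}}$ would require a restriction stronger than \eqref{pxqx}. Nor can you restrict to $t\lesssim r^{-n}$: the hypothesis is used at all gradient levels, and invoking the gradient bound that is being proved is circular.

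The obstacle you flag is actually harmless. By strictness of \eqref{pxqx}, fix $\vartheta_{*}\in(\nr{q}_{L^{\infty}}-1,\min\{\alpha,\sigma\}/n)$. Since $p\le q$, the factor $(t\log(1+t))^{p(x)-1}$, even multiplied by the logarithm produced by differentiating in the exponent, is $\lesssim 1+(t\log(1+t))^{\vartheta_{*}}\le 1+c\,\tx{J}^{\vartheta_{*}}$. Likewise the oscillation of $q(x)a(x)t^{q(x)-1}$ is $\lesssim r^{\min\{\alpha,\sigma\}}\ell_{1}(t)^{\nr{q}_{L^{\infty}}-1+\epsilon}\lesssim r^{\min\{\alpha,\sigma\}}(1+\tx{J}^{\vartheta_{*}})$ for small $\epsilon>0$. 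This is what the paper does, after localizing to small balls on which $\nr{p}_{L^{\infty}}\le 1+\vartheta_{*}/8$.

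Your list of hypotheses also omits several items. It leaves out \eqref{a.5} (with $\vartheta=2(\nr{p}_{L^{\infty}}-1)<1$), and the H\"older continuity \eqref{a.5.2} of $A''$ in $t$, which is needed for the vectorial H\"older step. It also leaves out \eqref{a.5.2s} when $n=2$ and $\alpha\ge 2/3$, which is met by taking $\gamma$ close to $\nr{q}_{L^{\infty}}$. Finally, $A_t/t$ is \emph{not} nondecreasing for the nearly linear part, since $\log(1+t)/t+1/(1+t)$ decreases. What is actually required, \eqref{a.2.x}$_{1}$, is that $A(x,t)/t$ be almost increasing.
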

\noindent Let us point out that the outcome of Theorem \ref{mt.2} aligns with \cite{rt20,bb90}, where the uniformly elliptic counterpart of \eqref{dppx} is studied. Moreover, notice that Theorems \ref{mt.1} and \ref{mt.2} are new already in the scalar case, see Section \ref{mode} for more models we can deal with. Overall, Theorems \ref{mt.1}--\ref{mt.2} are specific instances of a general result, which goes beyond the examples listed above and grants optimal, intrinsic Schauder estimates.
\begin{theorem}\label{mt}
Under assumptions \eqref{a.1}--\eqref{a.4}, \eqref{a.5.1}--\eqref{a.3} and \eqref{a.5.2}--\eqref{a.5.2s}, let $u\in W^{1,1}_{\loc}(\Omega,\mathbb{R}^{N})$ be a local minimizer of functional $\mathcal{F}$ in \eqref{fun}. There exists $\mu_{\textnormal{max}}\equiv \mu_{\textnormal{max}}(n,\vartheta_{*},\alpha,\gamma)>1$ such that if $1\le \mu<\mu_{\textnormal{max}}$ in \eqref{a.5}--\eqref{a.5.x}, then $Du$ is locally H\"older continuous. In particular, whenever $B_{r}\Subset \Omega$ is a ball with radius $r\in (0,1)$, Lipschitz estimate
$$
\nr{Du}_{L^{\infty}(B_{r/4})}\le \frac{c}{r^{\tx{d}}}\left(\int_{B_{r}}\tx{f}(x,Du)\dx+1\right)^{\tx{d}},
$$
holds for $c\equiv c(\texttt{data})$, and $\tx{d}\equiv \tx{d}(n,\mu,\gamma,\vartheta_{*})$.
\end{theorem}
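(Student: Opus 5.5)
The plan follows the nonlinear potential theoretic scheme of \cite{dm23a,dm23b,dm25}, run on a regularized problem with estimates independent of the regularization. First I approximate. On a ball $B_r\Subset\Omega$ I mollify $u$ and the integrand in $x$ and add a small uniformly elliptic term $\varepsilon(\snr{z}^2+1)^{q_0/2}$ with $q_0$ large, obtaining integrands $\tx{f}_\varepsilon$ that keep \eqref{a.1}--\eqref{a.4}, \eqref{a.5.1}--\eqref{a.3}, \eqref{a.5.2}--\eqref{a.5.2s} with constants independent of $\varepsilon$. The Dirichlet problems with boundary datum $u_\varepsilon$ have minimizers $u_\varepsilon$ that are smooth enough ($W^{2,2}_{\loc}$, $Du_\varepsilon\in L^\infty_{\loc}$) by uniformly elliptic theory and the radial Uhlenbeck structure $A(x,\snr{z})$. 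Everything below is done for $u_\varepsilon$. At the end, the $\mu$-ellipticity (which controls $\snr{Du_\varepsilon}\log$-type energies) together with lower semicontinuity and the absence of Lavrentiev phenomenon, which should follow from \eqref{a.5.2}, lets me pass to the limit and identify the limit with $u$. This identification uses strict convexity and uniqueness on the ball.

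The core is a Caccioppoli inequality for $v:=\tx{G}(x,\snr{Du})$, a convex function of $\snr{Du}$ adapted to $A$. Since the coefficients are only Hölder in $x$, I cannot differentiate the equation in $x$. Instead I use the fractional/difference-quotient approach of \cite{dm23a}: I compare locally with frozen-coefficient minimizers on small balls, for which the radial structure gives a clean Bernstein-type subsolution property for $(\snr{Dw}-k)_+$. The Hölder modulus $\alpha$ (and $\gamma$) of $x\mapsto\partial_z\tx{f}(x,z)$ then produces a right-hand side measured in a Nikol'skii/fractional norm. From this I get a nonlinear potential estimate of Havin--Mazya--Wolff type:
$$
\snr{Du(x_0)}\lesssim \Big(\medint_{B_r}\tx{f}(x,Du)\dx\Big)^{\theta_1}+\mathbf{P}(x_0,r)\,\nr{Du}_{L^\infty(B_r)}^{\theta_2},
$$
where $\mathbf{P}$ is a Riesz/Wolff potential of the coefficient oscillation, bounded by $r^{\alpha}$-type powers. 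The nonuniformity produces the factor $\nr{Du}_{L^\infty}^{\theta_2}$, where $\theta_2$ depends on $\mu$, $n$, $\alpha$, $\gamma$, $\vartheta_*$. The threshold $\mu_{\textnormal{max}}$ is chosen exactly so that $\theta_2<1$; in the model cases this is the analogue of \eqref{aq1}.

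Next I pass to the Lipschitz bound. I write the potential inequality on concentric balls $B_{\tau_1}\subset B_{\tau_2}\subset B_r$ with $(\tau_2-\tau_1)$-dependent constants. Since $\theta_2<1$, Young's inequality absorbs $\nr{Du}_{L^\infty(B_{\tau_2})}$ with a factor $1/2$, and the standard iteration lemma then gives the stated bound with exponent $\tx{d}$. Gradient Hölder continuity follows once $Du$ is bounded on $B_{r/2}$. In that case the problem becomes uniformly elliptic on the range $\{\snr{z}\le M\}$, because $\mu$-ellipticity degenerates only as $\snr{z}\to\infty$. I can then apply the perturbative Schauder theory of \cite{ho22a,ho22b} in the vectorial radial setting: I compare with frozen-coefficient minimizers, use their $C^{1,\beta_0}$ decay (Uhlenbeck), and transfer the decay via Campanato.

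The main obstacle I expect is the Caccioppoli/potential step. There the log-type, nearly linear growth gives only a degenerate lower bound $\partial^2\tx{f}\gtrsim \snr{z}^{-\mu}$, and the coefficient terms must be controlled using \eqref{a.5.2}--\eqref{a.5.2s} with $\nr{Du}_{L^\infty}$ raised to an exponent that I must track precisely to confirm $\theta_2<1$. My first attempt would be De Giorgi truncations on the level sets of $\tx{G}(x,\snr{Du})$ combined with the fractional Sobolev embedding of \cite{dm23b}, using the freedom in the choice of $\tx{G}$ to balance the $\mu$-loss.
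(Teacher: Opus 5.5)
Your overall architecture matches the paper's: regularize, compare on small balls with frozen-coefficient minimizers, take difference quotients of an intrinsic function of $\snr{Du}$, use the fractional embedding and a De Giorgi iteration with nonlinear potentials, absorb via Lemma \ref{iterlem}, then get Schauder estimates by frozen $C^{1,\beta}$ decay and Campanato. However, the central potential estimate is stated in a form that cannot close.

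\textbf{The main gap.} Freezing the coefficients on a ball of radius $\rr$ costs, by \eqref{a.3}, an error of order $\rr^{\alpha}\tx{a}^{1+\vartheta_*}$ (cf.\ \eqref{5.8}). This error is superlinear in the intrinsic quantity $\tx{a}(x,\snr{Du})$. In your estimate the potential carries only the coefficient oscillation, and the solution enters through a factor $\nr{Du}_{L^{\infty}}^{\theta_2}$. Then $\theta_2$ is at least $1+\vartheta_*$, and no choice of $\mu$ brings it below one. This also shows that \eqref{aq1} is not a $\mu$-threshold: in the Log-Double Phase model $\mu=1$. It is the hypothesis $\vartheta_*<\alpha/n$ from \eqref{atat}, which your sketch never uses.

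The missing idea is to keep part of the solution inside the potential. Write $\tx{a}^{1+\vartheta_*}\le \mathcal{M}^{\vartheta_*+\omega_i}\ell_1(\tx{a})^{1-\omega_i}$ with $\omega_i=1-\frac{i}{2}(\frac{\alpha}{n}+\vartheta_*)$. This gives the reverse H\"older inequality \eqref{66.0}, and then, via Lemma \ref{revlem}, a pointwise bound by a power of $\mathcal{M}$ times $\mathbf{P}^{1/2}_{i\alpha/2}(\ell_1(\tx{a})^{1-\omega_i};x_0,\cdot)$. Lemma \ref{crit} with $m=(1-\omega_i)^{-1}$ bounds this potential by the energy exactly because $n(1-\omega_i)/(i\alpha)=\frac12(1+n\vartheta_*/\alpha)<1$. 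The power of $\mathcal{M}$ in front is $1-\frac{i}{4}(\frac{\alpha}{n}-\vartheta_*)$, plus nonuniformity losses of size $c\,\delta_0(\mu-1+\omega_\mu)$.

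The only role of $\mu_{\textnormal{max}}$ is to make these losses fit into the gap $\frac{i}{4}(\frac{\alpha}{n}-\vartheta_*)$. Controlling the losses also needs the frozen Lipschitz bound of Theorem \ref{t4.1}. The frozen Caccioppoli inequality is not ``clean'': it carries the factor $\rrr_*(\tx{M}_0)^{1/2}$.

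\textbf{The case $n=2$, $\alpha\ge 2/3$.} Here the splitting fails for the third-order comparison term ($i=3$, which comes from products of \eqref{a.3}-errors). One has $3\alpha\ge n$ and possibly $3\vartheta_*\ge 1$, so this term can be neither absorbed nor fed into Lemma \ref{crit}. The paper uses \eqref{a.5.2s} with $\vartheta_*\le\gamma-1$ to rewrite it as $\ell_1(\snr{Du})^{\gamma-\ti{\omega}}$. It then bounds the corresponding potential via the uniform higher integrability $Du\in L^{2\gamma-1}$ of Theorem \ref{thi}, which is where \eqref{muma} enters. Your plan has no higher-integrability step, so this case is not covered.

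\textbf{Smaller slips in the approximation.} First, the smooth $\ti{u}_\varepsilon$ with $\mathcal{F}(\ti{u}_\varepsilon;B)\to\mathcal{F}(u;B)$ (absence of Lavrentiev phenomenon) come from \eqref{a.3} via \eqref{a.3.1}, not from \eqref{a.5.2}. Second, the penalization weight must be tied to $\nr{D\ti{u}_\varepsilon}_{L^{4\gamma}(B)}$, as the paper's $\sigma_\varepsilon$ is, so that the added energy vanishes. Third, $W^{1,1}$-compactness comes from \eqref{a.4}--\eqref{binf}, not from $\mu$-ellipticity.
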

\noindent We refer to Section \ref{sa} for a detailed description of the (minimal) set of assumptions in force. From a technical standpoint, Theorem \ref{mt} relies on the development of a general nonlinear potential-theoretic machinery yielding intrinsic Lipschitz estimates that fully preserve the structural information of the underlying integrand. Unlike previous approaches \cite{dm23b,dp24}, which crucially relied on the splitting and power-type structure of integrands of the form \eqref{dp}, our method does not depend on rigid assumptions. Instead, it provides a unified treatment of a broad class of variational integrals characterized by a few common structural properties. These include nearly linear growth below, $\mu$-ellipticity, and a quantified control on the oscillation of the coefficients. The latter condition plays a crucial role, as it ensures the absence of the Lavrentiev phenomenon and, consequently, allows for the construction of suitable approximation schemes for the problem. We conclude with an outline of the content of the paper.
\subsubsection*{Outline of the paper} In Section \ref{presec} we describe our notation and collect some auxiliary results that will be helpful at various stages of the paper. In Section \ref{mode} we list some examples to which our results apply, and possible generalizations. Section \ref{dgc} is devoted to the construction of suitable power-type, uniformly elliptic approximating integrands retaining in a sharp, quantitative fashion all the structural information of the original one in \eqref{fun}. In Section \ref{fi} we develop the basic Lipschitz regularity for certain auxiliary frozen problems, that will be crucial in the proof of our main result. Section \ref{lip} is the core of the paper as it contains the intrinsic Lipschitz bounds, a fundamental step for our main result. Section \ref{vs} completes the proof of our vectorial Schauder estimates, of course in an a priori form, and finally Section \ref{asas} provides the approximation scheme which ultimately leads to the proof of Theorem \ref{mt}.

\section{Preliminaries}\label{presec}
\noindent In this section we display our notation, collect some well-known functional analytic tools that will be useful throughout the paper, and list the main structural assumptions ruling integral \eqref{fun}.
\subsection{Notation} In this paper, $\Omega\subset \mathbb{R}^{n}$, $n\ge 2$, denotes an open, bounded domain with Lipschitz regular boundary. We denote by $c$ a general constant larger than $1$. Diverse occurrences from line to line will be still indicated by $c$. Special occurrences will be denoted by $c_*,  \tilde c$ or the like. Relevant dependencies on parameters will be as usual emphasized by putting them in parentheses. Sometimes we shall use symbols "$\gtrsim$", "$\lesssim$" with subscripts, to indicate that a certain inequality holds up to constants whose dependencies are marked in the subscript. We denote by $ B_r(x_0):= \{x \in \mathbb{R}^n  :   |x-x_0|< r\}$ the open ball with center $x_0$ and radius $r>0$; we omit the center when it is not necessary or irrelevant, i.e., $B \equiv B_r \equiv B_r(x_0)$; this especially happens when various balls in the same context share the same center. With $B$ being a given ball with radius $r$ and $\theta$ being a positive number, we denote by $\theta B$ the concentric ball with radius $\theta r$ and, analogously, $B/\theta \equiv (1/\theta)B$. We further denote $\ell_{s}(t):=s+t$ for all $s,t\in [0,\infty)$. Whenever $\ti{\Omega} \subset \mathbb{R}^{n}$ is a measurable subset with bounded positive measure $0<|\ti{\Omega}|<\infty$, and $f \colon \ti{\Omega} \to \mathbb{R}^{k}$, $\mathbb{N}\ni k\geq 1$, is a measurable map, we use
$$
(f)_{\ti{\Omega}}=\mint_{\ti{\Omega}}f(x)\dx:= \snr{\ti{\Omega}}^{-1}\int_{\ti{\Omega}}  f(x) \dx
$$
to indicate the integral average. If $f\in L^{p}(\ti{\Omega},\mathbb{R}^{k})$, for some $1\le p<\infty$, we shorten its averaged norm as
$$
\nra{f}_{L^{p}(\ti{\Omega})}:=\left(\mint_{\ti{\Omega}}\snr{f}^{p}\dx\right)^{\frac{1}{p}},
$$
while if $f\in W^{s,p}(\ti{\Omega})$ with $1\le p<\infty$ and $s\in (0,1)$, its averaged Sobolev–Slobodecki\v{i} seminorm will be denoted by 
$$
\snra{f}_{s,p;\ti{\Omega}}:=\left(\mint_{\ti{\Omega}}\int_{\ti{\Omega}}\frac{\snr{f(x)-f(y)}^{p}}{\snr{x-y}^{n+sp}}\dx\dy\right)^{\frac{1}{p}}.
$$
Moreover, given any open set $\ti{\Omega}\Subset \Omega$, to simplify the notation we collect the main parameters related to the problems under investigation in the shorthands
$$
\begin{cases}
\displaystyle
\ \data_{0}:=\left(n,N,A,\tx{g},\mu,\gamma,\vartheta\right),\qquad \quad \data:=(\data_{0},\alpha,\beta,\vartheta_{*}),\vspace{1.5mm}\\
\ \tx{l}(\ti{\Omega},\Omega):=(\dist(\ti{\Omega},\partial\Omega),\diam(\ti{\Omega}),\diam(\Omega)),
\end{cases}
$$
we refer to Section \ref{sa} for an outline of the various quantities appearing above. We conclude by introducing some notation that will be helpful when specializing the forthcoming estimates to the $2$-d supercritical setting. Specifically, we set
\begin{flalign}\label{1111}
\begin{array}{c}
\displaystyle
\mathds{1}_{1}:=1,\qquad\qquad\quad \mathds{1}_{2}:=1\\[10pt]\displaystyle
\mathds{1}_{3}:=\begin{cases}
\displaystyle
\ 1\quad &\mbox{if} \ \ n\ge 3 \quad \mbox{or} \quad n=2, \ \ 0<\alpha<2/3\vspace{0.5mm}\\ \displaystyle
\ 0 \quad &\mbox{if} \ \ n=2 \ \ \mbox{and} \ \ \alpha\ge 2/3,
\end{cases}\qquad\qquad \quad \ti{\mathds{1}}:=1-\mathds{1}_{3}.
\end{array}
\end{flalign}
\subsection{Nonlinear potentials} A key role in this paper is played by a general class of nonlinear potentials, first introduced by Havin \& Maz'ya \cite{HM}. Recently, nonlinear potentials became crucial tools in the regularity theory of nonuniformly elliptic problems \cite{bm20,dm21,def22,dm23a,dm23b,bs24,ddp24,dp24,dm25} - specifically, we refer to \cite[Section 4]{dm23a} for the potential theoretic technical toolbox needed here, and to \cite{km14,min24} for an overview. For a ball $B_{r}(x_{0})\subset \mathbb{R}^{n}$, parameters $\sigma>0$, $\vartheta\geq 0$, and a function $f\in L^{1}(B_{r}(x_{0}))$, we introduce the nonlinear Havin-Maz'ya-Wolff type potential ${\bf P}_{\sigma}^{\vartheta}(f;\cdot)$, i.e.:
\eqn{defi-P} 
$$
{\bf P}_{\sigma}^{\vartheta}(f;x_0,r) := \int_0^r \varrho^{\sigma} \left(  \mint_{B_{\varrho}(x_0)} \snr{f} \dx \right)^{\vartheta} \frac{\d\varrho}{\varrho} \,.
$$
The mapping properties among function spaces of ${\bf P}_{\sigma}^{\vartheta}(f;\cdot)$ needed here are contained in the next lemma, cf. \cite[Lemma 4.1]{dm23a}.
\begin{lemma}\label{crit} 
Let $B_{\tau}\Subset B_{\tau+r}\subset \mathbb{R}^{n}$ be two concentric balls with $\tau, r\leq 1$, $f\in L^{1}(B_{\tau+r})$ and let $\sigma,\vartheta>0$ be such that $ n\vartheta>\sigma$. Then
\eqn{stimazza}
$$
\nr{{\bf P}_{\sigma}^{\vartheta}(f;\cdot,r)}_{L^{\infty}(B_{\tau})} \lesssim_{n,\vartheta,\sigma,m} \|f\|_{L^{m}(B_{\tau+r})}^{\vartheta} $$
holds whenever $m > n\vartheta/\sigma>1$. 
\end{lemma}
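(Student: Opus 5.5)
The bound follows from a pointwise estimate of the potential by a Hölder-type (Morrey) inequality, integrated in the radius. Fix $x\in B_{\tau}$. Since $\tau+r$ is the outer radius and $\rho\le r$, the ball $B_{\rho}(x)$ is contained in $B_{\tau+r}$, so the averages in \eqref{defi-P} are well defined. No lower bound on $\rho$ is needed.

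For $m\ge 1$, Hölder's inequality gives
$$
\mint_{B_{\rho}(x)}\snr{f}\dx\le \left(\mint_{B_{\rho}(x)}\snr{f}^{m}\dx\right)^{\frac{1}{m}}\le \left(\frac{\snr{B_{\tau+r}}}{\snr{B_{\rho}}}\right)^{\frac1m}\nra{f}_{L^{m}(B_{\tau+r})} = \left(\frac{\tau+r}{\rho}\right)^{\frac nm}\nra{f}_{L^{m}(B_{\tau+r})}.
$$
I read $\|f\|_{L^{m}(B_{\tau+r})}$ in \eqref{stimazza} as the nonaveraged norm. In that case the cleaner form of the same inequality is
$$
\mint_{B_{\rho}(x)}\snr{f}\dx\le c(n,m)\,\rho^{-n/m}\,\|f\|_{L^{m}(B_{\tau+r})}.
$$
Inserting this into the potential gives
$$
{\bf P}_{\sigma}^{\vartheta}(f;x,r)\le c\,\|f\|_{L^{m}}^{\vartheta}\int_{0}^{r}\rho^{\sigma-n\vartheta/m}\frac{\d\rho}{\rho}.
$$
The hypothesis $m>n\vartheta/\sigma$ is exactly the statement that the exponent $\sigma-n\vartheta/m$ is positive. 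Hence the integral converges and equals $r^{\sigma-n\vartheta/m}/(\sigma-n\vartheta/m)$. Using $r\le 1$, this is at most $1/(\sigma-n\vartheta/m)$, a constant depending only on $(n,\vartheta,\sigma,m)$.

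Taking the supremum over $x\in B_{\tau}$ yields \eqref{stimazza}. The assumption $n\vartheta/\sigma>1$ is only there to guarantee $m>1$, so that Hölder's inequality applies. The only real point is the convergence of the radial integral at $\rho=0$, and that is precisely what the condition $m>n\vartheta/\sigma$ ensures.

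If instead the averaged norm $\nra{f}_{L^{m}(B_{\tau+r})}$ is intended in \eqref{stimazza}, the argument is unchanged. One only has to convert between the two norms using the factor $\snr{B_{\tau+r}}^{1/m}$. Since $\tau+r\le 2$, this factor is controlled by a constant depending on $n$ and $m$ alone.
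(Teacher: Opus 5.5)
Your proof is correct and complete: it bounds $\mint_{B_{\rr}(x)}\snr{f}\dx\le c(n,m)\rr^{-n/m}\nr{f}_{L^{m}(B_{\tau+r})}$ for $x\in B_{\tau}$ and $\rr\le r$, then integrates $\rr^{\sigma-n\vartheta/m-1}$ over $(0,r)$, which converges exactly because $m>n\vartheta/\sigma$ and is bounded independently of $r$ since $r\le 1$. This is the standard argument for the $L^{m}$ estimate. The paper gives no proof of its own and simply quotes the statement from \cite[Lemma 4.1]{dm23a}, so there is nothing further to compare; your reading of $\nr{f}_{L^{m}}$ as the non-averaged norm is the one the paper uses later in \eqref{14.4}.
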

\noindent Finally, we record a nonlinear potential theoretic iteration à la De Giorgi, whose basic prototype can be found in \cite{km94} and \cite{min11} --- we shall record it in the form of a quantified reverse H\"older inequality, first appeared in \cite{dm23a}, see also \cite{bm20}.
\begin{lemma}\label{revlem}
Let $B_{r_{0}}(x_{0})\subset \mathbb{R}^{n}$ be a ball and $\mathbb{N}\ni k\ge 1$ an integer. For $i\in \{1,\cdots,k\}$, assume that functions $w\in L^{2}(B_{r_{0}}(x_{0}))$, $f_{i} \in L^1(B_{2r_0}(x_{0}))$, and constants $\chi >1$, $\sigma_{i}, \vartheta_{i},\ti{c},M_{0}>0$ and $\kappa_0, M_{i}\geq 0$ satisfy 
\begin{flalign}
\left(\mint_{B_{\rr/2}(x_{0})}(w-\kk)_{+}^{2\chi}  \dx\right)^{\frac1{2\chi}}  &\le \ti{c}M_{0}\left(\mint_{B_{\rr}(x_{0})}(w-\kk)_{+}^{2}  \dx\right)^{\frac{1}{2}}+\ti{c} \sum_{i=1}^{k}M_{i}\rr^{\sigma_{i}}\left(\mint_{B_{\rr}(x_{0})}\snr{f_{i}}  \dx\right)^{\vartheta_{i}},
 \label{revva}
\end{flalign}
for all $\kk\ge \kk_{0}$, and for every concentric ball $B_{\rr}(x_{0})\subseteq B_{r_{0}}(x_{0})$. If $x_{0}$ is a Lebesgue point of $w$ in the sense that 
$$
w(x_0) = \lim_{r\to 0} (w)_{B_{r}(x_0)}\,,
$$ then
\eqn{siapplica}
$$
 w(x_{0})  \le\kk_{0}+cM_{0}^{\frac{\chi}{\chi-1}}\left(\mint_{B_{r_{0}}(x_{0})}(w-\kk_{0})_{+}^{2}  \dx\right)^{1/2}
+cM_{0}^{\frac{1}{\chi-1}} \sum_{i=1}^{k}M_{i}\mathbf{P}^{\vartheta_{i}}_{\sigma_{i}}(f_{i};x_{0},2r_{0})
$$
holds with $c\equiv c(n,\chi,\sigma,\vartheta,\ti{c},k)$.  
\end{lemma}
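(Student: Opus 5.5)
The plan is a De Giorgi-type iteration on a geometric sequence of shrinking balls centred at $x_0$, with levels increasing to a limit, in the spirit of Kilpel\"ainen--Mal\'y. Fix $r_j:=r_0/2^{j}$ and $B_j:=B_{r_j}(x_0)$. The levels are defined recursively by $\kappa_{j+1}:=\kappa_j+\delta_j$, starting from a level $\kappa_0$ to be chosen above the given one, with the increments
$$
\delta_j:=\frac{1}{\varepsilon}\left(\mint_{B_j}(w-\kappa_j)_+^{2}\dx\right)^{1/2}+\frac{1}{\varepsilon}\sum_{i=1}^{k}M_{i}r_j^{\sigma_i}\left(\mint_{B_j}\snr{f_i}\dx\right)^{\vartheta_i},
$$
where $\varepsilon\in(0,1)$ is a small parameter to be tuned in terms of $M_0$. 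The aim is to show that $\kappa_j\to\kappa_\infty$ with $\kappa_\infty$ bounded by the right-hand side of \eqref{siapplica}. Once this holds, the normalized excess $\mint_{B_j}(w-\kappa_\infty)_+^{2}\dx\to 0$, and the Lebesgue point property then gives $w(x_0)\le\kappa_\infty$.

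The key step is an excess-decay estimate. Write $E_j:=(\mint_{B_j}(w-\kappa_j)_+^{2}\dx)^{1/2}$. On $\{w>\kappa_{j+1}\}$ we have $(w-\kappa_j)_+\ge\delta_j$, so Chebyshev and H\"older together with \eqref{revva} applied at $\varrho=r_{j-1}$ give
$$
E_{j+1}\le c\,\delta_j^{-(\chi-1)}\left(\mint_{B_{j+1}}(w-\kappa_j)_+^{2\chi}\dx\right)^{1/2}\le c\,\delta_j^{1-\chi}\left(M_0E_j'+\text{potential term}\right)^{\chi},
$$
where $E_j'$ denotes the excess on the larger ball. To avoid the shift in radius, it is simplest to apply \eqref{revva} with $\varrho=r_j$, obtaining $B_{\varrho/2}=B_{j+1}$. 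Since $\delta_j\ge E_j/\varepsilon$ and $\delta_j$ dominates the potential term divided by $\varepsilon$, this yields $E_{j+1}\le c\,M_0^{\chi}\varepsilon^{\chi-1}\delta_j$. Choosing $\varepsilon$ so that $cM_0^{\chi}\varepsilon^{\chi-1}\le 1/2$, that is $\varepsilon\approx M_0^{-\chi/(\chi-1)}$, we get $E_{j+1}\le\delta_j/2$. This choice is exactly what produces the exponents $M_0^{\chi/(\chi-1)}$ and $M_0^{1/(\chi-1)}$ appearing in \eqref{siapplica}.

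Summing the increments, $\delta_{j+1}\le\delta_j/2+\varepsilon^{-1}(\text{potential term at }r_{j+1})$, and hence
$$
\sum_j\delta_j\le 2\delta_0+c\,\varepsilon^{-1}\sum_j\sum_i M_i r_j^{\sigma_i}\left(\mint_{B_j}\snr{f_i}\dx\right)^{\vartheta_i}.
$$
The dyadic sum is compared with $\mathbf{P}^{\vartheta_i}_{\sigma_i}(f_i;x_0,2r_0)$ by integrating over $\varrho\in[r_j,2r_j]$ and using $\mint_{B_{r_j}}\le 2^n\mint_{B_\varrho}$. This comparison is why the potential is taken at radius $2r_0$, with $f_i\in L^1(B_{2r_0})$. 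The first increment satisfies $\delta_0\lesssim\varepsilon^{-1}E_0+\varepsilon^{-1}\sum_i M_i\mathbf{P}^{\vartheta_i}_{\sigma_i}$, and the final bound follows. A subtlety is that \eqref{revva} requires $\kappa\ge\kappa_0$, which holds because the levels increase.

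The main obstacle is the degenerate case $\delta_j=0$ for some $j$, or more generally keeping the iteration consistent. This is handled by adding a tiny $\eta>0$ to each $\delta_j$ and letting $\eta\to0$. Constants are also tracked so that $c$ depends only on $(n,\chi,\sigma,\vartheta,\tilde c,k)$.
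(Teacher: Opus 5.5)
Your architecture is the right one: dyadic balls $B_j=B_{r_0/2^j}(x_0)$, increasing levels, Chebyshev plus \eqref{revva} at $\varrho=r_j$, geometric summation of the increments, comparison of the dyadic sums with $\mathbf{P}^{\vartheta_i}_{\sigma_i}(f_i;x_0,2r_0)$, and the Lebesgue point at the end. This is the Kilpel\"ainen--Mal\'y scheme behind the result the paper cites without proof. Building the potential into the increment is a harmless way to avoid the usual case distinction.

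\textbf{The gap.} Your choice of $\delta_j$ does not yield the powers of $M_0$ in \eqref{siapplica}, contrary to what you claim. Write $P_j:=\sum_i M_i r_j^{\sigma_i}(\mint_{B_j}\snr{f_i}\dx)^{\vartheta_i}$, so that $\delta_j=\varepsilon^{-1}(E_j+P_j)$. Then $\ti{c}M_0E_j+\ti{c}P_j\le \ti{c}(1+M_0)\varepsilon\delta_j$, and hence $E_{j+1}\le \ti{c}^{\chi}(1+M_0)^{\chi}\varepsilon^{\chi}\delta_j$. Note the power is $\varepsilon^{\chi}$, not $\varepsilon^{\chi-1}$, and this is what you actually need: $E_{j+1}\le\delta_j/2$ alone does not make $\delta_{j+1}=\varepsilon^{-1}E_{j+1}+\varepsilon^{-1}P_{j+1}$ contract; you need $\varepsilon^{-1}E_{j+1}\le\delta_j/2$. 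With $\varepsilon\approx(1+M_0)^{-\chi/(\chi-1)}$ you get $\delta_{j+1}\le\delta_j/2+\varepsilon^{-1}P_{j+1}$, and summing gives
\[
w(x_0)\le \kappa_0+c(1+M_0)^{\frac{\chi}{\chi-1}}E_0+c(1+M_0)^{\frac{\chi}{\chi-1}}\sum_{i=1}^{k}M_i\mathbf{P}^{\vartheta_i}_{\sigma_i}(f_i;x_0,2r_0).
\]
So the potentials carry $M_0^{\chi/(\chi-1)}$ instead of $M_0^{1/(\chi-1)}$, a full factor $M_0$ too much when $M_0\ge 1$. For $M_0<1$ even the first term is wrong, because \eqref{revva} does not multiply the $f_i$-terms by $M_0$, while the lemma allows any $M_0>0$.

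\textbf{The fix.} Weight the two parts of the increment differently:
\[
\delta_j:=\frac{E_j}{\varepsilon}+\frac{P_j}{\varepsilon M_0},\qquad \varepsilon:=\bigl(2(2\ti{c})^{\chi}M_0^{\chi}\bigr)^{-\frac{1}{\chi-1}}.
\]
Now $\ti{c}M_0E_j+\ti{c}P_j\le 2\ti{c}M_0\varepsilon\delta_j$ for every $M_0>0$. Hence $E_{j+1}\le (2\ti{c}M_0\varepsilon)^{\chi}\delta_j$ and $\delta_{j+1}\le \delta_j/2+(\varepsilon M_0)^{-1}P_{j+1}$. Summing the partial sums gives
\[
\kappa_\infty-\kappa_0\le 2\varepsilon^{-1}E_0+2(\varepsilon M_0)^{-1}\sum_{j\ge 0}P_j,
\]
with $\varepsilon^{-1}=cM_0^{\chi/(\chi-1)}$ and $(\varepsilon M_0)^{-1}=cM_0^{1/(\chi-1)}$. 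Combined with your dyadic comparison, this is exactly \eqref{siapplica}.

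\textbf{Two smaller corrections.} First, start the levels at the given $\kappa_0$. The excess $E_0$ in \eqref{siapplica} is computed at that level, so no other starting choice works. Second, the case $\delta_j=0$ is not an obstacle. If $\delta_j=0$, then $(w-\kappa_j)_+=0$ a.e.\ in $B_j$, so $E_{j+1}=0$ and the recursion holds trivially. Adding the same $\eta>0$ to every increment, as you propose, would instead make $\sum_j\delta_j$ diverge; you would need $\eta 2^{-j}$.
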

\subsection{Embeddings of fractional Sobolev spaces} For $w \colon \Omega \to \mathbb{R}^{k}$, $k\ge 1$, $\texttt{t}>0$ and $h \in \mathbb{R}^n$, we set $\Omega_{\texttt{t}\snr{h}}:=\left\{x\in \Omega\colon \dist(x,\partial \Omega)>\texttt{t}\snr{h}\right\}$, and introduce the finite difference operator $\tau_{h}\colon L^{1}(\Omega;\mathbb{R}^{k})\to L^{1}(\Omega_{|h|};\mathbb{R}^{k})$, defined as $\tau_{h}w(x):=w(x+h)-w(x)$. Given another map $v\colon \Omega\to \mathbb{R}^{k}$, the discrete Leibniz rule reads as
\eqn{prod}
$$
\tau_{h}(vw)(x)=w(x+h)\tau_{h}v(x)+v(x)\tau_{h}w(x)\,.
$$
Moreover, if $B_{\rr}\Subset B_{r}$ are concentric balls and $w\in W^{1,p}(B_r;\mathbb{R}^{k})$, $p\ge 1$ and $\snr{h}\leq r-\rr$, then
\eqn{gh}
$$
\nr{\tau_{h}w}_{L^{p}(B_{\rr})}\le \snr{h}\nr{Dw}_{L^{p}(B_{r})}\,.
$$
Two function spaces that will play a key role in this paper are Sobolev–Slobodeckij and Nikol'skii spaces.
\begin{definition}\label{fra1def}
Let  $p \in [1, \infty)$, $s \in (0,1)$.
\begin{itemize}
\item With $\Omega \subset \mathbb{R}^n$ open, $w\colon \Omega\to \mathbb{R}^{k}$ belongs to the Sobolev-Slobodeckij space $W^{s,p}(\Omega;\mathbb{R}^k )$ iff
\begin{flalign}
\notag
\| w \|_{W^{s,p}(\Omega)} & := \|w\|_{L^{p}(\Omega)}+ \left(\int_{\Omega} \int_{\Omega}  
\frac{|w(x)
- w(y) |^{p}}{|x-y|^{n+s p}} \dx \dy \right)^{1/p}\\
&=: \|w\|_{L^{p}(\Omega)} + [w]_{s,p;\Omega} < \infty\,.\label{gaglia}
\end{flalign}
\item $w\colon \mathbb{R}^n\to \mathbb{R}^{k}$ belongs to the Nikol'skii space $N^{s,p}(\mathbb{R}^n;\mathbb{R}^k)$ iff 
$$\| w \|_{N^{s,p}(\mathbb{R}^n;\mathbb{R}^k )} :=\|w\|_{L^{p}(\mathbb{R}^n)} + \left(\sup_{|h|\not=0}\, \int_{\mathbb{R}^n} \left|\frac{\tau_{h}w}{\snr{h}^{s}}\right|^{p}
 \dx  \right)^{1/p}<\infty\,.$$
\end{itemize}
\end{definition}
\noindent Next, a combination of the embedding of Nikol'skii spaces into fractional Sobolev spaces, and the Sobolev-Morrey embedding of fractional Sobolev spaces, \cite[Lemma 2.16]{bdlmbs24}.
\begin{lemma}\label{fraim}
Let $w\in L^{2}(B_{1}(0))$ be a function and assume that, for some $s \in (0,1)$, $\mathcal{S}\ge 0$ and $0<\tx{d}<1/2$ there holds
\eqn{cru1}
$$
\nr{\tau_{h}w}_{L^{2}(B_{1/2}(0))}\le \mathcal{S}\snr{h}^{s} \qquad \mbox{for every} \ \ h\in \mathbb{R}^{n} \ \ \mbox{with} \ \ 0<\snr{h}\le \tx{d}.
$$
Then,
\eqn{cru2}
$$
\nr{w}_{L^{\frac{2n}{n-2\sigma}}(B_{1/2}(0))}+\nr{w}_{W^{\sigma,2}(B_{1/2}(0))}\le c\tx{d}^{s-\sigma}\mathcal{S}
+ c\tx{d}^{-\sigma}\nr{w}_{L^{2}(B_{1/2}(0))},
$$
for all $\sigma\in(0,s)$, where $c\equiv c(n,s,\sigma)$. 
\end{lemma}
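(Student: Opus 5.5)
The statement to prove is Lemma \ref{fraim}. The plan is a two-step argument. First, pass from the Nikol'skii-type difference bound \eqref{cru1}, which is only available for $|h|\le \tx{d}$, to a bound on the Gagliardo seminorm $[w]_{\sigma,2;B_{1/2}}$ for every $\sigma<s$. Second, apply the fractional Sobolev embedding $W^{\sigma,2}\hookrightarrow L^{2n/(n-2\sigma)}$ on the ball $B_{1/2}(0)$, which is an extension domain. The powers of $\tx{d}$ are tracked by keeping the scale $\tx{d}$ explicit throughout rather than rescaling.

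For the seminorm, rewrite the Gagliardo double integral through the substitution $y=x+h$:
$$
[w]^{2}_{\sigma,2;B_{1/2}}=\int_{\mathbb{R}^{n}}\int_{B_{1/2}\cap (B_{1/2}-h)}\frac{|\tau_h w(x)|^{2}}{|h|^{n+2\sigma}}\dx\,{\rm d}h .
$$
Split the $h$-integral into $|h|\le \tx{d}$ and $|h|>\tx{d}$. Note that $B_{1/2}\cap(B_{1/2}-h)\subset B_{1/2}$, so \eqref{cru1} controls the inner integral whenever $|h|\le \tx{d}$. This gives the bound
$$
\mathcal{S}^{2}\int_{|h|\le \tx{d}}|h|^{2s-n-2\sigma}\,{\rm d}h\approx \frac{\mathcal{S}^{2}\tx{d}^{2(s-\sigma)}}{s-\sigma}.
$$
For $|h|>\tx{d}$, use $|\tau_hw(x)|^{2}\le 2|w(x+h)|^{2}+2|w(x)|^{2}$ with both points in $B_{1/2}$. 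This bounds the inner integral by $4\|w\|^{2}_{L^{2}(B_{1/2})}$, and integrating $|h|^{-n-2\sigma}$ over $|h|>\tx{d}$ gives $c\,\tx{d}^{-2\sigma}\|w\|^{2}_{L^2(B_{1/2})}$. Taking square roots yields
$$
[w]_{\sigma,2;B_{1/2}}\le c\,\tx{d}^{s-\sigma}\mathcal{S}+c\,\tx{d}^{-\sigma}\|w\|_{L^{2}(B_{1/2})}.
$$
The term $\|w\|_{L^2}$ in the full $W^{\sigma,2}$ norm is trivially bounded by $\tx{d}^{-\sigma}\|w\|_{L^2}$, since $\tx{d}<1$.

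For the Lebesgue part, the case $2\sigma<n$ holds automatically because $n\ge2$ and $\sigma<1$. Invoke the fractional Sobolev inequality on the extension domain $B_{1/2}$:
$$
\|w\|_{L^{2n/(n-2\sigma)}(B_{1/2})}\le c(n,\sigma)\big([w]_{\sigma,2;B_{1/2}}+\|w\|_{L^{2}(B_{1/2})}\big).
$$
Combining this with the seminorm bound from the first step, and again using $1\le \tx{d}^{-\sigma}$, gives \eqref{cru2} with $c\equiv c(n,s,\sigma)$.

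The only delicate point is ensuring the embedding constant does not depend on $\tx{d}$. This holds because the domain is fixed at $B_{1/2}(0)$ and all $\tx{d}$-dependence is carried explicitly through the splitting of the $h$-integral. Dependence on $s$ enters only through the factor $(s-\sigma)^{-1/2}$.
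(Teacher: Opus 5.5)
Your proof is correct and follows the route the paper indicates: the paper does not prove the lemma but quotes it from \cite[Lemma 2.16]{bdlmbs24}, as the embedding of Nikol'skii-type difference bounds into $W^{\sigma,2}$ followed by the fractional Sobolev embedding on the fixed ball $B_{1/2}(0)$. You carry out the first step by hand on $B_{1/2}(0)$, splitting the $h$-integral at $\snr{h}=\tx{d}$ to get exactly the factors $\tx{d}^{s-\sigma}$ and $\tx{d}^{-\sigma}$; since $n\ge 2$ and $\sigma<1$ force $2\sigma<n$, only the Sobolev (not the Morrey) part of the second step is needed, as you noted.
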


\subsection{Tools for degenerate/singular problems} Here we collect some basic tools of common use when dealing with singular or degenerate problems. More precisely, for $s\in [0,1]$ and $0<p<\infty$, we introduce the field $V_{s,p}\colon \mathbb{R}^{N\times n}\to \mathbb{R}^{N\times n}$ defined as $V_{s,p}(z):=(s^{2}+\snr{z}^{2})^{(p-2)/4}z$. It is well-known that $V_{s,p}$ satisfies
\eqn{Vm}
$$
 \snr{V_{s,p}(z_{1})-V_{s,p}(z_{2})}\approx_{n,p}(s^{2}+\snr{z_{1}}^{2}+\snr{z_{2}}^{2})^{\frac{p-2}{4}}\snr{z_{1}-z_{2}}\qquad \mbox{for all} \ \ z_{1},z_{2}\in \mathbb{R}^{N\times n},
$$
cf. \cite{ham92}. Next, the technical equivalence
\eqn{l60}
$$
 (\snr{z_{1}}^{2}+\snr{z_{2}}^{2}+\omega^{2})^{-t/2} \approx_{n,t} \int_{0}^{1}(\snr{z_{2}+\tau (z_{1}-z_{2})}^{2}+\omega^{2})^{-t/2} \d\tau\,, 
$$
holds for all $z_{1},z_{2}\in \mathbb{R}^{N\times n}$, and any $t<1$. The following is the multidimensional counterpart of the integration-by-parts trick from \cite[Section 2.2]{dm25}, after \cite[Section 8.2]{giu03}. 
\begin{lemma}
Let $\rr,\mathcal{h}_{0}>0$ be numbers, $h\in \mathbb{R}^{n}$ be a vector such that $\snr{h}\in (0,\mathcal{h}_{0}/4)$, $B_{\rr}(x_{0})\subset \mathbb{R}^{n}$ be a ball, $V\in L^{\infty}(B_{\rr+\mathcal{h}_{0}}(x_{0}),\mathbb{R}^{N\times n})$ and $W\in W^{1,\infty}_{0}(B_{\rr}(x_{0}),\mathbb{R}^{N\times n})$ be functions, and $H\in C(B_{\rr+\mathcal{h}_{0}}(x_{0})\times \mathbb{R}^{N\times n},\mathbb{R}^{N\times n})$ be a continuous vector field which is bounded on $B_{\rr+\mathcal{h}_{0}}(x_{0})\times \ti{\Omega}$ for every bounded subset $\ti{\Omega}\subset \mathbb{R}^{n}$. Then
\eqn{af}
$$
\int_{B_{\rr}(x_{0})}\langle \tau_{h}H(\cdot,V(\cdot)),W\rangle\dx=-\snr{h}\int_{B_{\rr}(x_{0})}\int_{0}^{1}\langle H(x+\beta h,V(x+\beta h)),\partial_{h/\snr{h}}W\rangle\d\beta\dx.
$$
\end{lemma}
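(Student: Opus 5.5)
The plan is to reduce the identity to a one-dimensional fundamental theorem of calculus along the segment $x\mapsto x+\beta h$, $\beta\in[0,1]$, and then move the resulting directional derivative onto $W$. To make this rigorous with $H(\cdot,V(\cdot))$ only bounded and measurable, write $G(x):=H(x,V(x))$. By the assumptions on $H$ and since $V\in L^{\infty}$, $G$ is measurable and bounded on $B_{\rr+\mathcal{h}_{0}}(x_{0})$; call the bound $M$. Set $e:=h/\snr{h}$. Since $\snr{h}<\mathcal{h}_{0}/4$, all translates $x+\beta h$ with $x\in B_{\rr}(x_{0})$ and $\beta\in[0,1]$ stay inside $B_{\rr+\mathcal{h}_{0}}(x_{0})$, so every integral below is well defined.

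The argument rests on the observation that, with $W$ extended by zero outside $B_{\rr}(x_{0})$ (a $W^{1,\infty}(\mathbb{R}^{n})$ map, since $W\in W^{1,\infty}_{0}$), everything can be computed on translates. By Fubini, the right-hand side equals
\[
-\snr{h}\int_{0}^{1}\int_{\mathbb{R}^{n}}\langle G(x+\beta h),\partial_{e}W(x)\rangle\dx\d\beta .
\]
The substitution $y=x+\beta h$ turns the inner integral into $\int\langle G(y),\partial_{e}W(y-\beta h)\rangle\dy$. Now
\[
\snr{h}\,\partial_{e}W(y-\beta h)=-\frac{\d}{\d\beta}W(y-\beta h),
\]
where the derivative is taken in the distributional/a.e. sense in $(y,\beta)$; this holds because $W$ is Lipschitz and $\beta\mapsto W(y-\beta h)$ is absolutely continuous for every $y$. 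Swapping $\int_{0}^{1}$ and $\int\dy$ again by Fubini (everything is bounded and compactly supported), the right-hand side becomes
\[
\int\Big\langle G(y),\int_{0}^{1}\frac{\d}{\d\beta}W(y-\beta h)\d\beta\Big\rangle\dy=\int\langle G(y),W(y-h)-W(y)\rangle\dy .
\]
Finally, the shift $y\mapsto y+h$ in the first term gives $\int\langle G(x+h)-G(x),W(x)\rangle\dx$. Since $W$ vanishes outside $B_{\rr}(x_{0})$, this is exactly the left-hand side $\int_{B_{\rr}(x_{0})}\langle\tau_{h}G,W\rangle\dx$.

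The key steps, in order, are:
\begin{enumerate}
\item extend $W$ by zero and check that both sides only see $G$ on $B_{\rr+\snr{h}}(x_{0})$;
\item apply Fubini to the right-hand side;
\item change variables by translation;
\item use the fundamental theorem of calculus for the Lipschitz map $\beta\mapsto W(y-\beta h)$;
\item undo the translation to recover $\tau_{h}G$.
\end{enumerate}

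The only delicate point is step 4: we never differentiate $G$, which is merely bounded and measurable. All derivatives fall on $W$, so only its Lipschitz regularity and the a.e. validity of the chain rule along lines are needed. These follow from Rademacher's theorem, or from a mollification of $W$ followed by a passage to the limit using dominated convergence and the $L^{\infty}$ bound $M$ on $G$.
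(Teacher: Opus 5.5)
Your proof is correct: setting $G=H(\cdot,V(\cdot))$, translating, and applying the fundamental theorem of calculus to the Lipschitz map $\beta\mapsto W(y-\beta h)$, so that no derivative ever falls on $G$, is exactly the discrete integration-by-parts trick intended here. The paper gives no proof of this lemma; it defers to the cited works of De Filippis--Mingione and Giusti, which use the same translation-plus-fundamental-theorem-of-calculus computation.
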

\noindent We close this section with the "simple, but fundamental" iteration lemma from \cite{gg82}.
\begin{lemma}\label{iterlem}
Let $h\colon [t,s]\to \mathbb{R}$ be a non-negative and bounded function, and let $a,b, m$ be non-negative numbers. Assume that the inequality 
$ 
h(\tau_2)\le  (1/2) h(\tau_1)+(\tau_1-\tau_2)^{-m}a+b,
$
holds whenever $t\le \tau_2<\tau_1\le s$. Then $
h(t)\le c(m)[a(s-t)^{-m}+b]
$, holds too. 
\end{lemma}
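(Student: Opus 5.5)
The statement is the Gehring--Giaquinta--Giusti iteration lemma, which I would prove by iterating the hypothesis along a geometric sequence of radii shrinking from $s$ to $t$. The choice of sequence is made so that the geometric gain $(1/2)^i$ beats the blow-up of $(\tau_1-\tau_2)^{-m}$.

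Fix $\lambda\in(0,1)$ with $\lambda^{-m}/2<1$, i.e. $\lambda>2^{-1/m}$ (any $\lambda\in(0,1)$ works if $m=0$). Define
$$
\tau_0:=t,\qquad \tau_{i+1}:=\tau_i+(1-\lambda)\lambda^{i}(s-t),
$$
so that $\tau_i$ increases to $s$ and $\tau_{i+1}-\tau_i=(1-\lambda)\lambda^i(s-t)$. Applying the hypothesis with $\tau_2=\tau_i$ and $\tau_1=\tau_{i+1}$ gives
$$
h(\tau_i)\le \tfrac12 h(\tau_{i+1})+(1-\lambda)^{-m}\lambda^{-im}(s-t)^{-m}a+b .
$$

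Iterating $k$ times by induction yields
$$
h(t)\le 2^{-k}h(\tau_k)+\sum_{i=0}^{k-1}2^{-i}\Big[(1-\lambda)^{-m}\lambda^{-im}(s-t)^{-m}a+b\Big].
$$
Since $h$ is bounded on $[t,s]$, the term $2^{-k}h(\tau_k)\le 2^{-k}\sup h$ tends to $0$ as $k\to\infty$; this is the only place where boundedness is used. The first series is $\sum_i (\lambda^{-m}/2)^i$, which converges by the choice of $\lambda$, and the second is $\sum_i 2^{-i}=2$.

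Letting $k\to\infty$ gives $h(t)\le c(m)\big[a(s-t)^{-m}+b\big]$ with
$$
c(m)=\max\Big\{(1-\lambda)^{-m}\big(1-\lambda^{-m}/2\big)^{-1},\,2\Big\}.
$$
For instance, taking $\lambda=2^{-1/(2m)}$ makes $c$ depend only on $m$.

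The main point to get right is the choice of $\lambda$: it must be strictly between $2^{-1/m}$ and $1$, so that the factor $\lambda^{-im}$ coming from the shrinking gaps is dominated by the contraction $2^{-i}$. Everything else is routine summation.
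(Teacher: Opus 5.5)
Your proof is correct. The paper does not prove this lemma; it quotes it from \cite{gg82}, and the proof there is exactly your iteration: radii $\tau_{i+1}=\tau_i+(1-\lambda)\lambda^{i}(s-t)$ with $\lambda\in(0,1)$ chosen so that $\lambda^{-m}/2<1$, and the limit $k\to\infty$ justified by the boundedness of $h$.
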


\subsection{Structural assumptions}\label{sa}
The high degree of generality we aim to achieve in this paper requires a careful description of the abstract integrand in \eqref{fun} that incorporates all the relevant features of the main models listed in Section \ref{mode} below. For this reason, we split the remainder of this section in several paragraph each devoted to the description of a key aspect of functional $\mathcal{F}$.
\subsection*{The main integrand: regularity and basic properties} We assume that $\tx{f}\colon \Omega\times \mathbb{R}^{N\times n}\to \mathbb{R}$ has radial (Uhlenbeck \cite{uhl77}) structure, i.e., there exists a function $A\colon \Omega\times [0,\infty)\to [0,\infty)$ such that
\eqn{a.1}
$$
\begin{cases}
\displaystyle
\ \tx{f}(x,z)=A(x,\snr{z})\ \  &\mbox{for all} \ \ (x,z)\in \Omega\times \mathbb{R}^{N\times n}\vspace{0.5mm}\\ \displaystyle
\ t\mapsto A(x,t)\in C^{2}_{\loc}(0,\infty)\cap C^{1}_{\loc}[0,\infty) \ \ &\mbox{for all} \ \ x\in \Omega \vspace{0.5mm}\\ \displaystyle
\ x\mapsto A'(x,t) \ \  & \mbox{continuous for all}\ \ t\ge 0.
\end{cases}
$$
In general,\footnote{Here $A'(x,t)=\frac{\d}{\dt}A(x,t)$.} $A$, $A'$ are continuous on $\Omega\times [0,\infty)$, while $A''$ is Carat\'eodory-regular on the same set. We shall further suppose that the basic monotonicity properties
\eqn{a.2}
$$
0<\inf_{x\in \Omega}A(x,1)\le \sup_{x\in \Omega}A(x,1)<\infty,
$$
and
\eqn{a.2.x}
$$
\begin{cases}
    \displaystyle
    \ t\mapsto \frac{A(\cdot,t)}{t} \ \ \mbox{almost increasing for all} \ \ t\in (0,\infty)\vspace{1.5mm}\\ 
    \displaystyle
    \ t\mapsto \frac{A(\cdot,t)}{t^{\gamma}}, \ \ \mbox{almost decreasing for all} \ \ t\in  [1,\infty),
\end{cases}
$$
hold\footnote{Condition $\eqref{a.2.x}_{2}$ implies that $A$ satisfies the $\Delta_{2}$-condition, that is $A(x,\tx{d}t)\le c(A,\gamma,\tx{d})(A(x,t)+1)$ for all constants $\tx{d}\in [0,\infty)$, see \cite[Lemma 2.2.6]{hh19}.} uniformly in $x\in \Omega$ for some $1<\gamma<\infty$. Notice that \eqref{a.2.x}$_{1}$ implies that $A(x,0)=0$ for all $x\in \Omega$. For $(x,t)\in \Omega\times (0,\infty)$, we set for simplicity $\mathcal{a}(x,t):=A'(x,t)t^{-1}$.

\subsection*{Closeness to linear growth}
To measure how the growth/ellipticity of $\tx{f}$ is close to linear, we introduce a function $\tx{g}\colon [0,\infty)\to [0,\infty)$ 
\eqn{a.4}
$$
\begin{cases}
\ 0\le\tx{g}(\cdot)\in C[0,\infty),\qquad t\tx{g}(t) \ \ \mbox{is convex}, \vspace{1mm}\\
\ \tx{g} \ \ \mbox{is nondecreasing, unbounded, and concave,}\vspace{1mm}\\
\ \tx{g}(t)\lesssim_{\tx{g},\omega}\ell_{1}(t)^{\omega}\qquad \mbox{for all} \ \ \omega>0, \ \ t\ge 0\vspace{1.5mm}\\
\displaystyle
\ t\tx{g}(t)\le \tx{c}A(x,t)+\tx{c},
\end{cases}
$$
for all $(x,t)\in \Omega\times [0,\infty)$. Notice that, letting $b(t):=t\tx{g}(t)$, $\eqref{a.4}_{1,2}$ implies that
\eqn{binf}
$$
\lim_{t\to \infty}\frac{b(t)}{t}=\infty,
$$
which will be useful to gain compactness in the $W^{1,1}$-setting.

\subsection*{Measuring growth/ellipticity and oscillation}
The growth/ellipticity features of $\tx{f}$ will be described by two positive continuous functions $\lambda,\Lambda\colon \Omega\times (0,\infty)\to [0,\infty)$ such that
\eqn{a.5.1}
$$
\tx{c}^{-1}\lambda(x,\snr{z})\snr{\xi}^{2}\le \langle\partial^{2}\tx{f}(x,z)\xi,\xi\rangle\qquad \mbox{and}\qquad \snr{\partial^{2}\tx{f}(x,z)}\le \tx{c}\Lambda(x,\snr{z}),
$$
for all $z\in \mathbb{R}^{N\times n}\setminus \{0\}$, $\xi\in \mathbb{R}^{N\times n}$, and $x\in \Omega$. The functions $\lambda$ and $\Lambda$ are the lowest and the highest eigenvalues of $\partial^{2}\tx{f}$ respectively, and satisfy the minimal set of assumptions
\eqn{a.5}
$$
\begin{cases}
    \displaystyle
 \ t\mapsto t^{\mu} \lambda(\cdot,t) \ \ &\mbox{is almost increasing for all} \ \ t\in (0,\infty)\vspace{1.5mm}\\
 \displaystyle
 \ t\mapsto \max\{t^{-\vartheta},1\}\lambda(\cdot,t)\ \ &\mbox{is almost decreasing for all} \ \ t\in  (0,\infty),
\end{cases}
$$
for some $\vartheta\in [0,1)$, and
\eqn{a.5.x}
$$
\inf_{x\in \Omega}\lambda(x,1)\in (0,\infty)\qquad \mbox{and}\qquad 
\frac{\Lambda(x,t)}{\lambda (x,t)}\le \tx{c}+\tx{c}\tx{g}(t)\ell_{1}(t)^{\mu-1}=:\tx{c}\rrr_{*}(t),
$$
for all $x\in \Omega$, $t\in (0,\infty)$, where $1\le \mu<2$ is a number, and $\tx{g}$ is as in \eqref{a.4}. Finally, let $B\subset \Omega$ be a ball and 
\eqn{atat}
$$
\alpha\in (0,1],\qquad \quad \vartheta_{*}\in \left(0,\alpha/n\right),
$$
be numbers. We assume that
\eqn{a.3}
$$
\snr{\lambda(x_{1},t)-\lambda(x_{2},t)}t+\snr{\mathcal{a}(x_{1},t)-\mathcal{a}(x_{2},t)}t\le \tx{c}\snr{B}^{\frac{\alpha}{n}}+\tx{c}\snr{B}^{\frac{\alpha}{n}}\left(\inf_{x\in B}A(x,t)\right)^{\vartheta_{*}},
$$
for all $x_{1},x_{2}\in B$, $t\in (0,\infty)$. 
\begin{remark}\label{rmp}
Let us highlight a few relevant facts.
\begin{itemize}
\item Oscillation bound \eqref{a.3} implies that
    \eqn{a.3.1}
$$
A(x_{1},t)\le \tx{c}A(x_{2},t)+\tx{c}\qquad \text{for all} \ \ t\in [0,\snr{B}^{-1/n}], \ \ x_{1},x_{2}\in B,
$$
which is the basic, optimal condition ensuring the absence of Lavrentiev phenomenon for $\mathcal{F}$, \cite{bds20,buli,ddp24,dp24}. Given any ball $B\subset\Omega$, $x_{1},x_{2}\in B$, $t\in (0,\snr{B}^{-1/n}]$, 
\begin{eqnarray*}
\snr{A(x_{1},t)-A(x_{2},t)}&\le&\int_{0}^{t}\snr{\mathcal{a}(x_{1},s)-\mathcal{a}(x_{2},s)}s\ds\nonumber \\
&\stackrel{\eqref{a.3}}{\le}&\tx{c}\snr{B}^{\frac{\alpha}{n}}t+\tx{c}\snr{B}^{\frac{\alpha}{n}}\int_{0}^{t}\left(\inf_{x\in B}A(x,s)\right)^{\vartheta_{*}}\ds\nonumber \\
&\le&\tx{c}\snr{B}^{\frac{\alpha}{n}}t^{\vartheta_{*}}\left(t^{1-\vartheta_{*}}+\left(\inf_{x\in B}A(x,t)\right)^{\vartheta_{*}}t^{1-\vartheta_{*}}\right)\nonumber \\
&\stackrel{\eqref{a.4}_{4}}{\le}&c\snr{B}^{\frac{\alpha}{n}}t^{\vartheta_{*}}\left(1+A(x_{2},t)\right)\nonumber \\
&\le& c\snr{B}^{\frac{\alpha-\theta_{*}}{n}}\left(1+A(x_{2},t)\right)\le c\left(1+A(x_{2},t)\right),
\end{eqnarray*}
for $c\equiv c(A,\tx{g},\alpha,\diam(B))$, which\footnote{If $t=0$, $A(x_{1},0)=A(x_{2},0)=0$ and there is nothing to prove.} is \eqref{a.3.1}.
\item To keep the technicalities at a reasonable level, in \eqref{a.3} we accounted only for a single modulus of continuity governing the oscillation of $A$. To handle without unnecessary restrictions multi phase integrals like those in \cite{dp24}, where multiple, say $\tx{k}\ge 2$, moduli of continuity appear, we just need to replace \eqref{a.3} with the more general
  \begin{flalign*}
  &\snr{\lambda(x_{1},t)-\lambda(x_{2},t)}t+\snr{\mathcal{a}(x_{1},t)-\mathcal{a}(x_{2},t)}t\nonumber \\
  &\qquad \qquad \quad \le \tx{c}\sum_{i=1}^{\tx{k}}\snr{B}^{\frac{\alpha_{i}}{n}}+\tx{c}\sum_{i=1}^{\tx{k}}\snr{B}^{\frac{\alpha_{i}}{n}}\left(\inf_{x\in B}A(x,t)\right)^{\vartheta_{*;i}},
  \end{flalign*}
with $\alpha_{i}\in (0,1]$ and $\vartheta_{*;i}\in (0,\alpha_{i}/n)$.
\end{itemize}
\end{remark}
\subsection*{Continuity of second derivatives near the origin} To gain full regularity in the vectorial setting, we need to control the oscillations of second derivatives. In fact, we prescribe that there exists $\beta\in (0,1]$, such that for every $M>0$ there is a constant $\tx{c}_{M}\equiv \tx{c}_{M}(M,A)$ satisfying
\eqn{a.5.2}
$$
\snr{A''(x,t+\tau)-A''(x,t)}\le \tx{c}_{M}A''(x,t)\left(\frac{\snr{\tau}}{t}\right)^{\beta},
$$
for all $x\in \Omega$, $t\in (0,M)$, $\tau\in \mathbb{R}$ such that $0<\snr{\tau}< t/2$.

\subsection*{A slight enforcement in $2$-d supercritical cases} To beat the criticality of the embedding of nonlinear potentials in low dimension, if $n=2$ and $\alpha\ge 2/3$ we reinforce \eqref{a.3} by requiring $\vartheta_{*}\in (0,\gamma-1]$ in \eqref{a.3} and also
\eqn{a.5.2s}
$$
\snr{\mathcal{a}(x_{1},t)-\mathcal{a}(x_{2},t)}t\le \tx{c}\snr{B}^{\frac{\alpha}{n}}\left(1+t^{\gamma-1}\right),\qquad \quad \gamma<1+\frac{\alpha}{n}.
$$
Notice that such a limitation is coherent with the usual ones imposed in the unbalanced polynomial growth setting \cite{dm23b,dp24,fps24}. 
\begin{remark}
The constant $\tx{c}\ge 1$ appearing in the structural description of integrand $\tx{f}$ in \eqref{fun} depends on the intrinsic properties of $\tx{f}$. To simplify notation, we will absorb any such dependence on $\tx{c}$, or on similar structural quantities associated with $\tx{f}$, into a generic dependence on $A$. For example, a dependence on $\inf_{x\in \Omega}A(x,1)$, see \eqref{a.2}, or on analogous bounds, will simply be recorded as a dependence on $A$.
\end{remark}
\section{Models}\label{mode} 
\noindent In this section we briefly discuss the models displayed in Section \ref{intro}, and show that they all satisfy the structural conditions in Section \ref{sa}.
\subsubsection*{\textbf{Variable exponent}} We focus on $p(x)$-type functionals, specifically on the case in which $\inf_{x\in \Omega}p(x)=1$. In fact, if $\inf_{x\in \Omega}p(x)>1$, the H\"older continuity of the gradient of minima is well-known, \cite{am01}. Since our results are local in nature, we can work on balls $B(\equiv B_{r}(x_{0})\Subset \Omega)$ with radius $r\in (0,r_{*}]$ where $r_{*}\in (0,1)$ is a threshold parameter to be determined in a few lines, such that $\min_{x\in \bar{B}}p(x)=1$. Let us consider integral \eqref{px} with \eqref{px.11},
and verify that the assumptions listed in Section \ref{sa} are satisfied.
\subsection{Verification of \eqref{a.1}--\eqref{a.4}}\label{c.1}
For $(x,t)\in \Omega\times [0,\infty)$, set $\tx{P}(x,t):=(t\log(1+t))^{p(x)}$. The very definition and the strict convexity of integrand $\tx{P}$, yield that \eqref{a.1}--\eqref{a.2} are satisfied. Moreover, $\eqref{a.2.x}_{1}$ holds as $p(x)\ge 1$ for all $x\in \Omega$ and $\eqref{a.2.x}_{2}$ is verified for any $\gamma>\nr{p}_{L^{\infty}(\Omega)}$. Condition \eqref{a.4}$_{4}$ holds with $\tx{g}(t):=\log(1+t)$ up to constants depending on $\nr{p}_{L^{\infty}(\Omega)}$, while \eqref{a.4}$_{1,2,3}$ follow from the basic properties of logarithms. 
\subsection{Verification of \eqref{a.5.1}--\eqref{a.5.x} and \eqref{a.5.2}}\label{c.2} Fix any $\vartheta_{*}\in (0,\alpha/n)$, restrict $r_{*}\in (0,1)$ so much that
\eqn{r*}
$$
[p]_{0,\alpha;\Omega}r_{*}^{\alpha}\le \frac{\vartheta_{*}}{4} \ \Longrightarrow \ \nr{p}_{L^{\infty}(\Omega)}\le 1+\frac{\vartheta_{*}}{4},
$$
and, for all $(x,t)\in \Omega\times (0,\infty)$, record
$$
\begin{cases}
    \displaystyle
    \ \frac{\tx{P}'(x,t)}{t}=p(x)\left(t\log(1+t)\right)^{p(x)-1}\left(\frac{\log(1+t)}{t}+\frac{1}{1+t}\right)\vspace{1.5mm}\\
    \displaystyle
    \ \tx{P}''(x,t)=\frac{(p(x)-1)}{\log(1+t)}\left(\log(1+t)+\frac{t}{1+t}\right)\left(\frac{\tx{P}'(x,t)}{t}\right)+p(x)\left(t\log(1+t)\right)^{p(x)-1}\left(\frac{2+t}{(1+t)^{2}}\right).
\end{cases}
$$
Next, set
$$
\begin{cases}
\displaystyle
    \ \lambda(x,t):=\frac{(t\log(1+t))^{p(x)-1}}{1+t}\vspace{1.5mm}\\
    \displaystyle
    \ \Lambda(x,t):=t^{p(x)-2}(\log(1+t))^{p(x)-2}\max\left\{1,(\log(1+t))^{2}\right\}.
\end{cases}
$$
A direct computation yields estimates for the eigenvalues
$$
\min\left\{\frac{\tx{P}'(x,t)}{t},\tx{P}''(x,t)\right\}\ge \lambda(x,t),\qquad \quad \max\left\{\frac{\tx{P}'(x,t)}{t},\tx{P}''(x,t)\right\}\lesssim_{\nr{p}_{L^{\infty}(\Omega)}} \Lambda(x,t),
$$
and \eqref{a.5.1} follows, up to constants depending on $(n,N,\nr{p}_{L^{\infty}(\Omega)})$. Moreover, $\eqref{a.5.x}_{1}$ is a consequence of the definition of $\lambda$, and
$$
\frac{\Lambda(x,t)}{\lambda(x,t)}\lesssim_{\nr{p}_{L^{\infty}(\Omega)}}1+\log(1+t),
$$
which implies $\eqref{a.5.x}_{2}$ for all $\mu\ge 1$. Estimate \eqref{a.5.2} is obtained by the mean value theorem after observing that $\snr{\tx{P}'''(t)}\lesssim_{\nr{p}_{L^{\infty}(\Omega)}} t^{-1}\tx{P}''(t)$ and exploiting the very definition of $\tx{P}''$. Finally, let us take care of \eqref{a.5}. A direct computation (take first derivatives), shows that $t\mapsto t^{\mu}\lambda(\cdot,t)$ is nondecreasing for all $t\in (0,\infty)$ and any $\mu\ge 1$. Concerning $\eqref{a.5}_{2}$, set $\vartheta:=2\left(\nr{p}_{L^{\infty}(\Omega)}-1\right)\in [0,1)$, 
cf.\ \eqref{r*}, and notice that if $t\in (0,1)$, then $(1+t)^{-1},\log(1+t)t^{-1}\approx 1$, thus $\max\{t^{-\vartheta},1\}\lambda(x,t)=t^{-\vartheta}\lambda(x,t)\approx_{\nr{p}_{L^{\infty}(\Omega)}} t^{-\vartheta+2(p(x)-1)}$, which is decreasing for all $(x,t)\in \Omega\times (0,1)$. On the other hand, if $t\ge 1$, then $\max\{t^{-\vartheta},1\}\lambda(x,t)=\lambda (x,t)$, which is decreasing for all $x\in \Omega$, therefore $\eqref{a.5}_{2}$ comes by combining the two previous observations. 
\subsection{Verification of \eqref{a.3} and \eqref{a.5.2s}}\label{c.3} Let $B_{\rr}\subseteq B$ be any ball, set $\mathcal{p}(x,t):=(t\log(1+t))^{p(x)-1}$ and notice that, given the explicit expressions of $\tx{P}'(x,t)t^{-1}$ and $\lambda(x,t)$, it is enough to estimate $\osc_{x\in B_{\rr}}\mathcal{p}(x,t)$. Note that \eqref{r*} implies that
\eqn{c.3.1}
$$
\mathcal{p}(x,t)\lesssim_{\nr{p}_{L^{\infty}(\Omega)}} \max\left\{1,\left(t\log(1+t)\right)\right\}^{\frac{\vartheta_{*}}{4}},
$$
and
\begin{eqnarray}\label{c.3.2}
\snr{\mathcal{p}(x_{1},t)-\mathcal{p}(x_{2},t)}&\le& [p]_{0,\alpha;\Omega}\snr{B_{\rr}}^{\frac{\alpha}{n}}\log\left(\max\left\{1,t\log(1+t)\right\}\right)\max\left\{1,\left(t\log(1+t)\right)\right\}^{\nr{p}_{L^{\infty}(\bar{B})}-1}\nonumber \\
&\le&c([p]_{0,\alpha;\Omega},\vartheta_{*})\snr{B_{\rr}}^{\frac{\alpha}{n}}\max\left\{1,\left(t\log(1+t)\right)\right\}^{\frac{\vartheta_{*}}{2}},
\end{eqnarray}
for all $x_{1},x_{2}\in B_{\rr}$, where we also used well-known properties of logarithms. We then obtain
\begin{flalign}\label{c.3.3}
&\snr{\lambda(x_{1},t)-\lambda(x_{2},t)}t+\snr{\tx{P}'(x_{1},t)-\tx{P}'(x_{2},t)}\nonumber \\
&\qquad \qquad \quad \le\snr{p(x_{1})-p(x_{2})}\nr{\mathcal{p}(\cdot,t)}_{L^{\infty}(\Omega)}\left(\log(1+t)+\frac{t}{1+t}\right)\nonumber \\
&\qquad\qquad \quad \quad +\nr{p}_{L^{\infty}(\Omega)}\left(\log(1+t)+\frac{t}{1+t}\right)\snr{\mathcal{p}(x_{1},t)-\mathcal{p}(x_{2},t)}\nonumber \\
&\qquad \qquad \quad \quad +\left(\frac{t}{1+t}\right)\snr{\mathcal{p}(x_{1},t)-\mathcal{p}(x_{2},t)}\nonumber \\
&\qquad \qquad \quad \le c\snr{B_{\rr}}^{\frac{\alpha}{n}}\left(1+\log(1+t)+\frac{t}{1+t}\right)\max\left\{1,\left(t\log(1+t)\right)\right\}^{\frac{\vartheta_{*}}{2}}\nonumber \\
&\qquad \qquad \quad \le c\snr{B_{\rr}}^{\frac{\alpha}{n}}+c\snr{B_{\rr}}^{\frac{\alpha}{n}}(t\log(1+t))^{\frac{3\vartheta_{*}}{4}}\le c\snr{B_{\rr}}^{\frac{\alpha}{n}}+c\snr{B_{\rr}}^{\frac{\alpha}{n}}\inf_{x\in B_{\rr}}\tx{P}(x,t)^{\vartheta_{*}},
\end{flalign}
and \eqref{a.3} holds with $c\equiv c(\nr{p}_{C^{0,\alpha}(\Omega)},\alpha,\vartheta_{*})$. To validate \eqref{a.5.2s}, we look back at the last-but-one line of the previous display, and use the slow growth of logarithms to bound
$$
\snr{\tx{P}'(x_{1},t)-\tx{P}'(x_{2},t)}\le c\snr{B_{\rr}}^{\frac{\alpha}{n}}+c\snr{B_{\rr}}^{\frac{\alpha}{n}}(t\log(1+t))^{\frac{3\vartheta_{*}}{4}}\le c\snr{B_{\rr}}^{\frac{\alpha}{n}}\left(1+t^{\vartheta_{*}}\right),
$$
for $c\equiv c(\nr{p}_{C^{0,\alpha}(\Omega)},\alpha,\vartheta_{*})$. Letting $\vartheta_{*}=\gamma-1$, any $1\le\nr{p}_{L^{\infty}(\Omega)}<\gamma<1+\alpha/2$, together with \eqref{r*} works.\\\\
\noindent The assumptions of Theorem \ref{mt} are then verified by merging the content of Sections \ref{c.1}--\ref{c.3}, and Theorem \ref{mt.1} follows.
\begin{remark}
One can check that analogous considerations hold if we replace the integrand $\tx{P}$ in \eqref{px}, by
    $$
    \tx{P}_{1}(x,t):=t^{p(x)}\log(1+t)\quad \mbox{or}\quad \tx{P}_{2}(x,t):=t(\log(1+t))^{p(x)},
    $$
    with exponent $p$ as in \eqref{px}.
\end{remark}
\subsubsection*{\textbf{Log-Double Phase}} Our setting enc also the Log-Double Phase energy, that is functional \eqref{dp}, subject to \eqref{aq1}.
Maximal regularity for minima of functional $\eqref{dp}_{1}$ has been obtained in \cite{dm23b,ddp24} in the scalar setting $N=1$, here we show that our approach grants the vectorial counterpart of \cite[Theorem 1.3]{dm23b}.\footnote{Theorem \ref{mt} does not cover \cite[Theorem 1]{ddp24}, which deals with bounded minima of integral $\eqref{dp}$ within the sharp maximal nonuniformity range $q<1+\alpha$, and requires a completely different strategy.} 
\subsection{Verification of \eqref{a.1}--\eqref{a.4}}\label{c.5} For $(x,t)\in \Omega \times [0,\infty)$, set $\tx{H}(x,t):=t\log(1+t)+a(x)t^{q}$, with coefficient $a$ and exponent $q$ as in \eqref{dp}--\eqref{aq1}. Verifying \eqref{a.1}--\eqref{a.2} is straightforward, given the definition of integrand $\tx{H}$. The monotonicity conditions in \eqref{a.2.x} hold for any $\gamma\ge q$. Moreover, the conditions in \eqref{a.4} are satisfied with $\tx{g}(t):=\log(1+t)$.
\subsection{Verification of \eqref{a.5.1}--\eqref{a.5.x} and \eqref{a.5.2}}\label{c.6} For $(x,t)\in \Omega\times (0,\infty)$, we have
$$
\begin{cases}
    \displaystyle
    \ \frac{\tx{H}'(x,t)}{t}=\frac{\log(1+t)}{t}+\frac{1}{1+t}+qa(x)t^{q-2}\vspace{1.5mm}\\
    \displaystyle
    \ \tx{H}''(x,t)=\frac{2+t}{(1+t)^{2}}+q(q-1)a(x)t^{q-2}.
\end{cases}
$$
We then define
$$
\begin{cases}
    \displaystyle
    \ \lambda(x,t):=\frac{1}{1+t}+a(x)t^{q-2}\vspace{1.5mm}\\
    \displaystyle
    \ \Lambda(x,t):=\frac{1+\log(1+t)}{1+t}+a(x)t^{q-2},
\end{cases}
$$
so that 
$$
\min\left\{\frac{\tx{H}'(x,t)}{t},\tx{H}''(x,t)\right\}\ge \lambda(x,t),\qquad \quad \max\left\{\frac{\tx{H}'(x,t)}{t},\tx{H}''(x,t)\right\}\lesssim_{q}\Lambda(x,t),
$$
thus \eqref{a.5.1} and \eqref{a.5.x} are satisfied up to constants depending on $(n,N,q)$, cf. \cite[Section 3]{dm23b}. Furthermore, \eqref{a.5} is true for $\mu=1$ and any $\vartheta\ge0$ - recall that, by \eqref{aq1}, $1<q<2$. Finally, the splitting structure of $\tx{H}$ guarantees the validity of the H\"older condition \eqref{a.5.2} of $\tx{H}''$. In fact, \eqref{a.5.2} can be verified by treating separately the (weighted) $q$-power term, for which \eqref{a.5.2} is standard \cite[Remark 2.3]{dsv09}, and the nearly linear growing one, that is subject to considerations analogous to those in Section \ref{c.2}.
\subsection{Verification of \eqref{a.3} and \eqref{a.5.2s}}\label{c.7} Fix any ball $B_{\rr}\subset \Omega$. By the splitting structure of $\tx{H}$, it is enough to control
$$
t^{q-1}\snr{a(x_{1})-a(x_{2})}\le [a]_{0,\alpha;\Omega}\snr{B_{\rr}}^{\frac{\alpha}{n}}t^{q-1}\stackrel{\eqref{aq1}}{\le}[a]_{0,\alpha;\Omega}\snr{B_{\rr}}^{\frac{\alpha}{n}}\left(\inf_{x\in B_{\rr}}\tx{H}(x,t)\right)^{\vartheta_{*}},
$$
with $\vartheta_{*}:=q-1$, admissible by \eqref{aq1}. Finally, \eqref{a.5.2s} holds by direct computation via \eqref{aq1}.
\subsubsection*{\textbf{Log-Multi Phase}} The prototypical model we have in mind is
\eqn{mp}
$$
\begin{cases}
\displaystyle
\ w\mapsto \int_{\Omega}\snr{Dw}\log(1+\snr{Dw})+\sum_{i=1}^{k}a_{i}(x)\snr{Dw}^{q_{i}}\dx\vspace{1.5mm}\\
\displaystyle
\ 0\le a_{i}(\cdot)\in C^{0,\alpha_{i}}(\Omega),  \ \ i\in \{1,\cdots,k\},
\end{cases}
$$
for exponents $q_{i}>1$, $\alpha_{i}\in (0,1]$ for all $i\in \{1,\cdots, k\}$, verifying
$$
q_{i}<1+\frac{\alpha_{i}}{n}\qquad \mbox{for all} \ \ i\in \{1,\cdots,k\}.
$$
Recalling the third bullet of Remark \ref{rmp}, the verification of the assumptions listed in Section \ref{sa} is analogous to the Log-Double Phase case (with obvious structural variations accounting for the multiple phases), thus Theorem \ref{mt} grants the validity of Theorem \ref{mt.1.5} (and of its Log-Multi Phase counterpart). In particular, Schauder theory for vector-valued minimizers of the Log-Multi Phase energy \eqref{mp} holds true, see \cite[Theorem 1.1 and Corollary 1.2]{dp24} for the scalar counterpart.
\begin{remark}
Similar considerations as in Sections \emph{\textbf{Log-Double Phase}} - \emph{\textbf{Log-Multi Phase}} show that the assumptions listed in Section \ref{sa} cover also the perturbed model $\ti{\tx{H}}(x,t):=t\log(1+t)+a(x)t^{q}\log(1+t)$, with obvious extensions to the multi-phase scenario.
\end{remark}
\subsubsection*{\textbf{Log-Double Phase with variable exponents}} This is integral \eqref{dppx}, under assumptions \eqref{pxqx}.
We focus on the case $1=\inf_{x\in \Omega}p(x)$, otherwise the gradient H\"older continuity of minima follows by more standard means, cf. \cite{rt20,bb90} and references therein. We can therefore proceed as in Section \ref{c.2}, set $\tx{J}(x,t):=(t\log(1+t))^{p(x)}+a(x)t^{q(x)}$, $q_{0}:=\inf_{x\in \Omega}q(x)$, fix any $\vartheta_{*}\in (\nr{q}_{L^{\infty}(\Omega)}-1,\min\{\alpha,\sigma\}/n)$ and work on balls $B(\equiv B_{r}(x_{0}))\Subset \Omega$ such that $\min_{x\in \bar{B}}p(x)=1$, $q_{0;B}:=\min_{x\in \bar{B}}q(x)>1$ and with radius $r\in (0,r_{*}]$ for some threshold $r_{*}\in (0,1)$ satisfying
\eqn{px.1}
$$
\max\{[p]_{0,\beta;\Omega},[q]_{0,\beta;\Omega}\}r_{*}^{\sigma}\le \frac{\vartheta_{*}}{8} \ \Longrightarrow \ 
\nr{p}_{L^{\infty}(\bar{B})}\le 1+\frac{\vartheta_{*}}{8}.
$$
Let us verify the validity of the conditions in Section \ref{sa}.
\subsection{Verification of \eqref{a.1}--\eqref{a.4}}\label{c.10} Assumptions \eqref{a.1}--\eqref{a.2.x}$_{1}$ immediately follow from the definition of integrand $\tx{J}$, $\eqref{a.2.x}_{2}$ is satisfied for any $\gamma>\nr{q}_{L^{\infty}(\bar{B})}$, and \eqref{a.4} holds with $\tx{g}(t):=\log(1+t)$.
\subsection{Verification of \eqref{a.5.1}--\eqref{a.5.x} and \eqref{a.5.2}} For $(x,t)\in \Omega\times (0,\infty)$, 
$$
\begin{cases}
    \displaystyle
    \ \frac{\tx{J}'(x,t)}{t}=p(x)\left(t\log(1+t)\right)^{p(x)-1}\left(\frac{\log(1+t)}{t}+\frac{1}{1+t}\right)+q(x)a(x)t^{q(x)-1}\vspace{1.5mm}\\
    \displaystyle
    \ \tx{J}''(x,t)=\left(\frac{\tx{J}'(x,t)}{t}\right)\left(\frac{(p(x)-1)}{\log(1+t)}\left(\log(1+t)+\frac{t}{1+t}\right)\right)\vspace{1.5mm} \\
    \displaystyle
\qquad\qquad\quad+p(x)\left(t\log(1+t)\right)^{p(x)-1}\left(\frac{2+t}{(1+t)^{2}}\right)+q(x)(q(x)-1)a(x)t^{q(x)-2}.
\end{cases}
$$
Letting
$$
\begin{cases}
    \displaystyle
    \ \lambda(x,t):=\frac{(t\log(1+t))^{p(x)-1}}{1+t}+a(x)t^{q(x)-2}\vspace{1.5mm}\\
    \displaystyle
    \ \Lambda(x,t):=t^{p(x)-2}(\log(1+t))^{p(x)-2}\max\left\{1,(\log(1+t))^{2}\right\}+a(x)t^{q(x)-2},
\end{cases}
$$
a direct computation gives
$$
\min\left\{\frac{\tx{J}'(x,t)}{t},\tx{J}''(x,t)\right\}\ge \lambda(x,t),\qquad \quad \max\left\{\frac{\tx{J}'(x,t)}{t},\tx{J}''(x,t)\right\}\lesssim_{q}\Lambda(x,t),
$$
and \eqref{a.5.1} follows, with constants depending on $(n,N,\nr{p}_{L^{\infty}(\Omega)},\nr{q}_{L^{\infty}(\Omega)},q_{0})$. The monotonicity properties in \eqref{a.5} hold with $\vartheta:=2(\nr{p}_{L^{\infty}(\Omega)}-1)$, by combining Sections \ref{c.2} and \ref{c.6}. Moreover, \eqref{a.5.x}$_{1}$ trivially follows from the definition of $\lambda$, and
$$
\frac{\Lambda(x,t)}{\lambda(x,t)}\lesssim_{\nr{p}_{L^{\infty}(\Omega)},\nr{q}_{L^{\infty}(\Omega)}}1+\log(1+t),
$$
which is $\eqref{a.5.x}_{2}$, valid for all $\mu\ge 1$. Finally, thanks to the splitting structure of integrand $\tx{J}$, \eqref{a.5.2} follows by merging again Sections \ref{c.2} and \ref{c.6}.
\subsection{Verification of \eqref{a.3} and \eqref{a.5.2s}} Let $\mathcal{p}$ be the auxiliary function introduced in Section \ref{c.3}, set $\mathcal{q}(x,t):=a(x)t^{q(x)-1}$ and let $B_{r}\subseteq B$. We then estimate, via \eqref{c.3.1} and \eqref{px.1},
\begin{eqnarray*}
\mathcal{p}(x,t)+\mathcal{q}(x,t)&\le&c\max\left\{1,\left(t\log(1+t)\right)\right\}^{\frac{\vartheta_{*}}{4}}+ca(x)\ell_{1}(t)^{\nr{q}_{L^{\infty}(\Omega)}-1}\nonumber \\
&\le&c\ell_{1}(t)^{\frac{\vartheta_{*}}{2}}+c\ell_{1}(t)^{\nr{q}_{L^{\infty}(\Omega)}-1},
\end{eqnarray*}
for $c\equiv c(n,\nr{p}_{L^{\infty}(\Omega)},\nr{q}_{L^{\infty}(\Omega)},\alpha)$, and
\begin{eqnarray*}
\snr{\mathcal{q}(x_{1},t)-\mathcal{q}(x_{2},t)}&\le &c\snr{q(x_{1})-q(x_{2})}\snr{\log(t)}t^{\nr{q}_{L^{\infty}(\Omega)}-1}\nonumber \\
&&+c\snr{a(x_{1})-a(x_{2})}\ell_{1}(t)^{\nr{q}_{L^{\infty}(\Omega)}-1}\le c\snr{B_{\rr}}^{\frac{\min\{\alpha,\sigma\}}{n}}\ell_{1}(t)^{\nr{q}_{L^{\infty}(\Omega)}-1+\epsilon},
\end{eqnarray*}
where $\epsilon\in (\nr{q}_{L^{\infty}(\Omega)}-1,\vartheta_{*})$ is any number and $c\equiv c(n,\nr{a}_{C^{0,\alpha}(\Omega)}, \nr{q}_{C^{0,\sigma}(\Omega)},\alpha)$. By the splitting structure of $\tx{J}$ and \eqref{c.3.3} we get
\begin{eqnarray*}
\snr{\lambda(x_{1},t)-\lambda(x_{2},t)}t+\snr{\tx{J}'(x_{1},t)-\tx{J}'(x_{2},t)}&\le&c\snr{B_{\rr}}^{\frac{\sigma}{n}}\left(1+(t\log(1+t))^{\frac{3\vartheta_{*}}{4}}\right)\nonumber \\
&&+c\snr{\mathcal{q}(x_{1},t)-\mathcal{q}(x_{1},t)}+c\snr{q(x_{1})-q(x_{2})}\ell_{1}(t)^{\nr{q}_{L^{\infty}(\Omega)}-1}\nonumber \\
&\le&c\snr{B_{\rr}}^{\frac{\sigma}{n}}+c\snr{B_{\rr}}^{\frac{\sigma}{n}}\left(\inf_{x\in B_{\rr}}\tx{J}(x,t)\right)^{\vartheta_{*}}\nonumber \\
&&+c\snr{B_{\rr}}^{\frac{\min\{\alpha,\sigma\}}{n}}\ell_{1}(t)^{\nr{q}_{L^{\infty}(\Omega)}-1+\epsilon}\nonumber \\
&\le&c\snr{B_{\rr}}^{\frac{\min\{\sigma,\alpha\}}{n}}+c\snr{B_{\rr}}^{\frac{\min\{\sigma,\alpha\}}{n}}\left(\inf_{x\in \bar{B}}\tx{J}(x,t)\right)^{\vartheta_{*}},
\end{eqnarray*}
with $c\equiv c(n,\nr{a}_{C^{0,\alpha}(\Omega)}, \nr{q}_{C^{0,\sigma}(\Omega)},\alpha)$, and \eqref{a.3} is verified. Concerning \eqref{a.5.2s}, we just need to restrict the size of $\gamma$ in Section \ref{c.10}: any $\gamma\in (\nr{q}_{L^{\infty}(\Omega)},1+\vartheta_{*})$ works.
\begin{remark} 
Let us quickly comment on possible extensions for the variable exponent Log-Double Phase model.
\begin{itemize}
\item 
    The content of Section \emph{\textbf{Log-Double Phase with variable exponents}} can be adapted to similar integrals like
$$
\begin{cases}
    \displaystyle
    \ w\mapsto \int_{\Omega}(\snr{Dw}\log(1+\snr{Dw}))^{p(x)}+a(x)\left(\sqrt{1+\snr{Dw}^{2}}\right)^{q(x)}\vspace{1.5mm}\\
    \displaystyle
    \ 0\le a(\cdot)\in C^{0,\alpha}(\Omega),\qquad 1\le p(\cdot)\le q(\cdot)\in C^{0,\beta}(\Omega),
\end{cases}
$$
with $\alpha,\beta\in (0,1]$, and
$$
\nr{q}_{L^{\infty}(\Omega)}<1+\frac{\alpha}{n},\qquad \quad\inf_{x\in \Omega}q(x)=\inf_{x\in \Omega}p(x)=1. 
$$
\item Needless to say, all the considerations made for integral \eqref{dppx} can be extended effortlessly to the variable exponent Multi-Phase case via the last bullet of Remark \ref{rmp}.
\end{itemize}
   
\end{remark}
\begin{remark}
Our results apply also when a general, $\mu$-elliptic integrand $\tx{L}\in C^{2}_{\loc}(0,\infty)\cap C^{1}_{\loc}[0,\infty)$ satisfying
    $$
    \begin{cases}
        \displaystyle
        \ t\tx{g}(t)-1\lesssim \tx{L}(t)\lesssim t\tx{g}(t)+1\vspace{1.5mm}\\
        \displaystyle
        \ \min\{t^{-1}\tx{L}'(t),\tx{L}''(t)\}\gtrsim (1+t)^{-\mu}\vspace{1.5mm}\\
        \displaystyle
        \ \max\{t^{-1}\tx{L}'(t),\tx{L}''(t)\}\lesssim \frac{\tx{g}(t)+1}{1+t}
    \end{cases}
    $$
    for all $t\in (0,\infty)$, some function $\tx{g}$ as in \eqref{a.4}$_{1,2,3}$ and $\mu>1$ arbitrarily close to one, replaces the basic nearly linear growing quantity $t\mapsto t\log(1+t)$ in all models listed above. This includes, for instance, the case of iterated logarithms:
    $$
\begin{cases}
\ \tx{L}(t):=t\bar{\tx{L}}_{i+1}(t)\quad & \mbox{for} \ \ i\ge 0\\
\ \bar{\tx{L}}_{i+1}(t):=\log(1+\bar{\tx{L}}_{i}(t)) \quad & \mbox{for} \ \ i\ge 0\\
\ \bar{\tx{L}}_{0}(t):=t,
\end{cases}
$$
see \cite{fm00,dm23b}. The only changes affect the eigenvalues $\lambda$, $\Lambda$ which now grow at infinity as $(1+t)^{-\mu}$ and $(1+\tx{g}(t))(1+t)^{-1}$ respectively.
\end{remark}
\section{Degeneracy/growth corrections}\label{dgc}
\noindent In this section we construct suitable regularized counterparts of the integrands introduced in Section \ref{sa}. Let $s\mapsto \sigma_{s}$ be a decreasing function with $\sigma_{0}=0$, $\delta,\varepsilon\in (0,1/4)$ be positive numbers, and, for $(x,t)\in \Omega\times [0,\infty)$, define 
\eqn{ade.1}
$$
\begin{cases}
\displaystyle
\ \mathcal{a}_{\delta}(x,t):=\mathcal{a}(x,\ell_{\delta}(t)),\qquad \quad &\mathcal{a}_{\delta}^{\varepsilon}(x,t):=\mathcal{a}_{\delta}(x,t)+4\gamma\sigma_{\varepsilon}\ell_{1}(t^{2})^{2\gamma-1},\vspace{1.5mm}\\ \displaystyle 
\ \bar{\mathcal{a}}_{\delta}(x,t):=\mathcal{a}_{\delta}(x,t)+t\mathcal{a}_{\delta}'(x,t),\qquad \quad &\bar{\mathcal{a}}_{\delta}^{\varepsilon}(x,t):=\mathcal{a}_{\delta}^{\varepsilon}(x,t)+t(\mathcal{a}_{\delta}^{\varepsilon})'(x,t),
\end{cases}
$$
set corresponding integrands
\begin{flalign}\label{adade}
A_{\delta}(x,t):=\int_{0}^{t}\mathcal{a}_{\delta}(x,s)s\ds,\qquad \mbox{and}\qquad A_{\delta}^{\varepsilon}(x,t):=A_{\delta}(x,t)+\sigma_{\varepsilon} \ell_{1}(t^{2})^{2\gamma},
\end{flalign}
for all $(x,z)\in \Omega\times \mathbb{R}^{N\times n}$, and name 
\begin{flalign*}
&\tx{f}_{\delta}(x,z):=A_{\delta}(x,\snr{z}),\qquad \qquad \quad \tx{f}_{\delta}^{\varepsilon}(x,z):=A_{\delta}^{\varepsilon}(x,\snr{z}),
\end{flalign*}
see \cite{de08,dsv09}. Define also the corrected ``eigenfunctions''
\begin{flalign*}
&\lambda_{\delta}(x,t):=\lambda(x,\ell_{\delta}(t)),\qquad \quad \ \lambda_{\delta}^{\varepsilon}(x,t):=\lambda_{\delta}(x,t)+\sigma_{\varepsilon}\ell_{1}(t^{2})^{2\gamma-1}\\
&\Lambda_{\delta}(x,t):=\Lambda(x,\ell_{\delta}(t)),\qquad \quad \Lambda_{\delta}^{\varepsilon}(x,t):=\Lambda_{\delta}(x,t)+\sigma_{\varepsilon}\ell_{1}(t^{2})^{2\gamma-1},
\end{flalign*}
that now make sense for all $(x,t)\in \Omega\times [0,\infty)$, and the primitive
$$
\tx{a}_{\delta}(x,t):=\int_{0}^{t}\lambda_{\delta}(x,s)s\ds\qquad \mbox{and}\qquad \tx{a}_{\delta}^{\varepsilon}(x,t):=\int_{0}^{t}\lambda_{\delta}^{\varepsilon}(x,s)s\ds.
$$
By construction, \eqref{a.1} and \eqref{a.3}, we have 
\eqn{regreg}
$$
z\mapsto \tx{f}_{\delta}^{\varepsilon}(\cdot,z)\in C^{2}_{\loc}(\mathbb{R}^{N\times n})\qquad \mbox{and}\qquad x\mapsto \tx{f}_{\delta}^{\varepsilon}(x,\cdot)\in C^{0,\alpha}(\Omega).
$$
In fact, $\eqref{regreg}_{1}$ follows directly by construction, while $\eqref{regreg}_{2}$ is detailed in \eqref{a12.x}$_{1}$ below. Let us show that the newly defined maps preserve the original structural features of $A$ and $b$.
\begin{lemma}\label{btbt}
    There exists $\tx{t}\equiv \tx{t}(A,\tx{g})\ge 1$ such that 
    \eqn{d2.3}
    $$
    \begin{cases}
    \displaystyle
    \ t\ge \tx{t} \ \Longrightarrow \ t\le c\tx{g}(t)\vspace{1.5mm}\\
    \displaystyle
    \  t\tx{g}(t)+\sigma_{\varepsilon} t^{4\gamma}\le cA_{\delta}^{\varepsilon}(x,t)+c,
    \end{cases}
    $$
    for all $(x,t)\in \Omega\times [0,\infty)$, with $c\equiv c(A,\tx{g},\gamma)$. Moreover,
    \eqn{add}
    $$
    \begin{cases}
    \displaystyle
    \ t\mapsto \frac{A_{\delta}(\cdot,t)}{t} \ \ \mbox{is almost increasing for all} \ \ t\in  (0,\infty)\vspace{1.5mm}\\
    \displaystyle
    \ t\mapsto \frac{A_{\delta}(\cdot,t)}{t^{\gamma}} \ \ \mbox{is almost decreasing for all} \ \ t\in  [1,\infty),
    \end{cases}
    $$
    and for every constant $\tx{d}\in [0,\infty)$,
    \eqn{d2}
$$
A_{\delta}^{\varepsilon}(x,\tx{d}t)\le c(A,\gamma,\tx{d})\left(1+A_{\delta}^{\varepsilon}(x,t)\right),
$$
holds for all $(x,t)\in \Omega\times [0,\infty)$. Finally,
\eqn{d2.2}
$$
A_{\delta}'(x,t)\le c\left(\frac{A_{\delta}(x,t)}{\ell_{\delta}(t)}+1\right)
$$
for all $(x,t)\in \Omega\times [0,\infty)$, with $c\equiv c(A,\gamma)$.
\end{lemma}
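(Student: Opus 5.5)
The plan is to compare $A_{\delta}$ with $A$ through the elementary identity
$$
A_{\delta}(x,t)=\int_{0}^{t}\mathcal{a}(x,\ell_{\delta}(s))s\ds .
$$
Since $s\le \ell_{\delta}(s)$, this gives $A_{\delta}(x,t)\le \int_{0}^{t}A'(x,\ell_{\delta}(s))\ds\le A(x,t+\delta)$.

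For a lower bound, restrict to $s\in(t/2,t)$ with $t\ge 1$. There $\ell_{\delta}(s)\le 2s$, so $s\ge \ell_\delta(s)/2$ and
$$
A_\delta(x,t)\ge \tfrac12\int_{t/2}^{t}A'(x,\ell_\delta(s))\ds=\tfrac12\bigl(A(x,t+\delta)-A(x,t/2+\delta)\bigr).
$$
To turn this into $A_\delta(x,t)\gtrsim A(x,t)-c$, I would use the almost-monotonicity in \eqref{a.2.x}. Convexity of $t\mapsto A(x,t)$ (from \eqref{a.5.1}) gives $A'(x,\tau)\ge A(x,\tau)/\tau$. Combined with $\eqref{a.2.x}_1$ this yields $\int_{t/2}^{t}A'\gtrsim A(x,t/2)$. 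The $\Delta_2$ property (footnote to \eqref{a.2.x}) then gives $A(x,t/2)\gtrsim A(x,t)-c$. For $t\le 1$ everything is bounded by \eqref{a.2}, which only costs an additive constant. In summary, I aim for the two-sided comparison
$$
c^{-1}A(x,t)-c\le A_{\delta}(x,t)\le cA(x,t)+c,
$$
uniformly in $\delta\in(0,1/4)$.

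With this comparison in hand, the individual claims go as follows.

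$\eqref{d2.3}_1$ cannot hold as literally written, since $\tx{g}$ grows slower than any power. I would prove the intended statement $1\le c\,\tx{g}(t)$ for $t\ge\tx{t}$. This holds because $\tx{g}$ is nondecreasing and unbounded: choose $\tx{t}$ with $\tx{g}(\tx{t})\ge 1$.

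$\eqref{d2.3}_2$: bound $t\tx{g}(t)\le \tx{c}A+\tx{c}$ by $\eqref{a.4}_4$, then pass to $A_\delta$ via the comparison. Use also $\sigma_\varepsilon t^{4\gamma}\le \sigma_\varepsilon\ell_1(t^2)^{2\gamma}$.

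\eqref{add}: for the first part, write $A_\delta(x,t)/t$ as an average. After the change of variables $s=t\tau$,
$$
\frac{A_\delta(x,t)}{t}=\int_0^1 \mathcal{a}(x,\ell_\delta(t\tau))\,t\tau\d\tau .
$$
Monotonicity in $t$ reduces to $\tau\mapsto \mathcal{a}(x,\ell_\delta(\tau))\tau$ being almost increasing. Since $A'(x,\cdot)$ is nondecreasing by convexity, $\mathcal{a}(x,\ell_\delta(\tau))\tau=A'(x,\ell_\delta(\tau))\,\tau/\ell_\delta(\tau)$ is a product of nondecreasing factors. For the second part, $A_\delta/t^\gamma$ on $[1,\infty)$: use the comparison together with $\eqref{a.2.x}_2$. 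The shift $\delta$ is harmless because $t+\delta\le 2t$ when $t\ge 1$.

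\eqref{d2}: follows from the $\Delta_2$ property of $A$, the comparison, and the explicit power form of the $\sigma_\varepsilon$ term.

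\eqref{d2.2}: write $A_\delta'(x,t)=A'(x,\ell_\delta(t))\,t/\ell_\delta(t)\le A'(x,\ell_\delta(t))$. By convexity of $A(x,\cdot)$ and $\Delta_2$,
$$
A'(x,\tau)\le \frac{A(x,2\tau)-A(x,\tau)}{\tau}\lesssim \frac{A(x,\tau)+1}{\tau}.
$$

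The main obstacle is the regime $t\lesssim\delta$ in \eqref{d2.2}. There $A_\delta(x,t)$ is of order $\mathcal{a}(x,\delta)t^2$, which is much smaller than $A(x,\delta)$, while $A'_\delta(x,t)$ is of order $\mathcal{a}(x,\delta)t$. Dividing gives
$$
\frac{A_\delta(x,t)}{\ell_\delta(t)}\approx \frac{\mathcal{a}(x,\delta)t^2}{\delta}\ll \mathcal{a}(x,\delta)t\approx A_\delta'(x,t),
$$
so the ratio term alone fails. This is exactly where the additive constant $+1$ is needed. The point to check is that $A'(x,\ell_\delta(t))\le A'(x,1/2)\le c(A)$ for $t\le 1/4$, using that $A'$ is bounded on compacts uniformly in $x$ by \eqref{a.2} and the $\Delta_2$ property. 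For $t\ge\delta$ the convexity argument applies with $\ell_\delta(t)\approx t$.
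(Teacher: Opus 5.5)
You follow the paper's route. You compare $A_{\delta}$ with $A$ via $A_{\delta}'(x,t)=A'(x,\ell_{\delta}(t))\,t/\ell_{\delta}(t)$, read off $\eqref{d2.3}_{2}$ and \eqref{d2} from $\eqref{a.4}_{4}$ and the $\Delta_{2}$-condition, get $\eqref{add}_{1}$ from convexity, and get \eqref{d2.2} from the Marcellini-type bound $A'(x,\tau)\le (A(x,2\tau)-A(x,\tau))/\tau$. Your observation that $\eqref{d2.3}_{1}$ is false as literally written is correct: take $\omega=1/2$ in $\eqref{a.4}_{3}$. Reading it as $1\le c\,\tx{g}(t)$ for $t\ge\tx{t}$ is a sensible repair.

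The step that does not close as written is $\eqref{add}_{2}$. Almost decreasing is a purely multiplicative property. Your comparison $c^{-1}A-c\le A_{\delta}\le cA+c$ only yields $A_{\delta}(x,t_{2})t_{2}^{-\gamma}\le C\,(A_{\delta}(x,t_{1})+1)\,t_{1}^{-\gamma}$ for $1\le t_{1}\le t_{2}$. Removing the $+1$ requires $A_{\delta}(x,t_{1})\gtrsim 1$ on $[1,\infty)$, uniformly in $x$ and $\delta$, and you never prove this. The paper supplies exactly this in its first line: by the lower bound in \eqref{a.2} and monotonicity, $A(x,t)\ge c>0$ for $t\ge 1$. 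So additive constants can be absorbed, and $A_{\delta}(x,t)\approx A(x,\ell_{\delta}(t))$ holds multiplicatively for $t\ge 1$.

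You can also get this from your own computation, provided you do not pass through the additive $\Delta_{2}$-condition. In the lower bound, use $A(x,a)\le \tfrac{a}{b}A(x,b)$ for $a\le b$ (convexity and $A(x,0)=0$). This gives $A_{\delta}(x,t)\ge \tfrac{t/2}{t/2+\delta}\cdot\tfrac{t/2}{t+\delta}\,A(x,t+\delta)\ge \tfrac{4}{15}A(x,t)$ for $t\ge 1$. In the upper bound, use $\eqref{a.2.x}_{2}$ directly: $A_{\delta}(x,t)\le A(x,2t)\le 2^{\gamma}c\,A(x,t)$ for $t\ge 1$.

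A smaller imprecision concerns \eqref{d2.2}. There the convexity regime must be $t\ge 1/4$ (or any fixed threshold), not $t\ge\delta$. The $+1$ coming from $\Delta_{2}$, divided by $\ell_{\delta}(t)$, is bounded only when $\ell_{\delta}(t)$ is bounded below independently of $\delta$. Your separate treatment of $t\le 1/4$ already covers the remaining range, so this is harmless once stated correctly.
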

\begin{proof}
By $\eqref{a.2}$, there exists a constant $c\equiv c(A)>0$ such that $A(x,t)\ge c$ for all $t\ge 1$, so the convexity of $t\mapsto A(x,t)$ implied by \eqref{a.5.1}, \eqref{a.2.x}$_{1}$ and \eqref{adade} yield \eqref{add} and
\eqn{ad1}
$$
A_{\delta}(x,t)\approx_{A} A(x,\ell_{\delta}(t))\qquad \mbox{for all} \ \ (x,t)\in \Omega\times [1,\infty).
$$
Implication \eqref{d2.3}$_{2}$ then follows via \eqref{ad1}, \eqref{a.4}$_{1,2,4}$ and \eqref{adade}, while $\eqref{d2.3}_{1}$ comes by \eqref{a.4}$_{1,2,3}$. The $\Delta_{2}$-condition in \eqref{d2} is a consequence of \eqref{add}$_{2}$ and the construction of $A_{\delta}^{\varepsilon}$ in \eqref{adade}. As $t\mapsto A_{\delta}^{\varepsilon}(x,t)$ is strictly convex, \eqref{d2.2} is derived from \eqref{add}$_{1}$ and \eqref{d2.2} following the arguments in \cite[Lemma 2.1]{mar89}.
\end{proof}
\noindent In the next lemma we describe the main features of the regularized eigenvalues
\begin{lemma}
    The regularized eigenvalues satisfy
\eqn{a.55}
$$
\begin{cases}
    \displaystyle
  \   t\mapsto \ell_{\delta}(t)^{\mu}\lambda_{\delta}(\cdot,t)\quad &\mbox{is almost increasing for all} \ \ t\in [0,\infty)\vspace{1.5mm}\\ \displaystyle
  \ t\mapsto \max\{\ell_{\delta}(t)^{-\vartheta},\ell_{\delta}(1)\}\lambda_{\delta}(\cdot,t)\quad &\mbox{is almost decreasing for all} \ \ t\in  [0,\infty),
\end{cases}
$$
where $\mu\ge 1$, $\vartheta\in [0,1)$ are the same exponents in $\eqref{a.5}$. 
Moreover, the ellipticity ratio bounds
\eqn{a.555}
$$
\frac{\Lambda_{\delta}(x,t)}{\lambda_{\delta}(x,t)}\le c\rrr_{*}(t)\qquad \mbox{and}\qquad \frac{\Lambda_{\delta}^{\varepsilon}(x,t)}{\lambda_{\delta}^{\varepsilon}(x,t)}\le c\rrr_{*}(t),
$$
hold for all $(x,t)\in \Omega\times [0,\infty)$ and some $c\equiv c(n,N,A,\gamma)$, where $\rrr_{*}$ has been defined in \eqref{a.5.x}, and
\eqn{rrr}
    $$\rrr_{*}(t)\le c(\tx{g},\mu,\omega_{\mu})\ell_{1}(t)^{2(\mu-1+\omega_{\mu})}\qquad \mbox{for all} \ \ t\in [0,\infty), \ \ \mbox{and any} \ \ \omega_{\mu}>0.$$
\end{lemma}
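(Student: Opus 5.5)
The plan is to read every claim directly off the definitions $\lambda_{\delta}(x,t)=\lambda(x,\ell_{\delta}(t))$ and $\Lambda_{\delta}(x,t)=\Lambda(x,\ell_{\delta}(t))$. The point is that $t\mapsto \ell_{\delta}(t)=\delta+t$ is an increasing bijection of $[0,\infty)$ onto $[\delta,\infty)\subset(0,\infty)$, so monotonicity statements for $\lambda(\cdot,s)$ on $(0,\infty)$ transfer to $t$ by the substitution $s=\ell_{\delta}(t)$. I will treat the three groups of claims in order: \eqref{a.55}, then \eqref{a.555}, then \eqref{rrr}.

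\textbf{Monotonicity \eqref{a.55}.} For $\eqref{a.55}_{1}$, if $0\le t_{1}\le t_{2}$ then $s_{i}:=\ell_{\delta}(t_{i})$ satisfy $s_{1}\le s_{2}$. Hence $\eqref{a.5}_{1}$ gives $s_{1}^{\mu}\lambda(x,s_{1})\le c\,s_{2}^{\mu}\lambda(x,s_{2})$ with the same constant, and this is exactly the claim. For $\eqref{a.55}_{2}$ I would use $\eqref{a.5}_{2}$ in the form that $\max\{s^{-\vartheta},1\}\lambda(\cdot,s)$ is almost decreasing. The only issue is the normalization $\ell_{\delta}(1)$ in place of $1$. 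Since $1\le \ell_{\delta}(1)\le 5/4$, we have $\max\{s^{-\vartheta},\ell_{\delta}(1)\}\approx\max\{s^{-\vartheta},1\}$ with absolute constants. Almost monotonicity is stable under multiplication by functions pinched between two constants, so the claim follows after the same substitution.

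\textbf{Ratio bounds \eqref{a.555}.} By $\eqref{a.5.x}_{2}$, $\Lambda_{\delta}/\lambda_{\delta}(x,t)\le \tx{c}\,\rrr_{*}(\ell_{\delta}(t))$, so I need $\rrr_{*}(\delta+t)\lesssim \rrr_{*}(t)$. This holds because $\tx{g}$ is nondecreasing and concave with $\tx{g}\ge0$, so $\tx{g}(t+\delta)\le \tx{g}(t)+\tx{g}(\delta)\le \tx{g}(t)+\tx{g}(1)$, while $\rrr_{*}\ge 1$. Also $\ell_{1}(t+\delta)\le 2\ell_{1}(t)$, and $\mu-1\in[0,1)$. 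For the $\varepsilon$-version I would use the elementary inequality $(a+e)/(b+e)\le \max\{a/b,1\}$ for $a,b,e\ge0$, $b>0$. This applies with $e=\sigma_{\varepsilon}\ell_{1}(t^{2})^{2\gamma-1}$, and $\rrr_{*}\ge1$ finishes the bound. (Positivity of $\lambda_{\delta}$ follows from $\eqref{a.5.x}_{1}$ together with $\eqref{a.5}_{1}$, since $\ell_{\delta}(t)\ge\delta>0$.)

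\textbf{Growth bound \eqref{rrr}.} From $\eqref{a.4}_{3}$ with $\omega=2\omega_{\mu}$ we get $\tx{g}(t)\le c\,\ell_{1}(t)^{2\omega_{\mu}}$. Therefore $\rrr_{*}(t)\le c+c\,\ell_{1}(t)^{\mu-1+2\omega_{\mu}}\le c\,\ell_{1}(t)^{2(\mu-1+\omega_{\mu})}$, using $\ell_{1}\ge1$ and $\mu\ge1$.

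I do not expect a genuine obstacle here. The only point requiring care is checking that the replacement of $1$ by $\ell_{\delta}(1)$ in $\eqref{a.55}_{2}$, and the shift $t\mapsto t+\delta$ inside $\rrr_{*}$, cost only constants independent of $\delta$. Both are handled by the elementary bounds above, using $\delta<1/4$ and the subadditivity of the concave function $\tx{g}$.
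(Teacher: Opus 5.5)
Your proposal is correct and follows the paper's route: the paper declares \eqref{a.55}--\eqref{a.555} immediate from \eqref{a.5}--\eqref{a.5.x} via the substitution $s=\ell_{\delta}(t)$, and obtains \eqref{rrr} from $\eqref{a.4}_{3}$ with $\omega=\mu-1$ or $\omega=\omega_{\mu}$ by cases; your single choice $\omega=2\omega_{\mu}$ works equally well, and you simply spell out details the paper skips. Two small touch-ups: in bounding $\rrr_{*}(t+\delta)$, the leftover term $\tx{g}(1)\ell_{1}(t)^{\mu-1}$ (up to a factor $2$) is absorbed by using $\tx{g}(1)\le\tx{g}(t)$ for $t\ge1$, not by $\rrr_{*}\ge1$ alone, since $\ell_{1}(t)^{\mu-1}$ is unbounded when $\mu>1$; and positivity of $\lambda(\cdot,s)$ for $s<1$ comes from $\eqref{a.5}_{2}$ (or from the standing assumption that $\lambda$ is positive), not from $\eqref{a.5}_{1}$.
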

\begin{proof}
The content of displays \eqref{a.55}--\eqref{a.555} follows immediately from \eqref{a.5}--\eqref{a.5.x}, and the definition of the approximating eigenvalues. Next, estimate \eqref{rrr} comes by $\eqref{a.4}_{3}$ choosing $\omega=\mu-1$ if $\mu>1$ and $\omega=\omega_{\mu}$ with $\omega_{\mu}>0$ being any number if $\mu=1$. 
\end{proof}
\noindent We then collect the main growth/ellipticity properties of our approximating integrands.
\begin{lemma}
The following hold.
\begin{itemize}
\item For all $x\in \Omega$, $z,\xi\in \mathbb{R}^{N\times n}$,
\eqn{lala}
$$
\begin{cases}
\displaystyle
\ \langle\partial^{2}\tx{f}_{\delta}^{\varepsilon}(x,z)\xi,\xi\rangle\ge c\lambda_{\delta}^{\varepsilon}(x,\snr{z})\snr{\xi}^{2}\qquad \mbox{and}\qquad \snr{\partial^{2}\tx{f}_{\delta}^{\varepsilon}(x,z)}\le c\Lambda_{\delta}^{\varepsilon}(x,\snr{z})\vspace{1.5mm}\\
\displaystyle
\ \langle\partial\tx{f}_{\delta}^{\varepsilon}(x,z_{1})-\partial\tx{f}_{\delta}^{\varepsilon}(x,z_{2}),z_{1}-z_{2}\rangle\ge c\lambda_{\delta}^{\varepsilon}(x,\snr{z_{1}},\snr{z_{2}})\snr{z_{1}-z_{2}}^{2},
\end{cases}
$$
with $c\equiv c(n,N,A,\gamma,\vartheta)$. 
\item For all $x\in \Omega$, $t\in [0,\infty)$, 
\eqn{a.7}
$$
\begin{cases}
\displaystyle
\ \bar{\mathcal{a}}_{\delta}^{\varepsilon}(x,t)\ge c\lambda_{\delta}^{\varepsilon}(x,t),\qquad \mathcal{a}_{\delta}^{\varepsilon}(x,t)\ge c\lambda_{\delta}^{\varepsilon}(x,t)\vspace{0.5mm}\\ \displaystyle
\ \bar{\mathcal{a}}_{\delta}^{\varepsilon}(x,t)\le c\Lambda_{\delta}^{\varepsilon}(x,t),\qquad \mathcal{a}_{\delta}^{\varepsilon}(x,t)\le c\Lambda_{\delta}^{\varepsilon}(x,t)\vspace{0.5mm}\\ \displaystyle
\ t\snr{(\mathcal{a}_{\delta}^{\varepsilon})'(x,t)}\le c\Lambda_{\delta}^{\varepsilon}(x,t),
\end{cases}
$$
with $c\equiv c(n,N,A,\gamma)$.
\end{itemize}
\end{lemma}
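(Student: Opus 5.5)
Everything follows from the radial structure $\tx{f}_{\delta}^{\varepsilon}(x,z)=A_{\delta}^{\varepsilon}(x,\snr{z})$. For $z\neq 0$ the Hessian is
$$
\partial^{2}\tx{f}_{\delta}^{\varepsilon}(x,z)=\mathcal{a}_{\delta}^{\varepsilon}(x,\snr{z})\,\mathds{I}+(\mathcal{a}_{\delta}^{\varepsilon})'(x,\snr{z})\,\snr{z}\,\frac{z\otimes z}{\snr{z}},
$$
so its eigenvalues are $\mathcal{a}_{\delta}^{\varepsilon}(x,\snr{z})$ (multiplicity $nN-1$) and $\bar{\mathcal{a}}_{\delta}^{\varepsilon}(x,\snr{z})=(A_{\delta}^{\varepsilon})''(x,\snr{z})$. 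Hence \eqref{lala}$_{1}$ reduces to \eqref{a.7}$_{1,2}$, and I will prove \eqref{a.7} first.

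\emph{Step 1: the bounds \eqref{a.7}.} I would split $\mathcal{a}_{\delta}^{\varepsilon}$ into its $\delta$-part and its $\sigma_{\varepsilon}$-part. The correction $4\gamma\sigma_{\varepsilon}\ell_{1}(t^{2})^{2\gamma-1}$, with its $t$-derivative contribution $t\,\frac{\d}{\dt}\bigl(4\gamma\sigma_{\varepsilon}\ell_{1}(t^{2})^{2\gamma-1}\bigr)=4\gamma(2\gamma-1)\sigma_{\varepsilon}\,2t^{2}\ell_{1}(t^{2})^{2\gamma-2}\ge0$, is comparable to $\sigma_{\varepsilon}\ell_{1}(t^{2})^{2\gamma-1}$ with constants depending only on $\gamma$, since $t^{2}\le\ell_{1}(t^{2})$. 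This handles the extra terms in $\lambda_{\delta}^{\varepsilon}$ and $\Lambda_{\delta}^{\varepsilon}$ from above and below.

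For the $\delta$-part, write $s=\ell_{\delta}(t)\ge\delta>0$. Then $\mathcal{a}_{\delta}(x,t)=\mathcal{a}(x,s)$ and
$$
\bar{\mathcal{a}}_{\delta}(x,t)=\mathcal{a}(x,s)+t\,\mathcal{a}'(x,s)=\frac{t}{s}A''(x,s)+\Bigl(1-\frac{t}{s}\Bigr)\mathcal{a}(x,s),
$$
using $\mathcal{a}+s\mathcal{a}'=A''$. This is a convex combination of $A''(x,s)$ and $A'(x,s)/s$. By \eqref{a.5.1} applied at $\snr{z}=s$, both quantities are eigenvalues of $\partial^{2}\tx{f}(x,\cdot)$, so both lie in $[\tx{c}^{-1}\lambda(x,s),\tx{c}\Lambda(x,s)]=[\tx{c}^{-1}\lambda_{\delta}(x,t),\tx{c}\Lambda_{\delta}(x,t)]$. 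This gives \eqref{a.7}$_{1,2}$.

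For \eqref{a.7}$_{3}$, note $t\snr{\mathcal{a}'(x,s)}\le s\snr{\mathcal{a}'(x,s)}=\snr{A''(x,s)-\mathcal{a}(x,s)}\le 2\tx{c}\Lambda(x,s)$. The correction term is handled as above.

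\emph{Step 2: the monotonicity inequality \eqref{lala}$_{2}$.} I read $\lambda_{\delta}^{\varepsilon}(x,\snr{z_{1}},\snr{z_{2}})$ as $\int_{0}^{1}\lambda_{\delta}^{\varepsilon}(x,\snr{z_{2}+\tau(z_{1}-z_{2})})\d\tau$, or as the equivalent quantity evaluated at $\snr{z_{1}}+\snr{z_{2}}$. I would write
$$
\partial\tx{f}_{\delta}^{\varepsilon}(x,z_{1})-\partial\tx{f}_{\delta}^{\varepsilon}(x,z_{2})=\int_{0}^{1}\partial^{2}\tx{f}_{\delta}^{\varepsilon}\bigl(x,z_{2}+\tau(z_{1}-z_{2})\bigr)(z_{1}-z_{2})\d\tau.
$$
This formula is legitimate because the segment meets $0$ in at most one point, and $\partial\tx{f}_{\delta}^{\varepsilon}$ is $C^{1}$ there, the $\delta$-shift making $\mathcal{a}_{\delta}$ bounded near $t=0$. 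Testing with $z_{1}-z_{2}$ and using \eqref{lala}$_{1}$ gives the integral form.

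Converting the integral form to a pointwise quantity at $\snr{z_{1}}+\snr{z_{2}}$ is the main obstacle. Let $\ell=\ell_{\delta}(\snr{z_{1}}+\snr{z_{2}})$. On the subset of $\tau$ of measure at least $1/4$ where $\snr{z_{2}+\tau(z_{1}-z_{2})}\ge(\snr{z_{1}}+\snr{z_{2}})/4$, the argument of $\lambda_{\delta}$ is comparable to $\ell$. Elsewhere it is at most $\ell$, and there I would use \eqref{a.55}: almost decrease of $\max\{\ell_{\delta}^{-\vartheta},\ell_{\delta}(1)\}\lambda_{\delta}$ combined with almost increase of $\ell_{\delta}^{\mu}\lambda_{\delta}$ gives two-sided comparability of $\lambda_{\delta}$ at comparable arguments. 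Constants then depend on $\vartheta$ (and on $\mu<2$). Alternatively, one can invoke \eqref{l60} when $\lambda_{\delta}$ behaves like a power with exponent $-t$, $t<1$. The $\sigma_{\varepsilon}$-term is increasing and is handled by the same averaging. This yields $c\equiv c(n,N,A,\gamma,\vartheta)$, as stated.
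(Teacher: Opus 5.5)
Your proposal is correct and follows essentially the paper's route. The radial Hessian, with eigenvalues $\mathcal{a}_{\delta}^{\varepsilon}$ and $\bar{\mathcal{a}}_{\delta}^{\varepsilon}=(A_{\delta}^{\varepsilon})''$, reduces \eqref{lala}$_{1}$ to \eqref{a.7}, which you derive explicitly from \eqref{a.5.1} via the convex-combination identity $\bar{\mathcal{a}}_{\delta}(x,t)=\frac{t}{\ell_{\delta}(t)}A''(x,\ell_{\delta}(t))+\frac{\delta}{\ell_{\delta}(t)}\mathcal{a}(x,\ell_{\delta}(t))$ (the paper only defers to an earlier reference there). For \eqref{lala}$_{2}$ you use, like the paper, the mean value theorem plus the almost decrease in \eqref{a.55}$_{2}$, differing only in the averaging step: you keep the set of $\tau$ of measure at least $1/4$ where $\snr{z_{2}+\tau(z_{1}-z_{2})}\ge(\snr{z_{1}}+\snr{z_{2}})/4$, whereas the paper applies \eqref{a.55}$_{2}$ at every $\tau$ and averages the resulting power weight via \eqref{l60}; the two amount to the same thing. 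On the complement of your good set nothing is needed, since it can be discarded by nonnegativity. Fix the typo in your Hessian formula: the rank-one term is $(\mathcal{a}_{\delta}^{\varepsilon})'(x,\snr{z})\,\snr{z}\,\frac{z\otimes z}{\snr{z}^{2}}$, not $\snr{z}\frac{z\otimes z}{\snr{z}}$; the eigenvalues you then read off are the right ones.
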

\begin{proof}
     Observe that
    $$
    \partial^{2}\tx{f}_{\delta}^{\varepsilon}(x,z)=\mathcal{a}_{\delta}^{\varepsilon}(x,\snr{z})\mathds{I}_{N\times n}+(\mathcal{a}_{\delta}^{\varepsilon})'(x,\snr{z})\snr{z}\left(\frac{z\otimes z}{\snr{z}^{2}}\right).
    $$
The bounds in \eqref{a.7} can then be derived from \eqref{a.5.1} and \eqref{adade} as done in \cite[Section 4.1]{dm21}. While estimate \eqref{lala}$_{1}$ is a direct consequence of \eqref{a.5.1} and of the definition of the regularized eigenvalues, the bound in $\eqref{lala}_{2}$ deserves a brief discussion. Let $z_{1},z_{2}\in \mathbb{R}^{N\times n}$, and, for $s\in (0,1)$, set $z_{s}:=z_{2}+s(z_{1}-z_{2})$. By the mean value theorem we have
\begin{eqnarray*}
\langle\partial\tx{f}_{\delta}^{\varepsilon}(x,z_{1})-\partial\tx{f}_{\delta}^{\varepsilon}(x,z_{2}),z_{1}-z_{2}\rangle&=&\int_{0}^{1}\langle\partial^{2}\tx{f}_{\delta}^{\varepsilon}(x,z_{s})(z_{1}-z_{2}),z_{1}-z_{2}\rangle\ds\nonumber \\
&\stackrel{\eqref{lala}_{1}}{\ge}&c\left(\int_{0}^{1}\lambda_{\delta}^{\varepsilon}(x,\snr{z_{s}})\ds\right)\snr{z_{1}-z_{2}}^{2}\nonumber \\
&\stackrel{\eqref{l60}}{\ge}&c\left(\int_{0}^{1}\lambda_{\delta}(x,\snr{z_{s}})\ds\right)\snr{z_{1}-z_{2}}^{2}\nonumber \\
&&+c\sigma_{\varepsilon}\ell_{1}(\snr{z_{1}}^{2}+\snr{z_{2}}^{2})^{2\gamma-1}\snr{z_{1}+z_{2}}^{2}\nonumber\\
&\stackrel{\eqref{a.55}_{2}}{\ge}&c\lambda_{\delta}(x,\snr{z_{1}}+\snr{z_{2}})\max\left\{\ell_{\delta}(1),\ell_{\delta}(\snr{z_{1}}+\snr{z_{2}})^{-\vartheta}\right\}\nonumber \\
&&\cdot\left(\int_{0}^{1}\min\{\ell_{\delta}(1)^{-1},\ell_{\delta}(\snr{z_{s}})^{\vartheta}\}\ds\right)\snr{z_{1}-z_{2}}^{2}\nonumber \\
&&+c\sigma_{\varepsilon}\ell_{1}(\snr{z_{1}}^{2}+\snr{z_{2}}^{2})^{2\gamma-1}\snr{z_{1}+z_{2}}^{2}\nonumber\\
&\stackrel{\eqref{l60}}{\ge}&c\lambda_{\delta}(x,\snr{z_{1}}+\snr{z_{2}})\max\left\{\ell_{\delta}(1),\ell_{\delta}(\snr{z_{1}}+\snr{z_{2}})^{-\vartheta}\right\}\nonumber \\
&&\cdot \ell_{\delta}(\snr{z_{1}}+\snr{z_{2}})^{\vartheta}\min\left\{1,\frac{1}{\ell_{\delta}(1)\ell_{\delta}(\snr{z_{1}}+\snr{z_{2}})^{\vartheta}}\right\}\snr{z_{1}-z_{2}}^{2}\nonumber \\
&&+c\sigma_{\varepsilon}\ell_{1}(\snr{z_{1}}^{2}+\snr{z_{2}}^{2})^{2\gamma-1}\snr{z_{1}+z_{2}}^{2}\nonumber\\
&\ge&c\lambda_{\delta}^{\varepsilon}(x,\snr{z_{1}}+\snr{z_{2}})\snr{z_{1}-z_{2}}^{2},
\end{eqnarray*}
for $c\equiv c(n,N,A,\gamma,\vartheta)$, and the proof is complete.
\end{proof}
\begin{remark}\label{laed}
By construction, \eqref{a.7} holds also if we replace $\mathcal{a}_{\delta}^{\varepsilon}$, $\bar{\mathcal{a}}_{\delta}^{\varepsilon}$ with $\mathcal{a}_{\delta}$ and $\bar{\mathcal{a}}_{\delta}$ respectively. Of course, now $\Lambda_{\delta}$, $\lambda_{\delta}$ substitute $\Lambda_{\delta}^{\varepsilon}$, $\lambda_{\delta}^{\varepsilon}$ respectively.
\end{remark}
\noindent We highlight some mutual bounds for the auxiliary function $\tx{a}_{\delta}^{\varepsilon}$, the main integrand $A_{\delta}^{\varepsilon}$ and eigenvalue $\lambda_{\delta}^{\varepsilon}$.
\begin{lemma}
For all $(x,t)\in \Omega\times [0,\infty)$, 
\eqn{a.7.1.x}
$$
\begin{cases}
\displaystyle
\ \tx{a}_{\delta}^{\varepsilon}(x,t)\le A_{\delta}^{\varepsilon}(x,t)\le c\rrr_{*}(t)\tx{a}_{\delta}^{\varepsilon}(x,t),\vspace{0.5mm}\\ \displaystyle
\ \lambda_{\delta}^{\varepsilon}(x,t)\ell_{\delta}(t)^{2}\le c\tx{a}_{\delta}^{\varepsilon}(x,t)+c,\vspace{0.5mm}\\ \displaystyle
\ \ell_{1}(t)^{-\mu}+\sigma_{\varepsilon}\ell_{1}(t^{2})^{2\gamma-1}\le c\lambda_{\delta}^{\varepsilon}(x,t), \vspace{0.5mm}\\ \displaystyle
\ \ell_{1}(t)^{2-\mu}+\sigma_{\varepsilon}\ell_{1}(t^{2})^{2\gamma}\le c\tx{a}_{\delta}^{\varepsilon}(x,t)+c, 
\end{cases}
$$
for $c\equiv c(n,N,A,\tx{g},\mu,\gamma,\vartheta_{*})$.
\end{lemma}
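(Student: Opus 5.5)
The plan is to reduce everything to pointwise one-dimensional comparisons in $t$, using the definitions of $\tx{a}_{\delta}^{\varepsilon}$ and $A_{\delta}^{\varepsilon}$ as integrals of $s\lambda_{\delta}^{\varepsilon}(x,s)$ and $s\mathcal{a}_{\delta}(x,s)$ (plus the $\sigma_{\varepsilon}$-term). We treat the $\delta$-part and the $\sigma_{\varepsilon}$-part separately, since the latter is explicit. The $\sigma_{\varepsilon}$-part is elementary: $\int_0^t\sigma_{\varepsilon}\ell_1(s^2)^{2\gamma-1}s\,\ds=\frac{\sigma_{\varepsilon}}{4\gamma}(\ell_1(t^2)^{2\gamma}-1)$, whereas $A_\delta^\varepsilon$ contains $\sigma_\varepsilon\ell_1(t^2)^{2\gamma}$. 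So the $\sigma_\varepsilon$-contributions to \eqref{a.7.1.x}$_{1,2,4}$ are comparable up to additive constants. One caveat: in the lower bound $\tx{a}_{\delta}^{\varepsilon}\le A_{\delta}^{\varepsilon}$ these are handled without constants, via $\frac{1}{4\gamma}\ell_1^{2\gamma}-\frac{1}{4\gamma}\le \ell_1^{2\gamma}$.

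\textbf{Step 1: the chain \eqref{a.7.1.x}$_{1}$.}
\begin{itemize}
\item By \eqref{a.7} (Remark \ref{laed}), $\mathcal{a}_\delta\ge c\lambda_\delta$. Integrating against $s\,\ds$ gives $\tx{a}_\delta\lesssim A_\delta$. The constant can be normalized to one by the implicit convention that $\lambda$ is the lowest eigenvalue, i.e. $\lambda\le \mathcal{a}$ as in the models.
\item For the upper bound, \eqref{a.7} gives $\mathcal{a}_\delta\le c\Lambda_\delta\le c\rrr_*(s)\lambda_\delta$ by \eqref{a.555}. Since $\rrr_*$ is nondecreasing (as $\tx{g}$ is nondecreasing and $\mu\ge1$), $\int_0^t \rrr_*(s)\lambda_\delta s\,\ds\le \rrr_*(t)\tx{a}_\delta(x,t)$.
\item Adding the $\sigma_\varepsilon$ terms uses $\rrr_*\ge 1$ and the previous remark. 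The additive constant there is absorbed because $\sigma_\varepsilon\ell_1(t^2)^{2\gamma}-\frac{\sigma_\varepsilon}{4\gamma}(\ell_1(t^2)^{2\gamma}-1)\lesssim \sigma_\varepsilon(\ell_1(t^2)^{2\gamma}-1)+\sigma_\varepsilon$, and $\sigma_\varepsilon$ is handled by redefining with a $-\sigma_\varepsilon$ normalization.
\end{itemize}

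\textbf{Step 2: the lower bound \eqref{a.7.1.x}$_{3}$.} By \eqref{a.55}$_1$, for $t\ge1$ we have $\lambda_\delta(x,t)\gtrsim \ell_\delta(1)^\mu\lambda(x,\ell_\delta(1))\ell_\delta(t)^{-\mu}$. By \eqref{a.5.x}$_1$ together with \eqref{a.5}, $\lambda(x,\ell_\delta(1))\approx\lambda(x,1)\gtrsim 1$ uniformly. For $t\le1$, \eqref{a.55}$_2$ shows $\lambda_\delta(x,t)\gtrsim \lambda_\delta(x,1)\gtrsim1$, since $\max\{\ell_\delta^{-\vartheta},\ell_\delta(1)\}$ is bounded above and below on that range. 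This yields $\lambda_\delta\gtrsim\ell_1(t)^{-\mu}$, and the $\sigma_\varepsilon$ term is identical on both sides.

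\textbf{Step 3: \eqref{a.7.1.x}$_{2}$ and \eqref{a.7.1.x}$_{4}$.} These come from almost monotonicity. For $s\in[t/2,t]$, \eqref{a.55}$_1$ gives $\lambda_\delta(x,s)\gtrsim (\ell_\delta(t)/\ell_\delta(s))^{\mu}\lambda_\delta(x,t)\ge 2^{-\mu}\lambda_\delta(x,t)$. Hence $\tx{a}_\delta(x,t)\ge\int_{t/2}^t\lambda_\delta s\,\ds\gtrsim t^2\lambda_\delta(x,t)$, which controls $\lambda_\delta\ell_\delta(t)^2$ when $t\ge\delta$. For $t<\delta<1$, \eqref{a.55}$_2$ gives $\lambda_\delta(x,t)\lesssim\lambda_\delta(x,0)\lesssim \lambda(x,\delta)\lesssim \delta^{-\vartheta}$ (bounded against $\lambda(x,1)$ through \eqref{a.5}$_2$). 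Thus $\lambda_\delta\ell_\delta^2\lesssim \delta^{2-\vartheta}\le1$, which is absorbed by the additive constant; this is where $\vartheta<1$ is used. Finally, \eqref{a.7.1.x}$_4$ follows by integrating \eqref{a.7.1.x}$_3$: $\int_0^t\ell_1(s)^{-\mu}s\,\ds\gtrsim \ell_1(t)^{2-\mu}-c$ since $\mu<2$.

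\textbf{Main obstacle.} The delicate point is the small-$t$ regime in Step 3, where $\lambda$ may blow up at $0$, combined with keeping the constants in Steps 2--3 uniform in $\delta$ and $\varepsilon$. Both rest on transferring the two almost-monotonicity conditions \eqref{a.55} across the threshold $t=1$.
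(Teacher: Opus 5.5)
\textbf{Main gap: the lower bound \eqref{a.7.1.x}$_3$ on $[0,1]$.} In Step~2 you say that $\max\{\ell_\delta(t)^{-\vartheta},\ell_\delta(1)\}$ is bounded above on $[0,1]$, and conclude $\lambda_\delta(x,t)\gtrsim\lambda_\delta(x,1)$. This is not uniform in $\delta$. At $t=0$ the weight is at least $\delta^{-\vartheta}$, so for $\vartheta>0$, \eqref{a.55}$_2$ only gives $\lambda_\delta(x,t)\gtrsim\ell_\delta(t)^{\vartheta}\lambda_\delta(x,1)$ on $[0,1]$, which degenerates as $t,\delta\to0$.

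No better argument can fix this. In the variable exponent model of Section~\ref{mode} one has $\vartheta=2(\nr{p}_{L^{\infty}(\Omega)}-1)>0$ and $\lambda(x,s)\approx s^{2(p(x)-1)}$ as $s\to0$. So at any $x$ with $p(x)>1$, $\lambda_\delta^\varepsilon(x,0)=\lambda(x,\delta)+\sigma_\varepsilon\to\sigma_\varepsilon$ as $\delta\to0$. The left side of \eqref{a.7.1.x}$_3$ at $t=0$ is $1+\sigma_\varepsilon$, and $\sigma_\varepsilon\to0$, so no constant independent of $\delta,\varepsilon$ works.

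The paper's proof takes exactly the same step: the last inequality of its chain for $t\in[0,1]$ is justified only by ``\eqref{a.55}''. So here you reproduce the paper, defect included. What is actually provable is:
\begin{itemize}
\item \eqref{a.7.1.x}$_3$ for $t\ge1$ (your argument there matches the paper's use of \eqref{iinf} and is fine);
\item \eqref{a.7.1.x}$_3$ for all $t$ when $\vartheta=0$;
\item only $\ell_\delta(t)^{\vartheta}\lesssim\lambda_\delta(x,t)$ on $[0,1]$ in general.
\end{itemize}
\eqref{a.7.1.x}$_4$ survives: because of its additive constant it only needs the lower bound on $[1,t]$.

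You were right that the upper bound in \eqref{a.7.1.x}$_1$ fails at $t=0$, since $A_\delta^\varepsilon(x,0)=\sigma_\varepsilon$ while $\tx{a}_\delta^\varepsilon(x,0)=0$. The paper's proof passes over this. The correct form carries an additive $\sigma_\varepsilon$ on the right, which is all your ``renormalization'' amounts to.

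\textbf{Step~3: two inverted monotonicity arguments.} First, almost-increase of $\ell_\delta^{\mu}\lambda_\delta$ gives, for $s\le t$, the upper bound $\lambda_\delta(x,s)\lesssim(\ell_\delta(t)/\ell_\delta(s))^{\mu}\lambda_\delta(x,t)$, not the lower bound you wrote. The lower bound $\lambda_\delta(x,s)\gtrsim\lambda_\delta(x,t)$ for $s\in[t/2,t]$ is still true, but it comes from \eqref{a.55}$_2$: since $\ell_\delta(s)\ge\ell_\delta(t)/2$, the weight changes by at most a factor $2^{\vartheta}$ on $[t/2,t]$.

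Second, \eqref{a.5}$_2$ gives only lower bounds for $\lambda$ below $1$, so ``$\lambda(x,\delta)\lesssim\delta^{-\vartheta}$'' is not implied by the hypotheses. For example, $\lambda(s)=s^{-1/2}(1+s)^{-1/2}$ satisfies \eqref{a.5} with $\vartheta=0$. The correct small-$t$ bound is the paper's: $\ell_\delta(t)^{\mu}\lambda_\delta(x,t)\lesssim\lambda_\delta(x,1)\lesssim1$ by \eqref{a.55}$_1$ and \eqref{suup.1}, so $\lambda_\delta\ell_\delta^2\lesssim\ell_\delta(t)^{2-\mu}\le c$ on $[0,1]$. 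This uses $\mu<2$, not $\vartheta<1$.

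With these repairs, your doubling estimate $\tx{a}_\delta(x,t)\ge\int_{t/2}^{t}\lambda_\delta(x,s)s\ds\gtrsim t^{2}\lambda_\delta(x,t)$ is a valid alternative to the paper's split at $t=1$. The paper compares $\lambda_\delta(x,t)$ with $\lambda_\delta(x,s)$ for $s\in[1,t]$ via almost-decrease; your route handles every $t\ge\delta$ in one stroke.
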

\begin{proof}
 The first inequality in $\eqref{a.7.1.x}_{1}$ is a direct consequence of $\eqref{a.7}_{1}$ and \eqref{adade}, while for the second one we have, using \eqref{a.555},
\begin{flalign*}
A_{\delta}^{\varepsilon}(x,t)\stackrel{\eqref{a.7}_{2}}{\le}\int_{0}^{t}\left(\frac{\Lambda_{\delta}^{\varepsilon}(x,s)}{\lambda_{\delta}^{\varepsilon}(x,s)}\right)\lambda_{\delta}^{\varepsilon}(x,s)s\ds\le c\rrr_{*}(t)\tx{a}_{\delta}^{\varepsilon}(x,t),
\end{flalign*}
for $c\equiv c(n,N,A,\tx{g},\mu,\gamma)$. Before proceeding further, let us record that
\eqn{iinf}
$$
\lambda_{\delta}^{\varepsilon}(x,1)=\lambda(x,\ell_{\delta}(1))+\sigma_{\varepsilon}2^{2\gamma-1}\stackrel{\eqref{a.5}_{2}}{\ge} \lambda(x,2)\stackrel{\eqref{a.5}_{1}}{\ge}c\lambda(x,1) \ \Longrightarrow \ \inf_{x\in \Omega}\lambda_{\delta}(x,1)\ge c(A,\mu,\vartheta)>0.
$$
Moreover, by \eqref{a.5.x} and \eqref{a.3} we also obtain
\eqn{suup.1}
$$
\sup_{x\in \Omega}\lambda_{\delta}(x,1)\le c(A,\mu,\vartheta_{*})<\infty.
$$
To achieve instead $\eqref{a.7.1.x}_{2}$, we first notice that if $t\le 1$, then by \eqref{a.55}$_{1}$ and \eqref{a.5.x} we get
$$
\lambda_{\delta}^{\varepsilon}(x,t)\ell_{\delta}(t)^{2}=\left(\lambda_{\delta}^{\varepsilon}(x,t)\ell_{\delta}(t)^{\mu}\right)\ell_{\delta}(t)^{2-\mu}\le c\left(\lambda(x,1)+1\right)\le c,
$$
with $c\equiv c(A,\mu,\gamma)$. On the other hand, if $t\ge 1$, then $\ell_{\delta}(1)\ge \ell_{\delta}(t)^{-\vartheta}$, thus
\begin{eqnarray*}
\lambda_{\delta}^{\varepsilon}(x,t)\ell_{\delta}(t)^{2}&\le& c\lambda_{\delta}(x,t)\left(\int_{1}^{t}s\ds\right)+c\lambda_{\delta}(x,t)\left(\int_{0}^{1}s\ds\right)+c\sigma_{\varepsilon}\ell_{1}(t^{2})^{2\gamma}\nonumber \\
&\stackrel{\eqref{a.55}}{\le}& c\int_{1}^{t}\lambda_{\delta}(x,s)s\ds+c\lambda_{\delta}(x,1)+c\sigma_{\varepsilon}\ell_{1}(t^{2})^{2\gamma}\le c\tx{a}_{\delta}^{\varepsilon}(x,t)+c,
\end{eqnarray*}
for $c\equiv c(A,\gamma,\mu,\vartheta_{*})$. We next control, for $t\in [0,1]$,
\begin{eqnarray*}
\ell_{1}(t)^{-\mu}+\sigma_{\varepsilon}\ell_{1}(t^{2})^{2\gamma-1}&\stackrel{\eqref{iinf}}{\le}&c\ell_{1}(t)^{-\mu}\left(\inf_{x\in \Omega}\lambda_{\delta}(x,1)\right)+\sigma_{\varepsilon}\ell_{1}(t^{2})^{2\gamma-1}\nonumber \\
&\le& c\ell_{1}(t)^{-\mu}\lambda_{\delta}(x,1)+\sigma_{\varepsilon}\ell_{1}(t^{2})^{2\gamma-1}\stackrel{\eqref{a.55}}{\le}c\lambda_{\delta}^{\varepsilon}(x,t),
\end{eqnarray*}
for $c\equiv c(A,\mu)$, while if $t>1$ we have
\begin{eqnarray*}
\lambda_{\delta}(x,t)+\sigma_{\varepsilon}\ell_{1}(t^{2})^{2\gamma-1}&\stackrel{\eqref{a.55}_{1}}{\ge}& c\left(\inf_{x\in \Omega}\lambda_{\delta}(x,1)\right)\ell_{1}(t)^{-\mu}+\sigma_{\varepsilon}\ell_{1}(t^{2})^{2\gamma-1}\nonumber \\
&\stackrel{\eqref{iinf}}{\ge}&c\ell_{1}(t)^{-\mu}+\sigma_{\varepsilon}\ell_{1}(t^{2})^{2\gamma-1},
\end{eqnarray*}
with $c\equiv c(A,\mu)$, and $\eqref{a.7.1.x}_{3}$ is proven. Finally, $\eqref{a.7.1.x}_{4}$ follows from $\eqref{a.7.1.x}_{2,3}$ and the definition of $\tx{a}_{\delta}^{\varepsilon}$.
\end{proof}

\noindent The main oscillation properties of integrand $A_{\delta}^{\varepsilon}$ are described in the following lemma.
\begin{lemma}
     Given any ball $B_{r}\subset \Omega$, 
\eqn{a12.x}
$$
\left\{
\begin{array}{c}
\displaystyle
\snr{\tx{a}_{\delta}^{\varepsilon}(x_{1},t)-\tx{a}_{\delta}^{\varepsilon}(x_{2},t)}+\snr{A_{\delta}^{\varepsilon}(x_{1},t)-A_{\delta}^{\varepsilon}(x_{2},t)}\le c\snr{B_{r}}^{\frac{\alpha}{n}}\ell_{\delta}(t)+c\snr{B_{r}}^{\frac{\alpha}{n}}\left(\inf_{x\in B}A_{\delta}(x,t)\right)^{\vartheta_{*}}\ell_{\delta}(t),\\[10pt]\displaystyle
\snr{\lambda_{\delta}^{\varepsilon}(x_{1},t)-\lambda_{\delta}^{\varepsilon}(x_{2},t)}t+\snr{\mathcal{a}_{\delta}^{\varepsilon}(x_{1},t)-\mathcal{a}_{\delta}^{\varepsilon}(x_{2},t)}t\le c\snr{B_{r}}^{\frac{\alpha}{n}}+c\snr{B_{r}}^{\frac{\alpha}{n}}\left(\inf_{x\in B}A_{\delta}(x,t)\right)^{\vartheta_{*}},
\end{array}
\right.
$$
for all $x_{1},x_{2}\in B$, $t\in [0,\infty)$ and some $c\equiv c(A,\vartheta_{*})$. Moreover, if \eqref{a.5.2s} is in force, then
\eqn{a.5.3s}
$$
\snr{\lambda_{\delta}^{\varepsilon}(x_{1},t)-\lambda_{\delta}^{\varepsilon}(x_{2},t)}t+\snr{\mathcal{a}_{\delta}^{\varepsilon}(x_{1},t)-\mathcal{a}_{\delta}^{\varepsilon}(x_{2},t)}t\le c\snr{B_{r}}^{\frac{\alpha}{n}}\ell_{1}(t)^{\gamma-1},
$$
for $c\equiv c(A,\vartheta_{*},\gamma)$.
\end{lemma}
\begin{proof}
To gain $\eqref{a12.x}_{1}$, we estimate
\begin{eqnarray*}
\snr{A_{\delta}^{\varepsilon}(x_{1},t)-A_{\delta}^{\varepsilon}(x_{2},t)}&=&\snr{A_{\delta}(x_{1},t)-A_{\delta}(x_{2},t)}\le\int_{0}^{t}\snr{\mathcal{a}_{\delta}(x_{1},s)-\mathcal{a}_{\delta}(x_{2},s)}\ell_{\delta}(s)\ds\nonumber \\
&\stackrel{\eqref{a.3}}{\le}&c\snr{B_{r}}^{\frac{\alpha}{n}}t+c\snr{B_{r}}^{\frac{\alpha}{n}}\left(\inf_{x\in B}A(x,\ell_{\delta}(t))\right)^{\vartheta_{*}}t\nonumber \\
&\stackrel{\eqref{ad1}}{\le}&c\snr{B_{r}}^{\frac{\alpha}{n}}\ell_{\delta}(t)+c\snr{B_{r}}^{\frac{\alpha}{n}}\left(\inf_{x\in B}A_{\delta}(x,t)\right)^{\vartheta_{*}}\ell_{\delta}(t),
\end{eqnarray*}
with $c\equiv c(A,\vartheta_{*})$. Inequalities $\eqref{a12.x}_{2}$-\eqref{a.5.3s} are a straightforward consequence of \eqref{a.3}, \eqref{ad1}, and \eqref{a.5.2s} and \eqref{ade.1}, respectively.
\end{proof}
\noindent Let us discuss the limiting behavior of $A_{\delta}^{\varepsilon}$ as $\delta\to 0$, $\varepsilon\to 0$.
\begin{lemma}\label{l2.3}
We have 
\eqn{difdif.1}
$$A_{\delta}^{\varepsilon}(x,t)\to_{\delta\to 0} A(x,t)+\sigma_{\varepsilon} \ell_{1}(t^{2})^{2\gamma}\to_{\varepsilon\to 0} A(x,t),$$ 
uniformly on bounded subsets of $\Omega\times [0,\infty)$. Specifically, 
\eqn{difdif}
$$
\snr{A_{\delta}(x,t)-A(x,t)}\le c\delta\ell_{1}(t)^{\gamma-1},
$$
with $c\equiv c(A,\gamma)$.
      
\end{lemma}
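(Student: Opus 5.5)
The plan is to reduce everything to the quantitative bound \eqref{difdif}, since the $\varepsilon$-part is trivial. Indeed, by \eqref{adade}, $A_{\delta}^{\varepsilon}(x,t)-A(x,t)-\sigma_{\varepsilon}\ell_{1}(t^{2})^{2\gamma}=A_{\delta}(x,t)-A(x,t)$, and the second convergence in \eqref{difdif.1} follows because $\sigma_{\varepsilon}\to0$ as $\varepsilon\to0$ (recall $\sigma_{0}=0$ and $s\mapsto\sigma_{s}$ is monotone, so we implicitly use continuity at zero). On a bounded set $\{t\le M\}$ the term $\sigma_{\varepsilon}\ell_{1}(M^{2})^{2\gamma}$ is a uniform bound. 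The first convergence then follows from \eqref{difdif} since $\ell_{1}(t)^{\gamma-1}\le \ell_{1}(M)^{\gamma-1}$ on bounded sets.

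To prove \eqref{difdif}, I would write both integrands as primitives. By \eqref{a.2.x}$_{1}$ we have $A(x,0)=0$, so $A(x,t)=\int_{0}^{t}A'(x,s)\ds$. Since $\mathcal{a}(x,s)s=A'(x,s)$, definition \eqref{adade} gives
$$
A_{\delta}(x,t)=\int_{0}^{t}\mathcal{a}(x,\delta+s)s\ds=\int_{0}^{t}A'(x,\delta+s)\frac{s}{\delta+s}\ds .
$$
Splitting accordingly,
$$
A_{\delta}(x,t)-A(x,t)=\int_{0}^{t}\big(A'(x,\delta+s)-A'(x,s)\big)\ds-\delta\int_{0}^{t}\frac{A'(x,\delta+s)}{\delta+s}\ds=:\mathrm{I}-\mathrm{II}.
$$
The first term telescopes exactly:
$$
\mathrm{I}=A(x,t+\delta)-A(x,\delta)-A(x,t)=\int_{t}^{t+\delta}A'(x,s)\ds-A(x,\delta),
$$
so $\snr{\mathrm{I}}\le 2\delta\sup_{s\le t+\delta}A'(x,s)$, using $A'\ge0$ (convexity together with $A(x,0)=0$ and almost increasing $A/t$).

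The remaining work is to bound $A'$ and $A'(x,s)/s$ by $\ell_{1}(\cdot)^{\gamma-1}$. For $A'$ I would use the Marcellini-type estimate behind \eqref{d2.2} (convexity plus \eqref{add}$_{2}$, \eqref{a.2}): $A'(x,s)\le c(A(x,2s)-A(x,s))/s\le c(A(x,s)/s+1)$, and \eqref{a.2.x}$_{2}$ gives $A(x,s)\le cA(x,1)s^{\gamma}$ for $s\ge1$. Hence $A'(x,s)\le c\ell_{1}(s)^{\gamma-1}$, with the range $s\le1$ handled by monotonicity of $A'$ and \eqref{a.2}. This yields $\snr{\mathrm{I}}\le c\delta\ell_{1}(t)^{\gamma-1}$, since $\delta<1/4$.

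For $\mathrm{II}$, where I expect the main obstacle, a naive bound $A'(x,\delta+s)/(\delta+s)\le c\ell_{1}(s)^{\gamma-1}/(\delta+s)$ produces a $\log(1/\delta)$ loss near $s=0$. To avoid this I would use that $\mathcal{a}(x,\cdot)$ is comparable to $\lambda$ and is bounded near the origin uniformly in $x$. Indeed, \eqref{a.5}$_{2}$ gives $\lambda(x,\tau)\lesssim\tau^{-\vartheta}\lambda(x,1)$ for $\tau\le1$, and \eqref{a.5.x} together with the boundedness of $\tx{g}$ near $0$ controls $\Lambda$ there. Thus $\mathcal{a}(x,\tau)\le c\Lambda(x,\tau)\le c\tau^{-\vartheta}$ with $\vartheta<1$, so $\int_{0}^{1}\mathcal{a}(x,\delta+s)\ds\le c$. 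For $s\ge1$, $A'(x,\delta+s)/(\delta+s)\le c\ell_{1}(s)^{\gamma-2}$, and integrating up to $t$ gives at most $c\ell_{1}(t)^{\gamma-1}$ when $\gamma>1$. Altogether $\snr{\mathrm{II}}\le c\delta\ell_{1}(t)^{\gamma-1}$, with $c\equiv c(A,\gamma)$, which completes \eqref{difdif}.
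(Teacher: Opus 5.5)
Your splitting via $s/(\delta+s)=1-\delta/(\delta+s)$ is also what the paper does. The paper then integrates $\mathrm{II}$ by parts, so that $A(x,\delta)$ cancels and a remainder $\delta\int_{0}^{t}A(x,\ell_{\delta}(s))\ell_{\delta}(s)^{-2}\ds$ appears. Your bounds for $\mathrm{I}$, for the range $s\ge 1$ of $\mathrm{II}$, and for the $\varepsilon$-limit are fine.

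\textbf{The gap.} It sits at the step you yourself flagged, the near-origin part of $\mathrm{II}$. You read \eqref{a.5}$_{2}$ as giving $\lambda(x,\tau)\lesssim\tau^{-\vartheta}\lambda(x,1)$ for $\tau\le 1$, but the inequality goes the other way. Almost decreasing of $\tau\mapsto\max\{\tau^{-\vartheta},1\}\lambda(x,\tau)$ compares the value at $1$ with the value at $\tau<1$, i.e.\ $\lambda(x,1)\lesssim\tau^{-\vartheta}\lambda(x,\tau)$. This is a \emph{lower} bound $\lambda(x,\tau)\gtrsim\tau^{\vartheta}$, limiting degeneracy at the origin. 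The only upper bound near $0$ contained in \eqref{a.5} is \eqref{a.5}$_{1}$, namely $\lambda(x,\tau)\lesssim\tau^{-\mu}$ with $\mu\ge 1$. That bound is not integrable and gives back exactly the $\log(1/\delta)$ loss you wanted to avoid. The pointwise claim $\mathcal{a}(x,\tau)\le c\tau^{-\vartheta}$ is in fact false in general. An integrand equal to $t^{1+\epsilon}$ near the origin, with $0<\epsilon<1-\vartheta$, is compatible with \eqref{a.5}--\eqref{a.5.x}, yet has $\mathcal{a}\approx\tau^{\epsilon-1}$.

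\textbf{The fix.} All you need is integrability of $\mathcal{a}(x,\cdot)$ near $0$, uniformly in $x$. This follows from the bounded ellipticity ratio near the origin, which you already invoke, provided you compare $\mathcal{a}$ with $A''$ rather than with a power of $\tau$. Since $\tx{g}$ is continuous and nondecreasing, $\rrr_{*}$ is bounded on $[0,2]$. Because $\mathcal{a}$ and $A''$ are eigenvalues of $\partial^{2}\tx{f}$, \eqref{a.5.1} and \eqref{a.5.x} give $\mathcal{a}(x,\tau)\le c\Lambda(x,\tau)\le c\lambda(x,\tau)\le cA''(x,\tau)$ for $\tau\in(0,2]$. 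Hence
\[
\int_{0}^{1}\mathcal{a}(x,\delta+s)\ds\le c\int_{\delta}^{1+\delta}A''(x,\tau)\,{\rm d}\tau\le cA'(x,2)\le c(A,\gamma),
\]
where the last step uses convexity, the $\Delta_{2}$-condition and \eqref{a.2}. With this replacement your argument proves \eqref{difdif}.

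The paper's remainder term faces the same issue near $0$: \eqref{add}$_{1}$ alone only gives $A(x,\tau)\tau^{-2}\lesssim\tau^{-1}$. The same observation handles it, since $A(x,\tau)\le\tau A'(x,\tau)$ bounds that term by $\delta\int_{0}^{t}\mathcal{a}(x,\ell_{\delta}(s))\ds$, which is your $\mathrm{II}$.
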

\begin{proof}
To get \eqref{difdif}, via \eqref{adade} we rearrange
$$
A_{\delta}(x,t)=A(x,\ell_{\delta}(t))-\frac{\delta A(x,\ell_{\delta}(t))}{\ell_{\delta}(t)}+\delta\int_{0}^{t}\frac{A(x,\ell_{\delta}(s))}{\ell_{\delta}(s)^{2}}\ds,
$$
and estimate by \eqref{add}, \eqref{d2.2} and the mean value theorem,
\begin{eqnarray*}
\snr{A_{\delta}(x,t)-A(x,t)}&\le&\snr{A(x,\ell_{\delta}(t))-A(x,t)}+\frac{\delta A(x,\ell_{\delta}(t))}{\ell_{\delta}(t)}+\delta\int_{0}^{t}\frac{A(x,\ell_{\delta}(s))}{\ell_{\delta}(s)^{2}}\ds\nonumber \\
&\le&\delta\left(\int_{0}^{1}A'(x,t+s\delta)\ds\right)+c\delta \ell_{\delta}(t)^{\gamma-1}\le c\delta \ell_{1}(t)^{\gamma-1},
\end{eqnarray*}
for $c\equiv c(A,\gamma)$. The convergence in \eqref{difdif.1} is a direct consequence of \eqref{difdif}. 
\end{proof}
\noindent Let us show that integrand $\tx{f}_{\delta}^{\varepsilon}$ can be traced back to standard, controlled polynomial growth condition, that of course will hold in a nonuniform fashion with respect to $\delta$ and $\varepsilon$.
\begin{lemma}\label{l3232}
The integrand $\tx{f}_{\delta}^{\varepsilon}$ satisfies growth/ellipticity conditions
\eqn{corfl.1r}
    $$
    \begin{cases}
    \displaystyle
    \ \sigma_{\varepsilon}\ell_{1}(\snr{z}^{2})^{2\gamma}\le \tx{f}_{\delta}^{\varepsilon}(x,z)\le c\ell_{1}(\snr{z}^{2})^{2\gamma}\vspace{0.5mm}\\\displaystyle
     \ c\sigma_{\varepsilon}\ell_{1}(\snr{z}^{2})^{2\gamma-1}\snr{\xi}^{2}\le \langle\partial^{2}\tx{f}_{\delta}^{\varepsilon}(x,z)\xi,\xi\rangle\vspace{0.5mm}\\ \displaystyle
     \ \snr{\partial^{2}\tx{f}_{\delta}^{\varepsilon}(x,z)}\le c_{\delta}\ell_{1}(\snr{z}^{2})^{2\gamma-1}
    \end{cases}
    $$
    for all $x\in \Omega$, $z,\xi\in \mathbb{R}^{N\times n}$, with $c\equiv c(\data_{0})$, $c_{\delta}\equiv c_{\delta}(\data_{0},\delta)$. Moreover, given any ball $B_{r}\subset \Omega$ and points $x_{1},x_{2}\in B_{r}$, the oscillation bound
    \eqn{corfl.2r}
    $$
    \snr{\partial \tx{f}_{\delta}(x_{1},z)-\partial \tx{f}_{\delta}(x_{2},z)}\le c\snr{B_{r}}^{\frac{\alpha}{n}}\ell_{1}(\snr{z}^{2})^{\frac{4\gamma-1}{2}},
    $$
    is satisfied for $c\equiv c(n,A,\vartheta_{*},\gamma,\alpha)$. 
\end{lemma}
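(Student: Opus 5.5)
The growth bounds in \eqref{corfl.1r} are obtained separately for the two pieces of $A_{\delta}^{\varepsilon}=A_{\delta}+\sigma_{\varepsilon}\ell_{1}(t^{2})^{2\gamma}$ in \eqref{adade}. The oscillation bound \eqref{corfl.2r} then follows from the radial structure and \eqref{a12.x}$_{2}$.

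For $\eqref{corfl.1r}_{1}$, the lower bound is immediate because $A_{\delta}\ge0$. For the upper bound I would use \eqref{ad1} together with $\eqref{add}_{2}$, or the $\Delta_{2}$ property \eqref{d2} combined with \eqref{a.2}. These give $A_{\delta}(x,t)\le c(1+t^{\gamma})$ for $t\ge1$, and $A_{\delta}(x,t)\le c$ for $t\le1$ by monotonicity. Since $\gamma\le 2\gamma$ and $\sigma_{\varepsilon}\le1$ (we may take $\sigma_{\varepsilon}\le\sigma_{1/4}$, normalized), we get $\tx{f}_{\delta}^{\varepsilon}\le c\ell_{1}(\snr{z}^{2})^{2\gamma}$.

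For $\eqref{corfl.1r}_{2}$, I would apply $\eqref{lala}_{1}$, which gives $\langle\partial^{2}\tx{f}_{\delta}^{\varepsilon}\xi,\xi\rangle\ge c\lambda_{\delta}^{\varepsilon}\snr{\xi}^{2}\ge c\sigma_{\varepsilon}\ell_{1}(\snr{z}^{2})^{2\gamma-1}\snr{\xi}^{2}$, since $\lambda_{\delta}\ge0$. Alternatively one can differentiate the added term directly: its Hessian is $4\gamma\ell_{1}(t^{2})^{2\gamma-1}\mathds{I}$ plus a nonnegative rank-one part.

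For $\eqref{corfl.1r}_{3}$, I would use $\eqref{lala}_{1}$ and bound $\Lambda_{\delta}^{\varepsilon}$. The added part is at most $\ell_{1}(t^{2})^{2\gamma-1}$. For $\Lambda_{\delta}(x,t)=\Lambda(x,\ell_{\delta}(t))$ I would use $\Lambda\le c\rrr_{*}\lambda$ from \eqref{a.555}, together with \eqref{a.55}$_{2}$. The latter makes $\lambda_{\delta}$ bounded by $c_{\delta}$ on $[0,1]$, thanks to the $\ell_{\delta}(t)^{-\vartheta}\le\delta^{-\vartheta}$ factor, and bounded by $c\lambda_{\delta}(x,1)$ for $t\ge1$, which is $\le c$ by \eqref{suup.1}. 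Then \eqref{rrr} gives $\rrr_{*}(t)\le c\ell_{1}(t)^{2(\mu-1+\omega)}$, and since $\mu<2$ we have $2(\mu-1+\omega)\le 2(2\gamma-1)$ after choosing $\omega$ small, because $\gamma>1$. This is exactly where the dependence on $\delta$ enters: the degeneracy at $t\to0$ is only cut off by $\ell_{\delta}$.

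For \eqref{corfl.2r}, I would write $\partial\tx{f}_{\delta}(x,z)=\mathcal{a}_{\delta}(x,\snr{z})z$, so that $\snr{\partial\tx{f}_{\delta}(x_{1},z)-\partial\tx{f}_{\delta}(x_{2},z)}=\snr{\mathcal{a}_{\delta}(x_{1},\snr z)-\mathcal{a}_{\delta}(x_{2},\snr z)}\snr z$. For this I would invoke \eqref{a.3}, evaluated at $\ell_{\delta}(\snr z)$ and multiplied by the factor $\snr z/\ell_{\delta}(\snr z)\le1$, or equivalently $\eqref{a12.x}_{2}$. The result is bounded by $c\snr{B_{r}}^{\alpha/n}(1+(\inf A_{\delta})^{\vartheta_{*}})$. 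Using the growth $A_{\delta}\le c\ell_{1}(t)^{\gamma}$ from the first step and $\vartheta_{*}<\alpha/n\le 1$, we get $(\inf A_{\delta})^{\vartheta_{*}}\le c\ell_{1}(\snr z)^{\gamma}\le c\ell_{1}(\snr z^{2})^{(4\gamma-1)/2}$, since $\gamma\le 2\gamma-1/2$.

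The main obstacle is the $\delta$-dependent upper Hessian bound near the origin. Without the shift by $\delta$, $\Lambda$ may blow up as $t\to0$, at rate $t^{-\vartheta}$ times the ratio; the shift is what absorbs this, producing the constant $c_{\delta}$.
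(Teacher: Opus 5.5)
Your proposal is correct and follows essentially the paper's own argument: \eqref{adade}--\eqref{add} for the growth bounds, \eqref{lala}$_{1}$ for the ellipticity, \eqref{a.555}, \eqref{a.55}$_{2}$ and \eqref{rrr} with $\omega_{\mu}$ small (the paper takes $\omega_{\mu}=(2\gamma-\mu)/2$) for the $\delta$-dependent Hessian bound, and \eqref{a12.x}$_{2}$ together with the $\gamma$-growth of $A_{\delta}$ and $\vartheta_{*}<1$ for \eqref{corfl.2r}. The one step you leave implicit, which the paper settles by appealing to continuity, is the uniform bound $\sup_{x}\lambda(x,\delta)<\infty$ needed when you compare with $t=0$ in \eqref{a.55}$_{2}$; it follows from \eqref{a.55}$_{1}$ and \eqref{suup.1}, since $\lambda(x,\delta)\le c\delta^{-\mu}\lambda_{\delta}(x,1)\le c\delta^{-\mu}$, and this quantity, controlled by \eqref{a.5}$_{1}$ rather than by the factor $\delta^{-\vartheta}$, is what governs the possible blow-up of $\lambda$ at the origin, contrary to the heuristic in your last paragraph.
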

\begin{proof}
The first line in \eqref{corfl.1r} follows from \eqref{adade} and \eqref{add}. Moreover, $\eqref{corfl.1r}_{2}$ is a direct consequence of \eqref{lala}$_{1}$, while for $\eqref{corfl.1r}_{3}$, by \eqref{a.55}$_{2}$, $\eqref{lala}_{1}$, and \eqref{rrr} we have
\begin{eqnarray*}
\snr{\partial^{2}\tx{f}_{\delta}^{\varepsilon}(x,z)}&\le&c\Lambda_{\delta}^{\varepsilon}(x,\snr{z})\le c\left(\frac{\Lambda_{\delta}(x,\snr{z})}{\lambda_{\delta}(x,\snr{z})}\right)\lambda_{\delta}(x,\snr{z})+c\sigma_{\varepsilon}\ell_{1}(\snr{z}^{2})^{2\gamma-1}\nonumber \\
&\le&c\ell_{1}(\snr{z}^{2})^{\mu-1+\omega_{\mu}}\delta^{-\vartheta}\nr{\lambda(\cdot,\delta)}_{L^{\infty}(\Omega)}+ c\sigma_{\varepsilon}\ell_{1}(\snr{z}^{2})^{2\gamma-1}\le c_{\delta}\ell_{1}(\snr{z}^{2})^{2\gamma-1},
\end{eqnarray*}
for $c_{\delta}\equiv c_{\delta}(n,N,A,\tx{g},\mu,\gamma,\vartheta,\delta)$, where we used that $\mu<2$, chose $\omega_{\mu}:=(2\gamma-\mu)/2$ and $\nr{\lambda(\cdot,\delta)}_{L^{\infty}(\Omega)}<\infty$ by continuity. Concerning \eqref{corfl.2r}, it directly follow from \eqref{a12.x}--\eqref{a.5.2s} and \eqref{add}$_{2}$ recalling that $\vartheta_{*}<1$. The proof is complete.

\end{proof}
\noindent The standard uniformly elliptic setting \cite{lie91,dsv09} can be recovered on bounded subsets.
\begin{lemma}\label{holhol.x}
    For any constant $M>0$, H\"older-type conditions
    \eqn{corfl.3}
    $$
    \begin{cases}
    \displaystyle
    \ \snr{(A_{\delta}^{\varepsilon})''(x,t+\tau)-(A_{\delta}^{\varepsilon})''(x,t)}\le c_{M}\left(\frac{\snr{\tau}}{t}\right)^{\beta}(A_{\delta}^{\varepsilon})''(x,t)\vspace{0.5mm}\\ \displaystyle
    \ \snr{(A_{\delta}^{\varepsilon})''(x,t+\tau)-(A_{\delta}^{\varepsilon})''(x,t)}\le c_{\delta;M}\left(\frac{\snr{\tau}}{t}\right)^{\beta}\ell_{1}(t^{2})^{2\gamma-1},
    \end{cases}
    $$
    are satisfied for all $(x,t)\in \Omega\times (0,M]$, $\tau\in \mathbb{R}$ with $\snr{\tau}<t/2$, for $c_M\equiv c_M(A,\tx{g},\mu,\gamma,\vartheta,\beta,M)$, and $c_{\delta;M}\equiv c_{\delta;M}(A,\tx{g},\mu,\gamma,\delta,\theta,\beta,M)$. Furthermore, the uniform ellipticity condition
    \eqn{corfl.4}
    $$
    (A_{\delta}^{\varepsilon})''(x,t)t\approx (A_{\delta}^{\varepsilon})'(x,t)
    $$
    holds for all $(x,t)\in \Omega\times (0,M]$, up to constants depending on $(A,M)$.
\end{lemma}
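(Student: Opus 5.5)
Since $A_{\delta}^{\varepsilon}=A_{\delta}+\sigma_{\varepsilon}\ell_{1}(t^{2})^{2\gamma}$, I would treat the two summands separately. The perturbation $P(t):=\sigma_{\varepsilon}\ell_{1}(t^{2})^{2\gamma}$ is smooth on $[0,\infty)$. Its second derivative satisfies $P''(t)\approx_{\gamma}\sigma_{\varepsilon}\ell_{1}(t^{2})^{2\gamma-1}$, and on bounded sets $|P'''(t)|\lesssim_{\gamma,M}t^{-1}P''(t)$. So both \eqref{corfl.3} and \eqref{corfl.4} for $P$ follow from the mean value theorem, using $|\tau|<t/2$ to compare $t+\tau$ with $t$.

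For $A_{\delta}$, I would start from $A_{\delta}'(x,t)=\mathcal{a}(x,\ell_{\delta}(t))t$, which gives
\[
A_{\delta}''(x,t)=\mathcal{a}(x,\ell_{\delta}(t))+t\,\mathcal{a}'(x,\ell_{\delta}(t)).
\]
Writing $s:=\ell_{\delta}(t)$ and using $\mathcal{a}(x,s)=A'(x,s)/s$, this can be rewritten as
\[
A_{\delta}''(x,t)=\frac{\delta}{s}\,\frac{A'(x,s)}{s}+\frac{t}{s}\,A''(x,s).
\]
Both coefficients $\delta/s$ and $t/s$ lie in $[0,1]$ and add up to one. By \eqref{a.7} (see Remark \ref{laed}), both $A'(x,s)/s$ and $A''(x,s)$ are bounded below by $c\lambda(x,s)$. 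Hence $A_{\delta}''(x,t)\gtrsim\lambda(x,s)$, and also $A_{\delta}''(x,t)\ge (t/s)A''(x,s)$.

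To prove $\eqref{corfl.3}_{1}$, I would take the increment $\tau$ and note that $s$ moves to $s+\tau$ with $|\tau|<t/2\le s/2$. The differences then split into three terms:
\begin{itemize}
\item The term $(t/s)[A''(x,s+\tau)-A''(x,s)]$ is controlled by \eqref{a.5.2} with $M+1$ in place of $M$. This gives $\tx{c}_{M}(|\tau|/s)^{\beta}A''(x,s)\le \tx{c}_{M}(|\tau|/t)^{\beta}(s/t)A_{\delta}''$. The factor $s/t$ is bounded only if $t\gtrsim\delta$, so I would keep the weight $t/s$ inside and bound $(t/s)A''(x,s)\le A_{\delta}''(x,t)$ directly. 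That removes the factor entirely.
\item The coefficient variations $|\tau|/s$ times the coefficients are trivially $\lesssim(|\tau|/t)^{\beta}$, because $|\tau|/s\le 1/2$.
\item The term $(\delta/s)[\mathcal{a}(x,s+\tau)-\mathcal{a}(x,s)]$ requires $|\mathcal{a}'(x,\sigma)|\sigma\lesssim\Lambda(x,\sigma)$, which is \eqref{a.7}. It yields $\lesssim(|\tau|/s)\Lambda(x,s)$.
\end{itemize}

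The main obstacle is that last bound: $\Lambda$ must be compared with $\lambda\lesssim A_{\delta}''$ via the ratio bound \eqref{a.5.x}. That ratio is $\le c\,\rrr_{*}(M+1)$, which is a constant depending on $(\tx{g},\mu,M)$. This is exactly where the restriction $t\le M$ enters. I would also use the almost monotonicity \eqref{a.5} to replace $\lambda(x,s\pm\tau)$ by $\lambda(x,s)$, and $(|\tau|/s)\le(|\tau|/t)^{\beta}$ since $|\tau|/s<1$. This gives $\eqref{corfl.3}_{1}$ with $c_M$ depending on $(A,\tx{g},\mu,\gamma,\vartheta,\beta,M)$.

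For $\eqref{corfl.3}_{2}$, I would bound the right-hand side of $\eqref{corfl.3}_{1}$ from above. On $(0,M]$, $A_{\delta}''\lesssim\Lambda(x,s)\le c\,\rrr_{*}\lambda(x,s)$. By $\eqref{a.5}_{2}$, $\lambda(x,s)\lesssim\delta^{-\vartheta}\lambda(x,\delta)$, which is finite by continuity, exactly as in the proof of Lemma \ref{l3232}; the perturbation part is trivially $\lesssim\ell_{1}(t^{2})^{2\gamma-1}$.

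Finally, \eqref{corfl.4} follows from the identities $(A_{\delta}^{\varepsilon})'/t=\mathcal{a}_{\delta}^{\varepsilon}$ and $(A_{\delta}^{\varepsilon})''=\bar{\mathcal{a}}_{\delta}^{\varepsilon}$. By \eqref{a.7}, both are squeezed between $c\lambda_{\delta}^{\varepsilon}$ and $c\Lambda_{\delta}^{\varepsilon}$, and by \eqref{a.555} their ratio is at most $c\,\rrr_{*}(M)$, a constant depending on $(A,M)$.
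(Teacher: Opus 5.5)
Your proposal is correct and follows essentially the paper's proof: the same splitting $A_{\delta}''=(t/\ell_{\delta}(t))A''(x,\ell_{\delta}(t))+(\delta/\ell_{\delta}(t))\mathcal{a}(x,\ell_{\delta}(t))$ from \eqref{sim}; \eqref{a.5.2} on the weighted $A''$-piece; the mean value theorem plus the ratio bound $\rrr_{*}(M)$ on the $\delta$-piece; the bound $\lambda_{\delta}\lesssim\delta^{-\vartheta}\nr{\lambda(\cdot,\delta)}_{L^{\infty}}$ for \eqref{corfl.3}$_2$; and the $\lambda/\Lambda$ squeeze for \eqref{corfl.4}. The differences are cosmetic (you differentiate $\mathcal{a}$ rather than $A'$, and you treat the $\sigma_{\varepsilon}$-perturbation explicitly), except that $A''(x,s)\gtrsim\lambda(x,s)$ and $s\snr{\mathcal{a}'(x,s)}\lesssim\Lambda(x,s)$ should be cited from \eqref{a.5.1} for the unregularized integrand, not from \eqref{a.7}, which concerns $\mathcal{a}_{\delta}$ and $\bar{\mathcal{a}}_{\delta}$.
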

\begin{proof}
By \eqref{adade} we have
    \eqn{sim}
    $$
    \begin{cases}
    \displaystyle
\ A_{\delta}''(x,t)=\frac{A''(x,\ell_{\delta}(t))t}{\ell_{\delta}(t)}+\frac{\delta \mathcal{a}_{\delta}(x,t)}{\ell_{\delta}(t)}=\bar{\mathcal{a}}_{\delta}(x,t)\vspace{1.5mm}\\
\displaystyle
\ \mathcal{a}_{\delta}'(x,t)=\frac{A''(x,\ell_{\delta}(t))}{\ell_{\delta}(t)}-\frac{\mathcal{a}_{\delta}(x,t)}{\ell_{\delta}(t)},\qquad \quad A_{\delta}'(x,t)=\mathcal{a}_{\delta}(x,t)t.
\end{cases}
    $$
Notice that all quantities involved in $\eqref{sim}_{1}$ are nonnegative by strict convexity \eqref{lala}. Since the validity of \eqref{corfl.3} is standard for power-type functions, \cite[Remark 2.3]{dsv09}, we will focus on controlling the difference $\snr{A_{\delta}''(x,t+\tau)-A_{\delta}''(x,t)}$. Let $\tau\in \mathbb{R}$, $\snr{\tau}<t/2$ and split
\begin{eqnarray*}
\snr{A_{\delta}''(x,t+\tau)-A_{\delta}''(x,t)}&\le& \left|\frac{A''( x,\ell_{\delta}(t+\tau))(t+\tau)}{\ell_{\delta}(t+\tau)}-\frac{A''(x,\ell_{\delta}(t))t}{\ell_{\delta}(t)}\right|\nonumber \\
&&+\delta\left|\frac{\mathcal{a}_{\delta}(x,t+\tau)}{\ell_{\delta}(t+\tau)}-\frac{\mathcal{a}_{\delta}(x,t)}{\ell_{\delta}(t)}\right|
=:\mbox{(I)}+\mbox{(II)}.
\end{eqnarray*}
Keeping in mind Remark \ref{laed}, we bound
\begin{eqnarray*}
\mbox{(I)}&\le&\snr{A''(x,\ell_{\delta}(t+\tau))-A''(x,\ell_{\delta}(t))}\left(\frac{\snr{t+\tau}}{\ell_{\delta}(t+\tau)}\right)\nonumber \\
&&+A''(x,\ell_{\delta}(t))\left|\frac{t+\tau}{\ell_{\delta}(t+\tau)}-\frac{t}{\ell_{\delta}(t)}\right|\nonumber \\
&\stackrel{\eqref{a.5.2}}{\le}&c\left(\frac{A''(x,\ell_{\delta}(t))t}{\ell_{\delta}(t)}\right)\left(\frac{\snr{\tau}}{t}\right)^{\beta}+\frac{c\delta\snr{\tau}A''(x,\ell_{\delta}(t))}{\ell_{\delta}(t)\ell_{\delta}(t+\tau)}\nonumber\\
&\stackrel{\eqref{sim}}{\le}&cA_{\delta}''(x,t)\left(\frac{\snr{\tau}^{\beta}}{t^{\beta}}+\frac{\snr{\tau}}{t}\right)\le cA_{\delta}''(x,t)\left(\frac{\snr{\tau}}{t}\right)^{\beta},
\end{eqnarray*}
for $c\equiv c(A,M)$, and
\begin{eqnarray*}
\mbox{(II)}&\le&\frac{\delta\snr{A'(x,\ell_{\delta}(t+\tau))-A'(x,\ell_{\delta}(t))}}{\ell_{\delta}(t+\tau)^{2}}+\frac{c\delta\snr{\tau}\mathcal{a}_{\delta}(x,t)}{\ell_{\delta}(t)\ell_{\delta}(t+\tau)}\nonumber \\
&\stackrel{\eqref{sim}}{\le}&\frac{c\delta \snr{\tau}}{\ell_{\delta}(t+\tau)^{2}}\left(\int_{0}^{1}A''(x,\ell_{\delta}(t+s\tau))\ds\right)+c\left(\frac{\snr{\tau}}{t}\right)A_{\delta}''(x,t)\nonumber \\
&\le&\frac{c\delta\snr{\tau}}{t\ell_{\delta}(t+\tau)}\left(\int_{0}^{1}\bar{\mathcal{a}}_{\delta}(x,t+s\tau)\ds\right)+c\left(\frac{\snr{\tau}}{t}\right)A_{\delta}''(x,t)\nonumber \\
&\stackrel{\eqref{a.7}_{2}}{\le}&\frac{c\delta\snr{\tau}}{t\ell_{\delta}(t+\tau)}\left(\int_{0}^{1}\Lambda_{\delta}( x,\ell_{\delta}(t+s\tau))\ds\right)+c\left(\frac{\snr{\tau}}{t}\right)A_{\delta}''(x,t)\nonumber \\
&\stackrel{\eqref{a.555}}{\le}&\frac{c\delta\snr{\tau}\rrr_{*}(M)}{t\ell_{\delta}(t+\tau)}\left(\int_{0}^{1}\lambda_{\delta}(x,t+s\tau)\ds\right)+c\left(\frac{\snr{\tau}}{t}\right)A_{\delta}''(x,t)\nonumber \\
&\stackrel{\eqref{a.55}_{2},\eqref{l60}}{\le}&\frac{c\delta\snr{\tau}\rrr_{*}(M)\lambda_{\delta}(x,t)}{t\ell_{\delta}(t+\tau)}+c\left(\frac{\snr{\tau}}{t}\right)A_{\delta}''(x,t)\nonumber \\
&\stackrel{\eqref{a.7}_{1}}{\le}&c\rrr_{*}(M)\left(\frac{\snr{\tau}}{t}\right)\left(\frac{\delta\mathcal{a}_{\delta}(x,t)}{\ell_{\delta}(t)}\right)+c\left(\frac{\snr{\tau}}{t}\right)A_{\delta}''(x,t)\stackrel{\eqref{sim}}{\le}c\left(\frac{\snr{\tau}}{t}\right)A_{\delta}''(x,t),
\end{eqnarray*}
with $c\equiv c(A,\tx{g},\mu,\vartheta,M)$. Merging the content of the two previous displays we obtain \eqref{corfl.3}$_{1}$. Finally, $\eqref{corfl.3}_{2}$ can be derived from $\eqref{corfl.3}_{1}$, as
\begin{eqnarray*}
    \snr{(A_{\delta}^{\varepsilon})''(x,t+\tau)-(A_{\delta}^{\varepsilon})''(x,t)}&\stackrel{\eqref{sim},\eqref{a.7}_{2}}{\le}& c\left(\frac{\snr{\tau}}{t}\right)^{\beta}\left(\Lambda_{\delta}(x,t)+\sigma_{\varepsilon}\ell_{1}(t^{2})^{2\gamma-1}\right)\nonumber \\
    &\stackrel{\eqref{a.555}}{\le}& c\left(\frac{\snr{\tau}}{t}\right)^{\beta}\left(\rrr_{*}(M)\lambda_{\delta}(x,t)+\sigma_{\varepsilon}\ell_{1}(t^{2})^{2\gamma-1}\right)\nonumber \\
    &\stackrel{\eqref{a.55}_{2}}{\le}&c\left(\frac{\snr{\tau}}{t}\right)^{\beta}\left(\delta^{-\vartheta}\nr{\lambda(\cdot,\delta)}_{L^{\infty}(\Omega)}+\sigma_{\varepsilon}\ell_{1}(t^{2})^{2\gamma-1}\right)\nonumber \\
    &\le&c_{\delta}\left(\frac{\snr{\tau}}{t}\right)^{\beta}\ell_{1}(t^{2})^{2\gamma-1},
\end{eqnarray*}
for $c_{\delta}\equiv c_{\delta}(A,\tx{g},\mu,\gamma,\delta)$. Next, notice that power-type functions such as $t\mapsto \ell_{1}(t^{2})^{2\gamma}$ are well-known to be uniformly elliptic, so let us take care of $A_{\delta}$. By $\eqref{sim}$ and Remark \ref{laed} we have
\begin{eqnarray*}
\begin{cases}
\displaystyle
\ A_{\delta}''(x,t)t\stackrel{\eqref{a.7}_{1}}{\ge}t\lambda_{\delta}(x,t)\stackrel{\eqref{a.555}}{\ge}\frac{t\Lambda_{\delta}(x,t)}{\rrr_{*}(M)}\stackrel{\eqref{a.7}_{2}}{\ge}\frac{t\mathcal{a}_{\delta}(x,t)}{\rrr_{*}(M)}\stackrel{\eqref{sim}_{2}}{=}\frac{A_{\delta}'(x,t)}{\rrr_{*}(M)}\vspace{1.5mm}\\ \displaystyle
\ A'_{\delta}(x,t)\stackrel{\eqref{a.7}_{1}}{\ge}t\lambda_{\delta}(x,t)\stackrel{\eqref{a.555}}{\ge}\frac{t\Lambda_{\delta}(x,t)}{\rrr_{*}(M)}\stackrel{\eqref{a.7}_{2}}{\ge}\frac{t\bar{\mathcal{a}}_{\delta}(x,t)}{\rrr_{*}(M)}\stackrel{\eqref{sim}_{1}}{=}\frac{tA_{\delta}''(x,t)}{\rrr_{*}(M)},
\end{cases}
\end{eqnarray*}
and the proof is complete.
\end{proof}
\noindent We conclude this section with a direct consequence of Lemma \ref{holhol.x}.
\begin{corollary}\label{c39}
There exists $M_{*}\equiv M_{*}(A,\gamma)\ge 1$ such that for any $M\ge M_{*}$, a strictly convex integrand $A_{M}\colon \Omega\times [0,\infty)\to \mathbb{R}$ can be constructed satisfying
\eqn{h'''}
$$
A_{M}(x,t)=A_{\delta}^{\varepsilon}(x,t)\qquad \mbox{for all} \ \ (x,t)\in \Omega\times [0,M].
$$
as well as
\eqn{h''}
$$
\begin{cases}
\displaystyle
\ t\mapsto A_{M}(\cdot,t)\in C^{2}_{\loc}[0,\infty),\qquad \quad x\mapsto A_{M}(x,\cdot)\in C^{0,\alpha}(\Omega)\vspace{1.5mm}\\ 
    \displaystyle
    \ A_{M}''(x,t)t\approx_{A,\gamma,\mu,M} A_{M}'(x,t)\vspace{1.5mm}\\
    \displaystyle
    \ \snr{A_{M}''(x,t+\tau)-A_{M}''(x,t)}\le c\left(\frac{\snr{\tau}}{t}\right)^{\beta}A_{M}''(x,t),
\end{cases}
$$
for all $(x,t)\in \Omega\times (0,\infty)$, $\tau\in \mathbb{R}$ with $\snr{\tau}\le t/2$, and some $\beta\in (0,1)$, $c\equiv c(A,\gamma,\mu,\vartheta,\beta,M)$. 
\end{corollary}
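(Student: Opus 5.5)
We extend $A_{\delta}^{\varepsilon}(x,\cdot)$ beyond $[0,M]$ by a quadratic Taylor-type continuation, as in the truncation device of \cite{lie91,dsv09}. The second derivative is continued in a $C^{0,\beta}$-compatible way, so that for $t\ge M$ it is frozen at its value at $M$. Concretely, set
$$
A_{M}''(x,t):=\begin{cases}(A_{\delta}^{\varepsilon})''(x,t)&t\in[0,M]\\ (A_{\delta}^{\varepsilon})''(x,M)&t>M,\end{cases}
$$
and recover $A_{M}$ by integrating twice with initial data $A_{M}(x,0)=0$ and $A_{M}'(x,0)=(A_{\delta}^{\varepsilon})'(x,0)=0$. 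Explicitly, for $t>M$,
$$
A_{M}(x,t)=A_{\delta}^{\varepsilon}(x,M)+(A_{\delta}^{\varepsilon})'(x,M)(t-M)+\tfrac12 (A_{\delta}^{\varepsilon})''(x,M)(t-M)^{2}.
$$
This gives \eqref{h'''} immediately. It also gives $C^{2}$-regularity in $t$ on $[0,\infty)$, since $A_{\delta}^{\varepsilon}\in C^{2}$ by \eqref{regreg} and the matching is $C^{2}$ at $t=M$. Strict convexity follows from $A_{M}''>0$, which is inherited from \eqref{lala} and \eqref{a.7}$_{1}$ because $(A_{\delta}^{\varepsilon})''(x,M)=\bar{\mathcal{a}}_{\delta}^{\varepsilon}(x,M)\ge c\lambda_{\delta}^{\varepsilon}(x,M)>0$. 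The threshold $M_{*}\ge1$ is taken large enough (depending on $A,\gamma$) that on $[M_*,\infty)$ the power term $\sigma_\varepsilon\ell_1(t^2)^{2\gamma}$ and $A_\delta$ are in their ``large $t$'' regime, where \eqref{add} and \eqref{a.55} apply with $t\ge1$.

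\emph{Uniform ellipticity} \eqref{h''}$_2$. On $(0,M]$ it is exactly \eqref{corfl.4}. For $t>M$ we compare $tA_M''(x,t)=t(A_\delta^\varepsilon)''(x,M)$ with
$$
A_M'(x,t)=(A_\delta^\varepsilon)'(x,M)+(A_\delta^\varepsilon)''(x,M)(t-M).
$$
Using \eqref{corfl.4} at $t=M$, i.e.\ $(A_\delta^\varepsilon)'(x,M)\approx M(A_\delta^\varepsilon)''(x,M)$, both sides are comparable to $(A_\delta^\varepsilon)''(x,M)\,t$. Hence $tA_M''\approx A_M'$ with constants depending on $(A,\gamma,\mu,M)$; by \eqref{corfl.4} the dependence is through $\rrr_{*}(M)$, which produces the $\mu$-dependence.

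\emph{Hölder condition} \eqref{h''}$_3$. If $t+\tau$ and $t$ both lie in $(0,M]$, this is \eqref{corfl.3}$_1$. If both exceed $M$, the difference vanishes. In the mixed case, say $t\le M<t+\tau$ (the other ordering is symmetric), we write
$$
|A_M''(x,t+\tau)-A_M''(x,t)|=|(A_\delta^\varepsilon)''(x,M)-(A_\delta^\varepsilon)''(x,t)|.
$$
Since $|M-t|\le|\tau|<t/2$, \eqref{corfl.3}$_1$ bounds this by $c_M(|\tau|/t)^\beta (A_\delta^\varepsilon)''(x,t)$. The case $t>M\ge t+\tau$ is handled the same way, using in addition that $(A_\delta^\varepsilon)''(x,M)\approx (A_\delta^\varepsilon)''(x,t+\tau)$ for comparable arguments, again by \eqref{corfl.3}$_1$. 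The only delicate point is that \eqref{corfl.3}$_1$ is stated for $|\tau|<t/2$ while \eqref{h''}$_3$ allows $|\tau|\le t/2$. This boundary case follows by continuity of $A_M''$ in $t$ at the cost of a constant.

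\emph{Hölder continuity in $x$} \eqref{h''}$_1$. This follows from \eqref{regreg}$_2$ on $[0,M]$. For $t>M$, $A_M$ is a polynomial in $t$ whose coefficients are $A_\delta^\varepsilon(x,M)$, $(A_\delta^\varepsilon)'(x,M)=\mathcal a_\delta^\varepsilon(x,M)M$ and $(A_\delta^\varepsilon)''(x,M)$. The first two coefficients are $C^{0,\alpha}$ in $x$ by \eqref{a12.x}.

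The main obstacle I expect is the $x$-regularity of the second derivative coefficient $(A_\delta^\varepsilon)''(x,M)=\bar{\mathcal a}_\delta^\varepsilon(x,M)$. The assumption \eqref{a.3} controls only $\lambda$ and $\mathcal a$, not $A''$. If this causes trouble, I would instead freeze not $A''$ but the quantity $\mathcal a$, continuing $\mathcal a_M(x,t):=\mathcal a_\delta^\varepsilon(x,M)(t/M)^{\gamma'-2}$ for $t\ge M$ with a fixed power, and smoothing the junction by a $C^{2}$ cutoff. This keeps all coefficients controlled through \eqref{a12.x}$_2$, while \eqref{h''}$_{2,3}$ remain standard for power-type pieces \cite[Remark 2.3]{dsv09}.
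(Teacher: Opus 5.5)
Your Taylor-type continuation handles the $t$-side correctly. $A_M$ is $C^{2}$, $A_M''>0$, and your checks of \eqref{h''}$_{2}$ and \eqref{h''}$_{3}$ go through. In the mixed case $t>M\ge t+\tau$, apply \eqref{corfl.3}$_{1}$ at base point $M$ with increment $t+\tau-M$; this is admissible because $t+\tau\ge t/2>M/2$.

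The gap is the other half of \eqref{h''}$_{1}$, namely $x\mapsto A_M(x,\cdot)\in C^{0,\alpha}(\Omega)$. You flag this yourself but do not resolve it. For $t>M$ your $A_M(x,t)$ contains the coefficient $(A_\delta^\varepsilon)''(x,M)=\bar{\mathcal a}_\delta^\varepsilon(x,M)$, which is essentially $A''(x,\ell_\delta(M))$. The hypotheses give no $C^{0,\alpha}$ control of $x\mapsto A''(x,t)$: \eqref{a.3} bounds only the oscillation of $\lambda$ and $\mathcal a$, and \eqref{a.1} asks only that $A''$ be Carath\'eodory. Interpolating \eqref{a.3} against \eqref{a.5.2} yields only an exponent of order $\alpha\beta/(1+\beta)<\alpha$. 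So your main construction does not prove the corollary as stated. It would suffice for the one place the corollary is used, Section \ref{vs}, where only the frozen $A_M(x_{\textnormal{c}},\cdot)$ is fed into \cite{dsv09}; but that is not what is claimed.

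Your fallback does not close the gap either, because it skips the actual difficulty. Gluing $\mathcal a_\delta^\varepsilon$ to a power $\psi$ through a cutoff $\phi$ gives $A_M''=\phi\bar{\mathcal a}_\delta^\varepsilon+(1-\phi)\bar\psi+t\phi'(\mathcal a_\delta^\varepsilon-\psi)$. The last term is of the same order as the first two and has no sign in general, so it can destroy strict convexity and \eqref{h''}$_{2}$. The remark that power-type pieces are standard says nothing about the junction.

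The paper avoids the first issue by keeping $x$ only inside values of $A_\delta^\varepsilon$: it sets $A_M=\eta_M(A_\delta^\varepsilon-A_\delta^\varepsilon(\cdot,0))+\tx{h}$ with $\tx{h}(t)=(t^{2}-M^{2})_{+}^{4\gamma}$. The $C^{0,\alpha}$ claim is then immediate from \eqref{a12.x}$_{1}$. Most of the proof goes into the junction:
\begin{itemize}
\item the cutoff errors $\tx{X}$ of \eqref{hh.3} are absorbed into $\tx{h}'$, and this is where $M\ge M_{*}(A,\gamma)$ is needed;
\item a separate small-$\theta$ argument handles $t$ near $M$, where $\tx{h}$ degenerates, see \eqref{the}--\eqref{hh.2}.
\end{itemize}
In your write-up $M_{*}$ plays no role, which is a symptom of the missing step.

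Your fallback can also be repaired by giving the cutoff term a favourable sign. Take $\phi$ nonincreasing and $\psi(x,t)=\mathcal a_\delta^\varepsilon(x,M)(t/M)^{K}$ with $K\gtrsim\rrr_{*}(2M)$. Then \eqref{a.7} and \eqref{a.555} give $t(\mathcal a_\delta^\varepsilon)'\le K\mathcal a_\delta^\varepsilon$, hence $\psi\ge\mathcal a_\delta^\varepsilon$ on the transition interval. The term $t\phi'(\mathcal a_\delta^\varepsilon-\psi)$ is then nonnegative. Some argument of this kind has to be supplied.
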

\begin{proof}
Following \cite[Section 5]{bs15}, we pick a cut-off function $\eta_{M}\in C^{\infty}_{c}(\mathbb{R}^{N\times n})$ such that $\mathds{1}_{B_{2M}}\le \eta_{M}\le \mathds{1}_{B_{4M}}$, $\snr{D\eta_{M}}\le 2M^{-1}$, $\snr{D^{2}\eta_{M}}\le 4M^{-2}$, $\snr{D^{3}\eta_{M}}\le 8M^{-3}$, and set
\eqn{amam}
$$
\tx{h}(t):=(t^{2}-M^{2})_{+}^{4\gamma},\qquad \quad A_{M}(x,t):=\eta_{M}(t)\left(A_{\delta}^{\varepsilon}(x,t)-A_{\delta}^{\varepsilon}(x,0)\right)+\tx{h}(t).
$$
By construction and \eqref{regreg}, we see that $t\mapsto A_{M}(\cdot,t)\in C^{2}_{\loc}[0,\infty)$, $x\mapsto A_{M}(x,\cdot)\in C^{0,\alpha}(\Omega)$, and $A_{M}(x,t)=A_{\delta}^{\varepsilon}(x,t)$ for all $(x,t)\in \Omega\times [0,M]$, so that $\eqref{h''}_{1}$ and \eqref{h'''} are proven. We then deal separately with the proof of $\eqref{h''}_{2}$, $\eqref{h''}_{3}$, and the strict convexity of $t\mapsto A_{M}(\cdot,t)$. 
\subsubsection*{Proof of $\eqref{h''}_{2}$} A direct computation gives
$$
\begin{cases}
\displaystyle
\ A_{M}'(x,t)=\eta_{M}'(t)\left(A_{\delta}^{\varepsilon}(x,t)-A_{\delta}^{\varepsilon}(x,0)\right)+\eta_{M}(t)(A_{\delta}^{\varepsilon})'(x,t)+\tx{h}'(t)\vspace{1.5mm}\\ \displaystyle
\ A_{M}''(x,t)=\eta_{M}''(t)\left(A_{\delta}^{\varepsilon}(x,t)-A_{\delta}^{\varepsilon}(x,0)\right)+2\eta_{M}'(t)(A_{\delta}^{\varepsilon})'(x,t)+\eta_{M}(t)(A_{\delta}^{\varepsilon})''(x,t)+\tx{h}''(t),
\end{cases}
$$
where (for the reader's sake),
\eqn{hhhh}
$$
\tx{h}'(t)=8\gamma(t^{2}-M^{2})_{+}^{4\gamma-1}t,\qquad \quad \tx{h}''(t)=8\gamma(t^{2}-M^{2})_{+}^{4\gamma-2}(2(4\gamma-1)t^{2}+(t^{2}-M^{2})_{+}).
$$
Before proceeding further, let $\theta\in (0,1)$ to be fixed in a few lines, and record the elementary implications:
\eqn{hh.6}
$$
\begin{cases}
    \displaystyle
    \ M+\theta\le t \ \Longrightarrow \ (t^{2}-M^{2})_{+}\ge \theta^{2}\vspace{1.5mm}\\ \displaystyle
    \ t\ge 2M \ \Longrightarrow \ \tx{h}'(t)\ge c(\gamma)t^{8\gamma-1}\vspace{1.5mm}\\ \displaystyle
    \ t\ge 2M \ \Longrightarrow\ \tx{h}''(t)t\approx_{\gamma,M}\tx{h}'(t).
\end{cases}
$$
Moreover, since $t\mapsto A_{\delta}(\cdot,t)$ is strictly convex, cf. \eqref{lala}, $t\mapsto A_{\delta}'(\cdot,t)$ is increasing, so
\eqn{hh.1}
$$
A'_{\delta}(x,t)\stackrel{\eqref{adade}}{=}\mathcal{a}_{\delta}(x,t)t\ge \mathcal{a}_{\delta}(x,1)\stackrel{\eqref{a.7}_{1}}{\ge} c\lambda_{\delta}(x,1)\stackrel{\eqref{iinf}}{\ge} c_{-}(A,\gamma,\mu,\vartheta)>0,
$$
where we also used Remark \ref{laed}. Set 
\eqn{the}
$$
\theta:=\min\left\{\frac{1}{2},\left(\frac{c_{-}}{3^{4\gamma+2}\gamma M^{4\gamma+1}}\right)^{\frac{1}{2(2\gamma-1)}}\right\}\in (0,1).
$$ 
Now assume that $0<t\le M+\theta$. Clearly, we can suppose $M\le t\le M+\theta$, otherwise if $0< t\le M$, $A_{M}(x,t)=A_{\delta}^{\varepsilon}(x,t)$ and we can conclude this part of the proof directly via \eqref{corfl.3}--\eqref{corfl.4}. By definition, now $A_{M}(x,t)=A_{\delta}^{\varepsilon}(x,t)+\tx{h}(t)$ and
\begin{eqnarray}\label{hh.2}
A_{M}''(x,t)t&=&(A_{\delta}^{\varepsilon})''(x,t)t+\tx{h}''(t)t\nonumber \\
&\stackrel{\eqref{corfl.4}}{\ge}&c(A_{\delta}^{\varepsilon})'(x,t)+16\gamma(4\gamma-1)(t^{2}-M^{2})_{+}^{4\gamma-2}t^{3}+8\gamma(t^{2}-M^{2})_{+}^{4\gamma-1}t\nonumber \\
&\stackrel{\eqref{hhhh}}{\ge}&c(A_{\delta}^{\varepsilon})'(x,t)+\tx{h}'(t)=cA_{M}'(x,t)\nonumber \\
&\stackrel{\eqref{corfl.4}}{\ge}&cA_{\delta}'(x,t)+c(A_{\delta}^{\varepsilon})''(x,t)t+4\gamma(t^{2}-M^{2})_{+}^{4\gamma-1}t\nonumber \\
&&+4\gamma(t^{2}-M^{2})_{+}^{4\gamma-2}t^{3}-4\gamma(t^{2}-M^{2})_{+}^{4\gamma-2}M^{2}t\nonumber \\
&\stackrel{\eqref{hh.1}}{\ge}&c_{-}+c(A_{\delta}^{\varepsilon})''(x,t)t+4\gamma(t^{2}-M^{2})_{+}^{4\gamma-1}t\nonumber \\
&&+4\gamma(t^{2}-M^{2})_{+}^{4\gamma-2}t^{3}-3^{4\gamma+2}\gamma M^{4\gamma+1}\theta^{2(2\gamma-1)}\stackrel{\eqref{the}}{\ge}cA_{M}''(x,t)t,
\end{eqnarray}
for all $(x,t)\in \Omega\times [M,M+\theta]$ and some $c\equiv c(A,\gamma,\mu,M)$. Next, if $M+\theta\le t\le 2M$ we look back at \eqref{hh.2}, third line, and estimate
\begin{eqnarray*}
A_{M}''(x,t)t&\ge&c(A_{\delta}^{\varepsilon})'(x,t)+\tx{h}'(t)=cA_{M}'(x,t)\nonumber \\
&\stackrel{\eqref{hh.6}_{1}}{\ge}&c(A_{\delta}^{\varepsilon})''(x,t)t+4\gamma(t^{2}-M^{2})_{+}^{4\gamma-1}t+4\theta^{2}M^{-2}\gamma(t^{2}-M^{2})_{+}^{4\gamma-2}t^{3}\nonumber \\
&\stackrel{\eqref{hhhh}}{\ge}&c(A_{\delta}^{\varepsilon})''(x,t)t+c\tx{h}''(t)t=cA_{M}''(x,t)t,
\end{eqnarray*}
for $c\equiv c(A,\gamma,\mu,M)$. Furthermore, if $t\ge 4M$, then $A_{M}(x,t)=\tx{h}(t)$ and $\eqref{hh.6}_{3}$ holds. We only need to check what happens for $2M\le t\le 4M$. As in this case $\eqref{hh.6}_{3}$ holds, we only need to control the terms depending on $A_{\delta}^{\varepsilon}$ and $\eta_{M}$. In this respect, we bound
\begin{eqnarray}\label{hh.3}
\tx{X}(x,t)&:=&\left(\snr{\eta'(t)}+\snr{\eta''(t)}\right)\snr{A_{\delta}^{\varepsilon}(x,t)-A_{\delta}^{\varepsilon}(x,0)}\nonumber \\
&&+2\snr{\eta'(t)}(A_{\delta}^{\varepsilon})'(x,t)\le\frac{12}{M}(A_{\delta}^{\varepsilon})'(x,t)t\stackrel{\eqref{d2.2}}{\le}\frac{ct^{4\gamma}}{M}\stackrel{\eqref{hh.6}_{2}}{\le}\frac{c_{\gamma}(A,\gamma)\tx{h}'(t)}{M^{4\gamma-1}},
\end{eqnarray}
 where we used again that $t\mapsto (A_{\delta}^{\varepsilon})'(\cdot,t)$ is increasing. Set 
 $$
 M_{*}:=\max\left\{\left(2^{10\gamma}c_{\gamma}\right)^{\frac{1}{4\gamma-2}},2\right\} \ \Longrightarrow \ M_{*}\equiv M_{*}(A,\gamma).
 $$
 We then estimate
\begin{eqnarray}\label{hh.20}
A_{M}''(x,t)t&\ge&\eta_{M}(t)(A_{\delta}^{\varepsilon})''(x,t)t+\tx{h}''(t)t-\tx{X}(x,t)\nonumber \\
&\stackrel{\eqref{hh.3},\eqref{corfl.4}}{\ge}&c\eta_{M}(t)(A_{\delta}^{\varepsilon})'(x,t)+\tx{h}'(t)-\tx{X}(x,t)\nonumber \\
&\ge&cA_{M}'(x,t)+\frac{\tx{h}'(t)}{2}-(1+c)\tx{X}(x,t)\nonumber \\
&\stackrel{\eqref{hh.3}}{\ge}&cA'_{M}(x,t)+\frac{\tx{h}'(t)}{2}\left(1-\frac{4c_{\gamma}}{M^{4\gamma-1}}\right)\nonumber \\
&\stackrel{M\ge M_{*}}{\ge}&cA_{M}'(x,t)+\frac{\tx{h}'(t)}{4}\ge cA_{M}'(x,t),
\end{eqnarray}
for $c\equiv c(A,\gamma,\mu,M)$ and, similarly,
\begin{eqnarray}\label{hh.21}
A_{M}'(x,t)&\ge&c\left(\eta_{M}(t)(A_{\delta}^{\varepsilon})'(x,t)+\tx{h}'(t)\right)+\frac{\tx{h}'(t)}{2}-c\tx{X}(x,t)\nonumber \\
&\stackrel{\eqref{corfl.4}}{\ge}&c\left(\eta_{M}(t)(A_{\delta}^{\varepsilon})''(x,t)t+\tx{h}''(t)t\right)+\frac{\tx{h}'(t)}{2}-c\tx{X}(x,t)\nonumber \\
&\ge&cA_{M}''(x,t)+\frac{\tx{h}'(t)}{2}-4\tx{X}(x,t)t\nonumber \\
&\stackrel{\eqref{hh.3}}{\ge}&cA_{M}''(x,t)+\tx{h}'(t)\left(\frac{1}{8}-\frac{16c_{\gamma}}{M^{4\gamma-2}}\right)\stackrel{M\ge M_{*}}{\ge}cA_{M}''(x,t),
\end{eqnarray}
with $c\equiv c(A,\gamma,\mu,M)$.\footnote{Since we are bounding $A_{M}''(x,t)t$ and $A_{M}'(x,t)$ below, there is no loss of generality in taking constant $c$ appearing in displays \eqref{hh.20}--\eqref{hh.21} less than one.} 
\subsubsection*{Proof of the strict convexity of $t\mapsto A_{M}(\cdot,t)$} Concerning the strict convexity of $t\mapsto A_{M}(\cdot,t)$, if $0\le t\le 2M$, $A_{M}(x,t)=A_{\delta}^{\varepsilon}(x,t)+\tx{h}(t)$, and the conclusion follows by definition and \eqref{lala}. On the other hand, if $2M<t<\infty$, 
\begin{eqnarray}\label{hh.22}
A_{M}'(x,t)&\ge& \eta_{M}(t)(A_{\delta}^{\varepsilon})'(x,t)+\tx{h}'(t)-\tx{X}(x,t)\nonumber \\
&\ge& \eta_{M}(t)(A_{\delta}^{\varepsilon})'(x,t)+c\tx{h}'(t)\left(1-\frac{c_{\gamma}}{M^{5\gamma-1}}\right)\nonumber \\
&\stackrel{M\ge M_{*}}{\ge}&\eta_{M}(t)(A_{\delta}^{\varepsilon})'(x,t)+c\tx{h}'(t)\nonumber \\
&\stackrel{\eqref{hh.6}_{2}}{\ge}&\eta_{M}(t)(A_{\delta}^{\varepsilon})'(x,t)+c\stackrel{\eqref{lala}}{\ge}c(\gamma,A)>0,
\end{eqnarray}
so by $\eqref{h''}_{2}$, we have the strictly convex of $t\mapsto A_{M}(\cdot,t)$ for all $(x,t)\in \Omega\times [0,\infty).$
\subsubsection*{Proof of $\eqref{h''}_{3}$} If $0<t\le M/2$, then $t/2\le t+\tau\le M$ and by \eqref{h'''} we can conclude with $\eqref{corfl.3}_{1}$, while if $t>8M$ we can apply standard estimates for power-type functions as $A_{M}=\tx{h}$. This means that we can assume that $M/2<t\le 8M$. Direct computations and the mean value theorem give, for $M/2<t\le M+1$,
\begin{eqnarray*}
\snr{\tx{h}''(t+\tau)-\tx{h}''(t)}&\le& \snr{\tau}\max\{\tx{h}'''(t+\tau),\tx{h}'''(t)\}\le c\snr{\tau}\nonumber \\
&\stackrel{\eqref{hh.22}}{\le}&c\snr{\tau}A_{M}'(x,t)\stackrel{\eqref{h''}_{2}}{\le} c\snr{\tau}A_{M}''(x,t)t\stackrel{t\le M+1}{\le} c\left(\frac{\snr{\tau}}{t}\right)A_{M}''(x,t),
\end{eqnarray*}
where we also used that if $t\le M+1$, then $A_{M}=A_{\delta}^{\varepsilon}+\tx{h}$ and so $c\equiv c(A,\gamma,\mu,M)$. On the other hand, if $M+1<t\le 8M$, since $(t^{2}-M^{2})_{+}\ge 1$, we directly have
$$
\snr{\tx{h}''(t+\tau)-\tx{h}''(t)}\le c(\gamma,M)\left(\frac{\snr{\tau}}{t}\right)\tx{h}''(t).
$$
With the three previous estimate at hand, by \eqref{d2.2}, the mean value theorem, \eqref{add}, \eqref{a.55}--\eqref{a.7}, \eqref{corfl.3}--\eqref{h''}$_{2}$, and \eqref{hh.22} we finally bound
\begin{eqnarray*}
\snr{A_{M}''(x,t+\tau)-A_{M}''(x,t)}&\le&\snr{\eta_{M}''(t+\tau)}\snr{A_{\delta}^{\varepsilon}(x,t+\tau)-A_{\delta}^{\varepsilon}(x,t)}\nonumber \\
&&+\snr{A_{\delta}^{\varepsilon}(x,t)-A_{\delta}^{\varepsilon}(x,0)}\snr{\eta_{M}''(t+\tau)-\eta_{M}''(t)}\nonumber \\
&&+2\snr{\eta_{M}'(t+\tau)}\snr{(A_{\delta}^{\varepsilon})'(x,t+\tau)-(A_{\delta}^{\varepsilon})'(x,t)}\nonumber \\
&&+2\snr{\eta_{M}'(t+\tau)-\eta_{M}'(t)}(A_{\delta}^{\varepsilon})'(x,t)\nonumber \\
&&+\eta_{M}(t+\tau)\snr{(A_{\delta}^{\varepsilon})''(x,t+\tau)-(A_{\delta}^{\varepsilon})''(x,t)}\nonumber \\
&&+(A_{\delta}^{\varepsilon})''(x,t)\snr{\eta_{M}(t+\tau)-\eta_{M}(t)}+\snr{\tx{h}''(t+\tau)-\tx{h}''(t)}\nonumber \\
&\le&c\left(\frac{\snr{\tau}}{t}\right)M^{4\gamma}+c\left(\frac{\snr{\tau}}{t}\right)\left(\rrr_{*}(M)\nr{\lambda(\cdot,1)}_{L^{\infty}(\Omega)}+\ell_{1}(t^{2})^{2\gamma-1}\right)\nonumber \\
&&+c\left(\frac{\snr{\tau}}{t}\right)^{\beta}(A_{\delta}^{\varepsilon})''(x,t)+c\left(\frac{\snr{\tau}}{t}\right)\left(A_{M}''(x,t)+\tx{h}''(t)\right)\nonumber \\
&\le&c\left(\frac{\snr{\tau}}{t}\right)^{\beta}A_{M}'(x,t)\le c\left(\frac{\snr{\tau}}{t}\right)^{\beta}A_{M}''(x,t),
\end{eqnarray*}
for $c\equiv c(A,\gamma,\mu,\beta,\vartheta,M)$. The proof is complete.
\end{proof}
\subsection{Regularized Dirichlet problems}\label{rdp}
Let $u\in W^{1,1}_{\loc}(\Omega,\mathbb{R}^{N})$ be a local minimizer of functional $\mathcal{F}$, and $B\Subset \Omega$ be a ball. Recalling \eqref{a.3} and the first bullet of Remark \ref{rmp}, \cite[Theorem 2.3]{buli} applies\footnote{In \cite{buli}, quantitatively superlinear integrands are considered, that is $t^{p}-1\lesssim A(x,t)\le 1+t^{q}$ for some $1<p\le q$ and all $(x,t)\in \Omega\times [0,\infty)$. Here, we do not control below any power larger than one, however, thanks to \eqref{a.4}--\eqref{binf} Dunford \& Pettis and de la Vallée Poussin theorem assure the compactness of approximating sequences in $W^{1,1}$, see also \cite[Section 5.2]{dm23b} and \cite[Lemma 3.2]{ddp24}.} and yields a sequence $\ti{u}_{\varepsilon}\in C^{\infty}(B,\mathbb{R}^{N})$ such that
\eqn{conv}
$$
\ti{u}_{\varepsilon}\to u \ \ \mbox{strongly in} \ \ W^{1,1}(B,\mathbb{R}^{N})\qquad \mbox{and}\qquad \mathcal{F}(\ti{u}_{\varepsilon};B)\to \mathcal{F}(u;B).
$$
\noindent Let us denote $\sigma_{\varepsilon}:=\left(10+\varepsilon^{-1}+\varepsilon^{-1}\nr{D\ti{u}_{\varepsilon}}_{L^{4\gamma}(B)}^{8\gamma}\right)^{-1}$, observe that 
\eqn{oe}
$$\texttt{o}(\varepsilon)\equiv \sigma_{\varepsilon}\nr{D\ti{u}_{\varepsilon}}_{L^{4\gamma}(B)}^{4\gamma}\to 0,$$ and introduce functional
\eqn{funed}
$$
W^{1,4\gamma}(B,\mathbb{R}^{N})\ni w\mapsto \mathcal{F}_{\delta}^{\varepsilon}(w;B):=\int_{B}\tx{f}_{\delta}^{\varepsilon}(x,Dw)\dx,
$$
with integrand $\tx{f}_{\delta}^{\varepsilon}$ constructed as in Section \ref{dgc}, corresponding to the choice of $\sigma_{\varepsilon}$ made above. By strict convexity, cf. $\eqref{lala}_{1}$, and direct methods, the Dirichlet problem
\eqn{pded}
$$
\ti{u}_{\varepsilon}+W^{1,4\gamma}_{0}(B,\mathbb{R}^{N})\ni w\mapsto \min_{\ti{u}_{\varepsilon}+W^{1,4\gamma}_{0}(B,\mathbb{R}^{N})}\mathcal{F}_{\delta}^{\varepsilon}(w;B)
$$
admits a unique solution $u_{\delta}^{\varepsilon}\in \ti{u}_{\varepsilon}+W^{1,4\gamma}_{0}(B,\mathbb{R}^{N})$, that is a weak solution to system
\eqn{elsed}
$$
\int_{B}\langle\partial\tx{f}_{\delta}^{\varepsilon}(x,Du_{\delta}^{\varepsilon}),Dw\rangle\dx=0\qquad \mbox{for all} \ \ w\in W^{1,4\gamma}_{0}(B,\mathbb{R}^{N}).
$$
Moreover, by Lemma \ref{l3232} we see that the integrand $\tx{f}_{\delta}^{\varepsilon}$ satisfies standard, controlled growth conditions \eqref{corfl.1r}, thus by now classical regularity theory applies \cite{tol83} and 
\eqn{areg}
$$
u_{\delta}^{\varepsilon}\in W^{1,\infty}_{\loc}(B,\mathbb{R}^{N}).
$$
This will be crucial for deriving uniform Lipschitz estimates, cf. Section \ref{lip}.

\begin{remark}[Notation alert]\label{remnot}
To streamline expressions, we suppress the indices $\varepsilon$ and $\delta$ throughout this section. Whenever a quantity depends on both parameters, we write it without decoration, e.g.: $\tx{f}_{\delta}^{\varepsilon}\equiv \tx{f}$; whenever it depends only on $\delta$, we place a tilde on the symbol, e.g.: $\tx{f}_{\delta}\equiv \ti{\tx{f}}$. All objects should therefore be understood in their indexed form, and the full notation will be reinstated at the end of Section \ref{lip}.
\end{remark}
\subsection{Quantitative higher integrability} Let us prove a quantitative higher integrability result for the solution to problem \eqref{pded} under (unbalanced) polynomial growth conditions. More precisely, we have the following theorem.
\begin{theorem}\label{thi}
Assume \eqref{add}, \eqref{lala}, \eqref{a.5.3s} and
\eqn{gm}
$$
1<\gamma<2-\mu+\frac{\alpha}{n}.
$$
The solution $u\in \ti{u}_{\varepsilon}+W^{1,4\gamma}_{0}(B,\mathbb{R}^{N})$ to the Dirichlet problem \eqref{pded} satisfies
\eqn{12.2.1}
$$
Du\in L^{t}_{\loc}(B,\mathbb{R}^{N\times n})\qquad \mbox{for all} \ \ t\in \left[1,\frac{n(2-\mu)}{n-2\alpha}\right).
$$
Moreover, for every ball $B_{r}\Subset B$ with radius $r\in (0,1]$, 
\eqn{12.2}
    $$
\nr{Du}_{L^{t}(B_{r/2})}\le \frac{c}{r^{\tx{d}}}\left(\nr{Du}_{L^{1}(B_{r})}+\sqrt{\sigma_{\varepsilon}}\nr{Du}_{L^{4\gamma}(B_{r})}^{2\gamma}+1\right)^{\tx{d}},    
    $$
with $c\equiv c(n,N,A,\mu,\alpha,\gamma)$ and $\tx{d}\equiv \tx{d}(n,\alpha,\gamma,\mu)$. Specifically, $t=2\gamma-2+\mu$ is admissible in \eqref{12.2.1}--\eqref{12.2}.
\end{theorem}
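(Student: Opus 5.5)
We prove Theorem \ref{thi} by a fractional differentiability argument via finite differences, in the spirit of \cite{dm23b,ddp24}, using Lemma \ref{fraim} to turn Nikol'skii-type bounds into higher integrability. Since $u\in W^{1,4\gamma}$ solves \eqref{elsed}, we may test \eqref{elsed} with $w:=\tau_{-h}(\eta^{2}\tau_{h}u)$, where $\eta\in C^{\infty}_{c}(B_{r})$ is a cut-off equal to one on $B_{r/2}$ and $\snr{h}\le r/8$. With $V(z):=\snr{V_{1,2-\mu}(z)}$-type quantities in mind, the monotonicity \eqref{lala}$_{2}$ together with \eqref{a.7.1.x}$_{3}$ gives the lower bound
$$
\int_{B_{r}}\eta^{2}\left(\ell_{1}(\snr{Du(x+h)}+\snr{Du(x)})^{-\mu}+\sigma_{\varepsilon}\ell_{1}(\snr{Du(x+h)}^{2}+\snr{Du(x)}^{2})^{2\gamma-1}\right)\snr{\tau_{h}Du}^{2}\dx,
$$
which controls $\snr{\tau_{h}V_{1,2-\mu}(Du)}^{2}$ (and the analogous $\sigma_{\varepsilon}$-weighted $V_{1,4\gamma}$ quantity) via \eqref{Vm}.

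The right-hand side splits into three terms. The first comes from the $x$-dependence: $\partial\tx{f}(x+h,Du(x+h))-\partial\tx{f}(x,Du(x+h))$, whose size is controlled by \eqref{a.5.3s} by $c\snr{h}^{\alpha}\ell_{1}(\snr{Du})^{\gamma-1}$. Paired with $\tau_{h}Du$ (after using \eqref{af} to move one difference onto the test function and Young's inequality against the lower-bound weight $\ell_{1}^{-\mu}$), it yields $c\snr{h}^{2\alpha}\int\ell_{1}(\snr{Du})^{2\gamma-2+\mu}$. The second comes from $\eta$ derivatives, and is handled with \eqref{a.7}, \eqref{a.555}, \eqref{rrr} by $c\snr{h}^{2}r^{-2}\int \ell_{1}(\snr{Du})^{\mu-1+\omega}\ell_{1}(\snr{Du})$ plus $\sigma_{\varepsilon}$-terms $\sigma_{\varepsilon}\int\snr{Du}^{4\gamma}$. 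The third comes from the $\Lambda$-part in the symmetric difference, also absorbed via Young. The outcome is
$$
\nr{\tau_{h}V_{1,2-\mu}(Du)}_{L^{2}(B_{r/2})}^{2}\le c\snr{h}^{2\alpha}r^{-2}\left(\int_{B_{r}}\ell_{1}(\snr{Du})^{2\gamma-2+\mu}\dx+\sigma_{\varepsilon}\int_{B_{r}}\snr{Du}^{4\gamma}\dx\right).
$$

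Lemma \ref{fraim} with $s=\alpha$ (after rescaling to $B_{1}$) then upgrades $V_{1,2-\mu}(Du)\in L^{2}$ to $L^{2n/(n-2\sigma)}$ for every $\sigma<\alpha$. Since $\snr{V_{1,2-\mu}(Du)}^{2}\approx\ell_{1}(\snr{Du})^{2-\mu}$, this gives $Du\in L^{t}$ for all $t<n(2-\mu)/(n-2\alpha)$, which is \eqref{12.2.1}.

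The main obstacle is closing the estimate: the right-hand side features $\ell_{1}(\snr{Du})^{2\gamma-2+\mu}$, while the left-hand side only gains integrability of order $(2-\mu)n/(n-2\sigma)$. Assumption \eqref{gm} is exactly $2\gamma-2+\mu<(2-\mu)n/(n-2\alpha)$ at the level of exponents (with some room, taking $\sigma$ close to $\alpha$). So we interpolate the $L^{2\gamma-2+\mu}$ norm between $L^{1}$ and $L^{t}$ with $t$ close to the endpoint. The resulting power of $\nr{Du}_{L^{t}}$ on the right is strictly less than its power on the left, which lets us absorb it via Young's inequality on a family of shrinking balls $B_{\tau_{2}}\subset B_{\tau_{1}}$ and Lemma \ref{iterlem}. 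The $\sigma_{\varepsilon}$-term is carried along untouched as $\sqrt{\sigma_{\varepsilon}}\nr{Du}^{2\gamma}_{L^{4\gamma}}$. Tracking the powers of $r$ and of the interpolation exponents produces \eqref{12.2} with some $\tx{d}(n,\alpha,\gamma,\mu)$. In particular, $t=2\gamma-2+\mu$ lies in the admissible range by \eqref{gm}.
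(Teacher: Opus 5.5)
\textbf{There is a genuine gap in how you handle the cut-off term}
\[
-2\int_{B}\eta\langle\tau_{h}(\partial\tx{f}(\cdot,Du)),\tau_{h}u\otimes D\eta\rangle\dx .
\]
You estimate it through the Hessian bounds \eqref{a.7}, \eqref{a.555}, \eqref{rrr} and Young's inequality. Consider the part of $\tau_{h}(\partial\tx{f}(\cdot,Du))$ coming from the gradient variable. Absorbing $\snr{\tau_{h}Du}$ into the monotonicity term leaves a quantity like $\int\snr{D\eta}^{2}\rrr_{*}(\mathcal{D}_{h})^{2}\lambda(\cdot,\mathcal{D}_{h})\snr{\tau_{h}u}^{2}\dx$, with $\mathcal{D}_{h}=\snr{Du(x)}+\snr{Du(x+h)}$.

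The weight is evaluated at $x$ and $x+h$, while $\tau_{h}u$ involves $Du$ along the whole segment. So $\snr{\tau_{h}u}$ cannot be replaced pointwise by $\snr{h}\snr{Du}$. The only way to extract $\snr{h}^{2}$ is H\"older plus \eqref{gh}, which requires at least $Du\in L^{2}$, in fact slightly more because the weight is unbounded.

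But \eqref{gm} forces $2\gamma-2+\mu<2-\mu+2\alpha/n\le 2$, so $L^{2}$ is not controlled by $\int\ell_{1}(\snr{Du})^{2\gamma-2+\mu}\dx$. When $n>4\alpha$, even the target exponent $n(2-\mu)/(n-2\alpha)$ is below $2$, so no bootstrap helps. The bound $c\snr{h}^{2}r^{-2}\int\ell_{1}(\snr{Du})^{\mu-1+\omega}\ell_{1}(\snr{Du})\dx$ you state does not follow. Your ``third term'' (the $\Lambda$-part absorbed via Young) is exactly where the argument breaks. In this nonuniformly elliptic setting the cut-off term must not see $\Lambda$ or $\rrr_{*}$ at all.

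\textbf{The missing idea is to apply the finite-difference integration by parts \eqref{af} to this term.} You invoke \eqref{af} for the $x$-dependence term, where it is not needed: there \eqref{a.5.3s} and Young suffice directly, as you say. Take $H=\partial\ti{\tx{f}}$ and $W=\eta D\eta\otimes\tau_{h}u$. Then \eqref{af} moves the whole difference, $x$-shift included, onto $W$. The only property of $\partial\ti{\tx{f}}$ that enters is the growth bound $\snr{\partial\ti{\tx{f}}(x,z)}\lesssim\ell_{1}(\snr{z})^{\gamma-1}$, which comes from \eqref{d2.2} and \eqref{add}$_{2}$.

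The resulting pieces are handled as follows.
The piece $\snr{h}\int\ell_{1}^{\gamma-1}(\snr{D\eta}^{2}+\snr{D^{2}\eta})\snr{\tau_{h}u}\dx$ is bounded via H\"older with exponents $(\gamma/(\gamma-1),\gamma)$ and \eqref{gh} by $c\snr{h}^{2}r^{-2}\int\ell_{1}(\snr{Du})^{\gamma}\dx$.
The piece $\snr{h}\int\ell_{1}^{\gamma-1}\eta\snr{D\eta}\snr{\tau_{h}Du}\dx$ is absorbed by Young into $\ell_{1}(\mathcal{D}_{h})^{-\mu}\snr{\tau_{h}Du}^{2}$. The cost is $c\snr{h}^{2}r^{-2}\int\ell_{1}(\snr{Du})^{2\gamma-2+\mu}\dx$, after Jensen and Fubini in the averaging parameter.
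Since $\gamma\le 2\gamma-2+\mu$, everything stays at the right integrability.
Only the $\sigma_{\varepsilon}$-part may be treated by the mean value theorem, Young and H\"older as you intended. This works because it is uniformly elliptic with $4\gamma$-growth, and $L^{4\gamma}$ control is available.

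\textbf{A smaller inaccuracy.} \eqref{gm} is not ``exactly'' $2\gamma-2+\mu<n(2-\mu)/(n-2\alpha)$. That inequality is only the strictly weaker condition for the interpolation to make sense. What \eqref{gm} buys is the absorption. With $\beta<\alpha$ the fractional order used in Lemma \ref{fraim}, the requirement $\theta(2\gamma-2+\mu)<2-\mu$ on the interpolation exponent is equivalent to $\beta>n(\gamma+\mu-2)$. Such a $\beta$ exists precisely under \eqref{gm}. Once these points are fixed, your closing step (interpolation with $L^{1}$, Young, Lemma \ref{iterlem}) is the paper's.
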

\begin{proof}
Let $B_{r}\Subset B$ be a ball with radius $r\in (0,1]$, $r/2\le \tau_{2}<\tau_{1}\le r$ be parameters, set $\ti{\tau}_{2}:=(\tau_{1}+\tau_{2})/2$, $\ti{\tau}_{1}:=(2\tau_{1}+\tau_{2})/3$, let $\eta\in C^{2}_{c}(B_{r})$ be a cut-off function satisfying $\mathds{1}_{B_{\ti{\tau}_{2}}}\le \eta\le \mathds{1}_{B_{\ti{\tau}_{1}}}$ and $\snr{D\eta}^{2}+\snr{D^{2}\eta}\lesssim (\tau_{1}-\tau_{2})^{-2}$, and pick a vector $h\in \mathbb{R}^{n}\setminus \{0\}$ with $\snr{h}<10^{-4}(\tau_{1}-\tau_{2})$. Define 
$$
\mathcal{D}_{h}:=\snr{Du(x)}+\snr{Du(x+h)},\qquad \quad \mathcal{N}_{\infty}:=1+\nr{D\eta}_{L^{\infty}(B)}^{2}+\nr{D^{2}\eta}_{L^{\infty}(B)},
$$
and test \eqref{elsed} against $w:=\tau_{-h}(\eta^{2}\tau_{h}u)$, admissible as $u\in W^{1,4\gamma}(B,\mathbb{R}^{N})$ to gain, after a few standard manipulations involving the integration-by-parts formula for finite differences and Leibnitz rule \eqref{prod},
\begin{eqnarray*}
\mbox{(I)}&:=&\int_{B}\eta^{2}\langle\partial \tx{f}(x,Du(x+h))-\partial\tx{f}(x,Du(x)),\tau_{h}Du\rangle\dx\nonumber \\
&=&-2\int_{B}\eta\langle \tau_{h}(\partial \tx{f}(\cdot,Du)),\tau_{h}u\otimes D\eta\rangle\dx\nonumber \\
&&-\int_{B}\eta^{2}\langle\partial \tx{f}(x+h,Du(x+h))-\partial\tx{f}(x,Du(x+h)),\tau_{h}Du\rangle\dx=:\mbox{(II)}+\mbox{(III)}.
\end{eqnarray*}
We then estimate, by the mean value theorem and \eqref{l60},
\begin{eqnarray*}
\mbox{(I)}&=&\int_{B}\eta^{2}\left(\int_{0}^{1}\langle\partial^{2}\tx{f}(x,Du(x)+s\tau_{h}Du(x))\tau_{h}Du,\tau_{h}Du\rangle\ds\right)\dx\nonumber \\
&\stackrel{\eqref{lala}}{\ge}&\int_{B}\eta^{2}\left(\int_{0}^{1}\lambda(x,\snr{Du+s\tau_{h}Du})\ds\right)\snr{\tau_{h}Du}^{2}\dx\nonumber \\
&\stackrel{\eqref{a.55}}{\ge}&c\int_{B}\eta^{2}\ti{\lambda}(x,\mathcal{D}_{h})\snr{\tau_{h}Du}^{2}\dx\nonumber \\
&&+c\sigma_{\varepsilon}\int_{B}\eta^{2}\left(\int_{0}^{1}\ell_{1}(\snr{Du+s\tau_{h}Du}^{2})^{2\gamma-1}\ds\right)\snr{\tau_{h}Du}^{2}\dx\nonumber \\
&\stackrel{\eqref{l60}}{\ge}&c\int_{B}\eta^{2}\left(\ti{\lambda}(x,\mathcal{D}_{h})+\sigma_{\varepsilon}\ell_{1}(\mathcal{D}_{h}^{2})^{2\gamma-1}\right)\snr{\tau_{h}Du}^{2}\dx\nonumber\\
&\stackrel{\eqref{a.7.1.x}_{3}}{\ge}&c\int_{B}\eta^{2}\left(\ell_{1}(\mathcal{D}_{h})^{-\mu}+\sigma_{\varepsilon}\ell_{1}(\mathcal{D}_{h}^{2})^{2\gamma-1}\right)\snr{\tau_{h}Du}^{2}\dx\nonumber \\
&\stackrel{\eqref{Vm}}{\ge}&c\int_{B}\eta^{2}\left(\snr{\tau_{h}V_{1,2-\mu}(Du)}^{2}+\sigma_{\varepsilon}\snr{\tau_{h}V_{1,4\gamma}(Du)}^{2}\right)\dx,
\end{eqnarray*}
with $c\equiv c(n,N,A,\mu,\gamma)$. Moreover, we bound using \eqref{af}, \eqref{d2.2}, \eqref{Vm}, Cauchy-Schwarz and Young's inequalities,
\begin{eqnarray*}
\snr{\mbox{(II)}}&\le &2\snr{h}\left|\int_{B}\left(\int_{0}^{1}\langle\partial\ti{\tx{f}}(x+th,Du(x+th)),\partial_{h/\snr{h}}(\eta D\eta\otimes \tau_{h}u)\rangle\dt\right)\right|\nonumber \\
&&+8\gamma\sigma_{\varepsilon}\int_{B}\langle\tau_{h}\left(\ell_{1}(\snr{Du}^{2})^{2\gamma-1}Du\right),\eta D\eta\otimes \tau_{h}u\rangle\dx\nonumber \\
&\le&c\snr{h}\int_{B}\left(\int_{0}^{1}\ell_{1}(\snr{Du(x+th)})^{\gamma-1}\dt\right)\left(\snr{D\eta}^{2}+\eta\snr{D^{2}\eta}\right)\snr{\tau_{h}u}\dx\nonumber \\
&&+c\snr{h}\int_{B}\left(\int_{0}^{1}\ell_{1}(\snr{Du(x+th)})^{\gamma-1}\dt\right)\eta\snr{D\eta}\snr{\tau_{h}Du}\dx\nonumber \\
&&+\omega\sigma_{\varepsilon}\int_{B}\eta^{2}\ell_{1}(\mathcal{D}_{h}^{2})^{2\gamma-1}\snr{\tau_{h}Du}^{2}\dx+\frac{c\sigma_{\varepsilon}}{\omega}\int_{B}\snr{D\eta}^{2}\ell_{1}(\mathcal{D}_{h}^{2})^{2\gamma-1}\snr{\tau_{h}u}^{2}\dx\nonumber \\
&\le&\omega\int_{B}\eta^{2}\left(\ell_{1}(\mathcal{D}_{h})^{-\mu}+\sigma_{\varepsilon}\ell_{1}(\mathcal{D}_{h}^{2})^{2\gamma-1}\right)\snr{\tau_{h}Du}^{2}\dx\nonumber \\
&&+\frac{c\snr{h}^{2}\mathcal{N}_{\infty}}{\omega}\int_{B_{\ti{\tau}_{1}}}\left(\int_{0}^{1}\ell_{1}(\snr{Du(x+th)})^{\gamma-1}\dt\right)^{2}\ell_{1}(\mathcal{D}_{h})^{\mu}\dx\nonumber \\
&&+c\snr{h}\mathcal{N}_{\infty}\left(\int_{B_{\ti{\tau}_{1}}}\left(\int_{0}^{1}\ell_{1}(\snr{Du(x+th)})^{\gamma-1}\dt\right)^{\frac{\gamma}{\gamma-1}}\dx\right)^{\frac{\gamma-1}{\gamma}}\left(\int_{B_{\ti{\tau}_{1}}}\snr{\tau_{h}u}^{\gamma}\dx\right)^{\frac{1}{\gamma}}\nonumber \\
&&+\frac{c\sigma_{\varepsilon}\mathcal{N}_{\infty}}{\omega}\left(\int_{B_{\ti{\tau}_{1}}}\ell_{1}(\mathcal{D}_{h}^{2})^{2\gamma}\dx\right)^{\frac{2\gamma-1}{2\gamma}}\left(\int_{B_{\ti{\tau}_{1}}}\snr{\tau_{h}u}^{4\gamma}\dx\right)^{\frac{1}{2\gamma}}\nonumber \\
&\le&\omega\int_{B}\eta^{2}\left(\ell_{1}(\mathcal{D}_{h})^{-\mu}+\sigma_{\varepsilon}\ell_{1}(\mathcal{D}_{h}^{2})^{2\gamma-1}\right)\snr{\tau_{h}Du}^{2}\dx\nonumber \\
&&+\frac{c\snr{h}^{2}\mathcal{N}_{\infty}}{\omega}\int_{B_{\ti{\tau}_{1}}}\left(\int_{0}^{1}\ell_{1}(\snr{Du(x+th)})^{\gamma-1}\dt\right)^{\frac{2\gamma-2+\mu}{\gamma-1}}+\ell_{1}(\mathcal{D}_{h})^{2\gamma-2+\mu}\dx\nonumber \\
&&+c\snr{h}^{2}\mathcal{N}_{\infty}\int_{B_{\tau_{1}}}\ell_{1}(\snr{Du})^{\gamma}\dx+\frac{c\sigma_{\varepsilon}\snr{h}^{2}\mathcal{N}_{\infty}}{\omega}\int_{B_{\tau_{1}}}\ell_{1}(\snr{Du}^{2})^{2\gamma}\dx\nonumber \\
&\le&\omega\int_{B}\eta^{2}\left(\snr{\tau_{h}V_{1,2-\mu}(Du)}^{2}+\sigma_{\varepsilon}\snr{\tau_{h}V_{1,4\gamma}(Du)}^{2}\right)\dx\nonumber \\
&&+\frac{c\snr{h}^{2}\mathcal{N}_{\infty}}{\omega}\int_{B_{\tau_{1}}}\ell_{1}(\snr{Du})^{2\gamma-2+\mu}+\sigma_{\varepsilon}\ell_{1}(\snr{Du}^{2})^{2\gamma}\dx,
\end{eqnarray*}
for $c\equiv c(n,N,A,\mu,\gamma)$. Finally, using again \eqref{a.5.3s}, Cauchy-Schwarz and Young's inequalities and \eqref{Vm} we control
\begin{eqnarray*}
\snr{\mbox{(III)}}&\le&c\snr{h}^{\alpha}\int_{B}\eta^{2}\ell_{1}(\snr{Du(x+h)})^{\gamma-1}\snr{\tau_{h}Du}\dx\nonumber \\
&\le&\omega\int_{B}\eta^{2}\ell_{1}(\mathcal{D}_{h})^{-\mu}\snr{\tau_{h}Du}^{2}\dx+\frac{c\snr{h}^{2\alpha}}{\omega}\int_{B}\eta^{2}\ell_{1}(\mathcal{D}_{h})^{2\gamma-2+\mu}\dx\nonumber \\
&\le&c\omega\int_{B}\eta^{2}\snr{\tau_{h}V_{1,2-\mu}(Du)}^{2}\dx+\frac{c\snr{h}^{2\alpha}}{\omega}\int_{B_{\tau_{1}}}\ell_{1}(\snr{Du})^{2\gamma-2+\mu}\dx,
\end{eqnarray*}
with $c\equiv c(n,N,A,\mu,\gamma)$. Merging all previous estimates and choosing $\omega\in (0,1)$ sufficiently small, after standard manipulations we obtain
\begin{flalign*}
\int_{B}\eta^{2}\snr{\tau_{h}V_{1,2-\mu}(Du)}^{2}\dx\le \frac{c\snr{h}^{2\alpha}}{(\tau_{1}-\tau_{2})^{2}}\int_{B_{\tau_{1}}}\ell_{1}(\snr{Du})^{2\gamma-2+\mu}+\sigma_{\varepsilon}\ell_{1}(\snr{Du}^{2})^{2\gamma}\dx.
\end{flalign*}
Lemma \ref{fraim} and \eqref{Vm} then yield
\begin{eqnarray}\label{12.0}
\nr{\ell_{1}(\snr{Du})}_{L^{\frac{n(2-\mu)}{n-2\beta}}(B_{\tau_{2}})}^{\frac{2-\mu}{2}}&\le& \frac{c}{(\tau_{1}-\tau_{2})^{1-\alpha+\beta}r^{\beta}}\nr{\ell_{1}(\snr{Du})}_{L^{2\gamma-2+\mu}(B_{\tau_{1}})}^{\frac{2\gamma-2+\mu}{2}}\nonumber \\
&&+\frac{c\sqrt{\sigma_{\varepsilon}}}{(\tau_{1}-\tau_{2})^{1-\alpha+\beta}r^{\beta}}\nr{\ell_{1}(\snr{Du})}_{L^{4\gamma}(B_{\tau_{1}})}^{2\gamma}\nonumber \\
&&+\frac{c}{(\tau_{1}-\tau_{2})^{\beta}}\left(\nr{V_{1,2-\mu}(Du)}_{L^{2}(B_{\tau_{2}})}+\snr{B_{\tau_{2}}}^{\frac{n-2\beta}{2n}}\right),
\end{eqnarray}
for all $\beta\in (\alpha/2,\alpha)$. Now notice that \eqref{gm} implies that $1\le \mu<1+\alpha/n$ and
\eqn{12.1}
$$
\beta\in \left(\max\left\{n(\gamma+\mu-2),\frac{\alpha}{2}\right\},\alpha\right) \ \Longrightarrow \ \frac{n(2-\mu)}{n-2\beta}>2\gamma-2+\mu \quad \mbox{and}\quad \frac{n(2\gamma+\mu-3)}{2\beta-n(\mu-1)}<1,
$$
therefore in \eqref{12.0} we can apply the interpolation inequality
\eqn{inter}
$$
\nr{\ell_{1}(\snr{Du})}_{L^{2\gamma-2+\mu}(B_{\tau_{1}})}\le \nr{\ell_{1}(\snr{Du})}_{L^{\frac{n(2-\mu)}{n-2\beta}}(B_{\tau_{1}})}^{\theta}\nr{\ell_{1}(\snr{Du})}_{L^{1}(B_{\tau_{1}})}^{1-\theta},
$$
where $\theta\in (0,1)$ is derived via
$$
\frac{1}{2\gamma-2+\mu}=\frac{\theta(n-2\beta)}{n(2-\mu)}+1-\theta \ \Longrightarrow \ \theta=\frac{n(2-\mu)(2\gamma+\mu-3)}{(2\beta-n(\mu-1))(2\gamma-2+\mu)}.
$$
Moreover, via \eqref{12.1} we see that $\theta(2\gamma-2+\mu)<2-\mu$, so we can apply Young's inequality with conjugate exponents
$$
(\tx{s}_{1},\tx{s}_{2}):=\left(\frac{2-\mu}{\theta(2\gamma-2+\mu)},\frac{2-\mu}{(2-\mu)(1+\theta)-2\theta\gamma}\right),
$$
to get
\begin{eqnarray*}
\nr{\ell_{1}(\snr{Du})}_{L^{\frac{n(2-\mu)}{n-2\beta}}(B_{\tau_{2}})}^{\frac{2-\mu}{2}}&\stackrel{\eqref{inter}}{\le}&\frac{c}{(\tau_{1}-\tau_{2})^{1-\alpha+\beta}r^{\beta}}\nr{\ell_{1}(\snr{Du})}_{L^{\frac{n(2-\mu)}{n-2\beta}}(B_{\tau_{1}})}^{\frac{\theta(2\gamma-2+\mu)}{2}}\nr{\ell_{1}(\snr{Du})}_{L^{1}(B_{\tau_{1}})}^{\frac{(1-\theta)(2\gamma-2+\mu)}{2}}\nonumber \\
&&+\frac{c}{(\tau_{1}-\tau_{2})^{\beta}}\left(\nr{V_{1,2-\mu}(Du)}_{L^{2}(B_{\tau_{2}})}+\snr{B_{\tau_{2}}}^{\frac{n-2\beta}{2n}}\right)\nonumber \\
&&+\frac{c\sqrt{\sigma_{\varepsilon}}}{(\tau_{1}-\tau_{2})^{1-\alpha+\beta}r^{\beta}}\nr{\ell_{1}(\snr{Du})}_{L^{4\gamma}(B_{\tau_{1}})}^{2\gamma}\nonumber \\
&\le&\frac{1}{4}\nr{\ell_{1}(\snr{Du})}_{L^{\frac{n(2-\mu)}{n-2\beta}}(B_{\tau_{1}})}^{\frac{2-\mu}{2}}+\frac{c}{(\tau_{1}-\tau_{2})^{(1-\alpha+\beta)\tx{s}_{2}}r^{\beta\tx{s}_{2}}}\nr{\ell_{1}(\snr{Du})}_{L^{1}(B_{\tau_{1}})}^{\frac{(1-\theta)(2\gamma-2+\mu)\tx{s}_{2}}{2}}\nonumber \\
&&+\frac{c}{(\tau_{1}-\tau_{2})^{\beta}}\left(\snr{B_{\tau_{2}}}^{\frac{\mu-1}{2}}\nr{Du}_{L^{1}(B_{\tau_{2}})}^{\frac{2-\mu}{2}}+\snr{B_{\tau_{2}}}^{\frac{n-2\beta}{2n}}\right)\nonumber \\
&&+\frac{c\sqrt{\sigma_{\varepsilon}}}{(\tau_{1}-\tau_{2})^{1-\alpha+\beta}r^{\beta}}\nr{\ell_{1}(\snr{Du})}_{L^{4\gamma}(B_{\tau_{1}})}^{2\gamma},
\end{eqnarray*}
with $c\equiv c(n,N,A,\mu,\alpha,\gamma)$. Lemma \ref{iterlem} eventually gives 
\begin{eqnarray}\label{12.6}
\nr{\ell_{1}(\snr{Du})}_{L^{\frac{n(2-\mu)}{n-2\beta}}(B_{r/2})}^{\frac{2-\mu}{2}}&\le&\frac{c}{r^{(1+\beta)\tx{s}_{2}}}\nr{\ell_{1}(\snr{Du})}_{L^{1}(B_{r})}^{\frac{(1-\theta)(2\gamma-2+\mu)\tx{s}_{2}}{2}}+cr^{\frac{n(\mu-1)}{2}-2\beta}\nr{\ell_{1}(\snr{Du})}_{L^{1}(B_{r})}^{\frac{2-\mu}{2}}\nonumber \\
&&+\frac{c\sqrt{\sigma_{\varepsilon}}}{r^{(1+\beta)\tx{s}_{2}}}\nr{\ell_{1}(\snr{Du})}_{L^{4\gamma}(B_{r})}^{2\gamma}.
\end{eqnarray}
The proof is then completed via standard manipulations.
\end{proof}
\begin{remark}
Theorem \ref{thi} is the nonautonomous, higher integrability counterpart of \cite[Theorem 2.2]{fm00}, see also \cite[Theorem 1.1]{fps24} for the nonautonomous, $\mu$-elliptic case, and \cite[Theorem 1.1]{bgs25} for the autonomous, $(\mu,q)$-elliptic setting. A closer inspection of the proof shows that, subject to \eqref{gm}, bounds \eqref{12.2.1}--\eqref{12.2} hold for minima of $(\mu,\gamma)$-elliptic integrands $\ti{\tx{f}}\in C^{2}(\mathbb{R}^{N\times n})$ satisfying\footnote{Only the structural properties of the (regularized) original integrand are listed; the perturbation term $z\mapsto \sigma_{\varepsilon}\ell_{1}(\snr{z}^{2})^{2\gamma}$ is omitted, cf. Remark \ref{remnot}.}
$$
    \begin{cases}
    \ \snr{z}-1\lesssim \ti{\tx{f}}(x,z)\lesssim 1+\snr{z}^{\gamma}\vspace{1.5mm}\\
    \displaystyle
    \ \snr{\partial \ti{\tx{f}}(x_{1},z)-\partial \ti{\tx{f}}(x_{2},z)}\lesssim \snr{x_{1}-x_{2}}^{\alpha}\ell_{1}(\snr{z})^{\gamma-1}\vspace{1.5mm}\\
    \displaystyle
    \ \langle\partial^{2}\ti{\tx{f}}(x,z)\xi,\xi\rangle\gtrsim \ell_{1}(\snr{z})^{-\mu}\snr{\xi}^{2},
    \end{cases}
$$
for all $z,\xi\in \mathbb{R}^{N\times n}$, $x,x_{1},x_{2}\in B$, and some $1\le \mu<2$. Consequently, \eqref{12.2} can be used as an a-priori estimate for solutions to more general, nonautonomous $(\mu,\gamma)$-elliptic problems, included those satisfying linear growth below.
\end{remark}
\subsection{Blow up}\label{bw} Let $B_{\rr}(x_{0})\Subset B$ be a ball with radius $\rr\in (0,1]$. Keeping in mind Remark \ref{remnot}, we scale on $B_{1}(\equiv B_{1}(0))$ the function $u$ by letting $u_{\rr}(x):=u(x_{0}+\rr x)\rr^{-1}$, and integrands $\tx{f},\ti{\tx{f}}$ and all related control functions by setting\footnote{Here $\mathcal{w}$ represents the regularized integrand or the auxiliary functions defined at the beginning of Section \ref{dgc}.} $\mathcal{w}_{\rr}(x,z):=\mathcal{w}(x_{0}+\rr x,z)$ if both indexes $\varepsilon$, $\delta$ appear in their original expression, and $\ti{\mathcal{w}}_{\rr}(x,t):=\ti{\mathcal{w}}(x_{0}+\rr x,z)$ if they depend only on $\delta$. Such maps are now defined for all $x\in\left\{x\in \mathbb{R}^{n}\colon x_{0}+\rr x\in B\right\}=:\mathcal{B}$, and any $z\in \mathbb{R}^{N\times n}$. Since $B_{\rr}(x_{0})\Subset B$, after scaling, $B_{1}\Subset \mathcal{B}$, and thanks also to \eqref{areg}, $u_{\rr}\in W^{1,\infty}(B_{1},\mathbb{R}^{N})$ minimizes the integral
\eqn{funrr}
$$
W^{1,4\gamma}(B_{1},\mathbb{R}^{N})\ni w\mapsto \mathcal{F}_{\rr}(w;B_{1}):=\int_{B_{1}}\tx{f}_{\rr}(x,Dw)\dx,
$$
and solves the related Euler--Lagrange system
\eqn{elrr}
$$
\int_{B_{1}}\langle\partial\tx{f}_{\rr}(x,Du_{\rr}),Dw\rangle\dx=0\qquad \mbox{for all} \ \ w\in W^{1,4\gamma}_{0}(B_{1},\mathbb{R}^{N}).
$$
Notice that the integrand $\tx{f}_{\rr}$ is of the same type as those described in Section \ref{dgc}, with obvious, minimal variations impacting only the oscillation properties \eqref{a12.x}, in which now the bounding constant $c$ is replaced by $c(A,\vartheta_{*})\rr^{\alpha}$ and $B_{r}$ is any ball contained in $\mathcal{B}$.

\section{Frozen Lipschitz bounds}\label{fi} \noindent In the following, an important role will be played by the constant-coefficient counterpart of functional $\mathcal{F}_{\rr}$. Specifically, let $B_{\sigma}(\equiv B_{\sigma}(x_{\textnormal{c}}))\Subset B_{1}$ be a ball, and, for $z\in \mathbb{R}^{N\times n}$, define 
\begin{flalign}\label{fi.0}
\begin{array}{c}
\displaystyle
\tx{f}_{\textnormal{c}}(z):=\tx{f}_{\rr}(x_{\textnormal{c}},z),\qquad A_{\textnormal{c}}(\snr{z}):=A_{\rr}(x_{\textnormal{c}},\snr{z}),\qquad \tx{a}_{\textnormal{c}}(\snr{z}):=\tx{a}_{\rr}(x_{\textnormal{c}},\snr{z}),\\ [8pt]\displaystyle
\lambda_{\textnormal{c}}(\snr{z}):=\lambda_{\rr}(x_{\textnormal{c}},\snr{z}),\qquad \Lambda_{\textnormal{c}}(\snr{z}):=\Lambda_{\rr}(x_{\textnormal{c}},\snr{z}).
\end{array}
\end{flalign}
This is nothing but the integrand $\tx{f}_{\rr}$ and related auxiliary functions made autonomous by "freezing" the space-depending component in the center of ball $B_{\sigma}$. Needles to say, all properties collected in Section \ref{dgc} continue to hold for $\tx{f}_{\textnormal{c}}$ (just replace $x$ with $x_{\textnormal{c}}$ there), while the oscillation conditions \eqref{a12.x} will be used to compare $\tx{f}_{\textnormal{c}}$ to $\tx{f}_{\rr}$ later on. In particular, the convention established in Remark \ref{remnot} holds also for the frozen rescaled maps above, see also at the beginning of Section \ref{bw}. Next, introduce the variational integral
$$
W^{1,4\gamma}(B_{\sigma},\mathbb{R}^{N})\ni w\mapsto \mathcal{F}_{\textnormal{c}}(w;B_{\sigma}):=\int_{B_{\sigma}}\tx{f}_{\textnormal{c}}(Dw)\dx,
$$
and, with $v_{0}\in W^{1,\infty}(\bar{B}_{\sigma},\mathbb{R}^{N})$, let us look at the Dirichlet problem
\eqn{pd0}
$$
v_{0}+W^{1,4\gamma}_{0}(B_{\sigma},\mathbb{R}^{N})\ni w\mapsto \min_{v_{0}+W^{1,4\gamma}(B_{\sigma},\mathbb{R}^{N})}\mathcal{F}_{\textnormal{c}}(w;B_{\sigma}).
$$
Existence and uniqueness of the solution $v\in v_{0}+W^{1,4\gamma}(B_{\sigma},\mathbb{R}^{N})$ follows via standard direct methods and strict convexity arguments. We aim at providing some Lipschitz and higher differentiability estimates for problem \eqref{pd0} that will play a crucial role in the rest of the paper. This is the content of the following theorem.
\begin{theorem}\label{t4.1}
Let $v_{0}\in W^{1,\infty}(\bar{B}_{\sigma},\mathbb{R}^{N})$ be a Lipschitz-regular function and $v\in (v_{0}+W^{1,4\gamma}_{0}(B_{\sigma},\mathbb{R}^{N}))$ be the solution of Dirichlet problem \eqref{pd0}. Then, whenever $B_{\tau}\Subset B_{\sigma}$ is a ball and $\tx{M}_{0}\ge 1$ is a constant such that
\eqn{m0}
    $$
\tx{M}_{0}\ge \max\left\{\tx{t},\nr{Dv}_{L^{\infty}(B_{\tau})}\right\},
    $$
the Caccioppoli inequality
\eqn{4.4}
    $$
\tau\nr{D(\tx{a}_{\textnormal{c}}(\snr{Dv})-\kappa)_{+}}_{L^{2}(B_{\tau/2})}\le c\rrr_{*}(\tx{M}_{0})^{\frac{1}{2}}\nr{(\tx{a}_{\textnormal{c}}(\snr{Dv})-\kappa)_{+}}_{L^{2}(B_{3\tau/4})}
    $$
holds for all $\kk\ge 0$, with $c\equiv c(\data_{0})$. Moreover there exist thresholds $\mu_{0}\equiv \mu_{0}(n)\in (1,3/2)$, $\ti{\omega}_{\mu}\equiv \ti{\omega}_{\mu}(n)\in (0,1)$ such that if $1\le \mu<\mu_{0}$ and $0<\omega_{\mu}<\ti{\omega}_{\mu}$, the Lipschitz bound
    \eqn{4.8}
    $$
\nr{Dv}_{L^{\infty}(B_{3\sigma/4})}\le c\tx{a}_{\textnormal{c}}\left(\nr{Dv_{0}}_{L^{\infty}(B_{\sigma})}\right)^{(\mu-1+\omega_{\mu})\delta_{0}}\nr{Dv_{0}}_{L^{\infty}(B_{\sigma})}+c
    $$
is satisfied for some $\delta_{0}\equiv \delta_{0}(n)$ and $c\equiv c(\data_{0})$.
\end{theorem}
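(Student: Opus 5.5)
The plan is to run the classical Uhlenbeck-type argument for radial autonomous integrands and then a De Giorgi iteration, using the fact that the frozen integrand $\tx{f}_{\textnormal{c}}$ is uniformly elliptic with polynomial controlled growth by Lemma \ref{l3232}.

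\emph{Regularity and the differentiated system.} Since $\tx{f}_{\textnormal{c}}$ satisfies \eqref{corfl.1r}, classical theory gives $v\in W^{1,\infty}_{\loc}\cap W^{2,2}_{\loc}$. With this we may differentiate the Euler--Lagrange system in $x_{s}$ and obtain
\[
\int\langle\partial^{2}\tx{f}_{\textnormal{c}}(Dv)D\partial_{s}v,D\varphi\rangle\dx=0 .
\]
If one prefers to avoid the $W^{2,2}$ step, the same argument can be run on difference quotients using \eqref{af}.

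\emph{Caccioppoli inequality \eqref{4.4}.} Test the differentiated system with $\varphi=\eta^{2}(\tx{a}_{\textnormal{c}}(\snr{Dv})-\kk)_{+}\partial_{s}v$ and sum over $s$. The radial structure $\partial^{2}\tx{f}_{\textnormal{c}}(z)=\mathcal{a}_{\textnormal{c}}\mathds{I}+\mathcal{a}_{\textnormal{c}}'\snr{z}\,z\otimes z/\snr{z}^{2}$ is the key input here. Together with $\partial_{s}v\,\partial_{s}Dv=\tfrac12 D\snr{Dv}^{2}$, it makes the term in which the derivative falls on $(\tx{a}_{\textnormal{c}}-\kk)_{+}$ nonnegative. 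That term is comparable, via \eqref{a.7}, to
\[
\eta^{2}\lambda_{\textnormal{c}}(\snr{Dv})\,\snr{Dv}\,\snr{D\snr{Dv}}^{2}\ \gtrsim\ \snr{D(\tx{a}_{\textnormal{c}}(\snr{Dv})-\kk)_{+}}^{2}/(\lambda_{\textnormal{c}}\snr{Dv}).
\]
Write $\tx{a}_{\textnormal{c}}'=\lambda_{\textnormal{c}}t$, which gives $\snr{D\tx{a}_{\textnormal{c}}}=\lambda_{\textnormal{c}}\snr{Dv}\snr{D\snr{Dv}}$.

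The cut-off term is bounded by $\Lambda_{\textnormal{c}}\snr{Dv}\snr{D\snr{Dv}}\snr{D\eta}\eta(\tx{a}_{\textnormal{c}}-\kk)_{+}$. Apply Young's inequality and use $\Lambda_{\textnormal{c}}/\lambda_{\textnormal{c}}\le c\rrr_{*}(\tx{M}_{0})$ on $B_{\tau}$, which holds by \eqref{a.555} and \eqref{m0} since $\rrr_{*}$ is nondecreasing. This leaves $\rrr_{*}(\tx{M}_{0})\snr{D\eta}^{2}\Lambda_{\textnormal{c}}\snr{Dv}^{2}(\tx{a}_{\textnormal{c}}-\kk)_{+}^{2}/(\lambda_{\textnormal{c}}\snr{Dv}^{2}\cdot\ldots)$.

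To close, I would use the variant of the argument in \cite{dm23a,dm25}: test instead with $\varphi=\eta^{2}(\tx{a}_{\textnormal{c}}-\kk)_{+}\partial_{s}v/\ldots$, or equivalently estimate directly
\[
\int\eta^{2}\snr{D(\tx{a}_{\textnormal{c}}-\kk)_{+}}^{2}\le c\int\eta^{2}\Lambda_{\textnormal{c}}\snr{D^{2}v}\ldots .
\]
The bookkeeping should yield exactly the single factor $\rrr_{*}(\tx{M}_{0})$, hence its square root in \eqref{4.4}. The threshold $\tx{t}$ in \eqref{m0} is what lets the low-gradient region be absorbed using \eqref{a.7.1.x}$_{2}$.

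\emph{Lipschitz bound \eqref{4.8}.} Combine \eqref{4.4} with Sobolev embedding to get the reverse Hölder inequality \eqref{revva} for $w=\tx{a}_{\textnormal{c}}(\snr{Dv})$, with $M_{0}=c\,\rrr_{*}(\tx{M}_{0})^{1/2}$ and no potential terms. Lemma \ref{revlem} (or a plain De Giorgi iteration) then gives
\[
\sup_{B_{\tau/4}}\tx{a}_{\textnormal{c}}(\snr{Dv})\le c\,\rrr_{*}(\tx{M}_{0})^{\frac{\chi}{2(\chi-1)}}\Big(\mint_{B_{\tau}}\tx{a}_{\textnormal{c}}(\snr{Dv})^{2}\dx\Big)^{1/2},
\]
where $\chi=n/(n-2)$, or any $\chi>1$ when $n=2$.

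Choose $\tx{M}_{0}=\nr{Dv}_{L^{\infty}(B_{\tau})}$ and convert back to $\snr{Dv}$ through \eqref{a.7.1.x}$_{2,4}$, which give $\tx{a}_{\textnormal{c}}(t)\gtrsim t^{2-\mu}$, and \eqref{rrr}, which gives $\rrr_{*}(t)\lesssim t^{2(\mu-1+\omega_{\mu})}$. The resulting inequality has the form $\nr{Dv}_{L^{\infty}}^{2-\mu}\lesssim\nr{Dv}_{L^{\infty}}^{(\mu-1+\omega_{\mu})\chi/(\chi-1)}\times(\ldots)$. When $\mu<\mu_{0}(n)$ and $\omega_{\mu}<\ti{\omega}_{\mu}(n)$, the exponent on the right is strictly below $2-\mu$, so a covering argument and Lemma \ref{iterlem} with radii $3\sigma/4\le\tau_{2}<\tau_{1}\le\sigma$ reabsorb it. This gives an $L^{\infty}$ bound in terms of $\int_{B_{\sigma}}\tx{a}_{\textnormal{c}}(\snr{Dv})^{2}$.

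Finally, bound the interior integral by the boundary datum. Testing the Euler--Lagrange system with $v-v_{0}$ and using minimality, the $L^{1}$-type energy $\int A_{\textnormal{c}}(\snr{Dv})$ is at most $\int A_{\textnormal{c}}(\snr{Dv_{0}})\lesssim\rrr_{*}\,\tx{a}_{\textnormal{c}}(\nr{Dv_{0}}_{\infty})$. Interpolating the $L^{2}$ norm of $\tx{a}_{\textnormal{c}}$ between $L^{\infty}$ and $L^{1}$ then produces \eqref{4.8} with some $\delta_{0}(n)$.

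I expect the main obstacle to be the exponent bookkeeping in this last stage: making the powers of $\rrr_{*}$ coming from Caccioppoli, from the iteration constant $M_{0}^{\chi/(\chi-1)}$, and from the energy comparison absorbable. This is precisely where $\mu_{0}(n)$ and $\ti{\omega}_{\mu}(n)$ are fixed.
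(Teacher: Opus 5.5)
Your plan for the Lipschitz bound \eqref{4.8} is essentially the paper's: reverse H\"older inequality from \eqref{4.4}, Lemma \ref{revlem} with $M_{0}\approx\rrr_{*}(\tx{M}_{0})^{1/2}$, conversion through \eqref{rrr} and \eqref{a.7.1.x}, Young's inequality, Lemma \ref{iterlem}, and minimality. The gap is the Caccioppoli inequality \eqref{4.4}, on which all of this rests: it is not proved. Your cut-off estimate trails off into ``$\ldots$'' and you defer to an unspecified modified test function.

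\textbf{Why your estimate does not close.} The good term is miscomputed. Set $\phi:=(\tx{a}_{\textnormal{c}}(\snr{Dv})-\kk)_{+}$. On $\{\phi>0\}$ one has $\snr{Dv}>0$ and $D\phi=\lambda_{\textnormal{c}}(\snr{Dv})\snr{Dv}\,D\snr{Dv}$. The radial form of $\partial^{2}\tx{f}_{\textnormal{c}}$ gives, on $\{\phi>0\}$ and for every $\xi\in\mathbb{R}^{n}$,
\[
\sum_{s=1}^{n}\langle\partial^{2}\tx{f}_{\textnormal{c}}(Dv)D_{s}Dv,D_{s}v\otimes\xi\rangle=\snr{Dv}\,\langle\mathcal{B}\,D\snr{Dv},\xi\rangle=\frac{\langle\mathcal{B}\,D\phi,\xi\rangle}{\lambda_{\textnormal{c}}(\snr{Dv})},\qquad \mathcal{B}:=\mathcal{a}_{\textnormal{c}}(\snr{Dv})\,\mathds{I}_{n}+\frac{\mathcal{a}_{\textnormal{c}}'(\snr{Dv})}{\snr{Dv}}\,(Dv)^{T}Dv.
\]
Here $\mathcal{a}_{\textnormal{c}}(t):=A_{\textnormal{c}}'(t)/t$. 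The symmetric $n\times n$ matrix $\mathcal{B}$ has eigenvalues between $\mathcal{a}_{\textnormal{c}}(\snr{Dv})$ and $A_{\textnormal{c}}''(\snr{Dv})$, hence in $[c^{-1}\lambda_{\textnormal{c}},c\Lambda_{\textnormal{c}}]$ by \eqref{a.7}. Taking $\xi=D\phi$, the term where the derivative falls on $\phi$ equals $\eta^{2}\lambda_{\textnormal{c}}^{-1}\langle\mathcal{B}D\phi,D\phi\rangle\ge c\,\eta^{2}\snr{D\phi}^{2}$. It is not $\eta^{2}\lambda_{\textnormal{c}}\snr{Dv}\snr{D\snr{Dv}}^{2}=\eta^{2}\snr{D\phi}^{2}/(\lambda_{\textnormal{c}}\snr{Dv})$ as you wrote. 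You dropped the factor $\lambda_{\textnormal{c}}(\snr{Dv})\snr{Dv}$, which is not uniformly comparable to one. That mismatch is exactly why your cut-off term would not absorb into a bound for $\snr{D\phi}^{2}$.

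\textbf{How to close it.} With the correct good term, no new test function and no use of $\tx{t}$ is needed.
\begin{itemize}
\item Taking $\xi=D\eta$, the cut-off term is $2\eta\phi\lambda_{\textnormal{c}}^{-1}\langle\mathcal{B}D\phi,D\eta\rangle$.
\item The decisive step is Cauchy--Schwarz in the inner product induced by $\mathcal{B}$, not the Euclidean one. It bounds the cut-off term by $\tfrac12\eta^{2}\lambda_{\textnormal{c}}^{-1}\langle\mathcal{B}D\phi,D\phi\rangle+2\phi^{2}\lambda_{\textnormal{c}}^{-1}\langle\mathcal{B}D\eta,D\eta\rangle$, and the last quantity is at most $c(\Lambda_{\textnormal{c}}/\lambda_{\textnormal{c}})\phi^{2}\snr{D\eta}^{2}$.
\item Discard the nonnegative term with $D_{s}Dv$ in both slots and absorb; this yields \eqref{4.3}.
\item On $B_{\tau}$, $\Lambda_{\textnormal{c}}/\lambda_{\textnormal{c}}\le c\rrr_{*}(\snr{Dv})\le c\rrr_{*}(\tx{M}_{0})$ by \eqref{a.555} and the monotonicity of $\rrr_{*}$, which gives \eqref{4.4}.
\end{itemize}
A Euclidean Young inequality would instead produce $(\Lambda_{\textnormal{c}}/\lambda_{\textnormal{c}})^{2}$, i.e.\ $\rrr_{*}(\tx{M}_{0})$ in place of $\rrr_{*}(\tx{M}_{0})^{1/2}$.

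\textbf{A smaller fix in the Lipschitz part.} $v$ is only locally Lipschitz in $B_{\sigma}$. So Lemma \ref{iterlem}, which needs a bounded function, cannot be run up to $\tau_{1}=\sigma$, and you cannot interpolate against $\nr{Dv}_{L^{\infty}(B_{\sigma})}$ at the end. As in the paper, interpolate $\nr{\tx{a}_{\textnormal{c}}(\snr{Dv})}_{L^{2}(B_{r_{0}})}^{2}\le\tx{a}_{\textnormal{c}}(\tx{M}_{0})\nr{\tx{a}_{\textnormal{c}}(\snr{Dv})}_{L^{1}(B_{r_{0}})}$ inside the iteration, with $\tau_{1}\le 5\sigma/6$. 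Then only $\int_{B_{\sigma}}\tx{f}_{\textnormal{c}}(Dv)\dx$ remains before you invoke minimality.
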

\begin{proof}
By minimality, the solution $v\in v_{0}+W^{1,4\gamma}(B_{\sigma},\mathbb{R}^{N})$ to problem \eqref{pd0} solves the Euler-Lagrange system
\eqn{4.1}
$$
\int_{B_{\sigma}}\langle\partial \tx{f}_{\textnormal{c}}(Dv),Dw\rangle\dx=0\qquad \mbox{for all} \ \ w\in W^{1,4\gamma}_{0}(B_{\sigma},\mathbb{R}^{N}),
$$
and satisfies the energy estimate
\eqn{4.6}
$$
\int_{B_{\sigma}}\tx{f}_{\textnormal{c}}(Dv)\dx\le \int_{B_{\sigma}}\tx{f}_{\textnormal{c}}(Dv_{0})\dx.
$$
Now notice that by \eqref{corfl.1r} (take $x=x_{0}+\rr x_{\textnormal{c}}$ there), the integrand $\tx{f}_{\textnormal{c}}$ satisfies the assumptions in \cite[Section 8]{bm20}, therefore, recalling the definition of $\tx{a}_{\textnormal{c}}$, we have
\eqn{4.0}
$$
v\in W^{1,\infty}_{\loc}(B_{\sigma},\mathbb{R}^{N})\cap W^{2,2}_{\loc}(B_{\sigma},\mathbb{R}^{N}),\qquad \partial\tx{f}_{\textnormal{c}}(Dv)\in W^{1,2}_{\loc}(B_{\sigma},\mathbb{R}^{N\times n}),\qquad \tx{a}_{\textnormal{c}}(\snr{Dw})\in W^{1,2}_{\loc}(B_{\sigma}).
$$
Thanks to \eqref{4.0} we can further differentiate \eqref{4.1} thus getting the system
\eqn{4.2}
$$
\sum_{s=1}^{n}\int_{B_{\sigma}}\langle\partial^{2}\tx{f}_{\textnormal{c}}(Dv)D_{s}Dv,Dw\rangle\dx=0\qquad \mbox{for all} \ \ w\in W^{1,2}(B_{\sigma},\mathbb{R}^{N}) \ \mbox{with} \ \supp(w)\Subset B_{\sigma}.
$$
Let $B_{\tau}(\equiv B_{\tau}(\hat{x}))\Subset B_{\sigma}$ be a ball, $\eta\in C^{1}_{c}(B_{\sigma})$ a cut-off function such that $\mathds{1}_{B_{\tau/2}}\le \eta\le \mathds{1}_{B_{3\tau/4}}$ and $\snr{D\eta}\lesssim \tau^{-1}$, $\kappa\ge \kappa_{0}\ge 0$ be a number and, for $s\in \{1,\cdots,n\}$, set $w_{\kappa}:=\eta^{2}(\tx{a}_{\textnormal{c}}(\snr{Dv})-\kappa)_{+}D_{s}v$. By \eqref{4.0} and the features of $\eta$ we see that $w_{\kappa}$ is admissible in \eqref{4.2}, so we obtain
\begin{eqnarray*}
0&=&\sum_{s=1}^{n}\int_{B_{\sigma}}\eta^{2}(\tx{a}_{\textnormal{c}}(\snr{Dv})-\kappa)_{+}\langle\partial^{2}\tx{f}_{\textnormal{c}}(Dv)D_{s}Dv,D_{s}Dv\rangle\dx\nonumber \\
&&+\int_{B_{\sigma}}\eta^{2}\snr{Dv}\langle \tx{f}_{\textnormal{c}}(Dv) D\snr{Dv},D(\tx{a}_{\textnormal{c}}(\snr{Dv})-\kappa)_{+}\rangle\dx\nonumber \\
&&+2\sum_{s=1}^{n}\int_{B_{\sigma}}\eta(\tx{a}_{\textnormal{c}}(\snr{Dv})-\kappa)_{+}\langle\partial^{2}\tx{f}_{\textnormal{c}}(Dv)D_{s}Dv,D\eta\otimes D_{s}v\rangle\dx=:\mbox{(I)}+\mbox{(II)}+\mbox{(III)}.
\end{eqnarray*}
Keeping in mind that that $D\tx{a}_{\textnormal{c}}(\snr{Dv})=\lambda_{\textnormal{c}}(\snr{Dv})\snr{Dv}D\snr{Dv}$, we bound
\begin{eqnarray*}
\mbox{(I)}+\mbox{(II)}&\ge&c\int_{B_{\sigma}}\eta^{2}\lambda_{\textnormal{c}}(\snr{Dv})(\tx{a}_{\textnormal{c}}(\snr{Dv})-\kappa)_{+}\snr{D^{2}v}^{2}\dx\nonumber \\
&&+c\int_{B_{\sigma}}\eta^{2}\left\langle\frac{\partial^{2}\tx{f}_{\textnormal{c}}(Dv)}{\lambda_{\textnormal{c}}(\snr{Dv})}D(\tx{a}_{\textnormal{c}}(\snr{Dv})-\kk)_{+},D(\tx{a}_{\textnormal{c}}(\snr{Dv})-\kk)_{+}\right\rangle\dx\nonumber \\
&\stackrel{\eqref{lala}_{1}}{\ge}&c\int_{B_{\sigma}}\eta^{2}\lambda_{\textnormal{c}}(\snr{Dv})(\tx{a}_{\textnormal{c}}(\snr{Dv})-\kappa)_{+}\snr{D^{2}v}^{2}\dx+c\int_{B_{\sigma}}\eta^{2}\snr{D(\tx{a}_{\textnormal{c}}(\snr{Dv})-\kappa)_{+}}^{2}\dx,
\end{eqnarray*}
for $c\equiv c(n,N,\gamma)$. Moreover, by the Cauchy-Schwarz and Young inequalities we gain
\begin{eqnarray*}
\snr{\mbox{(III)}}&\le&\omega\int_{B_{\sigma}}\frac{\eta^{2}\partial^{2}\tx{f}_{\textnormal{c}}(Dv)}{\lambda_{\textnormal{c}}(\snr{Dv})}\langle D(\tx{a}_{\textnormal{c}}(\snr{Dv})-\kappa)_{+},D(\tx{a}_{\textnormal{c}}(\snr{Dv})-\kappa)_{+}\rangle\dx\nonumber \\
&&+\frac{c}{\omega}\int_{B_{\sigma}}\snr{D\eta}^{2}(\tx{a}_{\textnormal{c}}(\snr{Dv})-\kappa)_{+}^{2}\frac{\snr{\partial^{2} \tx{f}_{\textnormal{c}}(Dv)}}{\lambda_{\textnormal{c}}(\snr{Dv})}\dx\nonumber \\
&\stackrel{\eqref{lala}_{1}}{\le}&\omega\int_{B_{\sigma}}\eta^{2}\left\langle\frac{\partial^{2}\tx{f}_{\textnormal{c}}(Dv)}{\lambda_{\textnormal{c}}(\snr{Dv})}D(\tx{a}_{\textnormal{c}}(\snr{Dv})-\kappa)_{+},D(\tx{a}_{\textnormal{c}}(\snr{Dv})-\kappa)_{+}\right\rangle\dx\nonumber \\
&&+\frac{c}{\omega}\int_{B_{\sigma}}\frac{\Lambda_{\textnormal{c}}(\snr{Dv})}{\lambda_{\textnormal{c}}(\snr{Dv})}\snr{D\eta}^{2}(\tx{a}_{\textnormal{c}}(\snr{Dv})-\kappa)_{+}^{2}\dx,
\end{eqnarray*}
with $c\equiv c(\data_{0})$. Choosing $\omega$ sufficiently small and combining the previous displays we obtain
\begin{flalign}\label{4.3}
\int_{B_{\sigma}}\eta^{2}\snr{D(\tx{a}_{\textnormal{c}}(\snr{Dv})-\kappa)_{+}}^{2}\dx\le c\int_{B_{\sigma}}\frac{\Lambda_{\textnormal{c}}(\snr{Dv})}{\lambda_{\textnormal{c}}(\snr{Dv})}\snr{D\eta}^{2}(\tx{a}_{\textnormal{c}}(\snr{Dv})-\kappa)_{+}^{2}\dx,
\end{flalign}
for $c\equiv c(\data_{0})$. With $\tx{M}_{0}\ge 1$ being any constant satisfying \eqref{m0}, \eqref{a.555} and the properties of $\eta$, we upgrade \eqref{4.3} to \eqref{4.4}. The Sobolev embedding theorem and estimate \eqref{4.4} yield 
\begin{flalign}\label{4.5}
\left(\mint_{B_{\tau/2}}(\tx{a}_{\textnormal{c}}(\snr{Dv})-\kappa)_{+}^{2\chi_{0}}\dx\right)^{\frac{1}{2\chi_{0}}}\le c\rrr_{*}(\tx{M}_{0})^{\frac{1}{2}}\left(\mint_{B_{\tau}}(\tx{a}_{\textnormal{c}}(\snr{Dv})-\kappa)_{+}^{2}\dx\right)^{\frac{1}{2}},
\end{flalign}
where $\chi_{0}\equiv \chi_{0}(n)\in (1,2)$ comes from the Sobolev theorem and $c\equiv c(\data_{0})$. With $\varsigma>0$, we fix parameters $3\varsigma/4\le \tau_{2}<\tau_{1}\le 5\varsigma/6$ and related concentric balls $B_{3\varsigma/4}(\ti{x})\subset B_{\tau_2}(\ti{x})\subset B_{\tau_1}(\ti{x})\subset B_{5\varsigma/6}(\ti{x})\subset B_{\varsigma}(\ti{x})\subseteq B_{\sigma}$. Pick any $\hat{x}\in B_{\tau_{2}}(\ti{x})$ and set $r_{0}:=(\tau_{1}-\tau_{2})/8$, so that $ B_{r_{0}}(\hat{x}) \subset B_{\tau_1}(\ti{x})$, observe that there is no loss of generality in assuming $\nr{Dv}_{L^{\infty}(B_{3\varsigma/4}(\ti{x}))}\ge \tx{t}$ (otherwise there would be nothing to prove), choose $\tx{M}_{0}= \nr{Dv}_{L^{\infty}(B_{\tau_1}(\ti{x}))}$ and take any concentric ball $B_{\tau}(\hat{x})\subseteq B_{r_0}(\hat{x})$ in \eqref{4.5}. Lemma \ref{revlem} applies with $M_{0}=\rrr_{*}(\tx{M}_{0})^{\frac{1}{2}}$, $f=0$, $w=(\tx{a}_{\textnormal{c}}(\snr{Dv})-\kappa)_{+}$, $\kk_{0}=0$, and gives
\begin{flalign}\label{4.55}
\tx{a}_{\textnormal{c}}(\snr{Dv(\hat{x})})\le c\rrr_{*}(\tx{M}_{0})^{\frac{\chi_{0}}{2(\chi_{0}-1)}}\left(\mint_{B_{r_{0}}(\hat{x})}\tx{a}_{\textnormal{c}}(\snr{Dv})^{2}\dx\right)^{\frac{1}{2}},
\end{flalign}
with $c\equiv c(\data_{0})$. Since $\hat{x}\in B_{\tau_{2}}(\ti{x})$ is arbitrary, we keep bounding
\begin{eqnarray}\label{4.555}
\tx{a}_{\textnormal{c}}\left(\nr{Dv}_{L^{\infty}(B_{\tau_{2}}(\ti{x}))}\right)&\le& \frac{c}{(\tau_{1}-\tau_{2})^{n/2}}\rrr_{*}\left(\tx{M}_{0}\right)^{\frac{\chi_{0}}{2(\chi_{0}-1)}}\tx{a}_{\textnormal{c}}\left(\tx{M}_{0}\right)^{\frac{1}{2}}\nr{\tx{a}_{\textnormal{c}}(\snr{Dv})}_{L^{1}(B_{\tau_{1}}(\ti{x}))}^{\frac{1}{2}}\nonumber \\
&\stackrel{\eqref{a.7.1.x}_{1}}{\le}&\frac{c}{(\tau_{1}-\tau_{2})^{n/2}}\rrr_{*}(\tx{M}_{0})^{\frac{\chi_{0}}{2(\chi_{0}-1)}}\tx{a}_{\textnormal{c}}\left(\tx{M}_{0}\right)^{\frac{1}{2}}\nr{\tx{f}_{\textnormal{c}}(Dv)}_{L^{1}(B_{\tau_{1}}(\ti{x}))}^{\frac{1}{2}}\nonumber \\
&\stackrel{\eqref{rrr}}{\le}&\frac{c}{(\tau_{1}-\tau_{2})^{n/2}}\tx{M}_{0}^{\frac{(\mu-1+\omega_{\mu})\chi_{0}}{\chi_{0}-1}}\tx{a}_{\textnormal{c}}\left(\tx{M}_{0}\right)^{\frac{1}{2}}\nr{\tx{f}_{\textnormal{c}}(Dv)}_{L^{1}(B_{\tau_{1}}(\ti{x}))}^{\frac{1}{2}}\nonumber \\
&\stackrel{\eqref{a.7.1.x}_{4}}{\le}&\frac{c}{(\tau_{1}-\tau_{2})^{n/2}}\tx{a}_{\textnormal{c}}\left(\tx{M}_{0}\right)^{\frac{1}{2}+\frac{(\mu-1+\omega_{\mu})\chi_{0}}{(2-\mu)(\chi_{0}-1)}}\nr{\tx{f}_{\textnormal{c}}(Dv)}_{L^{1}(B_{\tau_{1}}(\ti{x}))}^{\frac{1}{2}},
\end{eqnarray}
for $c\equiv c(\data_{0})$. A proper choice of threshold $\mu_{0}\equiv \mu_{0}(n)\in (1,3/2)$ such as
\eqn{4.10}
$$
1<\mu_{0}<1+\min\left\{\frac{1}{2},\frac{\chi_{0}-1}{16\chi_{0}}\right\} \ \Longrightarrow \ \frac{(\mu-1+\omega_{\mu})\chi_{0}}{(2-\mu)(\chi_{0}-1)}\le \frac{2\chi_{0}(\mu-1+\omega_{\mu})}{\chi_{0}-1}<\frac{1}{2},
$$
and $\omega_{\mu}$ so small that 
\eqn{4.10.1}
$$
0<\omega_{\mu}<\frac{\chi_{0}-1}{16\chi_{0}}=:\ti{\omega}_{\mu}\equiv \ti{\omega}_{\mu}(n),
$$
implies that $\tx{a}_{0}(\tx{M}_{0})$ is raised to a positive exponent less than one, and we can apply Young's inequality with conjugate exponents
\eqn{4.5555}
$$
(2\tx{d}_{1},2\tx{d}_{2}):=\left(\frac{2(2-\mu)(\chi_{0}-1)}{2(\mu-1+\omega_{\mu})\chi_{0}+(\chi_{0}-1)(2-\mu)},\frac{2(2-\mu)(\chi_{0}-1)}{(2-\mu)(\chi_{0}-1)-2(\mu-1+\omega_{\mu})\chi_{0}}\right)
$$
to gain
\begin{eqnarray*}
\tx{a}_{\textnormal{c}}\left(\nr{Dv}_{L^{\infty}(B_{\tau_{2}}(\ti{x}))}\right)&\le& \frac{1}{4}\tx{a}_{\textnormal{c}}\left(\nr{Dv}_{L^{\infty}(B_{\tau_{1}}(\ti{x}))}\right)+c\nonumber \\
&&+\frac{c}{(\tau_{1}-\tau_{2})^{n\tx{d}_{2}}}\left(\int_{B_{5\varsigma/4}(\ti{x})}\tx{f}_{\textnormal{c}}(Dv)\dx\right)^{\tx{d}_{2}}.
\end{eqnarray*}
This, together with Lemma \ref{iterlem} leads to
\begin{flalign}\label{4.7.1}
\tx{a}_{\textnormal{c}}\left(\nr{Dv}_{L^{\infty}(B_{3\varsigma/4}(\ti{x}))}\right)\le c\left(\mint_{B_{5\varsigma/4}(\ti{x})}\tx{f}_{\textnormal{c}}(Dv)\dx\right)^{\tx{d}_{2}}+c,
\end{flalign}
with $c\equiv c(\data_{0})$. Choosing in particular $B_{\varsigma}(\ti{x})=B_{\sigma}$, we keep estimating,
\begin{eqnarray}\label{4.7}
\tx{a}_{\textnormal{c}}\left(\nr{Dv}_{L^{\infty}(B_{3\sigma/4})}\right)&\stackrel{\eqref{4.7.1}}{\le}&c\left(\mint_{B_{\sigma}}\tx{f}_{\textnormal{c}}(Dv)\dx\right)^{\tx{d}_{2}}+c\stackrel{\eqref{4.6}}{\le}c\left(\mint_{B_{\sigma}}\tx{f}_{\textnormal{c}}(Dv_{0})\dx\right)^{\tx{d}_{2}}+c\nonumber \\
&\stackrel{\eqref{a.7.1.x}_{1}}{\le}&c\left(\mint_{B_{\sigma}}\rrr_{*}(\snr{Dv_{0}})\tx{a}_{\textnormal{c}}(\snr{Dv_{0}})\dx\right)^{\tx{d}_{2}}+c\nonumber \\
&\stackrel{\eqref{rrr}}{\le}&c\left(\mint_{B_{\sigma}}\ell_{1}(\snr{Dv_{0}})^{2(\mu-1+\omega_{\mu})}\tx{a}_{\textnormal{c}}(\snr{Dv_{0}})\dx\right)^{\tx{d}_{2}}+c\nonumber \\
&\le&c\tx{a}_{\textnormal{c}}(\nr{Dv_{0}}_{L^{\infty}(B_{\sigma})})^{\frac{2(\mu-1+\omega_{\mu})(2\chi_{0}-1)}{(2-\mu)(\chi_{0}-1)-2(\mu-1+\omega_{\mu})\chi_{0}}}\tx{a}_{\textnormal{c}}(\nr{Dv_{0}}_{L^{\infty}(B_{\sigma})})+c\nonumber \\
&\stackrel{\eqref{4.10}}{\le}&c\tx{a}_{\textnormal{c}}(\nr{Dv_{0}}_{L^{\infty}(B_{\sigma})})^{\frac{24(\mu-1+\omega_{\mu})}{\chi_{0}-1}}\tx{a}_{\textnormal{c}}(\nr{Dv_{0}}_{L^{\infty}(B_{\sigma})})+c
\end{eqnarray}
for $c\equiv c(\data_{0})$. By construction, $\tx{a}_{\textnormal{c}}$ is invertible, so we can apply $\tx{a}_{\textnormal{c}}^{-1}$ to both sides of \eqref{4.7}, set $\delta_{0}:=64/(\chi_{0}-1)$ and recall that in the large $\tx{a}_{\textnormal{c}}^{-1}$ grows as $t^{1/(2-\mu)}$ to get \eqref{4.8}, and the proof is complete. 
\end{proof}

\section{Intrinsic Lipschitz bounds}\label{lip} 
\noindent Let $u_{\rr}\in W^{1,\infty}(B_{1},\mathbb{R}^{N})$ be a local minimizer of functional $\mathcal{F}_{\rr}$ in \eqref{funrr}. Recall that $u_{\rr}$ solves \eqref{elrr} and its Lipschitz continuity is granted by \eqref{areg}, so, in particular, numbers
\eqn{mm}
$$
\tx{M}\ge \max\left\{\tx{t},\nr{Du}_{L^{\infty}(B_{\rr}(x_{0}))}\right\}\qquad \mbox{and}\qquad \mathcal{M}:=\max\left\{\tx{t},\nr{\tx{a}(\cdot,\snr{Du})}_{L^{\infty}(B_{\rr}(x_{0}))}\right\}
$$
are both finite. Let us introduce a series of parameters that will play a crucial role in the reminder of this section.
\subsubsection*{Step 1: numerology} Name
\eqn{ab0}
$$
\beta_{0}:=\frac{2}{2+\alpha},\qquad \quad \alpha_{0}:=\frac{\alpha}{2+\alpha},
$$
and, with $i\in \{1,2,3\}$, define numbers $\omega_{i}, \ti{\omega}\in (0,1)$ as
\eqn{omega}
$$
\begin{array}{c}
\displaystyle
\ \omega_{i}:=1-\frac{i}{2}\left(\frac{\alpha}{n}+\vartheta_{*}\right),\qquad\quad  \ti{\omega}:=\frac{3+\alpha-\gamma(1+2\alpha)}{2}.
\end{array}
$$
Next, for $i\in \{1,2,3\}$, shorten 
\eqn{bbb}
$$
\begin{array}{c}
\displaystyle
\mathcal{b}_{0}:=36\delta_{0}(\mu-1+\omega_{\mu}),\qquad \quad \mathcal{b}_{i}:=80\gamma\delta_{0}(\mu-1+\omega_{\mu})+\frac{i\vartheta_{*}}{2}+\frac{1+\omega_{i}}{2}\\[10pt]\displaystyle
\ti{\mathcal{b}}:=80\gamma\delta_{0}(\mu-1+\omega_{\mu})+\vartheta_{*}+\frac{1+\ti{\omega}}{2}.
\end{array}
$$
and finally reduce the size of $\mu$ by requiring that $1\le \mu<\mu_{\textnormal{max}}:=\min\left\{\mu_{0},\mu_{*}\right\},$
where $\mu_{0}\equiv \mu_{0}(n)\in (1,3/2)$ is the limiting threshold from Theorem \ref{t4.1}, cf. \eqref{4.10}, and $\mu_{*}\equiv \mu_{*}(\alpha,n,\vartheta_{*})$ is such that
\eqn{mu*}
$$
\mu_{*}:=1+\min\left\{\frac{\chi-1}{144\delta_{0}\chi},\frac{\chi-1}{1280\gamma\delta_{0}\chi}\left(\frac{\alpha}{n}-\vartheta_{*}\right)\right\}.
$$
Here we will also use that $1\le \mu<3/2$, implied by \eqref{mu*}, to control $2-\mu>1/2$. Similarly, we pick $0<\omega_{\mu}<\bar{\omega}_{\mu}$, where $\omega_{\mu}$ is so small that 
\eqn{tio}
$$
0<\bar{\omega}_{\mu}<\min\left\{\mu_{*}-1,\ti{\omega}_{\mu}\right\},
$$
with $\ti{\omega}_{\mu}\equiv \ti{\omega}_{\mu}(n)$ coming from Theorem \ref{t4.1}, see \eqref{4.10.1}. In the supercritical case $n=2$ and $\alpha\ge 2/3$, we restrict further the size of $\mu_{\textnormal{max}}$, $\bar{\omega}_{\mu}$ by requiring that 
\eqn{muma}
$$
\begin{array}{c}
\displaystyle
\mu_{\textnormal{max}}:=\min\left\{\mu_{0},\mu_{*},\frac{(3-\alpha+\gamma(2\alpha-3))(\chi-1)}{1280\delta_{0}\gamma\chi},2+\frac{\alpha}{2}-\gamma\right\}>1,\\ [12pt]\displaystyle 0<\bar{\omega}_{\mu}<\min\{\mu_{\textnormal{max}}-1,\ti{\omega}_{\mu}\},
\end{array}
$$ which makes sense thanks to \eqref{a.5.2s}.
\subsubsection*{Step 2: auxiliary estimates} Pick a vector $h\in \mathbb{R}^{n}$ with $\snr{h}\in \left(0,2^{-8/\beta_{0}}\right)$, and for $x_{\textnormal{c}}\in B_{\frac{1}{2}+2\snr{h}^{\beta_{0}}}$ set $B_{h}:=B_{\snr{h}^{\beta_{0}}}(x_{\textnormal{c}})$, which, by construction, satisfies the inclusion $8B_{h}\Subset B_{3/4}\subset B_{1}$. Next, let $\tx{f}_{\textnormal{c}}$ be the integrand in Section \ref{fi}, frozen at point $x_{\textnormal{c}}$, the center of ball $B_{h}$, and let $v\in u_{\rr}+W^{1,4\gamma}_{0}(8B_{h},\mathbb{R}^{N})$ be the solution of Dirichlet problem
\eqn{5.0}
$$
u_{\rr}+W^{1,4\gamma}_{0}(8B_{h},\mathbb{R}^{N})\ni w\mapsto \min_{w\in u_{\rr}+W^{1,4\gamma}_{0}(8B_{h},\mathbb{R}^{N})}\int_{8B_{h}}\tx{f}_{\textnormal{c}}(Dw)\dx.
$$
Existence and uniqueness of $v$ follow by standard direct methods. By minimality, $v$ solves the Euler-Lagrange system
\eqn{5.3}
$$
\int_{8B_{h}}\langle\partial\tx{f}_{\textnormal{c}}(Dv),Dw\rangle\dx=0\qquad \mbox{for all} \ \ w\in W^{1,4\gamma}_{0}(8B_{h},\mathbb{R}^{N}),
$$
and satisfies the energy estimate
\eqn{5.4}
$$
\int_{8B_{h}}\tx{f}_{\textnormal{c}}(Dv)\dx\le \int_{8B_{h}}\tx{f}_{\textnormal{c}}(Du_{\rr})\dx.
$$
Moreover, keeping in mind also \eqref{areg}, Theorem \ref{t4.1} applies, so $v\in W^{1,\infty}_{\loc}(8B_{h},\mathbb{R}^{N})\cap W^{2,2}_{\loc}(8B_{h},\mathbb{R}^{N})$ with $\partial\tx{f}_{\textnormal{c}}(Dv)\in W^{1,2}_{\loc}(8B_{h},\mathbb{R}^{N\times n})$ and $\tx{a}_{\textnormal{c}}(\snr{Dv})\in W^{1,2}_{\loc}(8B_{h})$, cf. \eqref{4.0}. Before entering into the key bounds of this section, let us record a few preliminary estimates. Since $\tx{a}_{\textnormal{c}}$ is increasing and continuous, we bound
\begin{eqnarray}\label{10.0}
\tx{a}_{\textnormal{c}}(\tx{M})&\le&\nr{\tx{a}_{\textnormal{c}}(\snr{Du_{\rr}})}_{L^{\infty}(B_{1})}\nonumber \\
&\stackrel{\eqref{a12.x}_{1}}{\le}&\mathcal{M}+c\rr^{\alpha}\tx{M}\left(1+\left\|\inf_{x\in B_{1}}\ti{A}_{\rr}(x,\snr{Du_{\rr}})\right\|_{L^{\infty}(B_{1})}^{\vartheta_{*}}\right)\nonumber \\
&\stackrel{\eqref{a.7.1.x}_{1}}{\le}&\mathcal{M}+c\rr^{\alpha}\rrr_{*}(\tx{M})^{\vartheta_{*}}\tx{M}\mathcal{M}^{\vartheta_{*}}\nonumber \\
&\stackrel{\eqref{rrr}}{\le}&\mathcal{M}+c\rr^{\alpha}\tx{M}^{2\vartheta_{*}(\mu-1+\omega_{\mu})+1}\mathcal{M}^{\vartheta_{*}}\stackrel{\eqref{a.7.1.x}_{4}}{\le}\mathcal{M}+c\rr^{\alpha}\mathcal{M}^{6(\mu-1+\omega_{\mu})+\vartheta_{*}+1},
\end{eqnarray}
with $c\equiv c(A,\tx{g},\mu,\gamma,\alpha)$. Recalling \eqref{mm}, by \eqref{10.0}, estimates \eqref{4.4}--\eqref{4.8} can be rewritten as
\begin{flalign}\label{5.1}
\nr{Dv}_{L^{\infty}(6B_{h})}\le c\tx{a}_{\textnormal{c}}\left(\tx{M}\right)^{(\mu-1+\omega_{\mu})\delta_{0}}\tx{M}\le c\mathcal{M}^{\delta_{0}(6(\mu-1+\omega_{\mu})+\vartheta_{*}+1)(\mu-1+\omega_{\mu})}\tx{M}\le \mathcal{M}^{16\delta_{0}(\mu-1+\omega_{\mu})}\tx{M},
\end{flalign}
for $c\equiv c(\data_{0},\alpha)$, and, given any ball $B_{\tau}\subseteq 6B_{h}$ and number $\kk\ge 0$,
\begin{eqnarray}\label{5.2}
\tau\nr{D(\tx{a}_{\textnormal{c}}(\snr{Dv})-\kk)_{+}}_{L^{2}(B_{\tau/2})}&\stackrel{\eqref{5.1}}{\le}&c\rrr_{*}(\mathcal{M}^{16\delta_{0}(\mu-1+\omega_{\mu})}\tx{M})^{\frac{1}{2}}\nr{(\tx{a}_{\textnormal{c}}(\snr{Dv})-\kk)_{+}}_{L^{2}(B_{\tau})}\nonumber \\
&\stackrel{\eqref{rrr},\eqref{a.7.1.x}_{4}}{\le}&c\mathcal{M}^{36\delta_{0}(\mu-1+\omega_{\mu})}\nr{(\tx{a}_{\textnormal{c}}(\snr{Dv})-\kk)_{+}}_{L^{2}(B_{\tau})},
\end{eqnarray}
with $c\equiv c(\data_{0},\alpha)$. Moreover, on $6B_{h}$, 
\begin{eqnarray}\label{5.6}
\tx{a}_{\textnormal{c}}(\snr{Dv})&\stackrel{\eqref{5.1}}{\le}&\int_{0}^{c\mathcal{M}^{16\delta_{0}(\mu-1+\omega_{\mu})}\tx{M}}\ti{\lambda}_{\textnormal{c}}(s)s\dx+c\sigma_{\varepsilon}\ell_{1}\left(\mathcal{M}^{32\delta_{0}(\mu-1+\omega_{\mu})}\tx{M}^{2}\right)^{2\gamma}\nonumber \\
&\stackrel{\eqref{a.55}_{1}}{\le}& c\ti{\lambda}_{\textnormal{c}}(c\mathcal{M}^{16\delta_{0}(\mu-1+\omega_{\mu})}\tx{M})\mathcal{M}^{16\mu\delta_{0}(\mu-1+\omega_{\mu})}\tx{M}^{\mu}\int_{0}^{c\mathcal{M}^{16\delta_{0}(\mu-1+\omega_{\mu})}\tx{M}}s^{1-\mu}\dx\nonumber \\
&&+c\mathcal{M}^{64\gamma\delta_{0}(\mu-1+\omega_{\mu})}\sigma_{\varepsilon}\ell_{1}(\tx{M}^{2})^{2\gamma}\nonumber \\
&\stackrel{\eqref{a.55}_{2}}{\le}&c\mathcal{M}^{32\delta_{0}(\mu-1+\omega_{\mu})}\ti{\lambda}_{\textnormal{c}}(\tx{M})\tx{M}^{2}+c\mathcal{M}^{64\gamma\delta_{0}(\mu-1+\omega_{\mu})}\sigma_{\varepsilon}\ell_{1}(\tx{M}^{2})^{2\gamma-1}\tx{M}^{2}\nonumber \\
&\le&c\mathcal{M}^{64\gamma\delta_{0}(\mu-1+\omega_{\mu})}\lambda_{\textnormal{c}}(\tx{M})\tx{M}^{2}
\stackrel{\eqref{a.7.1.x}_{2}}{\le} c\mathcal{M}^{64\gamma\delta_{0}(\mu-1+\omega_{\mu})}\left(\nr{\tx{a}_{\textnormal{c}}(\snr{Du_{\rr}})}_{L^{\infty}(B_{1})}+1\right),
\end{eqnarray}
where $c\equiv c(\data_{0},\alpha)$. Combining \eqref{10.0} and \eqref{5.6} we obtain
\eqn{10.1}
$$
\nr{\tx{a}_{\textnormal{c}}(\snr{Dv})}_{L^{\infty}(6B_{h})}\le c\mathcal{M}^{1+64\gamma\delta_{0}(\mu-1+\omega_{\mu})}+c\rr^{\alpha}\mathcal{M}^{70\gamma\delta_{0}(\mu-1+\omega_{\mu})+\vartheta_{*}+1},
$$
for $c\equiv c(\data_{0},\alpha)$. We shall also need the following estimates. For $x\in 6B_{h}$, we control oscillation
\begin{eqnarray}\label{5.8}
\mathcal{A}_{1}&:=&\snr{\tx{a}_{\textnormal{c}}(\snr{Du_{\rr}})-\tx{a}_{\rr}(x,\snr{Du_{\rr}})}\nonumber \\
&\stackrel{\eqref{a12.x}_{1}}{\le}&c\rr^{\alpha}\snr{h}^{\alpha\beta_{0}}\ell_{\delta}(\snr{Du_{\rr}})+c\rr^{\alpha}\snr{h}^{\alpha\beta_{0}}\left(\inf_{x\in 6B_{h}}\ti{A}_{\rr}(x,\snr{Du_{\rr}})\right)^{\vartheta_{*}}\ell_{\delta}(\snr{Du_{\rr}})\nonumber \\
&\stackrel{\eqref{a.7.1.x}_{1,4}}{\le}&c\rr^{\alpha}\snr{h}^{\alpha\beta_{0}}+c\rr^{\alpha}\snr{h}^{\alpha\beta_{0}}\rrr_{*}(\tx{M})^{\vartheta_{*}}\ti{\tx{a}}_{\rr}(x,\snr{Du_{\rr}})^{\vartheta_{*}+\frac{1}{2-\mu}}\nonumber \\
&\stackrel{\eqref{rrr}}{\le}&c\rr^{\alpha}\snr{h}^{\alpha\beta_{0}}+c\rr^{\alpha}\snr{h}^{\alpha\beta_{0}}\mathcal{M}^{6(\mu-1+\omega_{\mu})}\ti{\tx{a}}_{\rr}(x,\snr{Du_{\rr}})^{\vartheta_{*}+1},
\end{eqnarray}
for $c\equiv c(A,\tx{g},\mu,\gamma,\alpha)$. Furthermore, after letting
$$
\mathcal{D}_{\textnormal{c}}:=\sqrt{\lambda_{\textnormal{c}}(\snr{Du_{\rr}}+\snr{Dv})}\snr{Du_{\rr}-Dv},
$$
by the mean value theorem, $\eqref{a.7.1.x}_{2}$, \eqref{a.55}, \eqref{l60}, \eqref{10.0}, and \eqref{10.1} we also have
\begin{eqnarray}\label{5.9}
 \mathcal{A}_{2}&:=&\snr{\tx{a}_{\textnormal{c}}(\snr{Du_{\rr}})-\tx{a}_{\textnormal{c}}(\snr{Dv})}\nonumber\\
 &\le&c\left(\int_{0}^{1}\ti{\lambda}_{\textnormal{c}}(\snr{Dv+s(Du_{\rr}-Dv)})\snr{Dv+s(Du_{\rr}-Dv)}\ds\right)\snr{Du_{\rr}-Dv}\nonumber \\
 &&+c\sigma_{\varepsilon}\left(\int_{0}^{1}\ell_{1}(\snr{Dv+s(Du_{\rr}-Dv)}^{2})^{2\gamma-1}\snr{Dv+s(Du_{\rr}-Dv)}\ds\right)\snr{Du_{\rr}-Dv}\nonumber \\
 &\le&c\ti{\lambda}_{\textnormal{c}}(\snr{Du_{\rr}}+\snr{Dv})\left(\snr{Du_{\rr}}+\snr{Dv}\right)^{\mu}\left(\int_{0}^{1}\snr{Dv+s(Du_{\rr}-Dv)}^{1-\mu}\ds\right)\snr{Du_{\rr}-Dv}\nonumber \\
 &&+c\sqrt{\sigma_{\varepsilon}}\ell_{1}(\snr{Du_{\rr}}^{2}+\snr{Dv}^{2})^{\gamma}\mathcal{D}_{\textnormal{c}}\nonumber \\
 &\le&c\left(\left(\ti{\lambda}_{\textnormal{c}}(\snr{Du_{\rr}})\snr{Du_{\rr}}^{2}\right)^{\frac{1}{2}}+\left(\ti{\lambda}_{\textnormal{c}}(\snr{Dv})\snr{Dv}^{2}\right)^{\frac{1}{2}}+\sqrt{\sigma_{\varepsilon}}\ell_{1}(\snr{Du_{\rr}}^{2}+\snr{Dv}^{2})^{\gamma}\right)\mathcal{D}_{\textnormal{c}}\nonumber \\
 &\le&c\left(1+\tx{a}_{\textnormal{c}}(\snr{Du_{\rr}})^{\frac{1}{2}}+\tx{a}_{\textnormal{c}}(\snr{Dv})^{\frac{1}{2}}\right)\mathcal{D}_{\textnormal{c}}\nonumber \\
 &\le& c\left(\mathcal{M}^{\frac{1}{2}+32\gamma\delta_{0}(\mu-1+\omega_{\mu})}+\rr^{\frac{\alpha}{2}}\mathcal{M}^{35\gamma\delta_{0}(\mu-1+\omega_{\mu})+\frac{\vartheta_{*}+1}{2}}\right)\mathcal{D}_{\textnormal{c}},
\end{eqnarray}
with $c\equiv c(\data_{0},\alpha)$. Now we are ready to enter the core of the proof.
\subsubsection*{Step 3: comparison} We jump back to \eqref{5.3}, and see that, upon extension as $v=u_{\rr}$ in $B_{1}\setminus 8B_{h}$, by \eqref{areg}, $v-u_{\rr}\in W^{1,4\gamma}_{0}(8B_{h},\mathbb{R}^{N})$ is an admissible test function in both \eqref{elrr} and \eqref{5.3}, so we bound via the mean value theorem,
\begin{eqnarray}\label{com}
\int_{8B_{h}}\mathcal{D}_{\textnormal{c}}^{2}\dx&\stackrel{\eqref{lala}_{2}}{\le}&c\int_{8B_{h}}\langle \partial\tx{f}_{\textnormal{c}}(Dv)-\partial\tx{f}_{\textnormal{c}}(Du_{\rr}),Dv-Du_{\rr}\rangle\dx\nonumber \\
&\stackrel{\eqref{5.3},\eqref{elrr}}{\le}&c\int_{8B_{h}}\langle\partial\tx{f}_{\rr}(x,Du_{\rr})-\partial\tx{f}_{\textnormal{c}}(Du_{\rr}),Dv-Du_{\rr}\rangle\dx\nonumber \\
&\stackrel{\eqref{a12.x}_{2}}{\le}&c\rr^{\alpha}\snr{h}^{\beta_{0}\alpha}\int_{8B_{h}}\left(1+\inf_{x\in 8B_{h}}\ti{A}_{\rr}(x,\snr{Du_{\rr}})\right)^{\vartheta_{*}}\snr{Du_{\rr}-Dv}\dx\nonumber \\
&\stackrel{\eqref{a.7.1.x}_{1,4}}{\le}&c\rr^{\alpha}\snr{h}^{\alpha\beta_{0}}\rrr_{*}(\mathcal{M}^{1/(2-\mu)})^{\vartheta_{*}}\mathcal{M}^{\vartheta_{*}}\int_{8B_{h}}\snr{Du_{\rr}}+\snr{Dv}\dx\nonumber \\
&\stackrel{\eqref{rrr}}{\le}&c\rr^{\alpha}\snr{h}^{\alpha\beta_{0}}\mathcal{M}^{\vartheta_{*}+4(\mu-1+\omega_{\mu})}\int_{8B_{h}}1+\ti{A}_{\rr}(x,\snr{Du_{\rr}})+\ti{A}_{\textnormal{c}}(\snr{Dv})\dx\nonumber \\
&\stackrel{\eqref{5.4}}{\le}&c\rr^{\alpha}\snr{h}^{\alpha\beta_{0}}\mathcal{M}^{\vartheta_{*}+4(\mu-1+\omega_{\mu})}\int_{8B_{h}}1+\ti{A}_{\rr}(x,\snr{Du_{\rr}})+A_{\textnormal{c}}(\snr{Du_{\rr}})\dx=:\mathcal{C},
\end{eqnarray}
where $c\equiv c(\data)$. 
\subsubsection*{Step 4: uniform bounds in Nikol'skii spaces} Let $\omega_{1},\omega_{2},\omega_{3}, \ti{\omega}\in (0,1)$ be the numbers in \eqref{omega}. Basic properties of translations, the $1$-Lipschitz character of truncations, the Poincar\'e inequality, $\eqref{a.7.1.x}_{1,4}$, \eqref{4.0}, \eqref{5.2} and \eqref{5.8}--\eqref{com} yield
\begin{eqnarray}\label{6.21.x}
\int_{B_{h}}\snr{\tau_{h}(\tx{a}_{\rr}(\cdot,\snr{Du_{\rr}})-\kk)_{+}}^{2}\dx&\le&c\int_{B_{h}}\snr{\tau_{h}(\tx{a}_{\textnormal{c}}(\snr{Dv})-\kk)_{+}}^{2}\dx+c\int_{2B_{h}}\mathcal{A}_{1}^{2}+\mathcal{A}_{2}^{2}\dx\nonumber \\
&\le&c\snr{h}^{2}\int_{B_{h}}\snr{D(\tx{a}_{\textnormal{c}}(\snr{Dv})-\kk)_{+}}^{2}\dx\nonumber \\
&&+c\rr^{2\alpha}\snr{h}^{2\alpha\beta_{0}}\mathcal{M}^{12(\mu-1+\omega_{\mu})}\int_{2B_{h}}\tx{a}_{\rr}(x,\snr{Du_{\rr}})^{2+2\vartheta_{*}}+1\dx\nonumber \\
&&+c\left(\mathcal{M}^{1+64\gamma\delta_{0}(\mu-1+\omega_{\mu})}+\rr^{\alpha}\mathcal{M}^{70\gamma\delta_{0}(\mu-1+\omega_{\mu})+\vartheta_{*}+1}\right)\int_{2B_{h}}\mathcal{D}_{\textnormal{c}}^{2}\dx\nonumber \\
&\le&c\mathcal{M}^{72\delta_{0}(\mu-1+\omega_{\mu})}\snr{h}^{2(1-\beta_{0})}\int_{2B_{h}}(\tx{a}_{\rr}(x,\snr{Du_{\rr}})-\kk)_{+}^{2}\dx\nonumber \\
&&+c\mathcal{M}^{72\delta_{0}(\mu-1+\omega_{\mu})}\snr{h}^{2(1-\beta_{0})}\int_{2B_{h}}\mathcal{A}_{1}^{2}+\mathcal{A}_{2}^{2}\dx\nonumber \\
&&+c\rr^{2\alpha}\snr{h}^{2\alpha\beta_{0}}\mathcal{M}^{12(\mu-1+\omega_{\mu})}\int_{2B_{h}}\tx{a}_{\rr}(x,\snr{Du_{\rr}})^{2+2\vartheta_{*}}+1\dx\nonumber \\
&&+c\left(\mathcal{M}^{1+64\gamma\delta_{0}(\mu-1+\omega_{\mu})}+\rr^{\alpha}\mathcal{M}^{70\gamma\delta_{0}(\mu-1+\omega_{\mu})+\vartheta_{*}+1}\right)\int_{2B_{h}}\mathcal{D}_{\textnormal{c}}^{2}\dx\nonumber \\
&\le&c\mathcal{M}^{72\delta_{0}(\mu-1+\omega_{\mu})}\snr{h}^{2(1-\beta_{0})}\int_{2B_{h}}(\tx{a}_{\rr}(x,\snr{Du_{\rr}})-\kk)_{+}^{2}\nonumber \\
&&+c\rr^{2\alpha}\snr{h}^{2\alpha\beta_{0}}\mathcal{M}^{84\delta_{0}(\mu-1+\omega_{\mu})+2\vartheta_{*}+\omega_{2}+1}\int_{2B_{h}}\tx{a}_{\rr}(x,\snr{Du_{\rr}})^{1-\omega_{2}}+1\dx\nonumber \\
&&+c\left(\mathcal{M}^{1+136\gamma\delta_{0}(\mu-1+\omega_{\mu})}+\rr^{\alpha}\mathcal{M}^{142\gamma\delta_{0}(\mu-1+\omega_{\mu})+\vartheta_{*}+1}\right)\mathcal{C},
\end{eqnarray}
for $c\equiv c(\data)$. To complete estimate \eqref{6.21.x}, we only need to distinguish two cases: $n\ge 3$ or $n=2$ and $0<\alpha<2/3$, and $n=2$ with $\alpha\ge 2/3$. In the first case, by $\eqref{d2.3}_{2}$, \eqref{rrr}, \eqref{a.7.1.x}$_{1,4}$, and \eqref{a12.x}$_{1}$ we bound
\begin{flalign*}
&\left(\mathcal{M}^{1+136\gamma\delta_{0}(\mu-1+\omega_{\mu})}+\rr^{\alpha}\mathcal{M}^{142\gamma\delta_{0}(\mu-1+\omega_{\mu})+\vartheta_{*}+1}\right)\mathcal{C}\nonumber \\
&\qquad \qquad \le c\snr{h}^{\alpha\beta_{0}}\left(\rr^{\alpha}\mathcal{M}^{140\gamma\delta_{0}(\mu-1+\omega_{\mu})+\vartheta_{*}+1}+\rr^{2\alpha}\mathcal{M}^{146\gamma\delta_{0}(\mu-1+\omega_{\mu})+2\vartheta_{*}+1}\right)\int_{8B_{h}}A_{\rr}(x,\snr{Du_{\rr}})+1\dx\nonumber \\
&\qquad \qquad \quad +c\snr{h}^{2\alpha\beta_{0}}\left(\rr^{2\alpha}\mathcal{M}^{140\gamma\delta_{0}(\mu-1+\omega_{\mu})+\vartheta_{*}+1}+\rr^{3\alpha}\mathcal{M}^{146\gamma\delta_{0}(\mu-1+\omega_{\mu})+2\vartheta_{*}+1}\right)\int_{8B_{h}}A_{\rr}(x,\snr{Du_{\rr}})^{\vartheta_{*}}\ell_{\delta}(\snr{Du_{\rr}})\dx \nonumber \\
&\qquad \qquad \le c\snr{h}^{\alpha\beta_{0}}\mathcal{M}^{160\gamma\delta_{0}(\mu-1+\omega_{\mu})+\vartheta_{*}+1+\omega_{1}}\rr^{\alpha}\int_{8B_{h}}\ell_{1}(\tx{a}_{\rr}(x,\snr{Du_{\rr}}))^{1-\omega_{1}}\dx\nonumber \\
&\qquad \qquad\quad + c\snr{h}^{\alpha\beta_{0}}\mathcal{M}^{160\gamma\delta_{0}(\mu-1+\omega_{\mu})+2\vartheta_{*}+1+\omega_{2}}\rr^{2\alpha}\int_{8B_{h}}\ell_{1}(\tx{a}_{\rr}(x,\snr{Du_{\rr}}))^{1-\omega_{2}}\dx\nonumber \\
&\qquad \qquad\quad + c\snr{h}^{\alpha\beta_{0}}\mathcal{M}^{160\gamma \delta_{0}(\mu-1+\omega_{\mu})+3\vartheta_{*}+1+\omega_{3}}\rr^{3\alpha}\int_{8B_{h}}\ell_{1}(\tx{a}_{\rr}(x,\snr{Du_{\rr}}))^{1-\omega_{3}}\dx,
\end{flalign*}
for $c\equiv c(\data)$, while in the second one, via $\eqref{d2.3}_{2}$, \eqref{rrr}, \eqref{a.7.1.x}$_{1}$, \eqref{a12.x}$_{1}$, and \eqref{a.5.3s} we have
\begin{flalign*}
&\left(\mathcal{M}^{1+136\gamma\delta_{0}(\mu-1+\omega_{\mu})}+\rr^{\alpha}\mathcal{M}^{142\gamma\delta_{0}(\mu-1+\omega_{\mu})+\vartheta_{*}+1}\right)\mathcal{C}\nonumber \\
&\qquad \qquad \le c\snr{h}^{\alpha\beta_{0}}\left(\rr^{\alpha}\mathcal{M}^{140\gamma\delta_{0}(\mu-1+\omega_{\mu})+1+\vartheta_{*}}+\rr^{2\alpha}\mathcal{M}^{146\gamma\delta_{0}(\mu-1+\omega_{\mu})+2\vartheta_{*}+1}\right)\int_{8B_{h}}1+\ti{A}_{\rr}(x,\snr{Du_{\rr}})\dx\nonumber \\
&\qquad \qquad \quad +c\snr{h}^{\alpha\beta_{0}}\left(\rr^{\alpha}\mathcal{M}^{140\gamma\delta_{0}(\mu-1+\omega_{\mu})+\vartheta_{*}+1}+\rr^{2\alpha}\mathcal{M}^{146\gamma\delta_{0}(\mu-1+\omega_{\mu})+2\vartheta_{*}+1}\right)\int_{8B_{h}}\ti{A}_{\textnormal{c}}(\snr{Du_{\rr}})+\sigma_{\varepsilon}\ell_{1}(\snr{Du_{\rr}}^{2})^{2\gamma}\nonumber \\
&\qquad \qquad \le c\snr{h}^{\alpha\beta_{0}}\left(\rr^{\alpha}\mathcal{M}^{140\gamma\delta_{0}(\mu-1+\omega_{\mu})+\vartheta_{*}+1}+\rr^{2\alpha}\mathcal{M}^{146\gamma\delta_{0}(\mu-1+\omega_{\mu})+2\vartheta_{*}+1}\right)\int_{8B_{h}}1+A_{\rr}(x,\snr{Du_{\rr}})\dx\nonumber \\
&\qquad \qquad \quad +c\snr{h}^{\alpha\beta_{0}}\left(\rr^{\alpha}\mathcal{M}^{140\gamma\delta_{0}(\mu-1+\omega_{\mu})+\vartheta_{*}+1}+\rr^{2\alpha}\mathcal{M}^{146\gamma\delta_{0}(\mu-1+\omega_{\mu})+2\vartheta_{*}+1}\right)\int_{8B_{h}}\ti{A}_{\textnormal{c}}(\snr{Du_{\rr}})\dx\nonumber \\
&\qquad \qquad \le c\snr{h}^{\alpha\beta_{0}}\mathcal{M}^{160\gamma\delta_{0}(\mu-1+\omega_{\mu})+\vartheta_{*}+1+\omega_{1}}\rr^{\alpha}\int_{8B_{h}}\ell_{1}(\tx{a}_{\rr}(x,\snr{Du_{\rr}}))^{1-\omega_{1}}\dx\nonumber \\
&\qquad \qquad\quad +c\snr{h}^{\alpha\beta_{0}}\mathcal{M}^{160\gamma\delta_{0}(\mu-1+\omega_{\mu})+2\vartheta_{*}+1+\omega_{2}}\rr^{2\alpha}\int_{8B_{h}}\ell_{1}(\tx{a}_{\rr}(x,\snr{Du_{\rr}}))^{1-\omega_{2}}\dx\nonumber \\
&\qquad \qquad\quad +c\snr{h}^{\alpha\beta_{0}}\mathcal{M}^{160\gamma\delta_{0}(\mu-1+\omega_{\mu})+2\vartheta_{*}+1+\ti{\omega}}\rr^{2\alpha}\int_{8B_{h}}\ell_{1}(\snr{Du_{\rr}})^{\gamma-\ti{\omega}}\dx,
\end{flalign*}
with $c\equiv c(\data)$. Plugging the content of the two previous displays into \eqref{6.21.x} and recalling \eqref{1111}, \eqref{ab0}--\eqref{bbb}, we obtain
\begin{eqnarray}\label{6.21}
\int_{B_{h}}\snr{\tau_{h}(\tx{a}_{\rr}(\cdot,\snr{Du_{\rr}})-\kk)_{+}}^{2}\dx&\le&c\mathcal{M}^{2\mathcal{b}_{0}}\snr{h}^{2\alpha_{0}}\int_{8B_{h}}(\tx{a}_{\rr}(x,\snr{Du_{\rr}})-\kk)_{+}^{2}\dx\nonumber \\
&&+c\snr{h}^{2\alpha_{0}}\sum_{i=1}^{3}\mathds{1}_{i}\mathcal{M}^{2\mathcal{b}_{i}}\rr^{i\alpha}\int_{8B_{h}}\ell_{1}(\tx{a}_{\rr}(x,\snr{Du_{\rr}}))^{1-\omega_{i}}\dx\nonumber \\
&&+c\snr{h}^{2\alpha_{0}}\ti{\mathds{1}}\mathcal{M}^{2\ti{b}}\rr^{2\alpha}\int_{8B_{h}}\ell_{1}(\snr{Du_{\rr}})^{\gamma-\ti{\omega}}\dx,
\end{eqnarray}
where in particular the choice made in \eqref{ab0} yields equality $\alpha\beta_{0}=2(1-\beta_{0})$.
Let us glue estimates \eqref{6.21} via a dyadic covering argument. Specifically, we take a lattice $\mathcal{L}_{\snr{h}^{\beta_{0}}/\sqrt{n}}$ of open, disjoint cubes $\{Q_{\snr{h}^{\beta_{0}}/\sqrt{n}}(y)\}_{y\in (2\snr{h}^{\beta_{0}}/\sqrt{n})\mathbb{Z}^{n}}$. From this lattice, we pick $\tx{n}\approx_{n}\snr{h}^{-n\beta_{0}}$ cubes centered at points $\{x_{\ccc}\}_{\ccc\le \tx{n}}\subset (2\snr{h}^{\beta_{0}}/\sqrt{n})\mathbb{Z}^{n}$ such that $\snr{x_{\ccc}}\le 1/2+2\snr{h}^{\beta_{0}}$, thus determining the corresponding family $\{Q_{\ccc}\}_{\ccc\le \tx{n}}\equiv \{Q_{\snr{h}^{\beta_{0}}/\sqrt{n}}(x_{\ccc})\}_{\ccc\le \tx{n}}$. Observe that, in general, if $\snr{x}>1/2+2\snr{h}^{\beta_{0}}$, then $Q_{\snr{h}^{\beta_{0}}/\sqrt{n}}(x)\cap B_{1/2}=\emptyset$ as $Q_{\snr{h}^{\beta_{0}}/\sqrt{n}}(x)\subset B_{\snr{h}^{\beta_{0}}}(x)$ and $B_{\snr{h}^{\beta_{0}}}(x)\cap B_{1/2}=\emptyset$. We indeed have
\begin{flalign}\label{com.13}
\left| \  B_{1/2}\setminus \bigcup_{\ccc\le \tx{n}}Q_{\ccc} \ \right|=0,\qquad\qquad  Q_{\ccc_{1}}\cap Q_{\ccc_{2}}=\emptyset \ \Longleftrightarrow \ \ccc_{1}\not =\ccc_{2}.
\end{flalign}
Such a family of cubes corresponds to a family of balls $\{B_{\ccc}\}_{\ccc\le \tx{n}}:=\{B_{\snr{h}^{\beta_{0}}}(x_{\ccc})\}_{\ccc\le \tx{n}}$ in the sense that $Q_{\ccc}$ is the largest hypercube concentric to $B_{\ccc}$, with sides parallel to the coordinate axes. By construction $8B_{\ccc}\Subset B_{1}$ for all $\ccc\le \tx{n}$. Moreover, each of the dilated balls $8B_{\ccc_{t}}$ intersects the similar ones $8B_{\ccc_{s}}$, $\ccc_{t}\not =\ccc_{s}$ a finite, quantified number of times, depending only on $n$ (uniform finite intersection property). In fact, notice that the family of outer cubes $\{Q_{\snr{h}^{\beta_{0}}}(x_{\ccc})\}_{\ccc\le \tx{n}}$ has the same property and $B_{\ccc}\subset Q_{\snr{h}^{\beta_{0}}}(x_{\ccc})$. This yields:
\eqn{com.13.1}
$$
\sum_{\ccc\le \tx{n}}\phi(8B_{\ccc})\lesssim_{n}\phi(B_{1}),
$$
for every Borel measure $\phi$ defined on $B_{1}$. By \eqref{com.13} it turns out that also $\{B_{\ccc}\}_{\ccc\le \tx{n}}$ is a measure covering of $B_{1/2}$, i.e.:
\eqn{com.13.2}
$$
\left| \  B_{1/2}\setminus \bigcup_{\ccc\le \tx{n}}B_{\ccc} \ \right|=0.
$$
By \eqref{com.13.2}, \eqref{6.21} and \eqref{com.13.1} we obtain
\begin{eqnarray}\label{6.22}
\int_{B_{1/2}}\snr{\tau_{h}(\tx{a}_{\rr}(\cdot,\snr{Du_{\rr}})-\kk)_{+}}^{2}\dx&\stackrel{\eqref{com.13.2}}{\le}&\sum_{\ccc\le \tx{n}}\int_{B_{\ccc}}\snr{\tau_{h}(\tx{a}_{\rr}(x,\snr{Du_{\rr}})-\kk)_{+}}^{2}\dx\nonumber \\
&\stackrel{\eqref{6.21}}{\le}& c\snr{h}^{2\alpha_{0}}\mathcal{M}^{2\mathcal{b}_{0}}\sum_{\ccc\le\tx{n}}\int_{8B_{\ccc}}(\tx{a}_{\rr}(x,\snr{Du_{\rr}})-\kk)_{+}^{2}\dx\nonumber \\
&&+c\snr{h}^{2\alpha_{0}}\sum_{i=1}^{3}\mathds{1}_{i}\mathcal{M}^{2\mathcal{b}_{i}}\rr^{i\alpha}\sum_{\ccc\le\tx{n}}\int_{8B_{\ccc}}\ell_{1}(\tx{a}_{\rr}(x,\snr{Du_{\rr}}))^{1-\omega_{i}}\dx\nonumber \\
&&+c\snr{h}^{2\alpha_{0}}\ti{\mathds{1}}\mathcal{M}^{2\ti{\mathcal{b}}}\rr^{2\alpha}\sum_{\ccc\le \tx{n}}\int_{8B_{\ccc}}\ell_{1}(\snr{Du_{\rr}})^{\gamma-\ti{\omega}}\dx\nonumber \\
    &\stackrel{\eqref{com.13.1}}{\le}& c\snr{h}^{2\alpha_{0}}\mathcal{M}^{2\mathcal{b}_{0}}\int_{B_{1}}(\tx{a}_{\rr}(x,\snr{Du_{\rr}})-\kk)_{+}^{2}\dx\nonumber \\
&&+c\snr{h}^{2\alpha_{0}}\sum_{i=1}^{3}\mathds{1}_{i}\mathcal{M}^{2\mathcal{b}_{i}}\rr^{i\alpha}\int_{B_{1}}\ell_{1}(\tx{a}_{\rr}(x,\snr{Du_{\rr}}))^{1-\omega_{i}}\dx\nonumber \\
&&+c\snr{h}^{2\alpha_{0}}\ti{\mathds{1}}\mathcal{M}^{2\ti{\mathcal{b}}}\rr^{2\alpha}\int_{B_{1}}\ell_{1}(\snr{Du_{\rr}})^{\gamma-\ti{\omega}}\dx,
\end{eqnarray}
for $c\equiv c(\data)$. The availability of \eqref{6.22} allows applying Lemma \ref{fraim}, so that, after scaling back to $B_{\rr}(x_{0})$,
\begin{eqnarray}\label{66.0}
\nra{(\tx{a}(\cdot,\snr{Du})-\kk)_{+}}_{L^{2\chi}(B_{\rr/2}(x_{0}))}&\le& c\mathcal{M}^{\mathcal{b}_{0}}\nra{(\tx{a}(\cdot,\snr{Du})-\kk)_{+}}_{L^{2}(B_{\rr}(x_{0}))} \nonumber \\
&&+c\sum_{i=1}^{3}\mathds{1}_{i}\mathcal{M}^{\mathcal{b}_{i}}\rr^{\frac{i\alpha}{2}}\left(\mint_{B_{\rr}(x_{0})}\ell_{1}(\tx{a}(x,\snr{Du}))^{1-\omega_{i}}\dx\right)^{\frac{1}{2}}\nonumber \\
&&+c\ti{\mathds{1}}\mathcal{M}^{\ti{\mathcal{b}}}\rr^{\alpha}\left(\mint_{B_{\rr}(x_{0})}\ell_{1}(\snr{Du})^{\gamma-\ti{\omega}}\dx\right)^{\frac{1}{2}},
\end{eqnarray}
for all $\kk\ge 0$, with $\chi:=n/(n-2\beta)>1$ for all $\beta\in (0,\alpha_{0})$ --- say $\beta=\alpha_{0}/2$ to fix dependencies --- and $c\equiv c(\data)$.
\subsubsection*{Step 6: Lipschitz bounds} Let $B_{r}\subset B_{2r}\Subset B$ be any ball with radius $r\in (0,1]$, consider concentric balls $B_{r/2}\subseteq B_{\tau_{2}}\Subset B_{\tau_{1}}\subseteq B_{3r/4}$ and notice that there is no loss of generality in assuming that $\nr{\tx{a}(\cdot,\snr{Du})}_{L^{\infty}(B_{r/2})}\ge \tx{t}$, otherwise there would be nothing to prove. By \eqref{a12.x} and \eqref{areg}, all $x_{0}\in B_{\tau_{2}}$ are Lebesgue points for $\tx{a}(\cdot,\snr{Du})$, see \cite[Section 6.6]{dm21}, so we set $r_{0}:=(\tau_{1}-\tau_{2})/8$ so that $B_{2r_{0}}(x_{0})\Subset B_{\tau_{1}}$, and, via \eqref{66.0} applied on $B_{r_{0}}(x_{0})$ with $\mathcal{M}:=\nr{\tx{a}(\cdot,\snr{Du})}_{L^{\infty}(B_{\tau_{1}})}$ (that satisfies \eqref{mm}) we can apply Lemma \ref{revlem} choosing $k=4$, $\kk_{0}=0$, $w(x)=\tx{a}(x,\snr{Du(x)})$, $M_{0}= \mathcal{M}^{b_{0}}$, $M_{i}=\mathds{1}_{i}M^{\mathcal{b}_{i}}$ for $i\in \{1,2,3\}$, $M_{4}:=\ti{\mathds{1}}\mathcal{M}^{\ti{\mathcal{b}}}$, $f_{i}=\ell_{1}(\tx{a}(x,\snr{Du}))^{1-\omega_{i}}$ as $i\in \{1,2,3\}$, $f_{4}:=\ell_{1}(\snr{Du})^{\gamma-\ti{\omega}}$, $\sigma_{i}=i\alpha/2$ if $i\in \{1,2,3\}$, $\sigma_{4}=\alpha$, and $\vartheta_{i}=1/2$ for all $i\in \{1,\cdots,4\}$, to get
\begin{eqnarray*}
 \tx{a}(x_{0},\snr{Du(x_{0})})&\le&c\nr{\tx{a}(\cdot,\snr{Du})}_{L^{\infty}(B_{\tau_{1}})}^{\frac{\mathcal{b}_{0}\chi}{\chi-1}}\left(\mint_{B_{r_{0}}(x_{0})}\tx{a}(x,\snr{Du})^{2}\dx\right)^{\frac{1}{2}}+c\nonumber \\
&&+c\sum_{i=1}^{3}\mathds{1}_{i}\nr{\tx{a}(\cdot,\snr{Du})}_{L^{\infty}(B_{\tau_{1}})}^{\frac{\mathcal{b}_{0}}{\chi-1}+\mathcal{b}_{i}}\mathbf{P}^{\frac{1}{2}}_{\frac{i\alpha}{2}}\left(\ell_{1}(\tx{a}(\cdot,\snr{Du}))^{1-\omega_{i}};x_{0} ,\frac{\tau_{1}-\tau_{2}}{4}\right)\nonumber \\
&&+c\ti{\mathds{1}}\nr{\tx{a}(\cdot,\snr{Du})}_{L^{\infty}(B_{\tau_{1}})}^{\frac{\mathcal{b}_{0}}{\chi-1}+\ti{\mathcal{b}}}\mathbf{P}^{\frac{1}{2}}_{\alpha}\left(\ell_{1}(\snr{Du})^{\gamma-\ti{\omega}};x_{0} ,\frac{\tau_{1}-\tau_{2}}{4}\right),
\end{eqnarray*}
with $c\equiv c(\data)$. The arbitrariness of $x_{0}\in B_{\tau_{2}}$ grants
\begin{eqnarray}\label{14.0}
\nr{\tx{a}(\cdot,\snr{Du})}_{L^{\infty}(B_{\tau_{2}})}&\le&\frac{c\nr{\tx{a}(\cdot,\snr{Du})}_{L^{\infty}(B_{\tau_{1}})}^{\frac{\mathcal{b}_{0}\chi}{\chi-1}+\frac{1}{2}}}{(\tau_{1}-\tau_{2})^{n/2}}\left(\int_{B_{\tau_{1}}}\tx{a}(x,\snr{Du})\dx\right)^{\frac{1}{2}}+c\nonumber \\
&&+c\sum_{i=1}^{3}\mathds{1}_{i}\nr{\tx{a}(\cdot,\snr{Du})}_{L^{\infty}(B_{\tau_{1}})}^{\frac{\mathcal{b}_{0}}{\chi-1}+\mathcal{b}_{i}}\left\|\mathbf{P}^{\frac{1}{2}}_{\frac{i\alpha}{2}}\left(\ell_{1}(\tx{a}(\cdot,\snr{Du}))^{1-\omega_{i}};\ \cdot \ ,\frac{\tau_{1}-\tau_{2}}{4}\right)\right\|_{L^{\infty}(B_{\tau_{2}})}\nonumber \\
&&+c\ti{\mathds{1}}\nr{\tx{a}(\cdot,\snr{Du})}_{L^{\infty}(B_{\tau_{1}})}^{\frac{\mathcal{b}_{0}}{\chi-1}+\ti{\mathcal{b}}}\left\|\mathbf{P}^{\frac{1}{2}}_{\alpha}\left(\ell_{1}(\snr{Du})^{\gamma-\ti{\omega}};\ \cdot \ ,\frac{\tau_{1}-\tau_{2}}{4}\right)\right\|_{L^{\infty}(B_{\tau_{2}})},
\end{eqnarray}
for $c\equiv c(\data)$. At this stage, we need to reabsorbe the $L^{\infty}$-norms of $\tx{a}$ on the right-hand side of \eqref{14.0} and, simultaneously, to keep under control the nonlinear potentials. To do so, let us premise that, since all the constraints in \eqref{a.3}, \eqref{a.5.2s} are strict, there is no loss of generality in assuming $\alpha\in (0,1)$ --- otherwise one could just replace it in estimates \eqref{6.21}, \eqref{6.22} with an $\ti{\alpha}\in (0,\alpha)$ arbitrarily close to $\alpha$. Keeping this in mind, let us treat separately the subcritical and the supercritical cases.
\subsubsection*{Step 7: case $n\ge 3$ or $n=2$ and $0<\alpha<2/3$} Looking back at \eqref{1111}, here $\mathds{1}_{3}=1$ and $\ti{\mathds{1}}=0$, so \eqref{14.0} becomes
\begin{eqnarray}\label{14.1}
\nr{\tx{a}(\cdot,\snr{Du})}_{L^{\infty}(B_{\tau_{2}})}&\le&\frac{c\nr{\tx{a}(\cdot,\snr{Du})}_{L^{\infty}(B_{\tau_{1}})}^{\frac{\mathcal{b}_{0}\chi}{\chi-1}+\frac{1}{2}}}{(\tau_{1}-\tau_{2})^{n/2}}\left(\int_{B_{\tau_{1}}}\tx{a}(x,\snr{Du})\dx\right)^{\frac{1}{2}}+c\nonumber \\
&&+c\sum_{i=1}^{3}\nr{\tx{a}(\cdot,\snr{Du})}_{L^{\infty}(B_{\tau_{1}})}^{\frac{\mathcal{b}_{0}}{\chi-1}+\mathcal{b}_{i}}\left\|\mathbf{P}^{\frac{1}{2}}_{\frac{i\alpha}{2}}\left(\ell_{1}(\tx{a}(\cdot,\snr{Du}))^{1-\omega_{i}};\ \cdot \ ,\frac{\tau_{1}-\tau_{2}}{4}\right)\right\|_{L^{\infty}(B_{\tau_{2}})}.
\end{eqnarray}
Thanks to \eqref{atat} and to the choices made in \eqref{omega}--\eqref{mu*}, we immediately see that
\eqn{14.3}
$$
\frac{\mathcal{b}_{0}\chi}{\chi-1}+\frac{1}{2}<1,\qquad\quad  \frac{\mathcal{b}_{0}}{\chi-1}+\mathcal{b}_{i}<1,\qquad\quad  \frac{n}{i\alpha}>1,\qquad\quad  0<\frac{n(1-\omega_{i})}{i\alpha}<1,
$$
so we can apply Lemma \ref{crit} with $m=(1-\omega_{i})^{-1}$, $i\in \{1,2,3\}$, to bound
\eqn{14.4}
$$
\left\|\mathbf{P}^{\frac{1}{2}}_{\frac{i\alpha}{2}}\left(\ell_{1}(\tx{a}(\cdot,\snr{Du}))^{1-\omega_{i}};\ \cdot \ ,\frac{\tau_{1}-\tau_{2}}{4}\right)\right\|_{L^{\infty}(B_{\tau_{2}})}\le c\nr{\ell_{1}(\tx{a}(\cdot,\snr{Du}))}_{L^{1}(B_{r})}^{\frac{1-\omega_{i}}{2}},
$$
for $c\equiv c(n,\alpha,\vartheta_{*})$, use Young's inequality with conjugate exponents
\eqn{14.6}
$$
\left(\frac{2(\chi-1)}{2\mathcal{b}_{0}\chi+\chi-1},\frac{2(\chi-1)}{\chi-1-2\mathcal{b}_{0}\chi}\right),\qquad \quad \left(\frac{\chi-1}{\mathcal{b}_{0}+\mathcal{b}_{i}(\chi-1)},\frac{\chi-1}{(\chi-1)(1-\mathcal{b}_{i})-\mathcal{b}_{0}}\right),
$$
with $i\in \{1,2,3\}$, and conclude with
\begin{eqnarray*}
\nr{\tx{a}(\cdot,\snr{Du})}_{L^{\infty}(B_{\tau_{2}})}&\le&\frac{1}{4}\nr{\tx{a}(\cdot,\snr{Du})}_{L^{\infty}(B_{\tau_{1}})}+\frac{c}{(\tau_{1}-\tau_{2})^{\frac{n(\chi-1)}{\chi-1-2\mathcal{b}_{0}\chi}}}\left(\int_{B_{r}}\tx{a}(x,\snr{Du})\dx\right)^{\frac{\chi-1}{\chi-1-2\mathcal{b}_{0}\chi}}\nonumber \\
&&+c\sum_{i=1}^{3}\nr{\ell_{1}(\tx{a}(\cdot,\snr{Du}))}_{L^{1}(B_{r})}^{\frac{(1-\omega_{i})(\chi-1)}{2((\chi-1)(1-\mathcal{b}_{i})-\mathcal{b}_{0})}}+c,
\end{eqnarray*}
for $c\equiv c(\data)$. Lemma \ref{iterlem} eventually yields \eqref{lipb} below, and we are done.
\subsubsection*{Step 8: case $n=2$ and $\alpha\ge 2/3$} Now $\mathds{1}_{3}=0$, $\ti{\mathds{1}}=1$, and \eqref{14.0} reads as
\begin{eqnarray}\label{14.2}
\nr{\tx{a}(\cdot,\snr{Du})}_{L^{\infty}(B_{\tau_{2}})}&\le&\frac{c\nr{\tx{a}(\cdot,\snr{Du})}_{L^{\infty}(B_{\tau_{1}})}^{\frac{\mathcal{b}_{0}\chi}{\chi-1}+\frac{1}{2}}}{(\tau_{1}-\tau_{2})^{n/2}}\left(\int_{B_{\tau_{1}}}\tx{a}(x,\snr{Du})\dx\right)^{\frac{1}{2}}+c\nonumber \\
&&+c\sum_{i=1}^{2}\nr{\tx{a}(\cdot,\snr{Du})}_{L^{\infty}(B_{\tau_{1}})}^{\frac{\mathcal{b}_{0}}{\chi-1}+\mathcal{b}_{i}}\left\|\mathbf{P}^{\frac{1}{2}}_{\frac{i\alpha}{2}}\left(\ell_{1}(\tx{a}(\cdot,\snr{Du}))^{1-\omega_{i}};\ \cdot \ ,\frac{\tau_{1}-\tau_{2}}{4}\right)\right\|_{L^{\infty}(B_{\tau_{2}})}\nonumber \\
&&+c\nr{\tx{a}(\cdot,\snr{Du})}_{L^{\infty}(B_{\tau_{1}})}^{\frac{\mathcal{b}_{0}}{\chi-1}+\ti{\mathcal{b}}}\left\|\mathbf{P}^{\frac{1}{2}}_{\alpha}\left(\ell_{1}(\snr{Du})^{\gamma-\ti{\omega}};\ \cdot \ ,\frac{\tau_{1}-\tau_{2}}{4}\right)\right\|_{L^{\infty}(B_{\tau_{2}})}.
\end{eqnarray}
The terms in the first two lines of \eqref{14.2} can be controlled via \eqref{14.3}--\eqref{14.4} with $i\in \{1,2\}$, so we only need to take care of the last one. Notice that
$$
\eqref{a.5.2s}\ \ \mbox{and}\ \ \alpha\ge \frac{2}{3} \ \stackrel{\vartheta_{*}\le \gamma-1}{\Longrightarrow} \ \frac{\mathcal{b}_{0}}{\chi-1}+\ti{\mathcal{b}}\le \frac{\mathcal{b}_{0}}{\chi-1}+80\gamma\delta_{0}(\mu-1+\omega_{\mu})+\gamma-1+\frac{1+\ti{\omega}}{2}\stackrel{\eqref{omega},\eqref{bbb}}{<}1,
$$
so the exponents
\eqn{14.4.2}
$$
\left(\frac{\chi-1}{\mathcal{b}_{0}+\ti{\mathcal{b}}(\chi-1)},\frac{\chi-1}{(\chi-1)(1-\ti{\mathcal{b}})-\mathcal{b}_{0}}\right)
$$
are both finite and larger than one. Moreover,
\eqn{14.4.1}
$$
\begin{cases}
\displaystyle
\ \eqref{a.5.2s}\ \ \mbox{and}\ \ \alpha\ge \frac{2}{3} \ \Longrightarrow \ \frac{1}{\alpha}<\frac{2\gamma-1}{\gamma-\ti{\omega}}\vspace{1.5mm}\\
\displaystyle
\ \eqref{a.5.2s}\ \ \mbox{and}\ \ \eqref{muma} \ \Longrightarrow \ \gamma<2-\mu+\frac{\alpha}{2}\vspace{1.5mm}\\
\displaystyle
\ \eqref{a.5.2s} \ \ \mbox{and}\ \ \eqref{muma} \ \Longrightarrow \ 2\gamma-1<\frac{2-\mu}{1-\alpha},
\end{cases}
$$
and Lemma \ref{crit} yields
\eqn{14.5}
$$
\left\|\mathbf{P}^{\frac{1}{2}}_{\alpha}\left(\ell_{1}(\snr{Du})^{\gamma-\ti{\omega}};\ \cdot \ ,\frac{\tau_{1}-\tau_{2}}{4}\right)\right\|_{L^{\infty}(B_{\tau_{2}})}\le c\nr{\ell_{1}(\snr{Du})}_{L^{2\gamma-1}(B_{r})}^{\frac{\gamma-\ti{\omega}}{2}},
$$
for $c\equiv c(n,\alpha,\gamma)$. In \eqref{14.2}, we can then apply Young's inequality with conjugate exponents \eqref{14.6}$_{i\in \{1,2\}}$ and \eqref{14.4.2}, and plug in \eqref{14.5} and \eqref{12.2} --- recall that by $\eqref{14.4.1}_{2,3}$, Theorem \ref{thi} applies --- to obtain
\begin{eqnarray*}
\nr{\tx{a}(\cdot,\snr{Du})}_{L^{\infty}(B_{\tau_{2}})}&\le&\frac{1}{4}\nr{\tx{a}(\cdot,\snr{Du})}_{L^{\infty}(B_{\tau_{1}})}+\frac{c}{(\tau_{1}-\tau_{2})^{\frac{n(\chi-1)}{\chi-1-2\mathcal{b}_{0}\chi}}}\left(\int_{B_{r}}\tx{a}(x,\snr{Du})\dx\right)^{\frac{\chi-1}{\chi-1-2\mathcal{b}_{0}\chi}}\nonumber \\
&&+c\sum_{i=1}^{2}\nr{\ell_{1}(\tx{a}(\cdot,\snr{Du}))}_{L^{1}(B_{r})}^{\frac{(1-\omega_{i})(\chi-1)}{2((\chi-1)(1-\mathcal{b}_{i})-\mathcal{b}_{0})}}+c\nr{\ell_{1}(\snr{Du})}_{L^{2\gamma-1}(B_{r})}^{\frac{(\gamma-\ti{\omega})(\chi-1)}{2((\chi-1)(1-\ti{\mathcal{b}})-\mathcal{b}_{0})}}+c\nonumber \\
&\le&\frac{1}{4}\nr{\tx{a}(\cdot,\snr{Du})}_{L^{\infty}(B_{\tau_{1}})}+\frac{c}{(\tau_{1}-\tau_{2})^{\frac{n(\chi-1)}{\chi-1-2\mathcal{b}_{0}\chi}}}\left(\int_{B_{r}}\tx{a}(x,\snr{Du})\dx\right)^{\frac{\chi-1}{\chi-1-2\mathcal{b}_{0}\chi}}\nonumber \\
&&+\frac{c}{r^{\tx{d}}}\left(\nr{Du}_{L^{1}(B_{2r})}+\sqrt{\sigma_{\varepsilon}}\nr{Du}_{L^{4\gamma}(B_{2r})}^{2\gamma}+1\right)^{\tx{d}}\nonumber \\
&&+c\sum_{i=1}^{2}\nr{\ell_{1}(\tx{a}(\cdot,\snr{Du}))}_{L^{1}(B_{r})}^{\frac{(1-\omega_{i})(\chi-1)}{2((\chi-1)(1-\mathcal{b}_{i})-\mathcal{b}_{0})}}+c,
\end{eqnarray*}
with $c\equiv c(\data)$ and $\tx{d}\equiv \tx{d}(n,\alpha,\gamma,\mu)$. Recalling that by $\eqref{14.4.1}_{2}$, Theorem \ref{thi} applies, we achieve again \eqref{lipb} via Lemma \ref{iterlem}, \eqref{d2.3}$_{2}$, and $\eqref{a.7.1.x}_{1}$. Restoring the original notation, cf. Remark \ref{remnot}, we have shown the validity of the following proposition.
\begin{proposition}\label{lipp} There exists a threshold $\mu_{\textnormal{max}}\equiv \mu_{\textnormal{max}}(n,\alpha,\gamma,\vartheta_{*})>1$ such that if $1\le \mu<\mu_{\textnormal{max}}$, whenever $B_{r}\subset B_{2r}\Subset B$ are balls with radius $r\in (0,1]$, the solution $u_{\delta}^{\varepsilon}\in (\ti{u}_{\varepsilon}+W^{1,4\gamma}_{0}(B,\mathbb{R}^{N}))$ to the Dirichlet problem \eqref{pded} satisfies
\begin{flalign}\label{lipb}
\nr{\tx{a}_{\delta}^{\varepsilon}(\cdot,\snr{Du_{\delta}^{\varepsilon}})}_{L^{\infty}(B_{r/2})}\le \frac{c}{r^{\tx{d}}}\left(\int_{B_{2r}}A_{\delta}^{\varepsilon}(x,\snr{Du_{\delta}^{\varepsilon}})\dx+1\right)^{\tx{d}}
\end{flalign}
for $c\equiv c(\data)$ and $\tx{d}\equiv \tx{d}(n,\mu,\gamma,\vartheta_{*})$.
\end{proposition}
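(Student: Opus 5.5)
The proposition collects what Steps 1--8 already produce, so the plan is to assemble them in order and check that every exponent constraint is met for $\mu<\mu_{\textnormal{max}}$.

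\textbf{Local setup.} Fix $B_{r}\subset B_{2r}\Subset B$. Since $u_{\delta}^{\varepsilon}$ is locally Lipschitz by \eqref{areg}, the quantity $\mathcal{M}=\nr{\tx{a}(\cdot,\snr{Du})}_{L^{\infty}(B_{\tau_{1}})}$ is finite for all $r/2\le \tau_{2}<\tau_{1}\le 3r/4$. This makes the nonquantitative bookkeeping admissible.

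\textbf{Reverse H\"older inequality at every scale.} For each $x_{0}\in B_{\tau_{2}}$ and each $\rr\le r_{0}=(\tau_{1}-\tau_{2})/8$, I blow up to $B_{1}$ as in Section \ref{bw}. On each $8B_{h}$ I compare $u_{\rr}$ with the frozen minimizer $v$ from \eqref{5.0}; Theorem \ref{t4.1} supplies the Lipschitz bound \eqref{5.1} and the Caccioppoli inequality \eqref{5.2} for $v$. The comparison estimate \eqref{com} together with the coefficient oscillation bounds \eqref{5.8}--\eqref{5.9} then gives the localized Nikol'skii bound \eqref{6.21}, with the exponent balance $\alpha\beta_{0}=2(1-\beta_{0})$. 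I glue these by the covering \eqref{com.13.1} and apply Lemma \ref{fraim}. Scaling back yields the reverse H\"older inequality \eqref{66.0} for all $\kk\ge0$, where the constants carry only powers $\mathcal{M}^{\mathcal{b}_{i}}$ and the $\mathcal{b}_{i}$ are small because $\mu-1+\omega_{\mu}$ is small.

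\textbf{Potential iteration and reabsorption.} Lemma \ref{revlem} turns \eqref{66.0} into the pointwise bound \eqref{14.0}, which involves the potentials $\mathbf{P}$. For the reabsorption I use the numerology \eqref{omega}--\eqref{mu*}, which gives \eqref{14.3}. That is, every exponent of $\mathcal{M}$ on the right is below $1$, and $n(1-\omega_{i})/(i\alpha)<1$, so Lemma \ref{crit} controls the potentials by $L^{1}$ norms of $\ell_{1}(\tx{a})$. Young's inequality with \eqref{14.6} then gives
\[
\nr{\tx{a}}_{L^{\infty}(B_{\tau_{2}})}\le \frac14\nr{\tx{a}}_{L^{\infty}(B_{\tau_{1}})}+c(\tau_{1}-\tau_{2})^{-m}\left(\int_{B_{r}}\tx{a}\dx+1\right)^{\tx{d}}+c,
\]
and Lemma \ref{iterlem} removes the first term. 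Finally $\tx{a}\le A$ by \eqref{a.7.1.x}$_{1}$, which gives \eqref{lipb}.

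\textbf{Main obstacle: the planar case $n=2$, $\alpha\ge2/3$.} Here the potential $\mathbf{P}^{1/2}_{3\alpha/2}$ is critical and Lemma \ref{crit} fails, so the third term must be replaced using \eqref{a.5.2s}. That produces the term involving $\ell_{1}(\snr{Du})^{\gamma-\ti{\omega}}$, which must be bounded in $L^{2\gamma-1}$. I would do this with the higher integrability of Theorem \ref{thi}, checking \eqref{14.4.1} under the restriction \eqref{muma}, and then bound $\nr{Du}_{L^{1}}$ and $\sqrt{\sigma_{\varepsilon}}\nr{Du}^{2\gamma}_{L^{4\gamma}}$ by $\int_{B_{2r}}A_{\delta}^{\varepsilon}+1$ via \eqref{d2.3}$_{2}$. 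This is why the right-hand side of \eqref{lipb} involves $B_{2r}$ rather than $B_{r}$.
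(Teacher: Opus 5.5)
Your proposal is correct and follows the paper's own proof essentially step by step: blow-up and comparison with the frozen minimizer giving \eqref{6.21}, covering plus Lemma \ref{fraim} to reach \eqref{66.0}, Lemma \ref{revlem} with Lemma \ref{crit} under \eqref{14.3}, Young's inequality and Lemma \ref{iterlem}, and, for $n=2$, $\alpha\ge 2/3$, the replacement of the third potential via \eqref{a.5.2s} combined with Theorem \ref{thi} on $B_{2r}$. One small slip: only $\mathcal{b}_{0}$ is small. For $i\ge 1$ one has $\mathcal{b}_{i}=1-\tfrac{i}{4}(\tfrac{\alpha}{n}-\vartheta_{*})$ plus a term of order $\mu-1+\omega_{\mu}$, so the reabsorption really hinges on the gap $\vartheta_{*}<\alpha/n$ encoded in \eqref{14.3}, which is exactly what you then invoke.
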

\noindent A standard covering argument, \eqref{lipb}, $\eqref{a.7.1.x}_{1}$, \eqref{adade}, \eqref{oe} and the minimality of $u_{\delta}^{\varepsilon}$ in the Dirichlet class $\ti{u}_{\varepsilon}+W^{1,4\gamma}_{0}(B,\mathbb{R}^{N})$ gives for any open ball $\ti{B}\Subset B$,
\begin{flalign}\label{lipbc}
\nr{\tx{a}_{\delta}^{\varepsilon}(\cdot,\snr{Du_{\delta}^{\varepsilon}})}_{L^{\infty}(\ti{B})}\le c\left(\nr{A_{\delta}^{\varepsilon}(\cdot,\snr{Du_{\delta}^{\varepsilon}})}_{L^{1}(B)}+1\right)^{\tx{d}}\le c\left(\nr{A_{\delta}(\cdot,\snr{D\ti{u}_{\varepsilon}})}_{L^{1}(B)}+\texttt{o}(\varepsilon)+1\right)^{\tx{d}}
\end{flalign}
with $c\equiv c(\data,\tx{l}(\ti{B},B))$ and $\tx{d}\equiv \tx{d}(n,\gamma,\mu,\vartheta_{*})$. This will be helpful to prove gradient H\"older continuity in the next section.
\section{Vectorial Schauder}\label{vs}
\noindent Let $\varepsilon,\delta\in (0,1/4)$ be the small parameters introduced at the beginning of Section \ref{dgc}. Fix now $\varepsilon\in (0,1/4)$, and take $\delta\in (0,\sigma_{\varepsilon})$, cf. \eqref{oe}. We can then bound
\begin{eqnarray}\label{goodb}
\nr{A_{\delta}(\cdot,\snr{D\ti{u}_{\varepsilon}})}_{L^{1}(B)}&\stackrel{\eqref{difdif}}{\le}&\nr{A(\cdot,\snr{D\ti{u}_{\varepsilon}})}_{L^{1}(B)}+c\delta\nr{\ell_{1}(\snr{D\ti{u}_{\varepsilon}})^{\gamma-1}}_{L^{1}(B)}\nonumber \\
&\le&\nr{A(\cdot,\snr{D\ti{u}_{\varepsilon}})}_{L^{1}(B)}+c\delta\nr{\ell_{1}(\snr{D\ti{u}_{\varepsilon}}^{2})^{2\gamma}}_{L^{1}(B)}\nonumber \\
&\stackrel{\eqref{conv}}{\le}&\mathcal{F}(u;B)+\frac{c\delta}{\sigma_{\varepsilon}}+\texttt{o}(\varepsilon),
\end{eqnarray}
with $c\equiv c(A,\gamma)$. Next, let $\ti{B}\Subset B$ be a ball and notice that, whenever $B_{r}\subseteq \ti{B}$, 
\begin{eqnarray}\label{gh.1}
\nr{Du_{\delta}^{\varepsilon}}_{L^{\infty}(B_{r})}&\stackrel{\eqref{a.7.1.x}_{4}}{\le}&\nr{\tx{a}_{\delta}^{\varepsilon}(\cdot,\snr{Du_{\delta}^{\varepsilon}})}_{L^{\infty}(B_{r})}^{\frac{1}{2-\mu}}\nonumber \\
&\stackrel{\eqref{lipbc}}{\le}&c\left(\nr{A_{\delta}(\cdot,\snr{D\ti{u}_{\varepsilon}})}_{L^{1}(B)}+1+\texttt{o}(\varepsilon)\right)^{\frac{\tx{d}}{2-\mu}}\nonumber \\
&\stackrel{\eqref{goodb}}{\le}&c\left(\mathcal{F}(u;B)+\texttt{o}(\varepsilon)+1\right)^{\frac{\tx{d}}{2-\mu}}\nonumber \\
&\stackrel{\eqref{oe}}{\le}&c\left(\mathcal{F}(u;B)+1\right)^{\frac{\tx{d}}{2-\mu}}=:\tx{c}_{B}\equiv \tx{c}_{B}(\data,\tx{l}(\ti{B},B),\mathcal{F}(u;B)).
\end{eqnarray}
for $\tx{d}\equiv \tx{d}(n,\gamma,\mu,\vartheta_{*})$. In the next lines, the constant $\tx{c}_{B}$ will  possibly be magnified by a multiplicative factor depending on $\data$, or raised to a positive power depending at most on $(n,\mu,\gamma,\vartheta_{*})$, but it will maintain the same increasing monotonicity with respect to $\mathcal{F}(u;B)$ --- we shall keep on denoting it $\tx{c}_{B}$. 
We then take a ball $B_{2\sigma}(\equiv B_{2\sigma}(x_{\textnormal{c}}))\subseteq \ti{B}/4$, and let $v\in u_{\delta}^{\varepsilon}+W^{1,4\gamma}_{0}(B_{2\sigma},\mathbb{R}^{N})$ be the solution to the Dirichlet problem
\eqn{pd00}
$$
u_{\delta}^{\varepsilon}+W^{1,4\gamma}_{0}(B_{2\sigma},\mathbb{R}^{N})\ni w\mapsto \min_{u_{\delta}^{\varepsilon}+W^{1,4\gamma}_{0}(B_{2\sigma},\mathbb{R}^{N})}\int_{B_{2\sigma}}\tx{f}_{\textnormal{c}}(Dw)\dx,
$$
solving by minimality
\eqn{el00}
$$
\int_{B_{2\sigma}}\langle\partial \tx{f}_{\textnormal{c}}(Dv
),Dw \rangle \dx=0\qquad \mbox{for all} \ \ w\in W^{1,4\gamma}_{0}(B_{2\sigma},\mathbb{R}^{N}),
$$
where, as before, $\tx{f}_{\textnormal{c}}(z)=\tx{f}_{\delta}^{\varepsilon}(x_{\textnormal{c}},z)$ for all $z\in \mathbb{R}^{N\times n}$. By Theorem \ref{t4.1} and \eqref{gh.1} it follows that
\eqn{hh.0}
$$
\tx{a}_{\textnormal{c}}(\snr{Dv})\in W^{1,2}(B_{\sigma},\mathbb{R}^{N\times n})\qquad \mbox{and}\qquad \nr{Dv}_{L^{\infty}(B_{3\sigma/2})}+\nr{Du_{\delta}^{\varepsilon}}_{L^{\infty}(B_{2\sigma})}\le \tx{c}_{B}.
$$
Let $M_{*}\equiv M_{*}(A,\gamma)$ be the constant from Corollary \ref{c39}, set $M:=\max\{\tx{c}_{B},M_{*}\}+1$ and let $\tx{f}_{M}(z):=A_{M}(x_{\textnormal{c}},\snr{z})$ be the integrand constructed in \eqref{amam}. By \eqref{h''}--\eqref{h'''}, \eqref{hh.0} and the definition of $M\equiv M(\data,\tx{l}(\ti{B},B),\mathcal{F}(u;B))$, we have that $v\in W^{1,\infty}(B_{3\sigma/2},\mathbb{R}^{N})$ solves
\eqn{hh.8}
$$
\int_{B_{3\sigma/2}}\langle\partial\tx{f}_{M}(Dv),Dw\rangle\dx\stackrel{M> \tx{c}_{B}}{=}\int_{B_{3\sigma/2}}\langle\partial\tx{f}_{\textnormal{c}}(Dv),Dw\rangle\dx\stackrel{\eqref{el00}}{=}0\qquad \mbox{for all} \ \ w\in W^{1,8\gamma}_{0}(B_{3\sigma/2},\mathbb{R}^{N}).
$$
The convexity of $z\mapsto \tx{f}_{M}(z)$ and \eqref{hh.8} imply that $v\in W^{1,\infty}(B_{3\sigma/2},\mathbb{R}^{N})$ is a local minimizer of integral
$$
W^{1,8\gamma}(B_{3\sigma/2},\mathbb{R}^{N})\ni w\mapsto \mathcal{F}_{M}(w;B_{3\sigma/2}):=\int_{B_{3\sigma/2}}\tx{f}_{M}(Dw)\dx. 
$$
By \eqref{h''}, $\tx{f}_{M}$ satisfies the assumptions of \cite[Theorem 1.1]{dsv09}, so there exists $\ti{\beta}\equiv \ti{\beta}(\data,\tx{l}(\ti{B},B),\mathcal{F}(u;B))\in (0,1)$ such that $Dv\in C^{0,\ti{\beta}}_{\loc}(B_{\sigma},\mathbb{R}^{N\times n})$ with
\eqn{oscv}
$$
\osc_{B_{\theta\sigma}}Dv\le c(\data,\tx{l}(\ti{B},B),\mathcal{F}(u;B))\theta^{\ti{\beta}}\qquad \mbox{for all} \ \ \theta\in (0,1),
$$
cf. \cite[Theorem 6.4 and Lemma 2.10]{dsv09}. Furthermore, estimate \eqref{com} with $B_{2\sigma}$ replacing $8B_{h}$ yields
\begin{eqnarray}\label{hh.48}
\nra{Du_{\delta}^{\varepsilon}-Dv}_{L^{2}(B_{\sigma})}&\stackrel{\eqref{hh.0}_{2}}{\le}&\tx{c}_{B}\nra{\ell_{1}(\snr{Du_{\delta}^{\varepsilon}}+\snr{Dv})^{-\mu/2}\snr{Du_{\delta}^{\varepsilon}-Dv}}_{L^{2}(B_{\sigma})}\nonumber \\
&\stackrel{\eqref{a.7.1.x}_{3}}{\le}&\tx{c}_{B}\nra{\sqrt{\lambda_{0}(\snr{Du_{\delta}^{\varepsilon}}+\snr{Dv})}\snr{Du_{\delta}^{\varepsilon}-Dv}}_{L^{2}(B_{2\sigma})}\nonumber \\
&\stackrel{\eqref{com}}{\le}&\tx{c}_{B}\sigma^{\frac{\alpha}{2}}\nra{1+A_{\delta}(\cdot,\snr{Du_{\delta}^{\varepsilon}})+A_{\textnormal{c}}(\snr{Du_{\delta}^{\varepsilon}})}_{L^{1}(B_{2\sigma})}^{\frac{1}{2}}\stackrel{\eqref{hh.0}_{2}}{\le}\tx{c}_{B}\sigma^{\frac{\alpha}{2}}.
\end{eqnarray}
We then bound, for every ball $B_{\rr}\Subset  B_{\sigma}$, $\rr\in (0,\sigma)$,
\begin{eqnarray*}
\nra{Du_{\delta}^{\varepsilon}-(Du_{\delta}^{\varepsilon})_{B_{\rr}}}_{L^{2}(B_{\rr})}&\le& 2\nra{Du_{\delta}^{\varepsilon}-Dv}_{L^{2}(B_{\rr})}+2\nra{Dv-(Dv)_{B_{\rr}}}_{L^{2}(B_{\rr})}\nonumber \\
&\stackrel{\eqref{oscv}}{\le}&c\left(\sigma/\rr\right)^{\frac{n}{2}}\nra{Du_{\delta}^{\varepsilon}-Dv}_{L^{2}(B_{\sigma})}+c\left(\rr/\sigma\right)^{\ti{\beta}}\stackrel{\eqref{hh.48}}{\le}c\left(\sigma/\rr\right)^{\frac{n}{2}}\sigma^{\frac{\alpha}{2}}+c\left(\rr/\sigma\right)^{\ti{\beta}},
\end{eqnarray*}
for $c\equiv c(\data,\tx{l}(\ti{B},B),\mathcal{F}(u;B))$. Let us equalize the powers by choosing $\rr=(\sigma/2)^{\frac{n+2\ti{\beta}+\alpha}{n+2\ti{\beta}}}$ to gain
\eqn{hol.1}
$$
\nra{Du_{\delta}^{\varepsilon}-(Du_{\delta}^{\varepsilon})_{B_{\rr}}}_{L^{2}(B_{\rr})}\le c(\data,\tx{l}(\ti{B},B),\mathcal{F}(u;B))\rr^{\frac{\alpha\ti{\beta}}{n+2\ti{\beta}+\alpha}}.
$$
We point out that if $\sigma\le \rr\le 2\sigma$, estimate \eqref{hol.1} trivially follows. Set $\beta_{*}:=\alpha\ti{\beta}/(n+2\ti{\beta}+\alpha)$, so that $\beta_{*}\equiv \beta_{*}(\data,\tx{l}(\ti{B},B),\mathcal{F}(u;B))\in (0,1)$. A standard covering argument and Campanato-Meyers characterization of H\"older continuity yield that $Du_{\delta}^{\varepsilon}$ is locally H\"older continuous in $\ti{B}$, and, thanks to the arbitrariness of $\ti{B}\Subset B$ we further deduce the local H\"older continuity of $Du_{\delta}^{\varepsilon}$ in $B$. Specifically, given any ball $\ti{B}\Subset B$, $[Du_{\delta}^{\varepsilon}]_{0,\beta_{*};\ti{B}}\le c(\data(B),\tx{l}(\ti{B},B),\mathcal{F}(u;B))$. Overall, we have just proven the following proposition.
\begin{proposition}\label{holp}
Let $u_{\delta}^{\varepsilon}\in \ti{u}_{\varepsilon}+W^{1,4\gamma}_{0}(B,\mathbb{R}^{N})$ be the solution to the Dirichlet problem\footnote{Here we fixed $\varepsilon\in (0,1/4)$, and imposed restriction $\delta\in (0,\sigma_{\varepsilon})$, see the beginning of Section \ref{vs}.} \eqref{pded}. There exists a limiting parameter $\mu_{\textnormal{max}}\equiv \mu_{\textnormal{max}}(n,\alpha,\gamma,\vartheta_{*})>1$ such that if $1\le \mu<\mu_{\textnormal{max}}$, then $Du_{\delta}^{\varepsilon}$ is locally H\"older continuous in $B$. Specifically, given any ball $\ti{B}\Subset B$, $[Du_{\delta}^{\varepsilon}]_{0,\beta_{*};\ti{B}}\le c$, with H\"older exponent $\beta_{*}\equiv \beta_{*}(\data,\tx{l}(\ti{B},B),\mathcal{F}(u;B))\in (0,1)$, and bounding constant $c\equiv c(\data,\tx{l}(\ti{B},B),\mathcal{F}(u;B))$.
\end{proposition}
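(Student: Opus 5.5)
The plan is a comparison argument in Campanato form. First I would freeze the coefficients and compare $u_{\delta}^{\varepsilon}$ with the minimizer $v$ of an autonomous, uniformly elliptic problem. Then I would transfer the H\"older regularity of $Dv$ back to $u_{\delta}^{\varepsilon}$.

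The starting point is a uniform Lipschitz bound. Fix $\varepsilon$ and take $\delta<\sigma_{\varepsilon}$. Combining \eqref{lipbc} from Proposition \ref{lipp} with \eqref{difdif} and \eqref{conv}, I would show that $\nr{A_{\delta}(\cdot,\snr{D\ti{u}_{\varepsilon}})}_{L^{1}(B)}$ is controlled by $\mathcal{F}(u;B)+1$, up to the vanishing term $\texttt{o}(\varepsilon)$ and $c\delta/\sigma_{\varepsilon}\le c$. Then $\eqref{a.7.1.x}_{4}$ converts the $L^{\infty}$ bound on $\tx{a}_{\delta}^{\varepsilon}$ into $\nr{Du_{\delta}^{\varepsilon}}_{L^{\infty}(\ti{B})}\le \tx{c}_{B}$, for every ball $\ti{B}\Subset B$.

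For the comparison, on a ball $B_{2\sigma}(x_{\textnormal{c}})\subset \ti{B}/4$ I would let $v$ solve the frozen Dirichlet problem with integrand $\tx{f}_{\textnormal{c}}=\tx{f}_{\delta}^{\varepsilon}(x_{\textnormal{c}},\cdot)$ and boundary datum $u_{\delta}^{\varepsilon}$. Theorem \ref{t4.1}, specifically \eqref{4.8}, gives $\nr{Dv}_{L^{\infty}(B_{3\sigma/2})}\le \tx{c}_{B}$.

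Since $Dv$ takes values in a bounded set, $v$ also minimizes the modified integrand $\tx{f}_{M}$ of Corollary \ref{c39}, with $M>\tx{c}_{B}$. That integrand has controlled power growth, satisfies $A_{M}''t\approx A_{M}'$, and obeys the H\"older condition $\eqref{h''}_{3}$. Hence the vectorial Uhlenbeck-type theory of Diening, Stroffolini and Verde \cite{dsv09} yields a decay $\osc_{B_{\theta\sigma}}Dv\le c\theta^{\ti{\beta}}$. Here $\ti{\beta}$ and $c$ depend on $M$, and therefore on $\mathcal{F}(u;B)$.

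The remaining task is to control the comparison error $Du_{\delta}^{\varepsilon}-Dv$, which I expect to be the main obstacle. I would rerun the estimate \eqref{com} on $B_{2\sigma}$. The monotonicity \eqref{lala}$_{2}$ together with the oscillation bound $\eqref{a12.x}_{2}$ gives
$$
\mint_{B_{2\sigma}}\lambda_{\textnormal{c}}(\snr{Du_{\delta}^{\varepsilon}}+\snr{Dv})\snr{Du_{\delta}^{\varepsilon}-Dv}^{2}\dx\le \tx{c}_{B}\sigma^{\alpha}.
$$
This step is delicate because ellipticity is lost at large gradients. The loss is harmless here precisely because both gradients are bounded by $\tx{c}_{B}$: by $\eqref{a.7.1.x}_{3}$ one has $\lambda_{\textnormal{c}}\gtrsim \tx{c}_{B}^{-\mu}$, which yields $\nra{Du_{\delta}^{\varepsilon}-Dv}_{L^{2}(B_{\sigma})}\le \tx{c}_{B}\sigma^{\alpha/2}$.

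For $\rr<\sigma$, the triangle inequality then gives
$$
\nra{Du_{\delta}^{\varepsilon}-(Du_{\delta}^{\varepsilon})_{B_{\rr}}}_{L^{2}(B_{\rr})}\le c(\sigma/\rr)^{n/2}\sigma^{\alpha/2}+c(\rr/\sigma)^{\ti{\beta}}.
$$
I would balance the two terms by choosing $\sigma$ as a suitable power of $\rr$, namely $\rr\approx\sigma^{(n+2\ti{\beta}+\alpha)/(n+2\ti{\beta})}$. This produces the Campanato decay $\rr^{\beta_{*}}$ with $\beta_{*}=\alpha\ti{\beta}/(n+2\ti{\beta}+\alpha)$. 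A covering argument and the Campanato characterization then give H\"older continuity of $Du_{\delta}^{\varepsilon}$ on $\ti{B}$, with constants independent of $\delta$ and $\varepsilon$.
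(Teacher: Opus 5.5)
Your proposal is correct and follows essentially the same route as the paper's proof. The steps match one for one: the uniform Lipschitz bound from \eqref{lipbc} combined with \eqref{difdif}, \eqref{conv} and $\delta<\sigma_{\varepsilon}$; comparison with the frozen minimizer $v$ on $B_{2\sigma}$; H\"older regularity of $Dv$ via the truncated integrand $\tx{f}_{M}$ of Corollary \ref{c39} and \cite{dsv09}; the comparison estimate \eqref{com}, made nondegenerate by the $L^{\infty}$ bounds on both gradients; and the same balancing of radii, giving $\beta_{*}=\alpha\ti{\beta}/(n+2\ti{\beta}+\alpha)$.
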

\section{Approximation scheme and proof of Theorem \ref{mt}}\label{asas} \noindent In this section we show that the sequence of approximating minimizers constructed in Section \ref{rdp} converges in a uniform fashion to our original minimum $u\in W^{1,1}_{\loc}(\Omega,\mathbb{R}^{N})$, thus transferring to $u$ the Lipschitz and H\"older bounds obtained in Propositions \ref{lipp} and \ref{holp}, respectively. 
\subsubsection*{Convergence of approximating minimizers} Keeping in mind the restrictions imposed at the beginning of Section \ref{vs}, by the minimality of $u_{\delta}^{\varepsilon}$ in the Dirichlet class $\ti{u}_{\varepsilon}+W^{1,4\gamma}_{0}(B,\mathbb{R}^{N})$, \eqref{goodb} and \eqref{oe} we obtain
\begin{flalign}\label{enesed}
\mathcal{F}_{\delta}^{\varepsilon}(u_{\delta}^{\varepsilon};B)\le \nr{A_{\delta}(\cdot,\snr{D\ti{u}_{\varepsilon}})}_{L^{1}(B)}+\sigma_{\varepsilon}\nr{\ell_{1}(\snr{D\ti{u}_{\varepsilon}}^{2})}_{L^{2\gamma}(B)}^{2\gamma}\le \mathcal{F}(u;B)+\frac{c\delta}{\sigma_{\varepsilon}}+\texttt{o}(\varepsilon),
\end{flalign}
for $c\equiv c(A,\gamma)$. Moreover,
\begin{eqnarray}\label{enesed.1}
\mathcal{F}(u_{\delta}^{\varepsilon};B)+\sigma_{\varepsilon}\nr{\ell_{1}(\snr{Du_{\delta}^{\varepsilon}}^{2})}_{L^{2\gamma}(B)}^{2\gamma}&\stackrel{\eqref{difdif}}{\le}& \mathcal{F}_{\delta}^{\varepsilon}(u_{\delta}^{\varepsilon};B)+c\delta\nr{\ell_{1}(\snr{Du_{\delta}^{\varepsilon}})^{\gamma-1}}_{L^{1}(B)}\nonumber \\
&\le&\mathcal{F}_{\delta}^{\varepsilon}(u_{\delta}^{\varepsilon};B)+c\delta\nr{\ell_{1}(\snr{Du_{\delta}^{\varepsilon}}^{2})}_{L^{2\gamma}(B)}^{2\gamma}\nonumber \\
&\le&\mathcal{F}_{\delta}^{\varepsilon}(u_{\delta}^{\varepsilon};B)\left(1+\frac{c\delta}{\sigma_{\varepsilon}}\right)\stackrel{\eqref{enesed}}{\le}\mathcal{F}(u;B)\left(1+\frac{c\delta}{\sigma_{\varepsilon}}\right)+\frac{c\delta}{\sigma_{\varepsilon}}+\texttt{o}(\varepsilon),
\end{eqnarray}
with $c\equiv c(A,\gamma)$. Estimate \eqref{enesed.1} implies that, for fixed $\varepsilon>0$, the sequence $\{u_{\delta}^{\varepsilon}\}_{\delta>0}$ converges to some $u^{\varepsilon}\in \ti{u}_{\varepsilon}+W^{1,4\gamma}_{0}(B,\mathbb{R}^{N})$ weakly in $W^{1,4\gamma}(B,\mathbb{R}^{N})$. As $\delta\to 0$, by weak lower semicontinuity we derive
\begin{eqnarray}\label{enesed.2}
\mathcal{F}(u^{\varepsilon};B)+\sigma_{\varepsilon}\nr{\ell_{1}(\snr{Du^{\varepsilon}}^{2})}_{L^{2\gamma}(B)}^{2\gamma}&\le& \liminf_{\delta\to 0}\left(\mathcal{F}(u_{\delta}^{\varepsilon};B)+\sigma_{\varepsilon}\nr{\ell_{1}(\snr{Du_{\delta}^{\varepsilon}}^{2})}_{L^{2\gamma}(B)}^{2\gamma}\right)\nonumber \\
&\stackrel{\eqref{enesed.1}}{\le}&\mathcal{F}(u;B)+\texttt{o}(\varepsilon).
\end{eqnarray}
Next, notice that
$$
c\nr{b(\snr{Du^{\varepsilon}})}\stackrel{\eqref{a.4}_{4}}{\le}\mathcal{F}(u^{\varepsilon};B)+\snr{B}\stackrel{\eqref{enesed.2}}{\le}\mathcal{F}(u;B)+\snr{B}+\texttt{o}(\varepsilon),
$$
for $c\equiv c(A)$, so, recalling \eqref{binf}, via Dunford \& Pettis and de la Vall\'ee Poussin theorems we deduce that, up to subsequences, $u^{\varepsilon}\rightharpoonup \ti{u}$ weakly in $W^{1,1}(B,\mathbb{R}^{N})$ for some $\ti{u}\in u+W^{1,1}(B,\mathbb{R}^{N})$. Letting $\varepsilon\to 0$ in \eqref{enesed.2} and using weak lower semicontinuity we obtain
$$
\mathcal{F}(\ti{u};B)\le \liminf_{\varepsilon\to 0}\left(\mathcal{F}(u^{\varepsilon};B)+\sigma_{\varepsilon}\nr{\ell_{1}(\snr{Du^{\varepsilon}}^{2})}_{L^{2\gamma}(B)}^{2\gamma}\right)\stackrel{\eqref{enesed.2}}{\le}\mathcal{F}(u;B).
$$
The content of the previous display, together with the minimality of $u$, the fact that $\left.\ti{u}\right|_{\partial B}=\left.u\right|_{\partial B}$ and the strict convexity of $\tx{f}$ yield that $\ti{u}=u$ on $B$. Summarizing, we have just proven that, up to (nonrelabelled) subsequences,
\eqn{convconv}
$$
u_{\delta}^{\varepsilon}\rightharpoonup u^{\varepsilon} \ \ \mbox{weakly in} \ \ W^{1,4\gamma}(B,\mathbb{R}^{N})\quad \mbox{and}\quad u^{\varepsilon}\rightharpoonup u\ \ \mbox{weakly in} \ \ W^{1,1}(B,\mathbb{R}^{N}).
$$
\subsubsection*{Proof of Theorem \ref{mt}} Let $\ti{B}\Subset B$ be a ball. The bounds from Propositions \ref{lipp} and \ref{holp} now read as
\eqn{bb.1}
$$
\nr{Du_{\delta}^{\varepsilon}}_{L^{\infty}(\ti{B})}+[Du_{\delta}^{\varepsilon}]_{0,\beta_{*};\ti{B}}\le c,
$$
where $c\equiv c(\data,\tx{l}(\ti{B},B),\mathcal{F}(u;B))$ and $\beta_{*}\equiv \beta_{*}(\data,\tx{l}(\ti{B},B),\mathcal{F}(u;B))$. We can then update (locally) the limits in \eqref{convconv} as $u_{\delta}^{\varepsilon}\to u^{\varepsilon}$ weak* in $W^{1,\infty}(\ti{B},\mathbb{R}^{N})$, and uniformly in $C^{1,\sigma}(\ti{B},\mathbb{R}^{N})$ for all $\sigma\in (0,\beta_{*})$ by \eqref{bb.1}, so we can pass to the limit as $\delta\to 0$ in \eqref{bb.1} to gain
\eqn{bb.2}
$$
\nr{Du^{\varepsilon}}_{L^{\infty}(\ti{B})}+[Du^{\varepsilon}]_{0,\beta_{*};\ti{B}}\le c,
$$
with $c\equiv c(\data,\tx{l}(\ti{B},B),\mathcal{F}(u;B))$ and $\beta_{*}\equiv \beta_{*}(\data,\tx{l}(\ti{B},B),\mathcal{F}(u;B))$. We can then send $\varepsilon\to 0$ in \eqref{bb.2} and recall \eqref{convconv} to complete the proof.

\end{document}